\documentclass[10pt,a4paper]{article}
\usepackage{amsmath,amssymb,amsthm,graphicx,hyperref}
\usepackage{color}
\newtheorem{theorem}{Theorem}[section]
\newtheorem{lemma}[theorem]{Lemma}

\theoremstyle{definition}

\newtheorem{remark}[theorem]{Remark}
\newtheorem{example}[theorem]{Example}
\DeclareMathOperator{\sgn}{sgn}
\DeclareMathOperator{\Skellam}{Skellam}
\newcommand{\I}{\mathrm{i}}

\numberwithin{equation}{section}

\begin{document}
\title{\sc Initial-boundary value problems for linear\\ lattice differential equations}

\makeatletter
\let\@fnsymbol\@arabic
\makeatother
\author{Andreas Chatziafratis\thanks{Waseda Research Institute for Science and Engineering, Waseda University, Tokyo, Japan \& Department of Mathematics, National and Kapodistrian University of Athens, Athens, Greece. E-mail: {\tt chatziafrati@math.uoa.gr}, ORCID  0000-0002-3075-0924}, Spyridon Kamvissis\thanks{Mathematics Building, University of Crete, GR--700 13 Voutes, Greece
\& Institute of Applied and Computational Mathematics, FORTH,
GR--711 10 Voutes, Greece. E-mail: {\tt spyros@tem.uoc.gr}, ORCID 0000-0002-3952-3429}, and Antonín Slavík\thanks{Charles University, Faculty of Mathematics and Physics, Prague, Czech Republic. E-mail: {\tt slavik@karlin.mff.cuni.cz}, ORCID 0000-0003-3941-7375}}
\date{}
\maketitle
\allowdisplaybreaks

\begin{abstract}
We study initial-boundary value problems for linear
lattice differential equations on the half-line.
We first give a systematic account of fundamental solutions
for equations with general finite-range interactions.
We then develop complementary approaches based on reflection,
the Laplace transform, and the unified transform method,
obtaining contour integral and Bessel-function representations
of solutions. For the semidiscrete diffusion equation, we justify
these representations for continuous boundary data and initial
data of polynomial growth, establish uniqueness in the
corresponding growth class, and derive spatial estimates
and long-time properties, including asymptotic periodicity.
We also treat nonhomogeneous equations, Robin boundary
conditions, nonsymmetric coefficients, and a class of
time-dependent coefficients. Finally, higher-order examples
illustrate how the unified transform approach extends
beyond second-order equations.

\smallskip
\noindent \textbf{Keywords:} lattice differential equation;
initial-boundary value problem;
semidiscrete diffusion equation;
unified transform method;
fundamental solution;
asymptotic behavior;
asymptotic periodicity

\smallskip
\noindent \textbf{MSC:} 34A33
\end{abstract}

\section{Introduction}

Lattice differential equations describe systems whose spatial
variable is discrete while time remains continuous.
They arise both as models of interacting discrete systems
and as spatial discretizations of partial differential
equations. On the whole lattice, translation invariance
makes fundamental solutions and convolution formulas
natural tools for studying initial-value problems.
On a half-line, the presence of a boundary introduces
additional questions: how to incorporate prescribed
boundary values into an explicit solution formula, how
to justify that formula under weak assumptions on the
data, and how the boundary influences the long-time
behavior of the solution.

This paper presents a systematic study of initial-boundary
value problems for linear lattice differential equations
on a half-line. Our main model is the semidiscrete diffusion
equation
\begin{equation}\label{main-problem-1}
\frac{\partial u}{\partial t}(x,t)
=
a\bigl[u(x+1,t)-2u(x,t)+u(x-1,t)\bigr],
\qquad x\in\mathbb N,\quad t\geq0,
\end{equation}
where $a>0$, subject to
\begin{equation}\label{main-problem-2}
u(0,t)=g(t),\qquad t\geq0,
\qquad
u(x,0)=u_x,\qquad x\in\mathbb N.
\end{equation}
Here $g\colon[0,\infty)\to\mathbb R$ is continuous.
We initially consider bounded initial sequences and
subsequently allow initial data of polynomial growth.
We also study forcing terms and Robin-type boundary
conditions. Our treatment of second-order equations
extends to the nonsymmetric, nonhomogeneous problem
\[
\frac{\partial u}{\partial t}(x,t)
=
\alpha(t)u(x-1,t)+\beta(t)u(x,t)
+\gamma(t)u(x+1,t)+f(x,t),
\qquad x\in\mathbb N,\quad t\geq0,
\]
with prescribed initial and Dirichlet boundary data.
Here the coefficients are continuous and may be
complex-valued, and we assume
$\gamma(t)=\lambda\alpha(t)$ for a fixed nonzero constant
$\lambda$. This proportionality preserves the 
symmetry used in the unified transform method.

A simple physical interpretation of \eqref{main-problem-1}
is provided by an infinite array of identical tanks
$T_x$, $x\in\mathbb N_0$, connected by pipes; see
Figure~\ref{fig1}.
Each tank contains a fixed volume of perfectly mixed
brine, and neighboring tanks exchange liquid at equal
constant rates. If $u(x,t)$ is the amount of salt in
tank $x$ at time~$t$, then its evolution is governed
by \eqref{main-problem-1}, with $a$ determined by the
exchange rate and the tank volume.
The conditions \eqref{main-problem-2} prescribe the
initial salt distribution and describe an external
device that maintains the amount of salt in tank $0$
at the prescribed value $g(t)$.

\begin{figure}[!h]
    \centering
    \includegraphics[width=10cm]{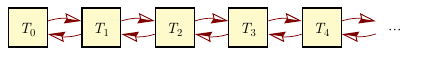}
    \caption{An infinite array of tanks connected by pipes;
    the amount of salt in tank $x$ at time $t$ is $u(x,t)$.}
    \label{fig1}
\end{figure}

An important objective of this paper is to determine how
the initial data, boundary data, and forcing influence
the long-time behavior of solutions.
For the semidiscrete diffusion equation with bounded initial
data, we show that convergence of the boundary data to
a constant implies convergence of the solution to the
same constant at every fixed lattice point.
This conclusion remains valid in the presence of a
forcing term that tends to zero at each lattice point
and admits a summable spatial majorant.
More generally, we bound the limit inferior and limit
superior of the solution by those of the boundary data.

We also establish asymptotic periodicity for periodic
boundary data and forcing: under suitable assumptions,
\[
u(x,t+T)-u(x,t)\longrightarrow0
\qquad\text{as }t\to\infty
\]
for every fixed $x\in\mathbb N$, where $T$ is the common
period of the prescribed data.
We prove this property for both symmetric and
nonsymmetric coefficients, identifying conditions on
the coefficients and data that ensure it.
In both cases, stronger spatial moment
assumptions yield explicit leading terms and remainder
estimates. 

Previous work on linear discrete-space evolution equations
includes explicit solution formulas, existence and
uniqueness results, and asymptotic analysis; see, for
example, \cite{FSS,SS2,SS,AS17,Slavik2020}.
For the half-line diffusion problem, local existence
for initial data in $c_0$ was investigated in
\cite{Oberta}.
Here we obtain explicit representations of global
classical solutions for bounded initial data and,
in the symmetric Dirichlet problem, for initial data
of polynomial growth. We prove uniqueness in the
corresponding growth classes and establish estimates
that describe the spatial behavior of the solutions.

An independent contribution is a systematic treatment
of fundamental solutions for the whole-line equation
\[
\frac{\partial u}{\partial t}(x,t)
=
\sum_{i=-m}^{m}a_i u(x-i,t),
\qquad x\in\mathbb Z,\quad t\geq0.
\]
Using generating functions and Fourier transforms,
we obtain representations for symmetric, antisymmetric,
and general coefficients.
The resulting Bessel-function formulas extend the
familiar nearest-neighbor kernels to equations involving
arbitrarily many spatial shifts.
We establish absolute summability estimates and use
the kernels to derive a Duhamel formula for nonhomogeneous
equations with forcing that is continuous at each
spatial point and uniformly bounded in space on compact
time intervals. These results provide the whole-line
foundation for the subsequent boundary-value analysis.

A central feature of the paper is the use of several
complementary methods for the same class of problems.
For symmetric diffusion, we derive the half-line solution
by reflection, by the Laplace transform, and by the
unified transform method, also known as the Fokas method.
This comparison connects the contour representations
arising from transform calculations with explicit
Bessel-kernel formulas.
We then rigorously justify the resulting solution formulas
for a broader class of data, allowing initial data of
polynomial growth and forcing terms that are bounded
in space, uniformly on compact time intervals.
The formulas also yield preservation of spatial
summability and decay, convergence results for
asymptotically constant boundary data, and asymptotic
periodicity under suitable periodicity and summability
assumptions.

The unified transform method was introduced and developed by Fokas
\cite{F1,F2,F3,F4,F5,F6,F7} as an approach to initial-boundary value
problems for partial differential equations.
It was initially developed for completely integrable
nonlinear equations such as the Korteweg--de Vries
and nonlinear Schr\"odinger equations.
Such equations can be formulated through the evolution
of a linear differential operator $L(t)$ satisfying
the Lax equation
\[
\frac{{\mathrm d}L}{{\mathrm d}t}=BL-LB,
\]
where $B$ is an auxiliary differential operator,
typically formally skew-adjoint.
For example, the inverse-scattering treatment of the
initial-value problem for the Korteweg--de Vries equation
centers on the associated Schr\"odinger operator $L$,
with $B$ determining the time evolution of the
scattering data.
For an initial-boundary value problem, the unified
transform uses spectral information associated with
both parts of the Lax pair.
Their interdependence makes the method a nontrivial
extension of the inverse-scattering approach.

Although initially proposed for nonlinear problems,
the method was soon recognized as useful for linear
equations as well.
Classical tools such as the Laplace and sine transforms
are especially effective for certain low-order equations
and simple geometries, but their application can become
cumbersome for higher-order equations or more general
boundary conditions.
The unified transform provides a systematic alternative
and has been applied to equations of arbitrary order,
to elliptic problems \cite{A}, and to problems with
variable coefficients and different domain geometries.
Its linear formulation has also clarified the role
of Lax pairs as nonlinear analogues of divergence
formulations for separable linear equations \cite{FS}.
Besides explicit solution representations, the method
provides a basis for effective numerical evaluation. For a survey of the unified transform method
for linear partial differential equations, including
problems on the half-line and finite interval and its
relationship with classical solution methods, we refer
to~\cite{FS}. Significant extensions and applications of the UTM have been developed in \cite{BL,FL}. For very recent advances around the UTM, we refer the reader to \cite{BCCK, CACF24, CGK24, CK25,CKS23, CLW27, CMKFA25,CO24,COT24}.

Semidiscrete versions of the unified transform method
have been developed in several earlier works.
Biondini and Hwang \cite{BH08} studied the discrete
linear Schr\"odinger equation and its integrable nonlinear
counterpart on the half-line, including Robin-type
boundary conditions for the linear equation.
Biondini and Wang \cite{BW10} developed a broader
framework for linear differential--difference equations,
with extensions to systems, forced problems, and fully
discrete equations. For problems on finite lattices,
see also Moon and Hwang \cite{MH19}.
Cisneros and Deconinck \cite{CD21} developed the
semidiscrete unified transform method with numerical
approximation of linear evolution partial differential
equations in view. Their approach yields exact
representations of the semidiscrete solutions and
addresses continuum limits, derivative boundary
conditions, and additional grid values introduced by
the spatial discretization.

Our treatment complements these developments by combining
reflection, Laplace-transform, and unified-transform
methods with a detailed analysis of the resulting
solution formulas. In particular, the half-line
transform analysis in \cite{BW10} assumes absolutely
summable initial data and continuous boundary data.
For the semidiscrete diffusion equation, we justify the
Bessel representation for initial data of polynomial
growth and continuous boundary data, and establish
uniqueness in the corresponding growth class.
We also treat forcing terms that are continuous in time
at each lattice point and bounded in space, uniformly
on compact time intervals. Beyond the construction and justification of the solution
formulas, we use them to establish the long-time results
described above, including the effect of decaying forcing,
asymptotic periodicity, and quantitative asymptotic
expansions under spatial moment assumptions. Together with our systematic
account of fundamental solutions for general finite-range
lattice equations, these results connect the explicit
representations with the existence, uniqueness, and
qualitative properties of solutions.
The paper is organized as follows.
Section~\ref{sec-2} develops the fundamental solutions, and
Section~\ref{sec-3} treats nonhomogeneous equations on the whole $\mathbb Z$.
Sections~\ref{sec-4} and~\ref{sec-5} establish symmetry properties,
uniqueness, and a superposition principle for
initial-boundary value problems.
Section~\ref{sec-6} studies symmetric diffusion by the three
methods described above and investigates boundary
conditions and asymptotic behavior.
Section~\ref{sec-7} treats nonsymmetric and time-dependent
coefficients.
Section~\ref{sec-8} develops a third-order unified-transform
example and outlines the corresponding algebraic
procedure for fourth-order and more general equations.
The higher-order discussion is intended to illustrate
the method; it does not constitute a general
well-posedness theory.

For convenience, Table~\ref{tab-kernel-notation} summarizes
the notation for the fundamental solutions and kernels
used throughout the paper. Their definitions are given
in the corresponding sections.

\begin{table}[ht]
\setlength{\belowcaptionskip}{8pt}
\centering
\caption{Notation for fundamental solutions and kernels.}
\label{tab-kernel-notation}
\begin{tabular}{ll}
\hline
Notation & Meaning\\
\hline
$G(x,t)$
& Whole-line fundamental solution for a general lattice equation\\
$K_x^{(a)}(t)$
& Whole-line diffusion kernel with diffusion coefficient $a>0$\\
$K_x(t)$
& Whole-line diffusion kernel with diffusion coefficient $1$\\
$P_{x,y}^{(a)}(t)$
& Dirichlet half-line diffusion kernel with diffusion coefficient $a>0$\\
$P_{x,y}(t)$
& Dirichlet half-line diffusion kernel with diffusion coefficient $1$\\
$Q_{x,y}(t)$
& Dirichlet kernel for constant nonsymmetric coefficients\\
$E_\ell(s,t)$
& Whole-line kernel for time-dependent coefficients\\
$Q_{x,y}(s,t)$
& Dirichlet kernel for time-dependent coefficients\\
$R^\kappa_{x,y}(t)$
& Robin kernel with boundary parameter $\kappa$\\
\hline
\end{tabular}
\end{table}
Explicit formulas for these kernels involve Bessel functions
and modified Bessel functions. We assume some basic familiarity
with these functions; their definitions and the properties used
below can be found in \cite{BCLO}.

\section{Fundamental solutions}\label{sec-2}

Consider the homogeneous equation
\begin{equation}\label{general-RHS}
	\frac{\partial u}{\partial t}(x,t)=\sum_{i=-m}^m a_iu(x-i,t),\quad x\in\mathbb Z,\quad t\ge 0,
\end{equation}
with $a_{-m},\dots,a_m\in\mathbb R$.
In this section, we focus on finding the fundamental solution, i.e., a  solution $G\colon\mathbb Z\times[0,\infty)\to\mathbb R$
for which the initial condition is
\[
G(x,0)=\begin{cases}
	1&\mbox{ if }x=0,\\
	0&\mbox{ if }x\ne 0.
\end{cases}
\]
In order to guarantee uniqueness, we require $G$ to be bounded on $\mathbb Z\times[0,T]$ for every $T>0$.

\subsection{The generating function approach}

We use the method of generating functions: Let $F(z,t)=\sum_{x\in\mathbb Z}G(x,t)z^x$. Then \eqref{general-RHS} yields
\[
\frac{\partial F}{\partial t}(z,t)=\sum_{i=-m}^m a_iz^{i} F(z,t),\quad F(z,0)=1,
\]
and therefore
\[
F(z,t)=\exp\left(\sum_{i=-m}^m a_iz^{i}t\right).
\]
An interesting consequence is that 
\[
\sum_{x\in\mathbb Z}G(x,t)=F(1,t)=\exp\left(\sum_{i=-m}^m a_i
t\right).
\]
In particular, $\lim_{x\to\pm\infty} G(x,t)=0$, and spatial sums are preserved (do not depend on $t$) if and only if $\sum_{i=-m}^m a_i=0$.

To get $G(x,t)$, we have to expand $F(z,t)$ in a Laurent series, and find the coefficient of $z^x$.

\subsection{Symmetric case}\label{section-symmetric-FS}

We begin with the symmetric case where $a_i=a_{-i}$ for all $i\in\{1,\dots,m\}$. 
Recall that
\begin{equation}\label{modified-bessel-expansion}
\exp\left(w\left(z+\frac{1}{z}\right)\right)=\sum_{x\in\mathbb Z}I_x(2w)z^x,
\end{equation}
where $I_x$ is the modified Bessel function of order $x$. Hence, we get
\begin{gather*}
F(z,t)={\mathrm e}^{a_0t}\exp\left(\sum_{i=1}^m a_i\left(\frac{1}{z^i}+z^i\right)t\right)
={\mathrm e}^{a_0t}\prod_{i=1}^m\exp\left(a_i\left(\frac{1}{z^i}+z^i\right)t\right)\
={\mathrm e}^{a_0t}\prod_{i=1}^m \sum_{x_i\in\mathbb Z}I_{x_i}(2a_it)z^{ix_i}.
\end{gather*}
We recover the fundamental solution $G$ as the coefficient of $z^x$ in $F(z,t)$, arriving at the following result.

\begin{theorem}\label{FS-even}
The fundamental solution of \eqref{general-RHS} where $a_i=a_{-i}$, $i\in\{1,\dots,m\}$, is
\[
G(x,t)={\mathrm e}^{a_0t}\sum_{\substack{x_1,\dots,x_m\in\mathbb Z,\\ 1x_1+2x_2+\cdots+mx_m=x}}I_{x_1}(2a_1t)I_{x_2}(2a_2t)\cdots I_{x_m}(2a_mt).
\]
\end{theorem}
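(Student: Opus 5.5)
The generating-function computation preceding the statement is only formal, so the plan is to verify directly that the proposed $G$ is a genuine fundamental solution in the required class. The argument has three parts: show that $G$ is well defined and bounded on $\mathbb Z\times[0,T]$; check the initial condition and the equation; and invoke uniqueness in that class.

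\emph{Convergence and boundedness.} Write $w_i=2a_it$. For real $w$ we have $|I_n(w)|\le I_{|n|}(|w|)$. Moreover,
\[
\sum_{n\in\mathbb Z}I_{|n|}(|w|)=\sum_{n\in\mathbb Z}I_n(|w|)={\mathrm e}^{|w|},
\]
by \eqref{modified-bessel-expansion} with $z=1$, $2w\to|w|$. Hence the unrestricted $m$-fold series $\sum_{x_1,\dots,x_m}\prod_i I_{x_i}(2a_it)$ converges absolutely, with sum of absolute values at most ${\mathrm e}^{2\sum_i|a_i|t}$. The constrained sum defining $G(x,t)$ is a subseries of it, so it converges absolutely and
\[
|G(x,t)|\le {\mathrm e}^{(a_0+2\sum_i|a_i|)t}.
\]
This gives boundedness on $\mathbb Z\times[0,T]$. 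The same bound, uniform on compact $z$-annuli around $|z|=1$, justifies multiplying the $m$ Laurent series and regrouping them by the power $z^x$. The product therefore equals $\sum_x G(x,t)z^x$, and this is the Laurent expansion of $F(z,t)$.

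\emph{Initial condition and equation.} At $t=0$ we have $I_n(0)=\delta_{n0}$, so only $x_1=\dots=x_m=0$ contributes, and this gives $G(x,0)=\delta_{x0}$. For the equation I would use the recurrence
\[
\frac{{\mathrm d}}{{\mathrm d}t}I_n(2a_it)=a_i\bigl(I_{n-1}(2a_it)+I_{n+1}(2a_it)\bigr).
\]
Differentiating the product term by term is justified because the differentiated series are dominated by the same exponential-type bound, locally uniformly in $t$. The derivative hitting the $i$-th factor shifts $x_i$ by $\pm1$, which shifts the constraint value $x$ by $\mp i$. Summing over $i$ gives $\sum_i a_i\bigl(G(x-i,t)+G(x+i,t)\bigr)$, and the factor ${\mathrm e}^{a_0t}$ contributes $a_0G(x,t)$. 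Since $a_{-i}=a_i$, this is exactly the right-hand side of \eqref{general-RHS}. Equivalently, one can differentiate $F$ in $t$ and compare Laurent coefficients, using the uniform convergence established above.

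\emph{Uniqueness.} Finally, uniqueness among solutions bounded on $\mathbb Z\times[0,T]$ identifies this $G$ as \emph{the} fundamental solution. For that step I would give a Gronwall argument. If $v$ is a bounded solution with zero data, then $\sup_x|v(x,t)|\le \sum_i|a_i|\int_0^t\sup_x|v(x,s)|\,{\mathrm d}s$, and hence $v\equiv0$. The main obstacle is the rigorous justification of the termwise operations, namely the rearrangement and the term-by-term differentiation, and the absolute-summability bound above handles this.
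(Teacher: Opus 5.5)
Your proposal is correct and follows essentially the paper's route. The formula is the Laurent coefficient of $F(z,t)=\exp\bigl(\sum_i a_iz^it\bigr)$, and your absolute-summability bound, obtained from $|I_n(w)|\le I_{|n|}(|w|)$ and $\sum_n I_n(|w|)={\mathrm e}^{|w|}$, is exactly the estimate the paper proves in Theorem~\ref{FS-general}, to which it defers the convergence of this series. What you add is an explicit verification of the equation, via $\frac{{\mathrm d}}{{\mathrm d}t}I_n(2a_it)=a_i\bigl(I_{n-1}(2a_it)+I_{n+1}(2a_it)\bigr)$ and termwise differentiation, together with a Gronwall uniqueness argument; the paper leaves these to its formal generating-function calculation and to a uniqueness theorem for bounded solutions cited from the literature.
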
 

We will postpone the proof of convergence of the series until Theorem \ref{FS-general}.

For $m=1$, we get the formula
\begin{equation}\label{fs-sym-1}
G(x,t)={\mathrm e}^{a_0t}I_x(2a_1t),
\end{equation}
which appears in a variety of applications \cite{CJKS}. The derivation based on generating functions was already presented in \cite[p.~490]{SS2}. The formulas for $m\ge 2$ seem to be new; for instance, the choice $m=2$ leads to
\begin{equation}\label{fs-sym-2}
G(x,t)={\mathrm e}^{a_0t}\sum_{\substack{x_1,x_2\in\mathbb Z,\\ x_1+2x_2=x}}I_{x_1}(2a_1t)I_{x_2}(2a_2t)={\mathrm e}^{a_0t}\sum_{k\in\mathbb Z} I_k(2a_2t)I_{x-2k}(2a_1t)
\end{equation}
(to get the second equality, we observe that if $x_2=k$, then $x_1=x-2x_2=x-2k$).

\begin{example}
The derivative of a function $f$ of an even order $2m$ can be approximated using the central difference of order $2m$ given by
\[
\Delta^{2m}f(x)=\sum_{i=-m}^m (-1)^{m+i}\binom{2m}{m+i}f(x-i).
\]
The expression can be obtained using the binomial theorem from
\[
\Delta^{2m}=(E^{1/2}-E^{-1/2})^{2m},
\]
where $E$ is the forward shift operator defined by $(Ef)(x)=f(x+1)$, so $((E^{1/2}-E^{-1/2})f)(x)=f(x+\tfrac12)-f(x-\tfrac12)$.

We can now consider the equation
\begin{equation}\label{order-2m}
	\frac{\partial u}{\partial t}(x,t)=\alpha\Delta^{2m}u(x,t),\quad x\in\mathbb Z,\quad t\ge 0,
\end{equation}
where $\alpha\in\mathbb R$, and the difference on the right-hand side is with respect to $x$.
Equation \eqref{order-2m} is a special case of \eqref{general-RHS} with $a_i=\alpha(-1)^{m+i}\binom{2m}{m+i}$,  and one can easily check that $a_{-i}=a_i$. Therefore, its fundamental solution is 
\[
G(x,t)={\mathrm e}^{\alpha(-1)^m\binom{2m}{m}t}\sum_{\substack{x_1,\dots,x_m\in\mathbb Z,\\ 1x_1+2x_2+\cdots+mx_m=x}}\prod_{i=1}^m I_{x_i}\left(2\alpha(-1)^{m+i}\binom{2m}{m+i}t\right).
\]
Using the fact that $I_n(-t)=(-1)^nI_n(t)$ for all $n\in\mathbb Z$, the previous formula can be rewritten in the form
\[
G(x,t)=(-1)^{x}{\mathrm e}^{\alpha(-1)^m\binom{2m}{m}t}\sum_{\substack{x_1,\dots,x_m\in\mathbb Z,\\ 1x_1+2x_2+\cdots+mx_m=x}}(-1)^{m(x_1+\cdots+x_m)}\prod_{i=1}^m I_{x_i}\left(2\alpha\binom{2m}{m+i}t\right).
\]
For $m=1$, we get the discrete diffusion kernel \eqref{fs-sym-1}.
The formulas for $m\ge 2$ are new; for instance, the choice $m=2$ yields
\begin{align*}
G(x,t)&={\mathrm e}^{6\alpha t}\sum_{\substack{x_1,x_2\in\mathbb Z,\\ x_1+2x_2=x}}I_{x_1}(-8\alpha t)I_{x_2}(2\alpha t)\\
&={\mathrm e}^{6\alpha t}\sum_{k\in\mathbb Z}I_k(2\alpha t)I_{x-2k}(-8\alpha t)=(-1)^x{\mathrm e}^{6\alpha t}\sum_{k\in\mathbb Z}I_k(2\alpha t)I_{x-2k}(8\alpha t).
\end{align*}
\end{example}

\subsection{Antisymmetric case}

If the coefficients $a_{-m},\ldots,a_m$ in \eqref{general-RHS} satisfy $a_i=-a_{-i}$ for all $i\in\{0,\dots,m\}$ (in particular, $a_0=0$), we can proceed in similar way. Instead of \eqref{modified-bessel-expansion}, we use the expansion \begin{equation*}
	\exp\left(w\left(z-\frac{1}{z}\right)\right)=\sum_{x\in\mathbb Z}J_x(2w)z^x,
\end{equation*}
where $J_x$ is the Bessel function of order $x$. The same calculation as before leads to the following result.

\begin{theorem}\label{FS-odd}
	The fundamental solution of \eqref{general-RHS} where $a_i=-a_{-i}$ for all $i\in\{0,\dots,m\}$, is
	\[
	G(x,t)=\sum_{\substack{x_1,\dots,x_m\in\mathbb Z,\\ 1x_1+2x_2+\cdots+mx_m=x}}J_{x_1}(2a_1t)J_{x_2}(2a_2t)\cdots J_{x_m}(2a_mt).
	\]
\end{theorem}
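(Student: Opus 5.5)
The argument follows the symmetric case of Theorem~\ref{FS-even}, with the generating function for ordinary Bessel functions in place of the one for $I_x$. Since $a_0=0$ and $a_{-i}=-a_i$, the generating function of the fundamental solution is
\[
F(z,t)=\exp\Bigl(\sum_{i=1}^m a_i\bigl(z^i-z^{-i}\bigr)t\Bigr)=\prod_{i=1}^m\exp\bigl(a_it\,(z^i-z^{-i})\bigr).
\]
We expand each factor by the Bessel generating function with $w=a_it$, evaluated at $z^i$ in place of $z$. This gives $\sum_{x_i\in\mathbb Z}J_{x_i}(2a_it)z^{ix_i}$, valid for every $z\in\mathbb C\setminus\{0\}$. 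Multiplying the $m$ Laurent series and collecting the coefficient of $z^x$ yields exactly the sum over $x_1+2x_2+\cdots+mx_m=x$ in the statement. This defines a candidate $G(x,t)$.

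Next we must check that this candidate really is the fundamental solution, not merely a formal coefficient. The plan has three steps.

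First, absolute convergence. We use $|J_n(s)|\le (|s|/2)^{|n|}/|n|!$, or equivalently $|J_n(s)|\le I_{|n|}(|s|)$. The multiple sum is then dominated by the corresponding sum built from $I_{|x_i|}(2|a_i|t)$. That sum is at most $\prod_i\sum_{n}I_{|n|}(2|a_i|t)\le \prod_i e^{2|a_i|t}$, uniformly in $x$ and locally uniformly in $t$. The same bound shows that $\sum_x|G(x,t)|\le e^{2\sum|a_i|t}$. Hence $G$ is bounded on $\mathbb Z\times[0,T]$, and the Cauchy product of the Laurent series is justified on $|z|=1$. This is the part the paper defers to Theorem~\ref{FS-general}, and we may cite it.

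Second, the equation. On the unit circle, $F(\mathrm e^{\I\theta},t)=\sum_x G(x,t)\mathrm e^{\I x\theta}$ with an absolutely convergent series. Thus $G(x,t)=\frac1{2\pi}\int_{-\pi}^{\pi}F(\mathrm e^{\I\theta},t)\mathrm e^{-\I x\theta}\,\mathrm d\theta$. Differentiating under the integral, which is allowed since the integrand is smooth on a compact set, gives $\partial_tG(x,t)=\sum_i a_iG(x-i,t)$. Here we use $\partial_tF=\sum_i a_iz^iF$, and multiplication by $z^i$ shifts the Fourier index. Moreover, $F(z,0)=1$ gives $G(x,0)=\delta_{x0}$, which can also be read off directly from $J_n(0)=\delta_{n0}$.

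Third, uniqueness within the class of solutions bounded on $\mathbb Z\times[0,T]$. This is the uniqueness assumed in the paper's definition: for a bounded solution, the equation is a linear ODE in $\ell^\infty$ with a bounded operator, so the solution is unique.

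The main obstacle is the first step, the convergence estimate that legitimizes term-by-term multiplication and differentiation. It is handled by the domination by modified Bessel functions described above.
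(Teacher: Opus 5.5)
Your proposal is correct and follows the paper's route: the same expansion via $\exp(w(z-1/z))=\sum_x J_x(2w)z^x$ and coefficient extraction. Convergence comes, as in the paper's Theorem~\ref{FS-general} with $\eta_i=\I a_i$, from dominating $|J_n(2a_it)|$ by $I_{|n|}(2|a_i|t)$. Your extra verification of the equation through the Fourier integral, together with the appeal to uniqueness of bounded solutions, only makes explicit what the paper leaves to its Fourier-transform subsection and its cited uniqueness result. One small correction: your two Bessel bounds are not ``equivalent'' (the first implies the second), though either one suffices.
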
 

Again, we will postpone the proof of convergence of the series until Theorem \ref{FS-general}. For $m=1$, we get the formula
\begin{equation}\label{fs-antisym-1}
G(x,t)=J_x(2a_1t),
\end{equation}
which follows from \cite[p.~490]{SS2}. The formulas for $m\ge 2$ seem to be new; for instance, the choice $m=2$ leads to
\begin{equation}\label{fs-antisym-2}
G(x,t)=\sum_{\substack{x_1,x_2\in\mathbb Z,\\ x_1+2x_2=x}}J_{x_1}(2a_1t)J_{x_2}(2a_2t)=\sum_{k\in\mathbb Z}  J_k(2a_2t)J_{x-2k}(2a_1t)
\end{equation}
(the second equality is obtained by letting $x_2=k$ and $x_1=x-2x_2=x-2k$).

\begin{example}
	The derivative of a function $f$ of an odd order $2m-1$ can be approximated using the central difference of order $2m-1$ given by
	\[
	\Delta^{2m-1}f(x)=\frac{1}{2}\sum_{i=-m}^{m}(-1)^{m-i}\left(\binom{2m-1}{i+m}-\binom{2m-1}{i+m-1}\right)f(x-i).
	\]
	The expression can be obtained using the binomial theorem from
	\[
	\Delta^{2m-1}=\frac{1}{2}(E^{1/2}+E^{-1/2})(E^{1/2}-E^{-1/2})^{2m-1},
	\]
	where $E$ is again the forward shift operator; the averaging operator $\frac{1}{2}(E^{1/2}+E^{-1/2})$ is employed to avoid non-integer powers of $E$, making it possible to deal with functions defined only on the integer grid. 
	
	We can now consider the equation
	\begin{equation}\label{order-2m+1}
		\frac{\partial u}{\partial t}(x,t)=\alpha\Delta^{2m-1}u(x,t),\quad x\in\mathbb Z,\quad t\ge 0,
	\end{equation}
	where $\alpha\in\mathbb R$, and the difference on the right-hand side is with respect to $x$.
	Equation \eqref{order-2m+1} is a special case of \eqref{general-RHS} with 
	\[
	a_i=\frac{\alpha}{2}(-1)^{m-i}\left(\binom{2m-1}{i+m}-\binom{2m-1}{i+m-1}\right),\]
	and one can easily check that $a_0=0$ and $a_{-i}=-a_i$ for $i\in\{1,\dots,m\}$. Therefore, the fundamental solution is 
	\begin{align*}
	G(x,t)&=\sum_{\substack{x_1,\dots,x_m\in\mathbb Z,\\ 1x_1+2x_2+\cdots+mx_m=x}}\prod_{i=1}^m J_{x_i}\left(\alpha(-1)^{m-i}\left(\binom{2m-1}{i+m}-\binom{2m-1}{i+m-1}\right)t\right)\\
	&=(-1)^{x}\sum_{\substack{x_1,\dots,x_m\in\mathbb Z,\\ 1x_1+2x_2+\cdots+mx_m=x}}(-1)^{m(x_1+\cdots+x_m)}\prod_{i=1}^m J_{x_i}\left(\alpha\left(\binom{2m-1}{i+m}-\binom{2m-1}{i+m-1}\right)t\right).
	\end{align*}
\end{example}

For $m=1$, we get the equation
\[
\frac{\partial u}{\partial t}(x,t)=\alpha\Delta u(x,t),\quad x\in\mathbb Z,\quad t\ge 0,
\]
where
\[
\Delta u(x,t)=\frac{1}{2}\left(u(x+1,t)-u(x-1,t)\right).
\]
The equation is a semidiscrete version of the transport equation with centered spatial difference, whose fundamental solution is
\begin{equation*}
G(x,t)=J_x(-\alpha t)=(-1)^xJ_x(\alpha t).
\end{equation*}
This is a special case of a result from \cite[p.~489]{SS2} (see also \eqref{asymmetric-fundamental-solution}). 
One can compare this formula with the fundamental solution of the semidiscrete transport equation with backward spatial difference, which was obtained in \cite[Lemma 4.1]{SV}.

The results for $m\ge 2$ are new. For instance, the choice  $m=2$ leads to the equation
\[
\frac{\partial u}{\partial t}(x,t)=\alpha\Delta^3 u(x,t),\quad x\in\mathbb Z,\quad t\ge 0,
\]
where 
\[
\Delta^3 u(x,t)=\frac{1}{2}\left(u(x+2,t)-2u(x+1,t)+2u(x-1,t)-u(x-2,t)\right)
\]
is a finite difference approximation of the third-order derivative. The fundamental solution is
\[
	G(x,t)=(-1)^x\sum_{\substack{x_1,x_2\in\mathbb Z,\\ x_1+2x_2=x}}J_{x_1}(-2\alpha t)J_{x_2}(-\alpha t)=\sum_{k\in\mathbb Z}(-1)^kJ_{x-2k}(2\alpha t)J_k(\alpha t)=\sum_{l\in\mathbb Z}J_{x+2l}(2\alpha t)J_l(\alpha t)
\]
(to get the last equality, let $l=-k$ and use $J_{-l}(z)=(-1)^lJ_l(z)$).

\subsection{General case}\label{sub-general-case}

General coefficients $a_{-m},\dots,a_m\in\mathbb R$ are handled in a similar way.
Let \mbox{$a_{-m},\dots,a_{-1},a_1,\dots,a_m\neq0$}.
For each $i\in\{1,\dots,m\}$, choose $\eta_i\in\mathbb C$
such that
\[
\eta_i^2=a_{-i}a_i.
\]
Then
\[
\eta_i\left(
\frac{\eta_i}{a_i z^i}
+\frac{a_i z^i}{\eta_i}
\right)
=
\frac{a_{-i}}{z^i}+a_i z^i.
\]
Using \eqref{modified-bessel-expansion}, we obtain
\begin{align}
F(z,t)
&=
{\mathrm e}^{a_0t}
\prod_{i=1}^m
{\mathrm e}^{(a_{-i}/z^i+a_i z^i)t}
=
{\mathrm e}^{a_0t}
\prod_{i=1}^m
\exp\left[
\eta_i t\left(
\frac{\eta_i}{a_i z^i}
+\frac{a_i z^i}{\eta_i}
\right)
\right]
\label{GF-calculation-1}\\
&=
{\mathrm e}^{a_0t}
\prod_{i=1}^m
\sum_{x_i\in\mathbb Z}
I_{x_i}(2\eta_i t)
\left(\frac{a_i}{\eta_i}\right)^{x_i}z^{ix_i}.
\label{GF-calculation-2}
\end{align}
These expressions are independent of the choices of signs
of the $\eta_i$: replacing $\eta_i$ by $-\eta_i$ leaves
\[
I_{x_i}(2\eta_i t)
\left(\frac{a_i}{\eta_i}\right)^{x_i}
\]
unchanged, since $I_n(-z)=(-1)^nI_n(z)$ for integer $n$.

If $a_i$ and $a_{-i}$ have opposite signs, then $\eta_i$
is imaginary. The modified Bessel function can then be
expressed using a Bessel function with real argument through
the identity $I_n(\I r)=\I^nJ_n(r)$.

Extracting the coefficient of $z^x$, we get the following result.

\begin{theorem}\label{FS-general}
	The fundamental solution of \eqref{general-RHS} with coefficients satisfying $a_{-m},\dots,a_{-1},a_1,\dots,a_m\ne 0$ is
\[
G(x,t)
=
{\mathrm e}^{a_0t}
\sum_{\substack{x_1,\dots,x_m\in\mathbb Z\\
x_1+2x_2+\cdots+mx_m=x}}
\prod_{i=1}^m
I_{x_i}(2\eta_i t)
\left(\frac{a_i}{\eta_i}\right)^{x_i},
\]
where $\eta_i^2=a_{-i}a_i$ for $i=1,\dots,m$.
The infinite series is absolutely convergent, and
$G(\cdot,t)\in\ell^1(\mathbb Z)$ for every $t\ge 0$, with
\[
\sum_{x\in\mathbb Z}|G(x,t)|
\le \exp\left(\sum_{i=-m}^m |a_i|t\right).
\]
Consequently, $G$ is bounded on $\mathbb Z\times[0,T]$ for every $T>0$.
\end{theorem}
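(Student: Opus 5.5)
The plan is to bound each factor of the product separately. The key input is the elementary estimate, for $w\in\mathbb C$ and $n\in\mathbb Z$,
\[
|I_n(2w)|\le \sum_{k\ge0}\frac{|w|^{2k+|n|}}{k!\,(k+|n|)!}=I_{|n|}(2|w|),
\]
which follows from the power series of $I_n$ together with $I_{-n}=I_n$. Applying it with $w=\eta_i t$ and using $|\eta_i|^2=|a_{-i}||a_i|$, I obtain
\[
\left|I_{x_i}(2\eta_it)\left(\frac{a_i}{\eta_i}\right)^{x_i}\right|
\le I_{|x_i|}(2|\eta_i|t)\,\rho_i^{x_i},\qquad \rho_i:=\frac{|a_i|}{|\eta_i|}=\sqrt{|a_i|/|a_{-i}|}.
\]
Next, apply \eqref{modified-bessel-expansion} with $w=|\eta_i|t$ and $z=\rho_i>0$; note that $I_{|n|}=I_n$ for real arguments and integer $n$. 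This yields
\[
\sum_{x_i\in\mathbb Z} I_{|x_i|}(2|\eta_i|t)\rho_i^{x_i}=\exp\bigl(|\eta_i|t(\rho_i+\rho_i^{-1})\bigr)=\exp\bigl((|a_i|+|a_{-i}|)t\bigr).
\]
All terms here are nonnegative.

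With these bounds I turn to the full sum. Summing over $x\in\mathbb Z$ the absolute values of the terms defining $G(x,t)$ amounts to summing over all $(x_1,\dots,x_m)\in\mathbb Z^m$ without restriction, because each tuple determines exactly one $x$. By Tonelli this total equals the product of the $m$ one-dimensional sums, so it is at most $\prod_i \exp((|a_i|+|a_{-i}|)t)$. This establishes absolute convergence of each series defining $G(x,t)$. Including the factor $|{\mathrm e}^{a_0t}|\le {\mathrm e}^{|a_0|t}$ gives the stated $\ell^1$ bound. Boundedness on $\mathbb Z\times[0,T]$ then follows from $|G(x,t)|\le\|G(\cdot,t)\|_{\ell^1}\le \exp(\sum|a_i|T)$.

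It remains to confirm that $G$ really is the fundamental solution. Absolute convergence on $\mathbb Z^m$ justifies the Cauchy product of the Laurent series in \eqref{GF-calculation-2} for every $z\ne0$, since the same estimate works with $\rho_i|z|^i$ in place of $\rho_i$. Hence $\sum_x G(x,t)z^x=F(z,t)$, and $G(x,0)=\delta_{x0}$ follows from $I_n(0)=\delta_{n0}$. To verify the differential equation I would take the coefficient of $z^x$ in $\partial_tF=(\sum a_iz^i)F$. Equivalently, and more carefully, I would differentiate the series termwise in $t$. This is legitimate because the majorant bounds are locally uniform in $t$, and the same estimates apply to $\partial_t$ of each factor via $I_n'=\tfrac12(I_{n-1}+I_{n+1})$. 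Uniqueness in the bounded class, which the section assumes, then identifies this $G$ as the fundamental solution.

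The main obstacle is the termwise-differentiation and coefficient-identification step. It requires a dominated bound for the $t$-derivatives uniformly on compact $t$-intervals, and uniqueness of Laurent coefficients on an annulus. I would handle it with the Bessel recurrence, which reduces the derivative bound to the same product estimate with an extra factor of $\sum_i(|a_i|+|a_{-i}|)$.
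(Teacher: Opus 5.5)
Your proposal is correct and follows the paper's argument: bound each factor using $|I_n(z)|\le I_n(|z|)$, evaluate the resulting nonnegative one-dimensional sums with the generating function at $z=\sqrt{|a_i|/|a_{-i}|}$, and use Tonelli to factor the sum over $\mathbb Z^m$ into a product. Your extra step verifies directly that the series solves the equation. It differentiates termwise using $I_n'=\tfrac12(I_{n-1}+I_{n+1})$, which writes $\partial_t$ of the $i$th factor as $a_i$ times the factor with index $x_i-1$ plus $a_{-i}$ times the factor with index $x_i+1$. This step is sound, and it makes rigorous what the paper obtains only from its formal generating-function derivation.
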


\begin{proof}
	The formula for $G$ follows from the earlier calculations.
	The estimate $|I_x(z)|\le I_x(|z|)$ implies
	\[
	|G(x,t)|\le {\mathrm e}^{a_0t}\sum_{\substack{x_1,\dots,x_m\in\mathbb Z,\\ 1x_1+2x_2+\cdots+mx_m=x}}I_{x_1}(2t\sqrt{|a_{-1}a_1|})\cdots I_{x_m}(2t\sqrt{|a_{-m}a_m|})\left(\sqrt{\frac{|a_{1}|}{|a_{-1}|}}\right)^{x_1}\cdots \left(\sqrt{\frac{|a_{m}|}{|a_{-m}|}}\right)^{x_m}.
	\]
	Therefore, similar calculations as in \eqref{GF-calculation-1}--\eqref{GF-calculation-2} with $z=1$ reveal that
	\[
	\sum_{x\in\mathbb Z}|G(x,t)|
	\le {\mathrm e}^{a_0t}\prod_{i=1}^m \sum_{x_i\in\mathbb Z}I_{x_i}(2t\sqrt{|a_{-i}a_i|})\left(\sqrt{\frac{|a_{i}|}{|a_{-i}|}}\right)^{x_i}
	={\mathrm e}^{a_0t}\prod_{i=1}^m{\mathrm e}^{(|a_{-i}|+|a_i|)t}
	\le \exp\left(\sum_{i=-m}^m |a_i|t\right).
	\]
	For every $T>0$, we get the estimate
	\[
	|G(x,t)|\le \exp\left(\sum_{i=-m}^m |a_i|T\right),\quad x\in\mathbb Z,\quad t\in[0,T].\tag*{\qedhere}
	\]
\end{proof}

In the symmetric case $a_{-i}=a_i$, choosing $\eta_i=a_i$
reduces the formula to Theorem~\ref{FS-even}.
In the antisymmetric case $a_{-i}=-a_i$ and $a_0=0$,
choose $\eta_i=\I a_i$.
Then
\[
I_{x_i}(2\I a_i t)
\left(\frac1\I\right)^{x_i}
=
J_{x_i}(2a_i t),
\]
so the formula reduces to Theorem~\ref{FS-odd}.

For $m=1$, we get the formula
\begin{equation}\label{asymmetric-fundamental-solution}
G(x,t)
=
{\mathrm e}^{a_0t}I_x(2\eta_1 t)
\left(\frac{a_1}{\eta_1}\right)^x,
\qquad \eta_1^2=a_{-1}a_1.
\end{equation}
which was obtained in \cite[p.~489]{SS2}. The formulas for $m\ge 2$ are new.

\subsection{The Fourier transform approach}

Sometimes it might be more convenient to express the fundamental solutions in terms of integrals. This can be achieved using the discrete Fourier transform. Recall that the fundamental solution $G$
should satisfy
\begin{equation*}
	\frac{\partial G}{\partial t}(x,t)=\sum_{j=-m}^m a_jG(x-j,t),\quad x\in\mathbb Z,\quad t\ge 0,
\end{equation*}
and
\[
G(x,0)=\begin{cases}
	1&\mbox{ if }x=0,\\
	0&\mbox{ if }x\ne 0.
\end{cases}
\]
The Fourier transform of $G$ with respect to $x$ is 
\[
\widehat G(\xi,t)=\sum_{x\in\mathbb Z}G(x,t){\mathrm e}^{-\I x\xi},\quad \xi\in[-\pi,\pi],\quad t\ge 0.
\]
The initial condition implies
\[
\widehat G(\xi,0)=1,\quad \xi\in[-\pi,\pi].
\]
Observe that
\[
\sum_{x\in\mathbb Z}G(x-j,t){\mathrm e}^{-\I x\xi}={\mathrm e}^{-\I j\xi}\sum_{x\in\mathbb Z}G(x-j,t){\mathrm e}^{-\I (x-j)\xi}={\mathrm e}^{-\I j\xi}\widehat G(\xi,t),\quad j\in\{-m,\dots,m\}.
\]
Hence, taking the Fourier transform of the differential equation, we get
\[
\frac{\partial \widehat G}{\partial t}(\xi,t)=\widehat G(\xi,t)\sum_{j=-m}^m a_j{\mathrm e}^{-\I j\xi},\quad \xi\in[-\pi,\pi],\quad t\ge 0.
\]
This means that
\[
\widehat G(\xi,t)=\exp\left(\sum_{j=-m}^m a_j{\mathrm e}^{-\I j\xi}t\right),\quad \xi\in[-\pi,\pi],\quad t\ge 0.
\]
Applying the inverse Fourier transform, we obtain
\[
G(x,t)=\frac{1}{2\pi}\int_{-\pi}^{\pi}\widehat G(\xi,t){\mathrm e}^{\I x\xi}\,{\mathrm d}\xi
=\frac{1}{2\pi}\int_{-\pi}^{\pi}{\mathrm e}^{\I x\xi}\exp\left(\sum_{j=-m}^m a_j{\mathrm e}^{-\I j\xi}t\right)\,{\mathrm d}\xi.
\]
Note that the term corresponding to $j=0$ does not depend on $\xi$, so we can also write
\begin{equation}\label{fourier-fundamental-fs}
G(x,t)=\frac{{\mathrm e}^{a_0t}}{2\pi}\int_{-\pi}^{\pi}{\mathrm e}^{\I x\xi}\exp\left(\sum_{j\in\{-m,\dots,m\}\setminus\{0\}} a_j{\mathrm e}^{-\I j\xi}t\right)\,{\mathrm d}\xi.
\end{equation}

In the symmetric case when $a_i=a_{-i}$ for all $i$,  formula \eqref{fourier-fundamental-fs} simplifies to
\[
G(x,t)=\frac{{\mathrm e}^{a_0t}}{2\pi}\int_{-\pi}^{\pi}{\mathrm e}^{\I x\xi}\exp\left(2\sum_{j=1}^m a_j t \cos(j\xi)\right)\,{\mathrm d}\xi.
\]
For example, if $m=1$, we get
\[
G(x,t)=\frac{{\mathrm e}^{a_0t}}{2\pi}\int_{-\pi}^{\pi}{\mathrm e}^{\I x\xi}{\mathrm e}^{2 a_1 t \cos(\xi)}\,{\mathrm d}\xi={\mathrm e}^{a_0t}I_{-x}(2a_1t)={\mathrm e}^{a_0t}I_{x}(2a_1t),
\]
which agrees with \eqref{fs-sym-1}; we have used the integral representation $I_x(z)=\frac{1}{2\pi}\int_{-\pi}^\pi {\mathrm e}^{z\cos\xi-\I x\xi}\,{\mathrm d}\xi$.

For $m=2$, we get
\[
G(x,t)=\frac{{\mathrm e}^{a_0t}}{2\pi}\int_{-\pi}^{\pi}{\mathrm e}^{\I x\xi}{\mathrm e}^{2 a_1 t \cos(\xi)}{\mathrm e}^{2 a_2 t \cos(2\xi)}\,{\mathrm d}\xi,
\]
which is a different way of expressing \eqref{fs-sym-2}. It is possible to derive \eqref{fs-sym-2} from the previous formula by using the expansion ${\mathrm e}^{z\cos\xi}=\sum_{k\in\mathbb Z}I_k(z){\mathrm e}^{\I k\xi}$, multiplying the series, and integrating term by term.

In the antisymmetric case when $a_i=-a_{-i}$ for all $i$ (and $a_0=0$),  formula \eqref{fourier-fundamental-fs} becomes
\[
G(x,t)=\frac{1}{2\pi}\int_{-\pi}^{\pi}{\mathrm e}^{\I x\xi}\exp\left(-2\I\sum_{j=1}^m a_j t \sin(j\xi)\right)\,{\mathrm d}\xi.
\]
For example, if $m=1$, we get
\[
G(x,t)=\frac{1}{2\pi}\int_{-\pi}^{\pi}{\mathrm e}^{\I x\xi}{\mathrm e}^{-2\I a_1 t \sin(\xi)}\,{\mathrm d}\xi=J_{x}(2a_1t),
\]
which agrees with \eqref{fs-antisym-1}; we have used the integral representation $J_x(z)=\frac{1}{2\pi}\int_{-\pi}^\pi {\mathrm e}^{-\I z\sin\xi+\I x\xi}\,{\mathrm d}\xi$.

For $m=2$, we get
\[
G(x,t)=\frac{1}{2\pi}\int_{-\pi}^{\pi}{\mathrm e}^{\I x\xi}{\mathrm e}^{-2\I a_1 t \sin(\xi)}{\mathrm e}^{-2\I a_2 t \sin(2\xi)}\,{\mathrm d}\xi,
\]
which is a different way of expressing \eqref{fs-antisym-2}. It is possible to derive \eqref{fs-antisym-2} from the previous formula by using the expansion ${\mathrm e}^{\I z\sin\xi}=\sum_{k\in\mathbb Z}J_k(z){\mathrm e}^{\I k\xi}$, multiplying the series, and integrating term by term.

\section{Nonhomogeneous equations} \label{sec-3}

We now use the fundamental solutions obtained above to
construct solutions with prescribed initial data and forcing.
The main tools are the superposition principle and
Duhamel's formula.

Consider again the equation
\begin{equation}\label{homogeneous}
	\frac{\partial u}{\partial t}(x,t)=\sum_{i=-m}^m a_iu(x-i,t),\quad x\in\mathbb Z,\quad t\ge 0.
\end{equation}

We begin by recalling the superposition principle from \cite[Theorem 2.5]{AS17}.

\begin{theorem}\label{superposition}
	Let $u_k\colon\mathbb Z\times[0,T]\to\mathbb R$, $k\in\mathbb N$, be a sequence of bounded solutions of \eqref{homogeneous}. Assume there exists a $\beta>0$ such that $\sum_{k=1}^\infty |u_k(x,0)|\leq \beta$ for every $x\in\mathbb Z$.                                                                                                                          Then, for every bounded sequence $\{c_k\}_{k=1}^\infty$, the function $u(x,t)=\sum_{k=1}^\infty c_k u_k(x,t)$ is a solution of \eqref{homogeneous} on $\mathbb Z\times[0,T]$. Moreover, $u$ is bounded on $\mathbb Z\times[0,T]$, with
	\[
	|u(x,t)|\le \beta\left(\sup_{k\in\mathbb N} |c_k|\right)\exp\left(\sum_{i=-m}^m|a_i|t\right),\quad x\in\mathbb Z,\quad t\in[0,T].
	\]
\end{theorem}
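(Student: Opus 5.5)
The plan is to use uniqueness of bounded solutions together with the convolution representation. Write $A=\sum_{i=-m}^m|a_i|$.

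\textbf{Step 1: a priori bound for a single solution.} Each $u_k$ is a bounded solution on $\mathbb Z\times[0,T]$. The equation is a linear ODE $\dot u=Lu$ in $\ell^\infty(\mathbb Z)$, where $L$ is bounded with $\|L\|\le A$. Boundedness on $\mathbb Z\times[0,T]$ and continuity of each component do not automatically give continuity in $\ell^\infty$. Instead we work with the integral form
\[
u_k(x,t)=u_k(x,0)+\int_0^t\sum_i a_iu_k(x-i,s)\,{\mathrm d}s .
\]
A Gronwall argument on $M_k(t)=\sup_x|u_k(x,t)|$ should give $|u_k(x,t)|\le \|u_k(\cdot,0)\|_\infty e^{At}$. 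This bound alone is not enough, however, because we need pointwise control by $\sum_k|u_k(y,0)|$, not by the sup of each initial datum. So the sharper tool is the representation
\[
u_k(x,t)=\sum_{y\in\mathbb Z}G(x-y,t)\,u_k(y,0),
\]
with $G$ from Theorem~\ref{FS-general}. To prove it, note that the right-hand side is a bounded solution, since $G(\cdot,t)\in\ell^1$, and termwise differentiation is justified by uniform convergence on $[0,T]$. The difference of the two sides is then a bounded solution with zero initial data. The Gronwall argument above applies to this difference: $M(t)\le A\int_0^tM(s)\,{\mathrm d}s$, with $M$ bounded and measurable as a sup of continuous functions, so $M\equiv0$.

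\textbf{Step 2: define $u$ and estimate it.} Set $u=\sum_kc_ku_k$. Using Step 1,
\[
\sum_k|c_k||u_k(x,t)|\le \sup_k|c_k|\sum_y|G(x-y,t)|\sum_k|u_k(y,0)|\le \beta\sup_k|c_k|\,e^{At}.
\]
Here we use Tonelli and the $\ell^1$ bound of Theorem~\ref{FS-general}; a finer bound $\sum_y|G(\cdot,t)|\le e^{At}$ is exactly what Theorem~\ref{FS-general} gives. This yields absolute convergence and the stated bound.

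\textbf{Step 3: $u$ is a solution.} By Fubini, which is justified by the same absolute bound, $u(x,t)=\sum_yG(x-y,t)u(y,0)$, where $u(\cdot,0)=\sum_kc_ku_k(\cdot,0)$ is bounded by $\beta\sup|c_k|$. By Step 1 this is a bounded solution. Alternatively, one can pass to the limit in the integral form, using dominated convergence for both the series and the time integral, and then differentiate the resulting continuous integrand.

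The main obstacle is Step 1: justifying uniqueness and the convolution representation for merely bounded solutions. In particular, one has to make the Gronwall argument rigorous when $t\mapsto\sup_x|u(x,t)|$ is only known to be bounded and lower semicontinuous.
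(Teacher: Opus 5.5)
Your proof is correct, but it takes a different route from the paper. The paper cites \cite[Theorem 2.5]{AS17} and reproduces that argument for the half-line analogue, Theorem~\ref{superposition-2}.

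\textbf{The paper's route.} Gronwall's inequality is applied to the finite combinations $\sum_{k\le N}d_ku_k$ with \emph{arbitrary} coefficients $|d_k|\le\sup_k|c_k|$. Their initial values are bounded pointwise by $\beta\sup_k|c_k|$. For a fixed $(x,t)$, the choice $d_k=|c_k|\sgn u_k(x,t)$ then turns the resulting sup bound into $\sum_{k\le N}|c_k|\,|u_k(x,t)|\le\beta\sup_k|c_k|\,{\mathrm e}^{At}$. The same sign choice applied to $\partial_tu_k$, followed by termwise integration of the integral equation, shows that $u$ solves the equation.

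This trick is exactly what removes the obstruction you noted in Step~1, namely that a Gronwall bound for a single $u_k$ only sees $\|u_k(\cdot,0)\|_\infty$. It needs neither the fundamental solution nor a separate uniqueness theorem. That is why it carries over verbatim to $\mathbb N_0$, where no convolution formula is available.

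\textbf{Your route.} You combine uniqueness with the representation $u_k(x,t)=\sum_yG(x-y,t)u_k(y,0)$ and then apply Tonelli and Fubini. This gives extra information: $u$ is the convolution of $G$ with $\sum_kc_ku_k(\cdot,0)$. The paper obtains that formula only afterwards, as a corollary of Theorem~\ref{superposition}. So it is essential that you verify the convolution formula independently, as you do.

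\textbf{Details to patch.} Two points need fixing.
First, Theorem~\ref{FS-general} is stated only when $a_{\pm i}\neq0$ for $i\ge1$, whereas Theorem~\ref{superposition} allows arbitrary real coefficients. In general, expand $\exp\bigl(t\sum_ia_iz^i\bigr)={\mathrm e}^{a_0t}\prod_{i\neq0}\sum_{n\ge0}(a_it)^nz^{in}/n!$. Dominate the Laurent coefficients by those of the same product with $|a_i|$ in place of $a_i$, and evaluate at $z=1$; this gives $\sum_x|G(x,t)|\le{\mathrm e}^{At}$.

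Second, termwise differentiation needs $\sum_j\sup_{0\le r\le T}|G(j,r)|<\infty$, not merely $G(\cdot,t)\in\ell^1$ for each $t$. This follows from the same expansion, or from the argument in the proof of Theorem~\ref{duhamel}.

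Finally, the Gronwall step you flag as the main obstacle is routine. If $0\le M(t)\le A\int_0^tM(s)\,{\mathrm d}s$ with $M$ bounded and lower semicontinuous, iteration gives $M(t)\le(\sup M)(At)^n/n!\to0$.
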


The fact that $u$ is bounded and the upper bound are not explicitly mentioned in \cite[Theorem 2.5]{AS17}, but they follow from the proof there.

As a corollary of the superposition principle, given a bounded bi-infinite sequence $\{u_x\}_{x\in\mathbb Z}$, the solution of the homogeneous equation \eqref{homogeneous}
subject to the initial condition
\[
u(x,0)=u_x,\quad x\in\mathbb Z,
\]
is given by the formula
\begin{equation*}
	u(x,t)=\sum_{k\in\mathbb Z} u_k G(x-k,t),\quad x\in\mathbb Z,\quad t\ge 0,
\end{equation*}
where $G$ is the fundamental solution (note that boundedness of $G$ on $\mathbb Z\times[0,T]$ for every $T>0$ is guaranteed by Theorem \ref{FS-general}).

Let us now consider the nonhomogeneous equation 
\begin{equation}\label{nonhomogeneous}
	\frac{\partial v}{\partial t}(x,t)=\sum_{i=-m}^m a_iv(x-i,t)+f(x,t),\quad x\in\mathbb Z,\quad t\ge 0,
\end{equation}
where $f$ is continuous. 

According to Duhamel's principle, we expect that the solution corresponding to zero initial values at $t=0$ will be given by the formula of the next theorem.

\begin{theorem}\label{duhamel} Let $G\colon\mathbb Z\times[0,\infty)\to\mathbb R$ be the fundamental solution of \eqref{homogeneous}. Suppose that $f\colon\mathbb Z\times[0,\infty)\to\mathbb R$ satisfies the following conditions:
	\begin{itemize}
		\item For every $x\in\mathbb Z$, the function $t\mapsto f(x,t)$ is continuous on $[0,\infty)$.
		\item For every $T>0$, the function $f$ is bounded on $\mathbb Z\times[0,T]$.
	\end{itemize}
Then the function
\[
v(x,t)=\int_0^t \sum_{k\in\mathbb Z}f(k,s)G(x-k,t-s)\,{\mathrm d}s,\quad x\in\mathbb Z,\quad t\ge 0,
\]
is the unique solution of \eqref{nonhomogeneous} which is bounded on $\mathbb Z\times[0,T]$ for every $T>0$, and satisfies $v(x,0)=0$ for all $x\in\mathbb Z$.
\end{theorem}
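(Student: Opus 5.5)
To prove Theorem~\ref{duhamel}, I will split the argument into three parts: the integrand is well defined and continuous, $v$ is bounded and solves \eqref{nonhomogeneous}, and uniqueness holds. Fix $T>0$ and put $M_T=\sup_{\mathbb Z\times[0,T]}|f|$ and $A=\sum_{i=-m}^m|a_i|$.

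\emph{Step 1: the inner sum.} Let
\[
w(x,t,s)=\sum_{k\in\mathbb Z}f(k,s)G(x-k,t-s), \qquad 0\le s\le t\le T.
\]
By Theorem~\ref{FS-general}, $\sum_k|G(x-k,t-s)|\le {\mathrm e}^{A(t-s)}$. Hence $|w|\le M_T{\mathrm e}^{AT}$.

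Continuity of $s\mapsto w(x,t,s)$ is the delicate point, since $f$ is only continuous pointwise in $x$ and not uniformly. I will handle it by dominated convergence over $k$. Each term is continuous in $s$, because $G(y,\cdot)$ is a solution, hence $C^1$, and $f(k,\cdot)$ is continuous. Each term is bounded by $M_T|G(x-k,t-s)|$. The tails $\sum_{|k|>N}|G(x-k,\tau)|$ tend to zero uniformly for $\tau\in[0,T]$. To see this, I will dominate $|G(y,\tau)|$ by the nonnegative majorant $H(y,\tau)$ from the proof of Theorem~\ref{FS-general}. This majorant is increasing in $\tau$, because $I_n(r)$ is increasing in $r$ for $r\ge0$. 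So $|G(y,\tau)|\le H(y,T)$, and $H(\cdot,T)\in\ell^1$. The series defining $w$ therefore converges uniformly in $(t,s)$, and $w$ is jointly continuous. In particular $v$ is well defined, $v(x,0)=0$, and $|v(x,t)|\le tM_T{\mathrm e}^{AT}$, so $v$ is bounded on $\mathbb Z\times[0,T]$.

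\emph{Step 2: the equation.} I will differentiate $v(x,t)=\int_0^t w(x,t,s)\,{\mathrm d}s$ by the Leibniz rule. The boundary term is $w(x,t,t)=\sum_k f(k,t)G(x-k,0)=f(x,t)$. For the interior term I will differentiate the series in $t$ term by term, using
\[
\partial_tG(y,\tau)=\sum_j a_jG(y-j,\tau).
\]
The differentiated series is dominated by $M_T\sum_j|a_j|H(x-k-j,T)$, which is summable in $k$ uniformly. This gives
\[
\partial_t w(x,t,s)=\sum_j a_j\,w(x-j,t,s),
\]
which is continuous in $(t,s)$. Hence
\[
\partial_tv(x,t)=f(x,t)+\sum_j a_jv(x-j,t).
\]

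\emph{Step 3: uniqueness.} The difference of two such solutions is a bounded solution of the homogeneous equation \eqref{homogeneous} with zero initial data. I will show it vanishes by a Gronwall argument on $\sup_x$. Write it as
\[
e(x,t)=\int_0^t\sum_ja_je(x-j,s)\,{\mathrm d}s.
\]
Then $E(t)=\sup_x|e(x,t)|$ is finite by boundedness and satisfies $E(t)\le A\int_0^tE(s)\,{\mathrm d}s$. Measurability of $E$ can be sidestepped by instead using the iterate estimate $E\le \sup E\cdot (At)^n/n!$. It follows that $E\equiv0$.

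I expect the main obstacle to be the uniform tail control in Step~1, namely establishing continuity of the inner sum when $f$ has only pointwise continuity. The monotone majorant $H(\cdot,T)$ is what resolves it.
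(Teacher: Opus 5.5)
Your outline follows the paper's proof: a summable majorant for $G(x-k,\cdot)$ that is uniform on $[0,T]$, and the Weierstrass M-test for the series and for its $t$-derivative, controlled through $\partial_\tau G(y,\tau)=\sum_j a_jG(y-j,\tau)$. Then come the Leibniz rule with boundary term $\sum_k f(k,t)G(x-k,0)=f(x,t)$, the bound $|v|\le M_TT{\mathrm e}^{AT}$ from Theorem~\ref{FS-general}, and uniqueness through the homogeneous problem.

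There is one false claim that must be repaired. The majorant $H(y,\tau)$ in the proof of Theorem~\ref{FS-general} carries the prefactor ${\mathrm e}^{a_0\tau}$. So it is \emph{not} increasing in $\tau$ when $a_0<0$, which is the main case of interest. For the diffusion equation ($a_0=-2$, $a_{\pm1}=1$) one has $H(y,\tau)=K_y(\tau)$. Then $|G(0,0)|=1>K_0(T)=H(0,T)$ for every $T>0$, so your inequality $|G(y,\tau)|\le H(y,T)$ fails.

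The fix is one line. Bound ${\mathrm e}^{a_0\tau}\le{\mathrm e}^{|a_0|T}$, and use monotonicity only for the remaining Bessel part $\widetilde H(y,\tau)$. That part is nondecreasing in $\tau$ and has $\sum_y\widetilde H(y,T)=\exp\bigl(\sum_{i\ne0}|a_i|T\bigr)$. Then $|G(y,\tau)|\le{\mathrm e}^{|a_0|T}\widetilde H(y,T)$ for $0\le\tau\le T$ is a summable majorant of total mass ${\mathrm e}^{AT}$. Steps~1 and~2 go through with $H(\cdot,T)$ replaced by it.

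With that correction your proof is complete. It differs from the paper's only in how the time-uniform majorant is produced and in the uniqueness step.

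The paper never uses the explicit Bessel majorant. Integrating the equation gives $\sup_{0\le r\le T}|G(j,r)|\le\delta_{j,0}+\int_0^T\sum_i|a_i|\,|G(j-i,s)|\,{\mathrm d}s$. Summing over $j$ with the $\ell^1$ bound of Theorem~\ref{FS-general} yields $\sum_j\sup_{0\le r\le T}|G(j,r)|\le{\mathrm e}^{AT}$. That argument needs only the equation and the $\ell^1$ estimate. So it is insensitive to the sign of $a_0$ and to the product structure of the Bessel formula, which requires $a_{\pm i}\ne0$. Yours is more explicit but relies on positivity and monotonicity of each factor $I_{x_i}(2\tau\sqrt{|a_{-i}a_i|})$.

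For uniqueness, the paper simply invokes uniqueness of bounded solutions of the homogeneous initial-value problem. Your iteration $E(t)\le(\sup E)(At)^n/n!$ proves it directly. Measurability is not actually an issue there, since $E$ is a supremum of continuous functions and hence lower semicontinuous.
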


\begin{proof}
Fix $T>0$ and put
\[
C:=\sum_{i=-m}^m|a_i|,
\qquad
M_T:=\sup_{\substack{k\in\mathbb Z\\0\leq s\leq T}}
|f(k,s)|.
\]
We first obtain a summable bound for $G$ that is uniform
in time.
Since $G$ satisfies \eqref{homogeneous} and
$G(j,0)=\delta_{j,0}$, we have
\[
G(j,r)
=
\delta_{j,0}
+
\int_0^r
\sum_{i=-m}^m a_iG(j-i,s)\,{\mathrm d}s.
\]
Consequently,
\[
\sup_{0\leq r\leq T}|G(j,r)|
\leq
\delta_{j,0}
+
\int_0^T
\sum_{i=-m}^m|a_i|\,|G(j-i,s)|\,{\mathrm d}s.
\]
For every $N\in\mathbb N$, Theorem~\ref{FS-general} gives
\begin{align*}
\sum_{j=-N}^N\sup_{0\leq r\leq T}|G(j,r)|
&\leq
1+
\int_0^T
\sum_{i=-m}^m|a_i|
\sum_{j=-N}^N|G(j-i,s)|\,{\mathrm d}s\leq
1+C\int_0^T{\mathrm e}^{Cs}\,{\mathrm d}s
=
{\mathrm e}^{CT}.
\end{align*}
Letting $N\to\infty$, we obtain
\[
\sum_{j\in\mathbb Z}
\sup_{0\leq r\leq T}|G(j,r)|
\leq{\mathrm e}^{CT}.
\]

For each fixed $x\in\mathbb Z$, the estimate
\[
|f(k,s)G(x-k,t-s)|
\leq
M_T\sup_{0\leq r\leq T}|G(x-k,r)|
\]
and the Weierstrass M-test imply that
\[
\sum_{k\in\mathbb Z}f(k,s)G(x-k,t-s)
\]
converges absolutely and uniformly on
$0\leq s\leq t\leq T$.
Each summand is continuous in $(s,t)$, so the sum is
continuous and the integral defining $v(x,t)$ exists.
Moreover,
\[
\frac{\partial G}{\partial r}(j,r)
=
\sum_{i=-m}^m a_iG(j-i,r),
\]
and therefore
\begin{align*}
\sum_{k\in\mathbb Z}
\sup_{0\leq r\leq T}
\left|\frac{\partial G}{\partial r}(x-k,r)\right|
&\leq
\sum_{i=-m}^m|a_i|
\sum_{k\in\mathbb Z}
\sup_{0\leq r\leq T}|G(x-k-i,r)|\leq C{\mathrm e}^{CT}.
\end{align*}
Thus the series
\[
\sum_{k\in\mathbb Z}
f(k,s)\frac{\partial G}{\partial r}(x-k,t-s)
\]
also converges uniformly for $0\leq s\leq t\leq T$.
Its sum is continuous and equals the partial derivative
with respect to $t$ of the preceding series.
The Leibniz rule consequently yields
\begin{align*}
\frac{\partial v}{\partial t}(x,t)
&=
\sum_{k\in\mathbb Z}f(k,t)G(x-k,0)+
\int_0^t
\sum_{k\in\mathbb Z}f(k,s)
\sum_{i=-m}^m a_iG(x-k-i,t-s)\,{\mathrm d}s\\
&=
f(x,t)+\sum_{i=-m}^m a_iv(x-i,t).
\end{align*}
Here we used $G(x-k,0)=\delta_{x,k}$.
Also, $v(x,0)=0$ for every $x\in\mathbb Z$.

To establish boundedness, we again use
Theorem~\ref{FS-general}:
\begin{align*}
|v(x,t)|
&\leq
M_T\int_0^t
\sum_{k\in\mathbb Z}|G(x-k,t-s)|\,{\mathrm d}s\leq
M_T\int_0^t{\mathrm e}^{C(t-s)}\,{\mathrm d}s
\leq
M_TT{\mathrm e}^{CT},
\qquad x\in\mathbb Z,\quad 0\leq t\leq T.
\end{align*}
Since $T>0$ was arbitrary, $v$ is bounded on
$\mathbb Z\times[0,T]$ for every $T>0$.

Uniqueness of the solution follows from the fact that the difference of any two solutions is a solution of the homogeneous equation with zero initial conditions. The only solution of the latter equation which is bounded on bounded time intervals is the zero function.
\end{proof}

In the special case when $f$ does not depend on $x$, the
identity
\[
\sum_{j\in\mathbb Z}G(j,r)
=
\exp\left(\sum_{i=-m}^m a_ir\right)
\]
gives
\[
v(x,t)
=
\int_0^t
f(s)\exp\left(\sum_{i=-m}^m a_i(t-s)\right)
\,{\mathrm d}s.
\]
In particular, if $\sum_{i=-m}^m a_i=0$, then
\[
v(x,t)=\int_0^t f(s)\,{\mathrm d}s.
\]

\section{Symmetric and antisymmetric solutions}\label{sec-4}

We first show that equations with symmetric coefficients
preserve the symmetry or antisymmetry of the initial data
and forcing. We then study a particular antisymmetric
solution of the semidiscrete diffusion equation, which will
be used in the reflection method and in the analysis
of long-time behavior on the half-line.

\begin{theorem}\label{symmetry}
Suppose that $v$ is a bounded function such that
\begin{equation}
	\frac{\partial v}{\partial t}(x,t)=\sum_{i=-m}^m a_iv(x-i,t)+f(x,t),\quad x\in\mathbb Z,\quad t\ge 0,
\end{equation}
where $a_i=a_{-i}$ for all $i\in\{1,\dots,m\}$.
\begin{enumerate}
\item If $f(-x,t)=-f(x,t)$ for all $x$, $t$ and
$v(-x,0)=-v(x,0)$ for all $x\in\mathbb N_0$, then $v(-x,t)=-v(x,t)$ for all $x$ and $t$.
In particular, $v(0,t)=0$ for all $t\ge 0$.
\item If $f(-x,t)=f(x,t)$ for all $x$, $t$ and
$v(-x,0)=v(x,0)$ for all $x\in\mathbb N_0$, then $v(-x,t)=v(x,t)$ for all $x$ and $t$.
\end{enumerate}
\end{theorem}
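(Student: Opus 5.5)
The plan is to prove both parts with a single uniqueness argument built on Theorem~\ref{duhamel}. Fix the sign $\sigma\in\{-1,1\}$, where $\sigma=-1$ corresponds to part 1 and $\sigma=1$ to part 2. Define the reflected function
\[
w(x,t):=\sigma v(-x,t),\qquad x\in\mathbb Z,\quad t\ge0.
\]
We will show that $w$ solves the same initial value problem as $v$ and is bounded. Uniqueness in the bounded class then forces $w=v$.

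First we check the equation. Replacing $x$ by $-x$ in the equation for $v$ and multiplying by $\sigma$ gives
\[
\frac{\partial w}{\partial t}(x,t)=\sigma\sum_{i=-m}^m a_iv(-x-i,t)+\sigma f(-x,t)=\sum_{i=-m}^m a_iw(x+i,t)+f(x,t),
\]
where we used $\sigma f(-x,t)=f(x,t)$. Reindexing $i\mapsto -i$ and using $a_{-i}=a_i$ turns $\sum_i a_iw(x+i,t)$ into $\sum_i a_iw(x-i,t)$. So $w$ satisfies the same nonhomogeneous equation as $v$.

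Next we check the initial data. The hypothesis $v(-x,0)=\sigma v(x,0)$ is stated for $x\in\mathbb N_0$. Applying it with $x$ replaced by $-x$ covers negative indices, so it holds for all $x\in\mathbb Z$, and hence $w(x,0)=v(x,0)$ for every $x$. Moreover $w$ is bounded because $v$ is.

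Finally we apply uniqueness. The difference $d:=v-w$ is bounded on $\mathbb Z\times[0,T]$ for every $T>0$. It solves the homogeneous equation \eqref{homogeneous} with $d(\cdot,0)=0$. The uniqueness part of Theorem~\ref{duhamel}, applied with $f\equiv0$, gives $d\equiv0$. That uniqueness amounts to the fact that the only solution bounded on bounded time intervals with zero initial data is zero, which the paper uses from \cite{AS17}. Hence $v(-x,t)=\sigma v(x,t)$ for all $x$ and $t$. In part 1, setting $x=0$ gives $v(0,t)=-v(0,t)$, so $v(0,t)=0$.

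The only delicate point is this appeal to uniqueness in the bounded class, which requires $v$ to be bounded. This is exactly why the theorem assumes boundedness. Everything else is a routine reindexing of the sum.
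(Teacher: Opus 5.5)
Your proposal is correct and is essentially the paper's proof. Like you, the paper reflects $v$ (via $w(x,t)=-v(-x,t)$, respectively $w(x,t)=v(-x,t)$), uses $a_i=a_{-i}$ to show $w$ solves the same equation with the same initial data, and concludes that the bounded difference $v-w$ vanishes; the only differences are that you treat both parts at once through $\sigma$, spell out the extension of the initial symmetry from $\mathbb N_0$ to $\mathbb Z$, and route uniqueness through Theorem~\ref{duhamel}, whereas the paper cites uniqueness of bounded solutions directly from \cite[Theorem 2.1]{FSS} rather than from \cite{AS17}.
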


\begin{proof}
	To prove the first statement, let $w(x,t)=-v(-x,t)$. Then
	\begin{align*}
		\frac{\partial w}{\partial t}(x,t)&=-\frac{\partial v}{\partial t}(-x,t)=-\left(\sum_{i=-m}^m a_iv(-x-i,t)+f(-x,t)\right)\\
		&=-\left(\sum_{i=-m}^m a_iv(-x+i,t)+f(-x,t)\right)
		=\sum_{i=-m}^m a_iw(x-i,t)+f(x,t).
	\end{align*}
	Hence $z=v-w$ satisfies the homogeneous equation
	\begin{equation}
		\frac{\partial z}{\partial t}(x,t)=\sum_{i=-m}^m a_iz(x-i,t),\quad x\in\mathbb Z,\quad t\ge 0.
	\end{equation}
	Moreover, $z(x,0)=v(x,0)-w(x,0)=v(x,0)+v(-x,0)=0$ for all $x\in\mathbb Z$. By uniqueness of bounded solutions for initial-value problems \cite[Theorem 2.1]{FSS}, the function $z$ is identically zero, i.e., $v(x,t)=w(x,t)=-v(-x,t)$ for all $x$, $t$.

	The second statement is proved similarly using $w(x,t)=v(-x,t)$.
\end{proof}

We now consider the solution of the homogeneous
semidiscrete diffusion equation with initial data $-\sgn x$, which will also appear in the explicit  solution formula for the boundary-initial value problem.

For $a>0$, denote the whole-line fundamental solution by
\[
K_x^{(a)}(t):={\mathrm e}^{-2at}I_x(2at),
\qquad x\in\mathbb Z,\quad t\geq0.
\]
When $a=1$, we write $K_x(t):=K_x^{(1)}(t)$.

\begin{theorem}\label{phi-function}
    Let $a>0$.
	The unique bounded function $\phi$ satisfying
	\begin{align}
	\frac{\partial \phi}{\partial t}(x,t)&=a\Delta \phi(x,t),\quad x\in\mathbb Z,\quad t\ge 0,\label{phi-1}\\
	\phi(x,0)&=-\sgn x,\quad x\in\mathbb Z,\label{phi-2}
	\end{align}
    where
    $\Delta \phi(x,t)=\phi(x+1,t)-2\phi(x,t)+\phi(x-1,t)$,
	is given by
	\[
	\phi(x,t)=\sum_{k\in\mathbb N}
	\bigl(K_{x+k}^{(a)}(t)-K_{x-k}^{(a)}(t)\bigr)
	=\sum_{z=x+1}^\infty K_z^{(a)}(t)
	-\sum_{z=-\infty}^{x-1}K_z^{(a)}(t),
	\quad x\in\mathbb Z,\quad t\ge 0.
	\]
	The solution has the following properties:
	\begin{itemize}
		\item $\phi(0,t)=0$ for all $t\ge 0$.
		\item For every $t\ge 0$, the sequence $x\mapsto \phi(x,t)$ is nonincreasing and satisfies
		\begin{equation}\label{limits}
		\lim_{x\to\infty}\phi(x,t)=-1,\quad \lim_{x\to-\infty}\phi(x,t)=1.
		\end{equation}
		In particular, $-1\le\phi(x,t)\le 1$ for all $x\in\mathbb Z$, $t\ge 0$.
		\item $\Delta\phi(x,t)=K_{x-1}^{(a)}(t)-K_{x+1}^{(a)}(t)$ for all $x\in\mathbb Z$, $t\ge 0$.
		\item If $a>0$, then $t\mapsto\phi(x,t)$ is increasing for $x\ge 1$, and decreasing for $x\le -1$.
		\item $\lim_{t\to\infty}\phi(x,t)=0$ for every $x\in\mathbb Z$. The convergence is not uniform with respect to $x$.
        \item
For every $x\geq1$ and $t\geq0$,
\begin{equation}\label{phi-finite-sum}
\phi(x,t)
=
-K_0^{(a)}(t)-2\sum_{z=1}^{x-1}K_z^{(a)}(t)-K_x^{(a)}(t)
=
-\sum_{\ell=1-x}^{x}K_\ell^{(a)}(t).
\end{equation}
For negative $x$, the values are determined by
$\phi(-x,t)=-\phi(x,t)$.
	\end{itemize}
\end{theorem}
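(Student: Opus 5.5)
Existence and the formula come from the corollary of Theorem~\ref{superposition}: the initial sequence $-\sgn x$ is bounded, so the unique bounded solution is $\phi(x,t)=\sum_{k\in\mathbb Z}(-\sgn k)K^{(a)}_{x-k}(t)$. Uniqueness among bounded solutions is \cite[Theorem 2.1]{FSS}. Splitting the sum into $k\ge1$ and $k\le-1$ and reindexing gives $\sum_{k\in\mathbb N}(K^{(a)}_{x+k}-K^{(a)}_{x-k})$; the substitutions $z=x+k$ and $z=x-k$ give the second form. Absolute convergence is guaranteed because $\sum_z K^{(a)}_z(t)=1$ and $K^{(a)}_z\ge0$, so the rearrangements are legitimate.

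For the listed properties I will use two facts: $K^{(a)}_z(t)>0$ for $t>0$, and $K^{(a)}_{-z}=K^{(a)}_z$ (from $I_{-n}=I_n$). Oddness, and hence $\phi(0,t)=0$, follows from Theorem~\ref{symmetry}(1) with $f=0$, or directly from the second form. For monotonicity, the second form gives
\[
\phi(x,t)-\phi(x+1,t)=K^{(a)}_{x+1}(t)+K^{(a)}_x(t)\ge0 .
\]
The limits \eqref{limits} follow because the two tail sums tend to $0$ and $1$ (total mass one), and these also give the bounds $-1\le\phi\le1$. Applying $\Delta$ to the second form telescopes: the first sum contributes $K_{x+1}-K_{x+2}$-type boundary terms. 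A cleaner route is to write
\[
\Delta\phi=(\phi(x+1)-\phi(x))-(\phi(x)-\phi(x-1))=-(K_{x+1}+K_x)+(K_x+K_{x-1}),
\]
which equals $K^{(a)}_{x-1}-K^{(a)}_{x+1}$. For the finite-sum formula \eqref{phi-finite-sum}, I sum the increments from $0$ to $x$:
\[
\phi(x)=\phi(0)-\sum_{j=0}^{x-1}(K_{j+1}+K_j)=-K_0-2\sum_{z=1}^{x-1}K_z-K_x,
\]
and symmetry of $K$ rewrites this as $-\sum_{\ell=1-x}^{x}K_\ell$.

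Time monotonicity comes from differentiating: by \eqref{phi-1} and the $\Delta\phi$ identity, $\partial_t\phi(x,t)=a(K^{(a)}_{x-1}-K^{(a)}_{x+1})=a(K^{(a)}_{|x|-1}-K^{(a)}_{|x|+1})\,\sgn x$. I need $I_{n-1}(s)>I_{n+1}(s)$ for $n\ge1$, $s>0$, which follows from the recurrence $I_{n-1}(s)-I_{n+1}(s)=\frac{2n}{s}I_n(s)>0$. This is the one nontrivial Bessel input; it gives strict increase for $x\ge1$, and oddness handles $x\le-1$. For the limit $t\to\infty$, the finite-sum formula reduces this to $K^{(a)}_\ell(t)\to0$ for each fixed $\ell$. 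That holds by the asymptotics $I_\ell(s)\sim e^s/\sqrt{2\pi s}$, or elementarily from $K^{(a)}_\ell(t)=\frac1{2\pi}\int_{-\pi}^{\pi}e^{-2at(1-\cos\xi)}\cos(\ell\xi)\,{\mathrm d}\xi\to0$ by dominated convergence.

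Non-uniformity follows from \eqref{limits}: for each $t$ there is an $x$ with $\phi(x,t)<-1/2$, so $\sup_x|\phi(x,t)|\ge1/2$ for all $t$. I expect no real obstacle beyond the Bessel inequality and justifying the rearrangements, both of which are handled above.
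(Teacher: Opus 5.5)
Your proof is correct, and for most items it coincides with the paper's. The superposition formula with uniqueness from \cite[Theorem 2.1]{FSS}, oddness via Theorem~\ref{symmetry}, the increment identity $\phi(x+1,t)-\phi(x,t)=-K^{(a)}_{x+1}(t)-K^{(a)}_x(t)$, and the resulting formula for $\Delta\phi$ are exactly as in the paper.

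The genuine difference is the limit $t\to\infty$. The paper cites a general asymptotic result for the homogeneous diffusion equation \cite[Corollary 2.5]{Slavik2020}, and only afterwards derives \eqref{phi-finite-sum}. You instead establish \eqref{phi-finite-sum} first and reduce the limit to $K^{(a)}_\ell(t)\to0$ for finitely many fixed $\ell$. That follows from the cosine integral representation by dominated convergence. Your version makes the theorem self-contained, though the argument is specific to this initial datum. The citation is shorter and places the statement within a general theory.

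There are two smaller variations. You obtain \eqref{phi-finite-sum} by telescoping the increments from $\phi(0,t)=0$, rather than through the paper's half-mass identity $\sum_{z\ge1}K^{(a)}_z(t)=\frac12\bigl(1-K^{(a)}_0(t)\bigr)$; this is shorter. You also justify the sign of $\partial_t\phi$ with the recurrence $I_{n-1}(s)-I_{n+1}(s)=\frac{2n}{s}I_n(s)$, instead of quoting that $n\mapsto I_n(\tau)$ decreases on $\mathbb N_0$. This makes the one Bessel-function fact you need explicit; the paper uses the same recurrence later, in Lemma~\ref{lem:dirichlet-kernel-properties}.
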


\begin{proof}
By the superposition principle (Theorem \ref{superposition}), the solution of the problem \eqref{phi-1}--\eqref{phi-2} is given by
\[
\phi(x,t)=\sum_{k\in\mathbb Z}(-\sgn k)K_{x-k}^{(a)}(t),
\]
which coincides with the formula in the statement of the theorem.

The relation $\phi(0,t)=0$ for all $t\ge 0$ follows from Theorem \ref{symmetry}.
Next, observe that
\begin{align*}
	\phi(x+1,t)-\phi(x,t)
	&=\sum_{z=x+2}^\infty K_z^{(a)}(t)
	-\sum_{z=-\infty}^{x}K_z^{(a)}(t)
	-\left(
	\sum_{z=x+1}^\infty K_z^{(a)}(t)
	-\sum_{z=-\infty}^{x-1}K_z^{(a)}(t)
	\right)\\
	&=-K_{x+1}^{(a)}(t)-K_x^{(a)}(t)\le 0,
\end{align*}
which implies that $x\mapsto \phi(x,t)$ is nonincreasing. The relations \eqref{limits} follow from the definition of $\phi$ and the fact that $\sum_{z\in\mathbb Z}K_z^{(a)}(t)=1$ for every $t\ge 0$.

Next, we calculate
\begin{gather*}
	\Delta \phi(x,t)
	=\phi(x+1,t)-\phi(x,t)-(\phi(x,t)-\phi(x-1,t))\\
	=-K_{x+1}^{(a)}(t)-K_x^{(a)}(t)
	-\bigl(-K_x^{(a)}(t)-K_{x-1}^{(a)}(t)\bigr)
	=K_{x-1}^{(a)}(t)-K_{x+1}^{(a)}(t).
\end{gather*}
Consequently,
\begin{equation*}
	\frac{\partial \phi}{\partial t}(x,t)
	=a\Delta \phi(x,t)
	=a\bigl(K_{x-1}^{(a)}(t)-K_{x+1}^{(a)}(t)\bigr)
	=a{\mathrm e}^{-2at}(I_{x-1}(2at)-I_{x+1}(2at)).
\end{equation*}
Recalling that $x\mapsto I_x(\tau)$ is decreasing on $\mathbb N_0$ for $\tau>0$ and $I_{-x}(\tau)=I_x(\tau)$, we see that if $t>0$, then $\frac{\partial \phi}{\partial t}(x,t)$ is zero for $x=0$, positive for $x\ge 1$, and negative for $x\le-1$.

Results on the asymptotic behavior of solutions to the homogeneous diffusion equation imply (see \cite[Corollary 2.5]{Slavik2020}) that $
\lim_{t\to\infty} \phi(x,t)=0$ for all $x\in\mathbb Z$.
The convergence is not uniform with respect to $x$, since for every $t\ge 0$, we have $\sup_{x\in\mathbb Z}\phi(x,t)=1$ and $\inf_{x\in\mathbb Z}\phi(x,t)=-1$.

Recall that $K_{-z}^{(a)}(t)=K_z^{(a)}(t)$ and $\sum_{z\in\mathbb Z}K_z^{(a)}(t)=1$ for all $t\ge 0$, $z\in\mathbb Z$. It follows that
\[
\sum_{z=-\infty}^{-1}K_z^{(a)}(t)
=\sum_{z=1}^\infty K_z^{(a)}(t)
=\frac{1-K_0^{(a)}(t)}{2}.
\]
Since $x\mapsto \phi(x,t)$ is odd, we can focus on $x\ge 1$. Then
\begin{align*}
	\sum_{z=x+1}^\infty K_z^{(a)}(t)
	&=\frac{1-K_0^{(a)}(t)}{2}
	-\sum_{z=1}^x K_z^{(a)}(t),\\
	\sum_{z=-\infty}^{x-1}K_z^{(a)}(t)
	&=\frac{1-K_0^{(a)}(t)}{2}
	+\sum_{z=0}^{x-1}K_z^{(a)}(t),
\end{align*}
and therefore
\begin{align*}
\phi(x,t)
&=\frac{1-K_0^{(a)}(t)}{2}
-\sum_{z=1}^x K_z^{(a)}(t)
-\frac{1-K_0^{(a)}(t)}{2}
-\sum_{z=0}^{x-1}K_z^{(a)}(t)\\
&=-K_0^{(a)}(t)-2\sum_{z=1}^{x-1}K_z^{(a)}(t)-K_x^{(a)}(t)
=
-\sum_{\ell=1-x}^{x}K_\ell^{(a)}(t),
\quad x\ge 1,
\end{align*}
where the last equality follows immediately from $K_{-\ell}^{(a)}(t)=K_\ell^{(a)}(t)$.
\end{proof}

\begin{figure}[!h]
	\centering
	\includegraphics[width=12cm]{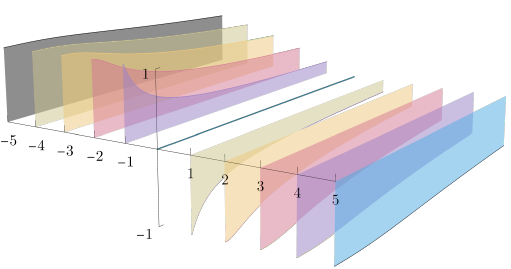}
	\caption{Graph of the solution $\phi$ with $a=1$}
	\label{fig2}
\end{figure}

The graph of $\phi$ is shown in Figure \ref{fig2}.
The finite-sum representation \eqref{phi-finite-sum} is useful
for computing $\phi$, since it avoids infinite sums.

Here is yet another way of expressing $\phi$: The Skellam distribution \cite{Skellam}, denoted by $\Skellam(\mu_1,\mu_2)$, is a discrete probability distribution that corresponds to the difference of two independent random variables, which are
Poisson-distributed with expected values $\mu_1$ and $\mu_2$, respectively. Its  probability mass function is
\[
P(X=x)={\mathrm e}^{-(\mu_1+\mu_2)}\left(\frac{\mu_1}{\mu_2}\right)^{x/2}I_x(2\sqrt{\mu_1\mu_2}).
\]
In the symmetric case when $X\sim\Skellam(at,at)$, we get
\[
P(X=x)={\mathrm e}^{-2at}I_x(2at)=K_x^{(a)}(t).
\]
Hence, we can express $\phi$ using the tail probabilities:
\[
\phi(x,t)=\sum_{z=x+1}^\infty K_z^{(a)}(t)
-\sum_{z=-\infty}^{x-1}K_z^{(a)}(t)
=P(X\ge x+1)-P(X\le x-1).
\]

\section{Basic results for initial-boundary value problems}\label{sec-5}

Before constructing solutions on the half-line, we establish
two general tools: uniqueness among solutions of polynomial
growth and a superposition principle for initial-boundary
value problems.

The proof of the following uniqueness result for initial-boundary value problems is inspired by the proof of a similar result for initial-value problems from \cite[Theorem 3.5]{SS}. In contrast to that result, the initial data need not be bounded since we allow polynomial growth.

\begin{theorem}\label{uniquenessForward}
	Let $m\in\mathbb N$. Consider the initial-boundary value problem
	\begin{align*}
		\frac{\partial u}{\partial t}(x,t)&=\sum_{i=-m}^m a_iu(x-i,t)+f(x,t),\quad x\in\{m,m+1,m+2,\dots\},\quad t\in[0,T],\\
		u(x,t)&=g_x(t),\quad t\in[0,T],\quad x\in\{0,\dots,m-1\},\\
		u(x,0)&=u_x,\quad x\in\{m,m+1,m+2,\dots\},
	\end{align*}
	where $g_0,\dots,g_{m-1}\colon[0,T]\to\mathbb R$ and $f\colon\mathbb N_0\times [0,T]\to\mathbb R$ are continuous and $\{u_x\}_{x\ge m}$  is such that $\{u_x\cdot x^{-M}\}$ is bounded for a certain $M\ge 0$.
	
	The problem has at most one solution
$u\colon\mathbb N_0\times[0,T]\to\mathbb R$ such that
$(x,t)\mapsto u(x,t)x^{-M}$ is bounded on
$\{m,m+1,m+2,\dots\}\times[0,T]$.
\end{theorem}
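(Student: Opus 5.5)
Let $u_1,u_2$ be two solutions with the stated growth and put $w=u_1-u_2$. Then $w$ solves the homogeneous equation for $x\ge m$, with $w(x,t)=0$ for $x\in\{0,\dots,m-1\}$ and $w(x,0)=0$ for $x\ge m$, and $|w(x,t)|\le Cx^M$ on $\{m,m+1,\dots\}\times[0,T]$. Since $w$ vanishes on $\{0,\dots,m-1\}$, this bound (after enlarging $C$) holds for all $x\in\mathbb N_0$ with $x^M$ replaced by $(1+x)^M$. The goal is $w\equiv0$. I would use a weighted energy (weighted $\ell^1$) argument in the spirit of \cite[Theorem 3.5]{SS}. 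Fix $\varepsilon>0$ and weights $\rho(x)=\mathrm e^{-\varepsilon x}$, or alternatively $\rho(x)=(1+x)^{-M-2}$, so that $\sum_x (1+x)^M\rho(x)<\infty$. Set
\[
E(t)=\sum_{x\ge m}\rho(x)\,|w(x,t)|.
\]
The growth bound makes this series converge uniformly on $[0,T]$, so $E$ is continuous and $E(0)=0$.

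Working with the integrated form avoids differentiating $|w|$. For each $x\ge m$,
\[
|w(x,t)|\le\int_0^t\sum_{i=-m}^m|a_i|\,|w(x-i,s)|\,\mathrm ds .
\]
Multiply by $\rho(x)$ and sum over $x\ge m$; Tonelli allows interchanging sum and integral. For the shifted terms, the indices $x-i$ with $x-i<m$ fall into the boundary strip, where $w=0$. For the remaining terms, reindex $y=x-i$ and use $\rho(y+i)\le c_\rho\,\rho(y)$ for $|i|\le m$, where $c_\rho=\mathrm e^{\varepsilon m}$ for the exponential weight, and is a constant depending on $m,M$ for the polynomial weight. This gives
\[
E(t)\le C\,c_\rho\int_0^t E(s)\,\mathrm ds,\qquad C=\sum_{i=-m}^m|a_i|,
\]
and Gronwall's inequality yields $E\equiv0$ on $[0,T]$. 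Since $\rho>0$, it follows that $w(x,t)=0$ for all $x\ge m$, $t\in[0,T]$.

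The main obstacle is justifying the summation step: exchanging the infinite sum with the time integral, and confirming that the reindexed series still converge. This is where the polynomial growth hypothesis is used. The bound $|w(y,s)|\le C(1+y)^M$, uniform in $s$, together with the summability of $(1+y)^M\rho(y)$, makes every series in sight dominated by a convergent one, so Tonelli and Fubini apply. A secondary point to check is the shift comparability $\rho(y+i)\le c_\rho\,\rho(y)$ for both signs of $i$ and all $y\ge 0$. For polynomial weights this holds because $(1+y+i)/(1+y)$ is bounded above and below for $|i|\le m$ and $y+i\ge0$. With both points verified, the Gronwall argument closes, and the problem has at most one solution in the stated growth class.
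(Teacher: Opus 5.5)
Your proof is correct, but it takes a different route from the paper's. The paper works with the weighted supremum of $|u_1-u_2|\mathrm e^{-x}$ instead of a weighted $\ell^1$ sum. It lets $t_0$ be the first time at which the two solutions differ. It then bounds the weighted difference at time $r\in[t_0,t_1]$ by $\bigl(\mathrm e^m\sum_i|a_i|\bigr)(r-t_0)$ times the supremum of the same quantity over $[t_0,t_1]$. Choosing $t_1$ so that this factor is at most $\frac12$ contradicts the positivity of that supremum.

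The paper's argument never interchanges a spatial sum with a time integral and needs only boundedness of $|u_1-u_2|\mathrm e^{-x}$. The price is the first-disagreement time and a contradiction on a short interval.

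Your energy $E(t)=\sum_{x\ge m}\rho(x)|w(x,t)|$ is continuous by the M-test. So $E(t)\le C c_\rho\int_0^tE(s)\,\mathrm ds$ closes on all of $[0,T]$ by Gronwall in one step, with no infimum or contradiction needed. The cost is the Tonelli interchange and the reindexing bookkeeping, and you handle both correctly: for $x\ge m$ and $|i|\le m$ the index $x-i$ stays in $\mathbb N_0$, the terms with $x-i<m$ vanish, and the shift comparability of $\rho$ holds as you state.

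With an exponential weight, your $\mathrm e^{-\varepsilon x}$ or $\mathrm e^{-\lambda x}$ in place of the paper's $\mathrm e^{-x}$, both arguments actually give uniqueness among solutions of at most exponential growth. So the polynomial growth hypothesis is stronger than either method needs; only your polynomial-weight variant relies on it.
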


\begin{proof}
	Suppose, for a contradiction, that there are two distinct solutions
$u_1,u_2\colon\mathbb N_0\times[0,T]\to\mathbb R$ such that
$(x,t)\mapsto u_i(x,t)x^{-M}$ is bounded on
$\{m,m+1,m+2,\dots\}\times[0,T]$ for $i=1,2$.
	It is clear that $u_1(x,t)=g_x(t)=u_2(x,t)$ for all $x\in\{0,\dots,m-1\}$ and $t\in[0,T]$, so $u_1$ and $u_2$ must differ for a certain $t\ge0$ and $x\in\{m,m+1,m+2,\dots\}$.
	Let
	\[
	t_0=\inf\{s\in[0,T]:u_1(x,s)\neq u_2(x,s)\mbox{ for some }x\in\{m,m+1,m+2,\dots\}\}.
	\]
	By continuity, it is clear that $u_1(x,t_0)=u_2(x,t_0)$ for every $x\in\{m,m+1,m+2,\dots\}$, and therefore $0\le t_0<T$.
	For every $r\in[t_0,T]$ and $x\in\{m,m+1,m+2,\dots\}$, we have
	\begin{equation*}
		u_1(x,r)-u_2(x,r)=\int_{t_0}^r \left(\frac{\partial u_1}{\partial t}(x,t)-\frac{\partial u_2}{\partial t}(x,t)\right) {\mathrm d}t
		=\int_{t_0}^r \left(\sum_{i=-m}^m a_iu_1(x-i,t)-\sum_{i=-m}^m a_iu_2(x-i,t)\right){\mathrm d}t,
	\end{equation*}
	and therefore
		\begin{equation*}
		|u_1(x,r)-u_2(x,r)|\le 
		\sum_{i=-m}^m |a_i|\int_{t_0}^r |u_1(x-i,t)-u_2(x-i,t)|\,{\mathrm d}t.
	\end{equation*}
	Multiplying by ${\mathrm e}^{-x}$ and observing that $(x,t)\mapsto |u_1(x,t)-u_2(x,t)|{\mathrm e}^{-x}$ is bounded, we get
		\begin{align*}
	|u_1(x,r)-u_2(x,r)|{\mathrm e}^{-x}&\le 
	\sum_{i=-m}^m \left(|a_i|{\mathrm e}^{-i}\int_{t_0}^r |u_1(x-i,t)-u_2(x-i,t)|{\mathrm e}^{-(x-i)}\,{\mathrm d}t\right)\\
&\le \left({\mathrm e}^m\sum_{i=-m}^m|a_i|\right)(r-t_0)\sup_{x\in\mathbb N_0,\,t\in[t_0,r]}|u_1(x,t)-u_2(x,t)|{\mathrm e}^{-x}.
	\end{align*}
	Now choose $t_1\in(t_0,T]$ such that
$\left({\mathrm e}^m\sum_{i=-m}^m|a_i|\right)(t_1-t_0)
\leq\frac12$.
Then
\[
|u_1(x,r)-u_2(x,r)|{\mathrm e}^{-x}
\leq
\frac12
\sup_{\substack{x\in\mathbb N_0\\t\in[t_0,t_1]}}
|u_1(x,t)-u_2(x,t)|{\mathrm e}^{-x},
\quad r\in[t_0,t_1],\quad x\geq m.
\]
Since the two solutions agree at the boundary points,
taking the supremum gives
\[
\sup_{\substack{x\in\mathbb N_0\\t\in[t_0,t_1]}}
|u_1(x,t)-u_2(x,t)|{\mathrm e}^{-x}
\leq
\frac12
\sup_{\substack{x\in\mathbb N_0\\t\in[t_0,t_1]}}
|u_1(x,t)-u_2(x,t)|{\mathrm e}^{-x}.
\]
This is a contradiction, since the supremum is finite
and is positive by the definition of $t_0$.
\end{proof}

The next result is a version of the superposition principle for solutions defined on $\mathbb N_0\times[0,T]$ rather than $\mathbb Z\times[0,T]$. The proof is essentially the same as the proof of Theorem \ref{superposition} (which can be found in \cite[Theorem 2.5]{AS17}), but we include it for reader's convenience.

\begin{theorem}\label{superposition-2}
Let $u_k\colon\mathbb N_0\times[0,T]\to\mathbb R$,
$k\in\mathbb N$, be a sequence of bounded functions satisfying
\begin{equation}\label{uk-equation}
\frac{\partial u_k}{\partial t}(x,t)
=
\sum_{i=-m}^{m}a_i u_k(x-i,t),
\quad x\in\{m,m+1,m+2,\dots\},\quad t\in[0,T].
\end{equation}
Assume that:
\begin{itemize}
\item
There exists a $\beta\geq0$ such that
$\sum_{k=1}^{\infty}|u_k(x,0)|\leq\beta$ for every
$x\in\{m,m+1,m+2,\dots\}$.
\item
There exists a $\gamma\geq0$ such that
$\sum_{k=1}^{\infty}|u_k(x,t)|\leq\gamma$ for all
$x\in\{0,\dots,m-1\}$ and $t\in[0,T]$.
\end{itemize}
Assume also that $t\mapsto u_k(x,t)$ is continuous on $[0,T]$
for every $k\in\mathbb N$ and every $x\in\{0,\dots,m-1\}$.

Then, for every bounded sequence $\{c_k\}_{k=1}^{\infty}$
such that
\[
t\longmapsto\sum_{k=1}^{\infty}c_k u_k(x,t)
\]
is continuous on $[0,T]$ for every $x\in\{0,\dots,m-1\}$,
the function $u\colon\mathbb N_0\times[0,T]\to\mathbb R$
given by
\[
u(x,t)=\sum_{k=1}^{\infty}c_k u_k(x,t)
\]
satisfies
\[
\frac{\partial u}{\partial t}(x,t)
=
\sum_{i=-m}^{m}a_i u(x-i,t),
\quad x\in\{m,m+1,m+2,\dots\},\quad t\in[0,T].
\]
Moreover, $u$ is bounded on $\mathbb N_0\times[0,T]$, with
\begin{equation}\label{super-estimate}
|u(x,t)|
\leq
(\beta+\gamma)
\left(\sup_{k\in\mathbb N}|c_k|\right)
\exp\left(\sum_{i=-m}^{m}|a_i|t\right),
\quad x\in\mathbb N_0,\quad t\in[0,T].
\end{equation}
\end{theorem}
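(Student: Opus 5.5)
The proof follows the standard superposition argument, adapted to the half-line. Everything is based on the integral form of \eqref{uk-equation}. For each $k$ and each $x\ge m$ the function $t\mapsto u_k(x,t)$ is differentiable, hence continuous, and
\[
u_k(x,t)=u_k(x,0)+\int_0^t\sum_{i=-m}^m a_iu_k(x-i,s)\,{\mathrm d}s .
\]
So the key is to control $\sum_k|u_k(x,t)|$ uniformly in $x\in\mathbb N_0$ and $t\in[0,T]$. Once this is done, we sum the integral identities against $c_k$. Dominated convergence, or uniform convergence via the M-test, lets us interchange sum and integral, and differentiating the resulting integral identity gives the equation for $u$. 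The continuity of $t\mapsto u(x,t)$ at the boundary points $x\le m-1$ is exactly what is needed for the integrand $\sum_i a_iu(x-i,s)$ to be continuous when $x-i$ falls into $\{0,\dots,m-1\}$. For $x-i\ge m$, continuity follows from the integral identity itself.

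\textbf{Step 1 (a priori bound).} For $N\in\mathbb N$ put
\[
S_N(x,t)=\sum_{k=1}^N|u_k(x,t)|, \qquad \Phi_N(t)=\sup_{x\in\mathbb N_0}S_N(x,t).
\]
$\Phi_N$ is finite because each $u_k$ is bounded. From the integral identity, for $x\ge m$,
\[
S_N(x,t)\le\beta+C\int_0^t\Phi_N(s)\,{\mathrm d}s, \qquad C=\sum_{i=-m}^m|a_i|,
\]
while for $x\le m-1$ we have $S_N(x,t)\le\gamma$ directly. Hence
\[
\Phi_N(t)\le\beta+\gamma+C\int_0^t\Phi_N(s)\,{\mathrm d}s .
\]
Gronwall's inequality then gives $\Phi_N(t)\le(\beta+\gamma){\mathrm e}^{Ct}$, and letting $N\to\infty$ yields $\sum_k|u_k(x,t)|\le(\beta+\gamma){\mathrm e}^{Ct}$. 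Multiplying by $\sup_k|c_k|$ gives \eqref{super-estimate} and shows that the series defining $u$ converges absolutely.

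\textbf{Step 2 (equation).} Multiply the integral identity by $c_k$ and sum over $k$. On $[0,T]$ the integrand series is dominated by $C\sup_k|c_k|(\beta+\gamma){\mathrm e}^{CT}$, so sum and integral can be interchanged, and we get
\[
u(x,t)=u(x,0)+\int_0^t\sum_i a_iu(x-i,s)\,{\mathrm d}s .
\]
Since the terms are only measurable a priori, bounded convergence is the right tool for the interchange. To differentiate we also need continuity of $s\mapsto u(y,s)$ for every $y$. For $y\le m-1$ this is assumed. For $y\ge m$ it follows from the integral identity just obtained, because the integrand is bounded. The fundamental theorem of calculus then gives the equation for $u$ at every $x\ge m$.

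The main obstacle is Step 1, specifically measurability and finiteness of $\Phi_N$ for Gronwall. I would avoid this issue by not using Gronwall on the supremum. Instead I would iterate the integral inequality: substitute the bound into itself repeatedly, starting from the crude bound $\Phi_N\le B_N$ coming from boundedness of $u_1,\dots,u_N$. After $n$ iterations this gives
\[
\Phi_N(t)\le(\beta+\gamma)\sum_{j<n}\frac{(Ct)^j}{j!}+B_N\frac{(Ct)^n}{n!},
\]
and letting $n\to\infty$ gives the exponential bound, with only suprema of bounded functions involved.
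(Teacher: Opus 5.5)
Your proposal is correct and follows essentially the same argument as the paper: a Gronwall bound on partial sums taken over all of $\mathbb N_0$, with the $\gamma$-bound at the points $x\le m-1$, then term-by-term integration of the integral identity, then the fundamental theorem of calculus, with continuity at $x\le m-1$ supplied by hypothesis. The only cosmetic difference is that you sum the per-$k$ inequalities for $|u_k|$ directly, whereas the paper bounds arbitrary combinations $\sum_{k\le N}d_ku_k$ with $|d_k|\le\sup_k|c_k|$ and then chooses $d_k=|c_k|\sgn u_k(x,t)$; your measurability worry is also unnecessary, since $\Phi_N$ is a supremum of countably many continuous functions, hence lower semicontinuous and measurable, so Gronwall applies directly.
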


\begin{proof}
	Let $M=\sup_{k\in\mathbb N} |c_k|$, and let $\{d_k\}_{k=1}^\infty$ be an arbitrary sequence of numbers such that $|d_k|\leq M$. Consider the sums
	\[
	u^{(N)}(x,t)=\sum_{k=1}^N d_k u_k(x,t),\quad x\in\mathbb N_0,\quad t\in[0,T],\quad N\in\mathbb N.
	\]
	Since each $u_k$ satisfies \eqref{uk-equation}, we have
	\[
	u^{(N)}(x,t)=u^{(N)}(x,0)+\int_{0}^t\sum_{i=-m}^m a_i u^{(N)}(x-i,s)\,{\mathrm d}s,\quad x\ge m,\quad t\in[0,T],\quad N\in\mathbb N,
	\]
	which leads to the estimate
	\[
	\sup_{x\ge m}|u^{(N)}(x,t)|\leq\beta M+\left(\sum_{i=-m}^m|a_i|\right)\int_{0}^t \sup_{x\ge 0}|u^{(N)}(x,s)|\,{\mathrm d s},\quad t\in[0,T], \quad N\in\mathbb N.
	\]
	Since
	\[
	|u^{(N)}(x,t)|\leq \gamma M,\quad x\in\{0,\dots,m-1\},\quad t\in[0,T],\quad N\in\mathbb N,
	\]
	we get 
	\[
	\sup_{x\ge 0}|u^{(N)}(x,t)|\leq\beta M+\gamma M+\left(\sum_{i=-m}^m|a_i|\right)\int_{0}^t \sup_{x\ge 0}|u^{(N)}(x,s)|\,{\mathrm d s},\quad t\in[0,T],\quad N\in\mathbb N.
	\]	
	By Gronwall's inequality, we have
	\[
	\sup_{x\ge 0}|u^{(N)}(x,t)|\leq (\beta+\gamma) M \exp\left(\sum_{i=-m}^m |a_i|t\right),\quad t\in[0,T],\quad N\in\mathbb N,
	\]
	and therefore 
	\[
	|u^{(N)}(x,t)|\leq(\beta+\gamma) M \exp\left(\sum_{i=-m}^m |a_i|t\right),\quad x\in\mathbb N_0,\quad t\in[0,T],\quad N\in\mathbb N.
	\]
	
	For an arbitrary fixed pair $(x,t)$, we can choose $d_k=|c_k|\sgn u_k(x,t)$, $k\in\mathbb N$. Then, the previous inequality reduces to
	\[
	\sum_{k=1}^N |c_k|\cdot|u_k(x,t)|\leq (\beta+\gamma) M \exp\left(\sum_{i=-m}^m |a_i|t\right),\quad N\in\mathbb N.
	\]
	In particular, the infinite series in the definition of $u$ is absolutely convergent, $u$ is bounded, and the estimate \eqref{super-estimate} holds.
	
	Next, we observe that for all $x\ge m$, $N\in\mathbb N$, $t\in[0,T]$, we have
	\begin{gather*}
	\left|\sum_{k=1}^N d_k \frac{\partial u_k}{\partial t}(x,t)\right|=\left|\sum_{k=1}^N d_k\sum_{i=-m}^m a_i u_k(x-i,t)\right|\\
	\le \sum_{i=-m}^m |a_i|\sum_{k=1}^N|d_k u_k(x-i,t)|
	\le \left(\sum_{i=-m}^m |a_i|\right)(\beta+\gamma) M \exp\left(\sum_{i=-m}^m |a_i|t\right)
	\end{gather*}
	In particular, for $d_k=|c_k|\sgn \frac{\partial u_k}{\partial t}(x,t)$, $k\in\mathbb N$, the left-hand side becomes $\sum_{k=1}^N \left| c_k \frac{\partial u_k}{\partial t}(x,t)\right|$, so we see that the series $\sum_{k=1}^\infty c_k \frac{\partial u_k}{\partial t}(x,t)$ is absolutely convergent. Term by term integration (which is possible by Lebesgue's dominated convergence theorem) yields
	\begin{equation}\label{integral-eqn}
	\sum_{k=1}^\infty c_k u_k(x,0)+\int_{0}^t\left(\sum_{k=1}^\infty c_k \frac{\partial u_k}{\partial t}(x,s)\right)\,{\mathrm d} s
	=\sum_{k=1}^\infty c_k u_k(x,0)+\sum_{k=1}^\infty c_k (u_k(x,t)-u_k(x,0))=u(x,t).
	\end{equation}
	Since the integral on the left-hand side is a continuous function of its upper limit, we have proved continuity of $u(x,\cdot)$ for every $x\geq m$.
For $x\in\{0,\dots,m-1\}$, continuity holds by assumption.

    The relation
	\[
	\sum_{k=1}^\infty c_k \frac{\partial u_k}{\partial t}(x,t)=\sum_{k=1}^\infty c_k\sum_{i=-m}^m a_iu_k(x-i,t)=\sum_{i=-m}^m a_i u(x-i,t),\quad x\ge m,\quad t\in[0,T],
	\]
	implies that the left-hand side  is continuous with respect  to $t$. Hence, we can differentiate the equality~\eqref{integral-eqn} 
	with respect to $t$ and obtain
	\begin{equation*}
		\frac{\partial u}{\partial t}(x,t)=\sum_{k=1}^\infty c_k \frac{\partial u_k}{\partial t}(x,t)=\sum_{i=-m}^m a_iu(x-i,t),\quad x\ge m,\quad t\in[0,T].
		\tag*{\qedhere}
	\end{equation*}
\end{proof}

\section{Symmetric diffusion}\label{sec-6}

We now study the initial-boundary value problem for the
semidiscrete diffusion equation on the half-line.
We derive solution formulas by reflection, the Laplace
transform, and the unified transform method, and then
investigate their spatial and long-time properties.
We also consider Robin-type boundary conditions.

\subsection{The reflection method}\label{sec51}

We will solve the equation
\begin{equation}\label{half-line}
\frac{\partial u}{\partial t}(x,t)
=
\Delta u(x,t),
\qquad x\in\mathbb N,\quad t\geq0,
\end{equation}
where
$\Delta u(x,t)=u(x+1,t)-2u(x,t)+u(x-1,t)$,
subject to the conditions
\begin{equation}\label{initial-boundary-conditions}
u(0,t)=g(t),\qquad t\geq0,
\qquad
u(x,0)=u_x,\qquad x\in\mathbb N,
\end{equation}
where $g\colon[0,\infty)\to\mathbb R$ is continuous and
$\{u_x\}_{x\in\mathbb N}$ is bounded.
We will use the whole-line heat kernel
\[
K_x(t)={\mathrm e}^{-2t}I_x(2t).
\]

For simplicity, we take the diffusion coefficient to be $1$; the general case is easily handled by a change of variables, and will be discussed in Theorem \ref{theo-nonhomogeneous}.

We will temporarily suppose that $g$ is continuously differentiable.
Performing the substitution $v(x,t)=u(x,t)-g(t)$, we see that
$u$ satisfies \eqref{half-line} if and only if $v$ satisfies
\[
\frac{\partial v}{\partial t}(x,t)
=
\Delta u(x,t)-g'(t)
=
\Delta v(x,t)-g'(t),
\qquad x\in\mathbb N,\quad t\geq0,
\]
and the initial-boundary conditions become
\[
v(0,t)=0,\qquad t\geq0,
\qquad
v(x,0)=u_x-g(0),\qquad x\in\mathbb N.
\]
To handle the homogeneous Dirichlet condition at $x=0$, we will
use the reflection method, which is well known in classical PDE theory
(see e.g.~\cite[Section 8.2]{Epstein} or
\cite[Section 3.1]{Strauss}).
We will construct $v$ as the sum of two solutions:
\[
v(x,t)=v_h(x,t)+v_g(x,t),
\]
where $v_h$ satisfies
\[
\frac{\partial v_h}{\partial t}(x,t)
=
\Delta v_h(x,t),
\qquad x\in\mathbb Z,\quad t\geq0,
\]
with
\[
v_h(x,0)=
\begin{cases}
\sgn x\cdot(u_{|x|}-g(0)),&x\neq0,\\
0,&x=0,
\end{cases}
\]
and $v_g$ satisfies
\[
\frac{\partial v_g}{\partial t}(x,t)
=
\Delta v_g(x,t)-\sgn x\cdot g'(t),
\qquad x\in\mathbb Z,\quad t\geq0,
\]
with
\[
v_g(x,0)=0,\qquad x\in\mathbb Z.
\]
The initial data for both functions, as well as the forcing
for $v_g$, are odd in $x$.
Hence, by Theorem~\ref{symmetry},
$v_h(0,t)=v_g(0,t)=0$ for all $t\geq0$.

By the results of the previous sections, $v_h$ and $v_g$ are given by
\begin{align*}
v_h(x,t)
&=
\sum_{\substack{k\in\mathbb Z\\k\neq0}}
\sgn k\cdot(u_{|k|}-g(0))K_{x-k}(t)\\
&=
\sum_{k\in\mathbb N}(u_k-g(0))K_{x-k}(t)
+
\sum_{k\in\mathbb N}(-u_k+g(0))K_{x+k}(t)\\
&=
\sum_{k\in\mathbb N}(u_k-g(0))
\bigl[K_{x-k}(t)-K_{x+k}(t)\bigr],\\
v_g(x,t)
&=
\int_0^t
\sum_{k\in\mathbb Z}
\sgn k\cdot(-g'(s))K_{x-k}(t-s)\,{\mathrm d}s\\
&=
\int_0^t
\sum_{k\in\mathbb N}
g'(s)\bigl[K_{x+k}(t-s)-K_{x-k}(t-s)\bigr]
\,{\mathrm d}s=
\int_0^t g'(s)\phi(x,t-s)\,{\mathrm d}s,
\end{align*}
where
\begin{align*}
\phi(x,\tau)
&=
\sum_{k\in\mathbb N}
\bigl[K_{x+k}(\tau)-K_{x-k}(\tau)\bigr]=
\sum_{z=x+1}^{\infty}K_z(\tau)
-
\sum_{z=-\infty}^{x-1}K_z(\tau),
\qquad x\in\mathbb Z,\quad \tau\geq0,
\end{align*}
is the function introduced in Theorem~\ref{phi-function},
here with diffusion coefficient equal to $1$.

Since we want to avoid the derivative of $g$ in the final result
and assume only continuity, we integrate by parts:
\begin{align*}
v_g(x,t)
&=
\int_0^t g'(s)\phi(x,t-s)\,{\mathrm d}s=
\left[g(s)\phi(x,t-s)\right]_{s=0}^t
-
\int_0^t
g(s)\frac{\partial}{\partial s}\phi(x,t-s)\,{\mathrm d}s\\
&=
-g(t)\sgn x-g(0)\phi(x,t)
+
\int_0^t g(s)\Delta\phi(x,t-s)\,{\mathrm d}s.
\end{align*}
Finally, note that
\[
-g(0)\phi(x,t)
=
\sum_{k\in\mathbb N}g(0)
\bigl[K_{x-k}(t)-K_{x+k}(t)\bigr].
\]
Recalling that $u(x,t)=g(t)+v_h(x,t)+v_g(x,t)$ and using
the expression for $\Delta\phi$ from
Theorem~\ref{phi-function}, we arrive at the following result.

For convenience, introduce the Dirichlet kernel
\[
P_{x,y}(t):=K_{x-y}(t)-K_{x+y}(t),
\qquad x\in\mathbb N_0,\quad y\in\mathbb N,\quad t\geq0.
\]
By symmetry of the whole-line kernel, $P_{0,y}(t)=0$.

\begin{theorem}\label{explicit-solution}
Let $g\colon[0,\infty)\to\mathbb R$ be continuous and
$\{u_x\}_{x\in\mathbb N}$ be bounded.
Then the function
$u\colon\mathbb N_0\times[0,\infty)\to\mathbb R$ given by
\begin{align*}
u(x,t)
&=
g(t)(1-\sgn x)
+
\sum_{k\in\mathbb N}u_kP_{x,k}(t)+
\int_0^t g(s)P_{x,1}(t-s)\,{\mathrm d}s,
\end{align*}
is the unique solution of the initial-boundary value problem
\eqref{half-line}--\eqref{initial-boundary-conditions}
that is bounded on $\mathbb N_0\times[0,T]$ for every $T>0$.
\end{theorem}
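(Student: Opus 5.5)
The derivation preceding the statement assumed $g\in C^1$. I would therefore verify the formula directly for merely continuous $g$ rather than pass to a limit. Uniqueness in the bounded class is immediate from Theorem~\ref{uniquenessForward} with $m=1$, $M=0$, $f=0$. It remains to show that the displayed $u$ satisfies \eqref{half-line}, attains the initial and boundary data, and is bounded on $\mathbb N_0\times[0,T]$. I would split $u=u_1+u_2+u_3$, with $u_1=g(t)(1-\sgn x)$, $u_2=\sum_k u_kP_{x,k}(t)$ and $u_3=\int_0^t g(s)P_{x,1}(t-s)\,{\mathrm d}s$, and treat each piece separately.

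\textbf{The initial-data term $u_2$.} This equals the restriction to $\mathbb N_0$ of the whole-line solution with odd initial data $\sgn x\cdot u_{|x|}$. That whole-line solution exists and is bounded by Theorem~\ref{superposition}. By Theorem~\ref{symmetry} it vanishes at $x=0$, and at $t=0$ it equals $u_x$ for $x\ge1$, since $P_{x,k}(0)=\delta_{x,k}$ for $x,k\ge1$.

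\textbf{The boundary term $u_1+u_3$.} For $x\ge1$ we have $u_1=0$. Since $P_{x,1}(\tau)=K_{x-1}(\tau)-K_{x+1}(\tau)$, each $P_{x,1}$ is smooth in $\tau$ and satisfies $\partial_\tau P_{x,1}=\Delta_x P_{x,1}$ for $x\ge1$. This holds because $K$ solves the whole-line equation, so $P_{x,1}$ solves it on $\mathbb Z$. The Leibniz rule, applied exactly as in the proof of Theorem~\ref{duhamel}, gives
\[
\partial_t u_3(x,t)=g(t)P_{x,1}(0)+\int_0^t g(s)\Delta_xP_{x,1}(t-s)\,{\mathrm d}s .
\]
Here $P_{x,1}(0)=\delta_{x,1}$. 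For $x\ge2$ this is $\Delta u_3(x,t)$, which is the equation since $u_1$ vanishes at $x\pm1$. For $x=1$ the extra term $g(t)$ must be matched by the neighbour at $x=0$. Indeed $u(0,t)=g(t)$, because $u_1(0,t)=g(t)$ while $P_{0,y}\equiv0$ kills $u_2$ and $u_3$ at $x=0$. Thus $\Delta u(1,t)$ picks up exactly $g(t)=u(0,t)$ from the $x-1$ slot, and the equation holds at $x=1$. At $t=0$, $u_3=0$ and $u_1=0$ for $x\ge1$, so the initial data are also attained.

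\textbf{Boundedness and the main obstacle.} For boundedness I would use $|P_{x,1}|\le K_{x-1}+K_{x+1}\le 2$ in this crude form, so that $|u_3|\le 2T\sup_{[0,T]}|g|$, while $u_2$ is bounded by $\sup|u_k|$. The main point requiring care is the differentiation under the integral with merely continuous $g$. This is fine because the integrand $g(s)P_{x,1}(t-s)$ and its $t$-derivative are jointly continuous on $0\le s\le t\le T$; there is no infinite sum in $u_3$. The only other delicate point is the bookkeeping at $x=1$ described above. The derivative $\partial_t u(x,t)$ need not exist at $x=0$, but this is not required by the statement.
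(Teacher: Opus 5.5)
Your proof is correct, but it does not follow the route the paper takes. At this point the paper proves only uniqueness (via Theorem~\ref{uniquenessForward}, as you do). It defers existence to Theorem~\ref{theo-nonhomogeneous}, whose $a=1$ case rests on Theorem~\ref{theo-1}. There the unified-transform contour formula is verified first for $\ell^1$ initial data: one differentiates under the contour integral, and the stray $z^{-1}$ coefficient at $x=1$ is killed by the symmetry $z\mapsto z^{-1}$. The formula is then identified with the Bessel form and extended to data of polynomial growth by truncating the initial sequence and passing to the limit in the integrated equation.

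You instead verify the Bessel formula directly:
\begin{itemize}
\item You handle the initial-data term by whole-line superposition of the odd extension. Theorem~\ref{symmetry} is not needed here, since $P_{0,k}=K_{-k}-K_k=0$ identically.
\item You handle the boundary term by a Leibniz rule with no infinite sum. The bookkeeping at $x=1$ comes from $P_{x,1}(0)=\delta_{x,1}$ and $P_{0,1}\equiv0$.
\end{itemize}
This is essentially the argument the paper itself uses later for Theorem~\ref{nonsymmetric-solution}. With $\alpha=\gamma=1$ and $\beta=-2$ one gets $Q_{x,k}=P_{x,k}$, and that theorem reduces to exactly the present statement. The one difference is that there the initial-data term is treated by the half-line superposition principle, Theorem~\ref{superposition-2}, rather than by odd reflection.

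Your route is shorter and entirely elementary for bounded data and continuous $g$. The paper's heavier route buys several things yours does not give. It covers initial data of polynomial growth, where superposition with bounded coefficients is unavailable. It gives uniqueness in the corresponding growth class. It also produces the Fokas contour representation itself and the spatial estimates collected in Theorem~\ref{theo-1}.
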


Uniqueness follows from Theorem~\ref{uniquenessForward}.
We omit the existence proof, since a more general result
will be proved in Theorem~\ref{theo-nonhomogeneous}.

\begin{example}\label{nonuniform-example}
Consider the initial-boundary value problem
\eqref{half-line}--\eqref{initial-boundary-conditions}
with $g\equiv1$ and $u_x=0$ for $x\in\mathbb N$.
Using the previous theorem, we obtain, for $x\in\mathbb N$,
\begin{align*}
u(x,t)
&=
\int_0^t\Delta\phi(x,t-s)\,{\mathrm d}s
=
-\int_0^t
\frac{\partial}{\partial s}\phi(x,t-s)\,{\mathrm d}s=
-\left[\phi(x,t-s)\right]_{s=0}^t
=
\phi(x,t)-\phi(x,0)
=
\phi(x,t)+1.
\end{align*}
In view of Theorem~\ref{phi-function}, this result is not
surprising.
Note that $\lim_{t\to\infty}u(x,t)=1$ for every
$x\in\mathbb N_0$, but the convergence is not uniform
with respect to $x$.
The reader might wish to compare this solution with the
well-known solution of the corresponding PDE, which can
be expressed using the error function
(see e.g.~\cite[p.~57]{Strauss}).
\end{example}

\subsection{The Laplace transform method}\label{sec52}

Here we provide an alternative derivation of the explicit solution
from Theorem~\ref{explicit-solution}, based on the Laplace transform.
By the superposition principle, the function
\[
u_a(x,t)
=
\sum_{k\in\mathbb N}u_kP_{x,k}(t)
\]
is a solution of \eqref{half-line} satisfying
$u_a(0,t)=0$ for all $t\geq0$ and $u_a(x,0)=u_x$ for every
$x\in\mathbb N$.
The remaining task is to find the solution of
\[
\frac{\partial u}{\partial t}(x,t)
=
\Delta u(x,t),
\qquad x\in\mathbb N,\quad t\geq0,
\]
subject to
\[
u(0,t)=g(t),\qquad t\geq0,
\qquad
u(x,0)=0,\qquad x\in\mathbb N,
\]
where $g\colon[0,\infty)\to\mathbb R$ is continuous.

We proceed formally, assuming that the Laplace transforms and
the operations below are justified.
Consider
\[
G(s)=\int_0^\infty g(t){\mathrm e}^{-st}\,{\mathrm d}t,
\qquad
U(x,s)=\int_0^\infty u(x,t){\mathrm e}^{-st}\,{\mathrm d}t.
\]

Transforming the boundary condition gives $U(0,s)=G(s)$.
Transforming the differential equation gives
\[
sU(x,s)-u(x,0)
=
U(x+1,s)-2U(x,s)+U(x-1,s),
\qquad x\in\mathbb N.
\]
Since the initial data are zero, this reduces to
\[
sU(x,s)
=
U(x+1,s)-2U(x,s)+U(x-1,s),
\qquad x\in\mathbb N,
\]
or equivalently
\[
U(x+1,s)-(2+s)U(x,s)+U(x-1,s)=0,
\qquad x\in\mathbb N.
\]
For a fixed real $s>0$ in the domain of convergence,
this is a recurrence relation with characteristic equation
\[
r^2-(2+s)r+1=0.
\]
Its roots are
\[
r_{1,2}(s)
=
\frac{2+s\mp\sqrt{(2+s)^2-4}}{2}
=
1+\frac{s}{2}\mp\frac12\sqrt{s(s+4)}.
\]
Thus the general solution of the recurrence is
\[
U(x,s)=\alpha(s)r_1(s)^x+\beta(s)r_2(s)^x.
\]
The roots satisfy $r_1(s)r_2(s)=1$ and $r_2(s)>1$, so
$0<r_1(s)<1$.
Seeking a transform bounded in $x$, we take $\beta(s)=0$.
The boundary condition then gives
\[
U(x,s)=G(s)r_1(s)^x.
\]

By the convolution theorem for the Laplace transform,
\[
u(x,t)=\int_0^t P_x(t-s)g(s)\,{\mathrm d}s,
\qquad x\in\mathbb N,
\]
where $P_x$ is determined by
\[
\mathcal L\{P_x\}(s)=r_1(s)^x.
\]
It remains to identify this inverse transform.

Using the Laplace transform of the modified Bessel function
and the shift property, we obtain
\[
\mathcal L\{{\mathrm e}^{-2t}I_x(2t)\}(s)
=
\frac{2^x}
{\sqrt{s(s+4)}\bigl(\sqrt{s(s+4)}+2+s\bigr)^x}
=
\frac{r_1(s)^x}{\sqrt{s(s+4)}},
\qquad x\in\mathbb N_0.
\]
Recall that $r_1(s)$ satisfies
\[
(2+s)r_1(s)=1+r_1(s)^2.
\]
Hence
\begin{align*}
(s+4)r_1(s)
&=
1+r_1(s)^2+2r_1(s)
=
(r_1(s)+1)^2,\\
sr_1(s)
&=
1+r_1(s)^2-2r_1(s)
=
(r_1(s)-1)^2.
\end{align*}
Multiplying these identities and taking square roots, we get
\[
r_1(s)\sqrt{s(s+4)}
=
|r_1(s)^2-1|
=
1-r_1(s)^2,
\]
where we used $0<r_1(s)<1$.
Multiplication by $r_1(s)^{x-1}$ therefore gives
\[
r_1(s)^x\sqrt{s(s+4)}
=
r_1(s)^{x-1}-r_1(s)^{x+1},
\qquad x\in\mathbb N.
\]
Consequently,
\begin{align*}
&\mathcal L\{
{\mathrm e}^{-2t}[I_{x-1}(2t)-I_{x+1}(2t)]
\}(s)=
\frac{r_1(s)^{x-1}-r_1(s)^{x+1}}{\sqrt{s(s+4)}}
=
r_1(s)^x.
\end{align*}
We conclude that
\[
P_x(t)
=
{\mathrm e}^{-2t}
\bigl[I_{x-1}(2t)-I_{x+1}(2t)\bigr]
=
P_{x,1}(t),
\qquad x\in\mathbb N,
\]
which agrees with the result obtained using the reflection method.

\subsection{A semi-discrete version of the unified transform method}

\subsubsection{Homogeneous problem -- a formal derivation of the solution}

We present another approach for solving the semidiscrete homogeneous equation
\begin{equation}\label{1.1-again}
\frac{\partial u}{\partial t}(x,t)
=u(x+1,t)-2u(x,t)+u(x-1,t),
\quad x\in\mathbb N,\quad t\ge 0,
\end{equation}
subject to the boundary and initial value conditions
\begin{equation}\label{1.2-again}
u(0,t)=g(t),\quad t\ge 0,\quad
u(x,0)=u_x,\quad x\in\mathbb N.
\end{equation}
We start by assuming that $g\colon[0,\infty)\to\mathbb R$ is continuous
and $\{u_x\}_{x\in\mathbb N}$ belongs to $\ell^1$; the latter assumption
will be weakened later.

For the formal derivation, suppose that
$u(\cdot,t)\in\ell^1$ for every $t\geq0$ and that the
termwise differentiations below are justified.
Define the discrete transform by
\[
\hat u(z,t):=\sum_{x\in\mathbb N}u(x,t)z^x,
\qquad |z|\leq1.
\]
For each fixed $t$, this function is analytic in the open
unit disc and continuous on the closed unit disc.

Conversely, for any function $U(z,t)$ analytic in the open unit disc and continuous in
the closed unit disc,
we define the inverse discrete transform by the formula
\begin{align*}
\tilde U(x,t)
=
\frac{1}{2\pi{\mathrm i}}
\int_{|z|=1}U(z,t)z^{-x-1}\,{\mathrm d}z,
\quad x\in\mathbb N,
\end{align*}
where the unit circle has counterclockwise orientation. Clearly, we can replace the unit circle by any piecewise ${\mathcal C}^1$ closed
curve contained in the closed unit disc whose index with respect to the
origin is $1$.

By the residue theorem, $\tilde U(x,t)$ is the coefficient of $z^x$
in the Taylor expansion of $U(z,t)$ at the origin. Consequently, the
inverse discrete transform of the discrete transform of an
$\ell^1$-sequence is the sequence itself.

Suppose that $u\colon\mathbb N_0\times[0,\infty)\to\mathbb R$ satisfies \eqref{1.1-again}, and define the shifted functions $u^s(x,t)=u(x+1,t)$ and $u^{-s}(x,t)=u(x-1,t)$.
Clearly,
\begin{align*}
\widehat{u^s}(z,t) &= z^{-1}\hat u(z,t)-u(1,t),\\
\widehat{u^{-s}}(z,t) &= z\hat u(z,t)+zu(0,t).
\end{align*}
Applying the discrete transform formally to \eqref{1.1-again},
we obtain
\begin{equation*}
\frac{\partial\hat u}{\partial t}(z,t)
=
(z+z^{-1}-2)\hat u(z,t)+zu(0,t)-u(1,t).
\end{equation*}
Suppose now that $u$ satisfies \eqref{1.2-again}, and denote
$U(z,t)=\hat u(z,t)$. Then
\begin{equation*}
U(z,0)=\sum_{x\in\mathbb N}u_xz^x,\qquad |z|\leq1.
\end{equation*}
Letting $u(1,t)-u(0,t)=g_1(t)$, the previous differential equation becomes
\begin{equation*}
\frac{\partial U}{\partial t}(z,t)
=
(z+z^{-1}-2)U(z,t)+(z-1)g(t)-g_1(t).
\end{equation*}
For each fixed $z$ with $0<|z|\le 1$, this is a linear ordinary differential equation,
whose solution is
\begin{equation}\label{Fokas1}
U(z,t)
=
\exp\bigl((z+z^{-1}-2)t\bigr)U(z,0)
+(z-1)\tilde g(z,t)-\tilde g_1(z,t),
\end{equation}
where
\begin{align*}
U(z,0)&=\sum_{x\in\mathbb N}u_xz^x,\\
\tilde g(z,t)
&=
\int_0^t
g(s)\exp\bigl[(z+z^{-1}-2)(t-s)\bigr]\,{\mathrm d}s,\\
\tilde g_1(z,t)
&=
\int_0^t
g_1(s)\exp\bigl[(z+z^{-1}-2)(t-s)\bigr]\,{\mathrm d}s.
\end{align*}

Note that $\tilde g(z,t)$ and $\tilde g_1(z,t)$ are defined in the whole complex plane except at $0$. Clearly,
\begin{equation*}
\tilde g(z,t)=\tilde g(z^{-1},t),
\qquad
\tilde g_1(z,t)=\tilde g_1(z^{-1},t),
\qquad z\neq0.
\end{equation*}

The solution $u(x,t)$ is recovered from the inverse transform
\begin{align}\label{solution-recovered}
u(x,t)
=
\frac{1}{2\pi{\mathrm i}}
\int_{|z|=1}U(z,t)z^{-x-1}\,{\mathrm d}z,
\qquad x\in\mathbb N.
\end{align}

This is the discrete analogue of the integral formula that appears in the original method for PDEs. Similarly to that setting, we have to deal with the problem that the function $g_1$ is not given.
As in the PDE case, we use the symmetry of the phase function
\begin{equation*}
w(z)=z+z^{-1}-2,
\qquad
w(z^{-1})=w(z).
\end{equation*}

Substituting \eqref{Fokas1} into \eqref{solution-recovered}, we get
\begin{equation}\label{fokas1}
u(x,t)
=
\frac{1}{2\pi{\mathrm i}}
\int_{|z|=1}
\left[
\exp\bigl((z+z^{-1}-2)t\bigr)U(z,0)
+(z-1)\tilde g(z,t)-\tilde g_1(z,t)
\right]
z^{-x-1}\,{\mathrm d}z.
\end{equation}
For $|z|=1$, both $z$ and $z^{-1}$ lie on the unit circle.
Replacing $z$ by $z^{-1}$ in \eqref{Fokas1} and using the
symmetry of $w$, we obtain
\begin{equation}\label{fokas2}
U(z^{-1},t)
=
\exp\bigl[(z+z^{-1}-2)t\bigr]U(z^{-1},0)
+(z^{-1}-1)\tilde g(z,t)-\tilde g_1(z,t).
\end{equation}
Adding \eqref{Fokas1} and \eqref{fokas2} and solving for $\tilde g_1(z,t)$, we obtain
\begin{equation}\label{fokas3}
\begin{aligned}
\tilde g_1(z,t)
={}&
\frac12\exp\bigl[(z+z^{-1}-2)t\bigr]
\bigl(U(z,0)+U(z^{-1},0)\bigr)\\
&-\frac12\bigl(U(z,t)+U(z^{-1},t)\bigr)
+\frac12(z+z^{-1}-2)\tilde g(z,t).
\end{aligned}
\end{equation}

Substituting \eqref{fokas3} back into \eqref{Fokas1} gives
\begin{equation*}
\begin{aligned}
U(z,t)
={}&
\frac12\exp\bigl[(z+z^{-1}-2)t\bigr]
\bigl(U(z,0)-U(z^{-1},0)\bigr)\\
&+\frac12\bigl(U(z,t)+U(z^{-1},t)\bigr)
+\frac12(z-z^{-1})\tilde g(z,t).
\end{aligned}
\end{equation*}
Hence
\begin{equation}\label{fokas4}
U(z,t)
=
\exp\bigl[(z+z^{-1}-2)t\bigr]
\bigl(U(z,0)-U(z^{-1},0)\bigr)
+U(z^{-1},t)
+(z-z^{-1})\tilde g(z,t).
\end{equation}

For $x\geq1$, the term involving $U(z^{-1},t)$ has zero contribution to the inverse transform, since
\begin{equation*}
U(z^{-1},t)=\sum_{y\in\mathbb N}u(y,t)z^{-y}
\end{equation*}
contains only negative powers of $z$, and therefore
$U(z^{-1},t)z^{-x-1}$ has no $z^{-1}$ term.

Since
\begin{equation*}
U(z,0)-U(z^{-1},0)
=
\sum_{y\in\mathbb N}u_y\left(z^y-z^{-y}\right),
\end{equation*}
putting everything together gives
\begin{equation}\label{fokas5}
\begin{aligned}
u(x,t)
={}&
\frac{1}{2\pi{\mathrm i}}
\int_{|z|=1}
\left[
\sum_{y\in\mathbb N}
u_y\left(z^y-\frac{1}{z^y}\right)
\right]
\exp\left[\left(z+\frac{1}{z}-2\right)t\right]
\frac{{\mathrm d}z}{z^{x+1}}
\\
&+
\frac{1}{2\pi{\mathrm i}}
\int_{|z|=1}
\left(z-\frac{1}{z}\right)
\left\{
\int_0^t
g(s)
\exp\left[\left(z+\frac{1}{z}-2\right)(t-s)\right]
\,{\mathrm d}s
\right\}
\frac{{\mathrm d}z}{z^{x+1}},
\qquad x\geq1.
\end{aligned}
\end{equation}

This is a Fokas-type integral formula involving only the initial and
Dirichlet data.

By further manipulating the contour integrals, we can express the solution in
terms of the modified Bessel functions. This will allow
us to relax significantly the assumption on the data \(\{u_{x}\}_{x\in\mathbb N}\) to
 \begin{equation} \label{fokas6} 
u_{x} = O(x^{M}),\quad \mbox{ as }x \rightarrow \infty,
\end{equation}
for some nonnegative number \(M\).
We first finish the formal derivation, which will be rigorously justified in the next subsection under the relaxed assumption \eqref{fokas6}.

For \(z\in\mathbb C\setminus\{0\}\), we have the Laurent expansion
\begin{gather*}
\exp\left[\left(z+\frac{1}{z}\right)t\right]
={\mathrm e}^{tz}{\mathrm e}^{t/z} =\left(\sum_{n=0}^{\infty}\frac{t^n}{n!}z^n\right)
  \left(\sum_{m=0}^{\infty}\frac{t^m}{m!}\frac{1}{z^m}\right) =\sum_{n,m\in\mathbb N_0}
  \frac{t^{n+m}}{n!m!}z^{n-m} \\
=\sum_{n\ge m}\frac{t^{n+m}}{n!m!}z^{n-m}
  +\sum_{n<m}\frac{t^{n+m}}{n!m!}z^{n-m} =\sum_{\ell=0}^{\infty}
  \left[\sum_{m=0}^{\infty}
  \frac{t^{2m+\ell}}{m!(m+\ell)!}\right]z^\ell
  +\sum_{\ell=1}^{\infty}
  \left[\sum_{n=0}^{\infty}
  \frac{t^{2n+\ell}}{n!(n+\ell)!}\right]\frac{1}{z^\ell} \\
=\sum_{\ell\in\mathbb Z}
  \left[\sum_{n=0}^{\infty}
  \frac{t^{2n+|\ell|}}{n!(n+|\ell|)!}\right]z^\ell =\sum_{\ell\in\mathbb Z}I_\ell(2t)z^\ell,
\end{gather*}
where
\[
I_\ell(2t)=
\sum_{n=0}^{\infty}
\frac{t^{2n+|\ell|}}{n!(n+|\ell|)!},
\qquad \ell\in\mathbb Z.
\]
$I_\ell$ is the modified Bessel function of the first kind.
Note that $I_{-\ell}=I_\ell$ for integer $\ell$, and
\[
I_{\ell}(0)=\begin{cases}1,&\text{if }\ell=0,\\0,&\text{if }\ell\neq0.\end{cases}
\]
Now, for fixed \(x \in\mathbb N\), we have
\begin{gather}
u_{a}(x,t):= \frac{1}{2\pi {\mathrm i}}\int_{|z| = 1}{\left[ \sum_{y \in\mathbb N}u_{y}\left( z^{y} - \frac{1}{z^{y}} \right) \right]\exp\left[ \left( z + \frac{1}{z} - 2 \right)t \right]\frac{\,{\mathrm d}z}{z^{x + 1}}}\label{ua-definition}\\
= \frac{{\mathrm e}^{- 2t}}{2\pi {\mathrm i}}\int_{|z| = 1}{\left( \sum_{y \in\mathbb N}{u_{y}z^{y}} \right)\exp\left[ \left( z + \frac{1}{z} \right)t \right]\frac{\,{\mathrm d}z}{z^{x + 1}}} - \frac{{\mathrm e}^{- 2t}}{2\pi {\mathrm i}}\int_{|z| = 1}{\left( \sum_{y \in\mathbb N}\frac{u_{y}}{z^{y}} \right)\exp\left[ \left( z + \frac{1}{z} \right)t \right]\frac{\,{\mathrm d}z}{z^{x + 1}}}\nonumber\\
=:\sigma_{a,1}(x,t) - \sigma_{a,2}(x,t).\nonumber
\end{gather}
Using the residue theorem, we get
\begin{align*}
\sigma_{a,1}(x,t) &= \frac{{\mathrm e}^{- 2t}}{2\pi {\mathrm i}}\int_{|z| = 1}{\left[ \sum_{y \in\mathbb N,\ \ell \in\mathbb Z,\ y + \ell = x}{u_{y}I_{\ell}(2t)} \right]\frac{\,{\mathrm d}z}{z}}= {\mathrm e}^{- 2t}\sum_{\substack{y\in\mathbb N,\ \ell\in\mathbb Z\\
\ell+y=x}}{u_{y}I_{\ell}(2t)} = {\mathrm e}^{- 2t}\sum_{y = 1}^{\infty}{u_{y}I_{x - y}(2t)},\\
\sigma_{a,2}(x,t)
&=
\frac{{\mathrm e}^{- 2t}}{2\pi {\mathrm i}}\int_{|z| = 1}{\left[ \sum_{y \in\mathbb N,\ \ell \in\mathbb Z,\ \ell-y = x}{u_{y}I_{\ell}(2t)} \right]\frac{\,{\mathrm d}z}{z}}=
{\mathrm e}^{-2t}
\sum_{\substack{y\in\mathbb N,\ \ell\in\mathbb Z\\
\ell-y=x}}
u_yI_\ell(2t)=
{\mathrm e}^{-2t}\sum_{y=1}^{\infty}u_yI_{x+y}(2t).
\end{align*}
Thus, for \(x \in\mathbb N\),
\begin{equation}\label{ua-expression}
u_{a}(x,t) ={\mathrm e}^{- 2t}\sum_{y = 1}^{\infty}{u_{y}[ I_{y - x}(2t) - I_{y + x}(2t)]}.
\end{equation}
On the other hand,
\begin{align}
u_{g}(x,t):&= \frac{1}{2\pi {\mathrm i}}\int_{|z| = 1}{\left( z - \frac{1}{z} \right)\left\{ \int_{0}^{t}{g(s)\exp\left[ \left( z + \frac{1}{z} - 2 \right)(t - s) \right]\,{\mathrm d}s} \right\}\frac{\,{\mathrm d}z}{z^{x + 1}}}\label{ug-definition}\\
&= \frac{1}{2\pi {\mathrm i}}\int_{|z| = 1}{z\left\{ \int_{0}^{t}{{\mathrm e}^{- 2(t - s)}g(s)\exp\left[ \left( z + \frac{1}{z} \right)(t - s) \right]\,{\mathrm d}s} \right\}\frac{\,{\mathrm d}z}{z^{x + 1}}}\nonumber\\
&- \frac{1}{2\pi {\mathrm i}}\int_{|z| = 1}{\frac{1}{z}\left\{ \int_{0}^{t}{{\mathrm e}^{- 2(t - s)}g(s)\exp\left[ \left( z + \frac{1}{z} \right)(t - s) \right]\,{\mathrm d}s} \right\}\frac{\,{\mathrm d}z}{z^{x + 1}}}=:\sigma_{g,1}(x,t) - \sigma_{g,2}(x,t).\nonumber
\end{align}
For fixed \(x \in\mathbb N\), we have
\begin{align*}
\sigma_{g,1}(x,t) &= \frac{1}{2\pi {\mathrm i}}\int_{|z| = 1}{z\left\{ \int_{0}^{t}{{\mathrm e}^{- 2(t - s)}g(s)\left[ \sum_{\ell \in\mathbb Z}{I_{\ell}\bigl(2(t - s)\bigr)z^{\ell}} \right]\,{\mathrm d}s} \right\}\frac{\,{\mathrm d}z}{z^{x + 1}}}\\
&= \frac{1}{2\pi {\mathrm i}}\sum_{\ell \in\mathbb Z}\left\{ \left( \int_{|z| = 1}\frac{z^{1 + \ell}}{z^{x + 1}} \,{\mathrm d}z \right)\int_{0}^{t}{{\mathrm e}^{- 2(t - s)}g(s)I_{\ell}\bigl(2(t - s)\bigr)\,{\mathrm d}s} \right\} = \int_{0}^{t}{{\mathrm e}^{- 2(t - s)}g(s)I_{x - 1}\bigl(2(t - s)\bigr)\,{\mathrm d}s}
\end{align*}
and 
\begin{align*}
\sigma_{g,2}(x,t) &= \frac{1}{2\pi {\mathrm i}}\int_{|z| = 1}{\frac{1}{z}\left\{ \int_{0}^{t}{{\mathrm e}^{- 2(t - s)}g(s)\left[ \sum_{\ell \in\mathbb Z}{I_{\ell}\bigl(2(t - s)\bigr)z^{\ell}} \right]\,{\mathrm d}s} \right\}\frac{\,{\mathrm d}z}{z^{x + 1}}}\\
&=
\frac{1}{2\pi{\mathrm i}}
\sum_{\ell\in\mathbb Z}
\left\{
\left(
\int_{|z|=1}z^{\ell-x-2}\,{\mathrm d}z
\right)
\int_0^t
{\mathrm e}^{-2(t-s)}g(s)I_\ell\bigl(2(t-s)\bigr)\,{\mathrm d}s
\right\}=
\int_0^t
{\mathrm e}^{-2(t-s)}g(s)I_{x+1}\bigl(2(t-s)\bigr)\,{\mathrm d}s.
\end{align*}
Therefore,
\begin{equation}\label{ug-expression}
u_{g}(x,t) = \int_{0}^{t}{{\mathrm e}^{- 2(t - s)}g(s)[ I_{x - 1}\bigl(2(t - s)\bigr) - I_{x + 1}\bigl(2(t - s)\bigr)]\,{\mathrm d}s}.
\end{equation}
In view of the above computations, we see that \eqref{fokas5}  becomes
\begin{equation}\label{fokas7} 
\begin{aligned}
u(x,t) &= {\mathrm e}^{- 2t}\sum_{y = 1}^{\infty}{u_{y}[ I_{y - x}(2t) - I_{y + x}(2t)]}\\ 
&+ \int_{0}^{t}{{\mathrm e}^{- 2(t - s)}g(s)[ I_{x - 1}\bigl(2(t - s)\bigr) - I_{x + 1}\bigl(2(t - s)\bigr)]\,{\mathrm d}s},
\quad x \in\mathbb N, \quad t \geq 0,
\end{aligned}
\end{equation}
which agrees with the results obtained in the previous sections.

\subsubsection{Homogeneous problem -- rigorous justification}

We first give a detailed verification under the assumption
$\sum_{x\in\mathbb N}|u_x|<\infty$.
Later, by studying the convergence of the series in \eqref{fokas7}, we
will show that its right-hand side remains well-defined and gives a
solution under the weaker assumption \eqref{fokas6}.

Let $u_a(x,t)$ and $u_g(x,t)$, for $x\in\mathbb N$, be the functions
defined by \eqref{ua-definition} and \eqref{ug-definition}, respectively.
We extend them to $x=0$ by setting
\[
u_a(0,t):=0,
\qquad
u_g(0,t):=g(t).
\]
Thus
\[
u(x,t)=u_a(x,t)+u_g(x,t),
\qquad x\in\mathbb N_0.
\]
The function $u_a$ represents the contribution from the initial data
and has homogeneous boundary data, whereas $u_g$ represents the
contribution from the boundary data and has zero initial data.

We first verify the initial condition. For $x\in\mathbb N$, the residue
theorem gives
\begin{align*}
u_a(x,0)
&=
\frac{1}{2\pi{\mathrm i}}
\int_{|z|=1}
\left[
\sum_{y\in\mathbb N}
u_y\left(z^y-z^{-y}\right)
\right]
\frac{{\mathrm d}z}{z^{x+1}}\\
&=
\frac{1}{2\pi{\mathrm i}}
\sum_{y\in\mathbb N}u_y
\int_{|z|=1}z^{y-x-1}\,{\mathrm d}z
-
\frac{1}{2\pi{\mathrm i}}
\sum_{y\in\mathbb N}u_y
\int_{|z|=1}z^{-y-x-1}\,{\mathrm d}z=u_x.
\end{align*}
Moreover, $u_g(x,0)=0$. Therefore,
\[
u(x,0)=u_x,\qquad x\in\mathbb N.
\]
By our definitions at $x=0$, we also have
\[
u(0,t)=u_a(0,t)+u_g(0,t)=g(t).
\]

It remains to show that both $u_a$ and $u_g$ satisfy the semidiscrete
diffusion equation with their respective boundary conditions. More
precisely, we shall prove that
\begin{align*}
\frac{\partial u_a}{\partial t}(x,t)
&=
u_a(x+1,t)-2u_a(x,t)+u_a(x-1,t),\\
\frac{\partial u_g}{\partial t}(x,t)
&=
u_g(x+1,t)-2u_g(x,t)+u_g(x-1,t)
\end{align*}
for $x\in\mathbb N$.
The desired equation for $u=u_a+u_g$ will then follow by addition.

Define
\[
A(z,t):=
\left[\hat u(z,0)-\hat u(z^{-1},0)\right]
\exp\left[\left(z+z^{-1}-2\right)t\right].
\]
Then, for $x\in\mathbb N$, 
\[
u_a(x,t)=\frac{1}{2\pi\mathrm{i}}\int_{|z|=1}A(z,t)\frac{\mathrm{d}z}{z^{x+1}}.
\]
Direct differentiation gives
\[
\frac{\partial A}{\partial t}(z,t)
=
(z+z^{-1}-2)A(z,t).
\]
For $|z|=1$, we have
\[
\left|\hat u(z,0)-\hat u(z^{-1},0)\right|
\leq 2\sum_{y=1}^{\infty}|u_y|,
\qquad
z+z^{-1}-2\in[-4,0].
\]
Hence
\[
\left|\frac{\partial A}{\partial t}(z,t)\right|
\leq 8\sum_{y=1}^{\infty}|u_y|,
\qquad |z|=1,\quad t\geq0.
\]
This bound justifies differentiation under the integral sign in the
formula for $u_a$, yielding
\[
\frac{\partial u_a}{\partial t}(x,t)
=
\frac{1}{2\pi{\mathrm i}}
\int_{|z|=1}
(z+z^{-1}-2)A(z,t)\frac{{\mathrm d}z}{z^{x+1}}.
\]
For $x\geq2$, we immediately obtain
\begin{align*}
\frac{\partial u_a}{\partial t}(x,t)
&=
\frac{1}{2\pi{\mathrm i}}
\int_{|z|=1}
A(z,t)
\left(
\frac{1}{z^{x+2}}
-\frac{2}{z^{x+1}}
+\frac{1}{z^x}
\right)
\,{\mathrm d}z=
u_a(x+1,t)-2u_a(x,t)+u_a(x-1,t).
\end{align*}
For $x=1$, we have
\begin{align*}
\frac{\partial u_a}{\partial t}(1,t)
&=
\frac{1}{2\pi{\mathrm i}}
\int_{|z|=1}
A(z,t)
\left(
\frac{1}{z^3}-\frac{2}{z^2}+\frac{1}{z}
\right)
\,{\mathrm d}z=
u_a(2,t)-2u_a(1,t)
+
\frac{1}{2\pi{\mathrm i}}
\int_{|z|=1}A(z,t)\frac{{\mathrm d}z}{z}.
\end{align*}
Since $u_a(0,t)=0$, it remains to show that the last integral vanishes.

By the definition of $A$,
\begin{align*}
\int_{|z|=1}A(z,t)\frac{{\mathrm d}z}{z}
&=
\int_{|z|=1}
\hat u(z,0)
{\mathrm e}^{(z+z^{-1}-2)t}
\frac{{\mathrm d}z}{z}-
\int_{|z|=1}
\hat u(z^{-1},0)
{\mathrm e}^{(z+z^{-1}-2)t}
\frac{{\mathrm d}z}{z}.
\end{align*}
In the second integral, make the substitution $w=z^{-1}$.
The minus sign arising from ${\mathrm d}z/z=-{\mathrm d}w/w$
is canceled by the reversal of the orientation of the unit circle.
The two integrals are therefore equal, so their difference is zero.

Thus $u_a$ satisfies the semidiscrete diffusion equation for every
$x\in\mathbb N$.

We now turn to $u_g$. Put
\[
H(z,t):=(z-z^{-1})\tilde g(z,t).
\]
Then
\[
u_g(x,t)
=
\frac{1}{2\pi{\mathrm i}}
\int_{|z|=1}H(z,t)\frac{{\mathrm d}z}{z^{x+1}},
\qquad x\in\mathbb N.
\]
Since $g$ is continuous, differentiation of
\[
\tilde g(z,t)
=
\int_0^t
g(s){\mathrm e}^{(z+z^{-1}-2)(t-s)}\,{\mathrm d}s
\]
gives
\[
\frac{\partial\tilde g}{\partial t}(z,t)
=
g(t)+(z+z^{-1}-2)\tilde g(z,t).
\]
Consequently,
\[
\frac{\partial H}{\partial t}(z,t)
=
(z-z^{-1})g(t)+(z+z^{-1}-2)H(z,t).
\]
Both $H$ and $\partial_t H$ are continuous on
$\{z:|z|=1\}\times[0,T]$ for every $T>0$. Thus differentiation under
the contour integral is justified, and
\begin{align*}
\frac{\partial u_g}{\partial t}(x,t)
&=
\frac{g(t)}{2\pi{\mathrm i}}
\int_{|z|=1}
(z-z^{-1})\frac{{\mathrm d}z}{z^{x+1}}+
\frac{1}{2\pi{\mathrm i}}
\int_{|z|=1}
(z+z^{-1}-2)H(z,t)\frac{{\mathrm d}z}{z^{x+1}}.
\end{align*}
For $x\geq2$, the first integral vanishes, since
\[
(z-z^{-1})z^{-x-1}=z^{-x}-z^{-x-2}
\]
has no $z^{-1}$ term. Expanding the second integrand, we obtain
\begin{align*}
\frac{\partial u_g}{\partial t}(x,t)
&=
\frac{1}{2\pi{\mathrm i}}
\int_{|z|=1}
H(z,t)
\left(
\frac{1}{z^{x+2}}
-\frac{2}{z^{x+1}}
+\frac{1}{z^x}
\right)
\,{\mathrm d}z=
u_g(x+1,t)-2u_g(x,t)+u_g(x-1,t).
\end{align*}
For $x=1$, the residue theorem gives
\[
\frac{1}{2\pi{\mathrm i}}
\int_{|z|=1}
(z-z^{-1})\frac{{\mathrm d}z}{z^2}
=
\frac{1}{2\pi{\mathrm i}}
\int_{|z|=1}(z^{-1}-z^{-3})\,{\mathrm d}z
=1.
\]
Hence
\begin{align*}
\frac{\partial u_g}{\partial t}(1,t)
&=
g(t)+
\frac{1}{2\pi{\mathrm i}}
\int_{|z|=1}
H(z,t)
\left(
\frac{1}{z^3}-\frac{2}{z^2}+\frac{1}{z}
\right)
\,{\mathrm d}z\\
&=
g(t)+u_g(2,t)-2u_g(1,t)
+
\frac{1}{2\pi{\mathrm i}}
\int_{|z|=1}H(z,t)\frac{{\mathrm d}z}{z}.
\end{align*}
Since $u_g(0,t)=g(t)$, it remains to show that the last integral
vanishes.

The symmetry $\tilde g(z^{-1},t)=\tilde g(z,t)$ implies
\[
H(z^{-1},t)=-H(z,t).
\]
Making the substitution $w=z^{-1}$, and accounting for the cancellation
of the minus sign in ${\mathrm d}z/z=-{\mathrm d}w/w$ with the reversal
of orientation, we obtain
\[
\int_{|z|=1}H(z,t)\frac{{\mathrm d}z}{z}
=
\int_{|w|=1}H(w^{-1},t)\frac{{\mathrm d}w}{w}
=
-\int_{|w|=1}H(w,t)\frac{{\mathrm d}w}{w}.
\]
The left and right-hand side differ only by sign, so they must both vanish.
Thus $u_g$ satisfies the semidiscrete diffusion equation for every
$x\in\mathbb N$.

It follows that $u=u_a+u_g$ satisfies \eqref{1.1-again} together with
the initial-boundary conditions \eqref{1.2-again}.

The following lemma will be crucial for the proof of
Theorem~\ref{theo-1}.

\begin{lemma}\label{double-sum-lemma}
For each $x\in\mathbb N$ and $M,t\in[0,\infty)$, we have
\begin{align*}
\sum_{y=1}^{\infty}\sum_{n=0}^{\infty}
y^{M}\frac{t^{2n+|y-x|}}{n!(n+|y-x|)!}
&\leq
x^{M}\left(
{\mathrm e}^{t+t^{2}}
+2^{M}{\mathrm e}^{t^{2}}
\sum_{m=1}^{\infty}\frac{m^{M}t^{m}}{m!}
\right),\\
\sum_{y=1}^{\infty}\sum_{n=0}^{\infty}
y^{M}\frac{t^{2n+y+x}}{n!(n+y+x)!}
&\leq
{\mathrm e}^{t^{2}}
\sum_{m=1}^{\infty}\frac{m^{M}t^{m}}{m!}.
\end{align*}
\end{lemma}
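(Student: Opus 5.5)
The key observation is that the factor $t^{2n}/n!$ separates from the rest, because $(n+k)!\ge n!\,k!$. Applying this to both double sums gives
\[
\frac{t^{2n+k}}{n!(n+k)!}\le \frac{t^{2n}}{n!}\cdot\frac{t^{k}}{k!}\quad\text{(after using }\tfrac{1}{n!(n+k)!}\le\tfrac{1}{n!\,k!}\text{)},
\]
and summation over $n$ produces the factor ${\mathrm e}^{t^2}$. Both estimates therefore reduce to bounding single sums of the form $\sum_y y^M t^{k(y)}/k(y)!$, with $k(y)=|y-x|$ or $k(y)=y+x$. All terms are nonnegative, so Tonelli lets us rearrange freely.

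For the second inequality, with $k=y+x$, substitute $m=y+x\ge x+1\ge 1$. Since $y\le m$, we have $y^M\le m^M$, and hence
\[
\sum_{y\ge1}y^M\frac{t^{y+x}}{(y+x)!}\le\sum_{m\ge 1}\frac{m^Mt^m}{m!}.
\]
Multiplying by ${\mathrm e}^{t^2}$ gives the claim.

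For the first inequality, split the sum at $y=x$. The $n$-sum is then bounded by ${\mathrm e}^{t^2}$ times $\sum_y y^M t^{|y-x|}/|y-x|!$.

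For $1\le y\le x$, use $y^M\le x^M$. The terms $t^{x-y}/(x-y)!$ summed over $y$ are at most ${\mathrm e}^t$, which contributes $x^M{\mathrm e}^{t+t^2}$.

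For $y>x$, write $m=y-x\ge1$. Then $y=x+m\le 2xm$, because $x,m\ge1$, so $y^M\le 2^Mx^Mm^M$. This contributes $2^Mx^M{\mathrm e}^{t^2}\sum_{m\ge1}m^Mt^m/m!$.

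Adding the two parts gives exactly the stated bound. The $n$-sum for the $y\le x$ part should be bounded directly, without first applying the factorial splitting, to avoid losing constants; either way it yields ${\mathrm e}^{t^2}{\mathrm e}^t$.

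The only delicate point is the elementary inequality $x+m\le 2xm$, which is what produces the factor $2^M$ in front of $x^M$; the rest is routine. The case $M=0$ follows from the same argument with the convention $0^0$ irrelevant, since $y,m\ge1$. Finiteness of $\sum_m m^Mt^m/m!$ follows from the ratio test.
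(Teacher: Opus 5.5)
Your proposal is correct and follows the paper's proof essentially step for step: the split at $y=x$ with $y^M\le x^M$ on the first part, the bound $x+m\le 2xm$ for $y>x$, the substitution $m=y+x$ with $y\le m$ in the second inequality, and the use of $1/(n+k)!\le 1/k!$ to extract the factor $\exp(t^2)$ from the $n$-sum. Only $(n+k)!\ge k!$ is needed for that last step, and it is what the paper uses; your stated $(n+k)!\ge n!\,k!$ implies it.
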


\begin{proof}
For the first inequality, we split the sum into the parts corresponding
to $y\leq x$ and $y>x$. Since $y^M\leq x^M$ when $1\leq y\leq x$,
the substitution $m=x-y$ gives
\begin{equation*}
\sum_{y=1}^{x}\sum_{n=0}^{\infty}
y^{M}\frac{t^{2n+|y-x|}}{n!(n+|y-x|)!}
\leq
x^{M}\sum_{m=0}^{x-1}\sum_{n=0}^{\infty}
\frac{t^{2n+m}}{n!(n+m)!}\leq
x^{M}\sum_{m=0}^{\infty}\sum_{n=0}^{\infty}
\frac{t^{2n+m}}{n!m!}=x^{M}{\mathrm e}^{t+t^{2}}.
\end{equation*}
For $y>x$, set $m=y-x$. Since $x,m\geq1$, we have
$x+m\leq2xm$, and therefore
\begin{align*}
\sum_{y=x+1}^{\infty}\sum_{n=0}^{\infty}
y^{M}\frac{t^{2n+|y-x|}}{n!(n+|y-x|)!}
&=
\sum_{m=1}^{\infty}\sum_{n=0}^{\infty}
(x+m)^{M}\frac{t^{2n+m}}{n!(n+m)!}\leq
2^{M}x^{M}
\sum_{m=1}^{\infty}\sum_{n=0}^{\infty}
m^{M}\frac{t^{2n+m}}{n!m!}\\
&=
2^{M}x^{M}{\mathrm e}^{t^{2}}
\sum_{m=1}^{\infty}\frac{m^{M}t^{m}}{m!}.
\end{align*}
Adding these two estimates proves the first inequality.

For the second inequality, set $m=y+x$. Then $y=m-x\leq m$ and
$(n+m)!\geq m!$, so
\begin{align*}
\sum_{y=1}^{\infty}\sum_{n=0}^{\infty}
y^{M}\frac{t^{2n+y+x}}{n!(n+y+x)!}
&=
\sum_{m=x+1}^{\infty}\sum_{n=0}^{\infty}
(m-x)^{M}\frac{t^{2n+m}}{n!(n+m)!}\\
&\leq
\sum_{m=x+1}^{\infty}\sum_{n=0}^{\infty}
m^{M}\frac{t^{2n+m}}{n!m!}\leq
{\mathrm e}^{t^{2}}
\sum_{m=1}^{\infty}\frac{m^{M}t^{m}}{m!}.\tag*{\qedhere}
\end{align*}
\end{proof}

We can now state the main result of this subsection.
Recall that the Dirichlet diffusion kernel is
\[
P_{x,y}(t)
=
K_{x-y}(t)-K_{x+y}(t)
=
{\mathrm e}^{-2t}
\bigl[I_{x-y}(2t)-I_{x+y}(2t)\bigr],
\qquad x,y\in\mathbb N.
\]

\begin{theorem}\label{theo-1}
Suppose that \(g\colon[0,\infty)\to\mathbb R\) is continuous and
\(u_x\in\mathbb R\), \(x\in\mathbb N\). Then the following hold:
\begin{enumerate}
\item
If \(u_x=O(x^M)\) as \(x\to\infty\), for some 
\(M\in[0,\infty)\), then the function $u\colon\mathbb N_0\times[0,\infty)\to\mathbb R$ given by
\[
u(x,t)
=\begin{cases}
\displaystyle
\sum_{y=1}^{\infty}u_yP_{x,y}(t)
+
\int_0^t g(s)P_{x,1}(t-s)\,{\mathrm d}s,
&
x\in\mathbb N,\\
g(t),&x=0,
\end{cases}
\]
is well-defined and satisfies the semidiscrete diffusion equation
\eqref{1.1-again} with the initial-boundary conditions
\eqref{1.2-again}.

Furthermore, for every $T>0$,
$u(x,t)=O(x^M)$ as $x\to\infty$, uniformly for $t\in[0,T]$.
The function $u$ is the unique solution satisfying this
growth condition on every compact time interval.

In particular, if
\(\sup_{x\in\mathbb N}|u_x|<\infty\), then
$\sup_{\substack{x\in\mathbb N\\0\leq t\leq T}}
|u(x,t)|<\infty$.

\item
If $\sum_{x\in\mathbb N}|u_x|<\infty$, then, for
$x\in\mathbb N$ and $t\geq0$, the solution can alternatively
be expressed by the integral formula
\begin{gather*}
u(x,t)
=
\frac{1}{2\pi{\mathrm i}}
\int_{|z|=1}
\left[
\sum_{y\in\mathbb N}
u_y\left(z^y-\frac{1}{z^y}\right)
\right]
\exp\left[
\left(z+\frac{1}{z}-2\right)t
\right]
\frac{{\mathrm d}z}{z^{x+1}}
\\
+
\frac{1}{2\pi{\mathrm i}}
\int_{|z|=1}
\left(z-\frac{1}{z}\right)
\left\{
\int_0^t
g(s)
\exp\left[
\left(z+\frac{1}{z}-2\right)(t-s)
\right]
\,{\mathrm d}s
\right\}
\frac{{\mathrm d}z}{z^{x+1}}.
\end{gather*}

\item
For \(p>0\) and \(T>0\), if
$\sum_{x\in\mathbb N}|u_x|^p<\infty$,
then
$\sup_{0\leq t\leq T}
\sum_{x\in\mathbb N}|u(x,t)|^p<\infty$.

\item
If \(\lim_{x\to\infty}u_x=0\), then, for \(T>0\),
$\lim_{x\to\infty}u(x,t)=0$
uniformly for \(t\in[0,T]\).

\item
For every $N\in\mathbb N$ and $T>0$, the part $u_g$ given by
\eqref{ug-definition} satisfies
$\sup_{0\leq t\leq T}
\sum_{x\in\mathbb N}x^N|u_g(x,t)|<\infty$.
\end{enumerate}
\end{theorem}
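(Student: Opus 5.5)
We treat the two parts of the formula separately: $u=u_a+u_g$, with $u_a$ given by the Bessel series \eqref{ua-expression} and $u_g$ by \eqref{ug-expression}. The key tool for $u_a$ is Lemma~\ref{double-sum-lemma}. Since $|u_y|\le Cy^M$ for $y\ge1$ (after enlarging $C$), and $I_\ell(2t)$ is the inner $n$-sum in the lemma, the lemma gives
\[
\sum_{y\ge1}|u_y|\bigl(I_{y-x}(2t)+I_{y+x}(2t)\bigr)\le C\,x^M\Phi_M(t),
\]
where $\Phi_M$ is continuous and increasing in $t$. The same estimate holds with $t$ replaced by $T$, because $I_\ell$ is increasing on $[0,\infty)$. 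Hence the series for $u_a$ converges absolutely and uniformly on $[0,T]$ for each fixed $x$, and $|u_a(x,t)|\le C' x^M$ uniformly for $t\in[0,T]$. For $u_g$ we use $|P_{x,1}|\le K_{x-1}+K_{x+1}\le 2$, which gives $|u_g(x,t)|\le 2T\max_{[0,T]}|g|$. This proves well-definedness and the $O(x^M)$ bound.

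To verify the equation for $u_a$, we differentiate termwise. Each $P_{x,y}$ satisfies the lattice equation in $x$ on $\mathbb N$ (the whole-line kernel is a solution, and so is its reflection), and $P_{0,y}=0$. The derivative series is $\sum_y u_y\bigl(P_{x+1,y}-2P_{x,y}+P_{x-1,y}\bigr)$. It converges uniformly on $[0,T]$ by the same lemma applied at $x-1,x,x+1$; at $x-1=0$ the term vanishes identically. So termwise differentiation is legitimate. The initial condition holds since $P_{x,y}(0)=\delta_{xy}$ for $x,y\ge1$.

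For $u_g$ we argue directly. $P_{x,1}$ is smooth, so
\[
\partial_t u_g(x,t)=g(t)P_{x,1}(0)+\int_0^t g(s)\,\partial_tP_{x,1}(t-s)\,{\mathrm d}s .
\]
Here $P_{x,1}(0)=\delta_{x1}$, and $\partial_t P_{x,1}=\Delta_xP_{x,1}$ holds for $x\ge1$ with $P_{0,1}=0$. At $x=1$ the extra term $g(t)$ is exactly the contribution of the boundary value $u(0,t)=g(t)$. This repeats the contour verification given earlier but needs no Fourier assumption on $g$. Uniqueness in the growth class then follows from Theorem~\ref{uniquenessForward} with $m=1$. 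The bounded case is the special case $M=0$.

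Part~2 follows from the contour computation already carried out: in the $\ell^1$ case, \eqref{ua-expression} and \eqref{ug-expression} are exactly the residue evaluations of the two contour integrals, and the interchanges of sum and integral are justified by absolute convergence.

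For part~3, the case $p\ge1$ goes as follows. The operator $u\mapsto\sum_y u_yP_{x,y}$ has $|P_{x,y}|\le K_{x-y}+K_{x+y}$. This kernel has row and column sums at most $2$, so Schur's test gives boundedness on $\ell^p$. The case $0<p<1$ is handled by a different argument. Since $\ell^p\subset\ell^1$, we use $|\sum a_y|^p\le\sum|a_y|^p$ together with $|P_{x,y}|^p\le (K_{x-y}+K_{x+y})^p$. It then remains to bound $\sum_x (K_{x-y})^p$ uniformly in $y$ and $t\le T$. We expect this from the factorial decay $I_\ell(2t)\le t^{|\ell|}e^{t^2}/|\ell|!$, which makes $\sum_\ell K_\ell^p$ finite uniformly on $[0,T]$.

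Part~5 and the $u_g$ contribution to part~3 both use this same factorial decay. It gives $|P_{x,1}(\tau)|\le 2T^{x-1}e^{T^2}/(x-1)!$ for $\tau\le T$, and therefore $\sum_x x^N|u_g(x,t)|\le 2T\max_{[0,T]}|g|\,e^{T^2}\sum_x x^NT^{x-1}/(x-1)!<\infty$.

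For part~4, we split $u_y$ into a finite part, $y\le Y$, and a tail with $\sup_{y>Y}|u_y|<\varepsilon$. The tail contributes at most $2\varepsilon$, since the row sums of the kernel are at most $2$. Each of the finitely many terms $P_{x,y}(t)$ tends to $0$ as $x\to\infty$, uniformly on $[0,T]$, by the factorial bound. The $u_g$ part also tends to $0$ by part~5.

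We expect the main obstacle to be the justification of termwise differentiation in part~1 under polynomial growth, that is, getting uniform-in-$t$ bounds for the neighbouring indices. Lemma~\ref{double-sum-lemma}, combined with monotonicity of $I_\ell$ in $t$, should handle it. The $p<1$ case of part~3 is secondary but needs separate care.
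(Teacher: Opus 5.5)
Your proof is correct. Parts 2--5 are essentially the paper's argument. Schur's test with row and column sums at most $2$ is the half-line form of the paper's odd extension plus Young's inequality. The case $0<p<1$ and part 5 use the same factorial bound as the paper, and your finite-part/tail split in part 4 is the paper's approximation argument.

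The genuine difference is how you verify the equation in part 1. The paper never differentiates the Bessel series. It first uses the earlier contour verification, which is valid for $\ell^1$ data, to know that truncated data $u^{(r)}$ give solutions. It then lets $r\to\infty$ in the integrated equation $u^{(r)}(x,t)=u^{(r)}_x+\int_0^t\bigl[u^{(r)}(x+1,s)-2u^{(r)}(x,s)+u^{(r)}(x-1,s)\bigr]\,\mathrm{d}s$. This needs only the uniform convergence of $u^{(r)}(j,\cdot)$ for each fixed $j$, which Lemma~\ref{double-sum-lemma} supplies.

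You instead differentiate termwise. You use that each $P_{x,y}$ solves the lattice equation (from $2I_\ell'=I_{\ell-1}+I_{\ell+1}$) with $P_{0,y}=0$ and $P_{x,y}(0)=\delta_{x,y}$. You get the majorant for the derivative series by applying the lemma at $x-1,x,x+1$, and for $u_g$ you use the Leibniz rule with $P_{x,1}(0)=\delta_{x,1}$. This makes part 1 independent of the contour-integral machinery, which is then needed only for part 2. The paper's route reuses work already done and needs no derivative bounds.

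What your route actually avoids is the truncation through $\ell^1$ initial data, not any assumption on $g$. The contour verification already needed only continuity of $g$.

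A cosmetic point on part 5. The factorial bound alone gives $|P_{x,1}(\tau)|\le e^{T^2}\bigl(T^{x-1}/(x-1)!+T^{x+1}/(x+1)!\bigr)$, and the second term is not dominated by the first when $T^2>x(x+1)$. Your stated inequality is still true, because $0\le P_{x,1}\le K_{x-1}$ by monotonicity of $I_n$ in $n\ge 0$. Summability is unaffected either way.
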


\begin{proof}
We first justify the equivalence of the contour and Bessel
representations when the initial data belong to $\ell^1$.
Under this assumption, the series involved in the calculations leading
to \eqref{ua-expression} and \eqref{ug-expression} converge absolutely
and uniformly on the unit circle, uniformly for time in compact
intervals. Together with the continuity of $g$, this justifies the
interchanges of sums and integrals in those calculations.
Consequently, the two representations agree, proving assertion~2.
By the preceding verification of the contour formula, the Bessel
formula therefore gives a solution whenever the initial data belong
to $\ell^1$, in particular when they have finite support.

We now prove assertion~1 under the weaker assumption
$u_y=O(y^M)$.
Recall that
\[
K_\ell(t)={\mathrm e}^{-2t}I_\ell(2t),
\qquad \ell\in\mathbb Z,\quad t\geq0.
\]
Write
\[
u(x,t)=u_a(x,t)+u_g(x,t),
\]
where
\begin{align*}
u_a(x,t)
&=
\sum_{y=1}^{\infty}
u_yP_{x,y}(t),\\
u_g(x,t)
&=
\int_0^t g(s)P_{x,1}(t-s)\,{\mathrm d}s.
\end{align*}
We shall repeatedly use the estimate
\begin{equation}\label{K-factorial-bound}
0\leq K_\ell(t)
\leq I_\ell(2t)
\leq
{\mathrm e}^{T^2}\frac{T^{|\ell|}}{|\ell|!},
\qquad 0\leq t\leq T.
\end{equation}
Indeed, for $\ell\geq0$,
\[
I_\ell(2t)
=
\sum_{n=0}^{\infty}
\frac{t^{2n+\ell}}{n!(n+\ell)!}
\leq
\frac{t^\ell}{\ell!}
\sum_{n=0}^{\infty}\frac{t^{2n}}{n!}
=
{\mathrm e}^{t^2}\frac{t^\ell}{\ell!}.
\]

Fix $x\in\mathbb N$ and $T>0$, and choose $C>0$ such that
$|u_y|\leq Cy^M$ for every $y\in\mathbb N$.
Since the series for $I_\ell(2t)$ has nonnegative coefficients,
$0\leq I_\ell(2t)\leq I_\ell(2T)$ for $0\leq t\leq T$. Thus
\[
\left|
{\mathrm e}^{-2t}u_y
\bigl[I_{y-x}(2t)-I_{y+x}(2t)\bigr]
\right|
\leq
Cy^M\bigl[I_{y-x}(2T)+I_{y+x}(2T)\bigr].
\]
By Lemma~\ref{double-sum-lemma}, the sum of these bounds over
$y\in\mathbb N$ is finite. The Weierstrass $M$-test therefore shows
that the series defining $u_a(x,t)$ converges absolutely and uniformly
for $t\in[0,T]$.

Moreover, the two estimates in Lemma~\ref{double-sum-lemma} give
\begin{align*}
|u_a(x,t)|
&\leq
C\sum_{y=1}^{\infty}y^M
\bigl[I_{y-x}(2T)+I_{y+x}(2T)\bigr]\\
&\leq
Cx^M\left(
{\mathrm e}^{T+T^2}
+2^M{\mathrm e}^{T^2}
\sum_{m=1}^{\infty}\frac{m^MT^m}{m!}
\right)
+C{\mathrm e}^{T^2}
\sum_{m=1}^{\infty}\frac{m^MT^m}{m!}\\
&\leq
Cx^M\left(
{\mathrm e}^{T+T^2}
+(2^M+1){\mathrm e}^{T^2}
\sum_{m=1}^{\infty}\frac{m^MT^m}{m!}
\right)
=:C_Tx^M,
\end{align*}
where the last inequality uses $x^M\geq1$.
The constant $C_T$ is finite and independent of $x$ and $t$.

Let
\[
G_T:=\max_{0\leq s\leq T}|g(s)|.
\]
Using the definition of $u_g$ and
\eqref{K-factorial-bound}, we obtain
\begin{equation}\label{ug-factorial-bound}
|u_g(x,t)|
\leq
TG_T{\mathrm e}^{T^2}
\left(
\frac{T^{x-1}}{(x-1)!}
+
\frac{T^{x+1}}{(x+1)!}
\right),
\qquad 0\leq t\leq T.
\end{equation}
Since $T^k/k!\leq{\mathrm e}^T$ for every $k\geq0$, it follows that
\[
|u_g(x,t)|\leq2TG_T{\mathrm e}^{T+T^2}.
\]
Combining this with the estimate for $u_a$ and using $x^M\geq1$, we get
\[
|u(x,t)|
\leq
\left(C_T+2TG_T{\mathrm e}^{T+T^2}\right)x^M.
\]
Thus $u(x,t)=O(x^M)$ as $x\to\infty$, uniformly for $t\in[0,T]$.

It remains to verify the equation. For $r\in\mathbb N$, define the
truncated initial data
\[
u_y^{(r)}
=
\begin{cases}
u_y,&y\leq r,\\
0,&y>r.
\end{cases}
\]
For these finitely supported initial data, the corresponding function
$u^{(r)}$ satisfies the equation and the initial-boundary conditions,
as established at the beginning of the proof.
For each fixed $j\in\mathbb N$, the uniform convergence of the series
defining $u_a(j,t)$ implies that
\[
u^{(r)}(j,t)\longrightarrow u(j,t)
\]
uniformly for $t\in[0,T]$, since the boundary-data contribution $u_g$
is unchanged. At $j=0$, we have
$u^{(r)}(0,t)=u(0,t)=g(t)$ for every $r$.
Consequently, we may let $r\to\infty$ in the integral form
\[
u^{(r)}(x,t)
=
u_x^{(r)}
+
\int_0^t
\bigl[
u^{(r)}(x+1,s)-2u^{(r)}(x,s)+u^{(r)}(x-1,s)
\bigr]\,{\mathrm d}s.
\]
This gives
\[
u(x,t)
=
u_x+
\int_0^t
\bigl[
u(x+1,s)-2u(x,s)+u(x-1,s)
\bigr]\,{\mathrm d}s.
\]
For each fixed $j\in\mathbb N_0$, the function $u(j,\cdot)$ is
continuous, being the uniform limit on $[0,T]$ of the continuous
functions $u^{(r)}(j,\cdot)$. Thus the integrand is continuous,
and differentiation with respect to $t$ shows that $u$ satisfies
\eqref{1.1-again}. Moreover,
$u(x,0)=u_x$, while $u(0,t)=g(t)$ by definition. Uniqueness in the stated growth class follows from
Theorem~\ref{uniquenessForward}, applied on $[0,T]$ with
$m=1$, $a_{-1}=a_1=1$, and $a_0=-2$. This proves assertion 1.

We next prove assertion~3. Fix $p>0$ and suppose that
\[
\sum_{y=1}^{\infty}|u_y|^p<\infty.
\]
In particular, the initial sequence is bounded, so assertion~1 applies
with $M=0$. Extend the initial sequence oddly to $\mathbb Z$ by setting
\[
v_0=0,\qquad v_y=u_y,\qquad v_{-y}=-u_y,
\qquad y\in\mathbb N.
\]
Then
\[
u_a(x,t)=\sum_{y\in\mathbb Z}K_{x-y}(t)v_y.
\]
Moreover, substituting $z=1$ into the generating function
${\mathrm e}^{t(z+z^{-1})}=\sum_{\ell\in\mathbb Z}I_\ell(2t)z^\ell$ gives
\[
\sum_{\ell\in\mathbb Z}K_\ell(t)=1.
\]
If $p\geq1$, Young's inequality $\|a*b\|_{\ell^p(\mathbb Z)}
\leq\|a\|_{\ell^1(\mathbb Z)}\|b\|_{\ell^p(\mathbb Z)}$  yields
\begin{align*}
\sum_{x=1}^{\infty}|u_a(x,t)|^p\leq
\sum_{x\in\mathbb Z}
\left|\sum_{y\in\mathbb Z}K_{x-y}(t)v_y\right|^p\leq
\left(\sum_{\ell\in\mathbb Z}K_\ell(t)\right)^p
\sum_{y\in\mathbb Z}|v_y|^p=
\sum_{y\in\mathbb Z}|v_y|^p
=
2\sum_{y=1}^{\infty}|u_y|^p.
\end{align*}
If $0<p<1$, the triangle inequality and the subadditivity of
$r\mapsto r^p$ give
\[
\left|\sum_j a_j\right|^p
\leq \sum_j|a_j|^p.
\]
For infinite sums, this follows by applying the finite-sum inequality
to partial sums and passing to the limit.
Consequently,
\begin{align*}
\sum_{x=1}^{\infty}|u_a(x,t)|^p
&\leq
\sum_{x\in\mathbb Z}
\left|\sum_{y\in\mathbb Z}K_{x-y}(t)v_y\right|^p\\
&\leq
\sum_{x\in\mathbb Z}\sum_{y\in\mathbb Z}
K_{x-y}(t)^p|v_y|^p=
\sum_{y\in\mathbb Z}|v_y|^p
\sum_{x\in\mathbb Z}K_{x-y}(t)^p=
\left(\sum_{\ell\in\mathbb Z}K_\ell(t)^p\right)
\left(\sum_{y\in\mathbb Z}|v_y|^p\right).
\end{align*}
By \eqref{K-factorial-bound},
\[
\sup_{0\leq t\leq T}
\sum_{\ell\in\mathbb Z}K_\ell(t)^p
\leq
{\mathrm e}^{pT^2}
\left(
1+2\sum_{\ell=1}^{\infty}
\frac{T^{p\ell}}{(\ell!)^p}
\right)
<\infty,
\]
where convergence follows from the ratio test.
Thus, for the given $p>0$,
\[
\sup_{0\leq t\leq T}
\sum_{x=1}^{\infty}|u_a(x,t)|^p<\infty.
\]

By \eqref{ug-factorial-bound}, $|u_g(x,t)|$ is bounded, uniformly for
$t\in[0,T]$, by a sequence belonging to $\ell^p$ for every $p>0$.
Indeed, the ratio test gives
\[
\sum_{k=0}^{\infty}
\left(\frac{T^k}{k!}\right)^p<\infty.
\]
Consequently,
\[
\sup_{0\leq t\leq T}
\sum_{x=1}^{\infty}|u_g(x,t)|^p<\infty.
\]
Combining this with the estimate for $u_a$ and using
\[
|a+b|^p\leq
\begin{cases}
|a|^p+|b|^p,&0<p\leq1,\\
2^{p-1}(|a|^p+|b|^p),&p>1,
\end{cases}
\]
we obtain
\[
\sup_{0\leq t\leq T}
\sum_{x=1}^{\infty}|u(x,t)|^p<\infty.
\]
This proves assertion 3.

For assertion 4, suppose that $u_y\to0$. Then its odd extension
$v$ belongs to $c_0(\mathbb Z)$. Given $\varepsilon>0$, choose a
finitely supported sequence $v^{(r)}$ such that
\[
\|v-v^{(r)}\|_\infty<\varepsilon.
\]
Since $\sum_{\ell\in\mathbb Z}K_\ell(t)=1$,
\[
\left\|
K(t)*(v-v^{(r)})
\right\|_\infty
\leq\varepsilon
\]
uniformly for $t\in[0,T]$. For the finitely supported sequence $v^{(r)}$,
\eqref{K-factorial-bound} gives
\[
\sup_{0\leq t\leq T}
\left|(K(t)*v^{(r)})_x\right|
\leq
{\mathrm e}^{T^2}
\sum_{y\in\operatorname{supp}v^{(r)}}
|v_y^{(r)}|
\frac{T^{|x-y|}}{|x-y|!}
\longrightarrow0
\]
as $x\to\infty$, since the sum is finite.
Together with the preceding approximation estimate, this yields
\[
\limsup_{x\to\infty}
\sup_{0\leq t\leq T}|u_a(x,t)|
\leq\varepsilon.
\]
Since $\varepsilon>0$ was arbitrary, the required uniform convergence
follows. The same conclusion for $u_g$ follows
immediately from \eqref{ug-factorial-bound}. This proves
assertion 4.

Finally, for every $N\in\mathbb N$, estimate
\eqref{ug-factorial-bound} gives
\begin{align*}
\sup_{0\leq t\leq T}
\sum_{x=1}^{\infty}x^N|u_g(x,t)|
&\leq
G_TT{\mathrm e}^{T^2}
\sum_{x=1}^{\infty}
x^N
\left(
\frac{T^{x-1}}{(x-1)!}
+
\frac{T^{x+1}}{(x+1)!}
\right)
<\infty.
\end{align*}
This proves assertion 5.
\end{proof}

\begin{remark}
We briefly compare our transform convention with those
used in \cite{BW10,CD21}.
Biondini and Wang \cite{BW10} define the spatial transform by
\[
\hat q(z,t)=\sum_{n=1}^{\infty}q_n(t)z^{-n}.
\]
With $q_n(t)=u(n,t)$, this gives
\[
\hat q(z,t)=\hat u(z^{-1},t).
\]
Thus the two conventions are related by replacing $z$ by its reciprocal. In particular,
the exterior of the unit disk in their formulation
corresponds to the interior of the unit disk in ours.

Cisneros and Deconinck \cite{CD21} use the transform
\[
\hat q(k,t)
=
h\sum_{n=1}^{\infty}
q_n(t){\mathrm e}^{-{\mathrm i}knh},
\]
where $h>0$ is the spatial mesh size, with inversion formula
\[
q_n(t)
=
\frac{1}{2\pi}
\int_{-\pi/h}^{\pi/h}
{\mathrm e}^{{\mathrm i}knh}\hat q(k,t)\,{\mathrm d}k.
\]
The substitution
\[
z={\mathrm e}^{-{\mathrm i}kh}
\]
gives $\hat q(k,t)=h\hat u(z,t)$ and
${\mathrm d}k=-{\mathrm d}z/({\mathrm i}hz)$.
As $k$ increases from $-\pi/h$ to $\pi/h$, the variable
$z$ traverses the unit circle clockwise.
Reversing this orientation therefore yields
\[
q_n(t)
=
\frac{1}{2\pi{\mathrm i}}
\int_{|z|=1}\hat u(z,t)\frac{{\mathrm d}z}{z^{n+1}},
\]
with counterclockwise orientation, as in our convention.

For the centered semidiscrete diffusion equation, their
dispersion relation is
\[
W(k)
=
\frac{2-{\mathrm e}^{{\mathrm i}kh}
-{\mathrm e}^{-{\mathrm i}kh}}{h^2}.
\]
Consequently,
\[
-W(k)=\frac{z+z^{-1}-2}{h^2},
\]
and the symmetry $k\mapsto-k$ becomes $z\mapsto z^{-1}$.
Their time transform of the boundary data satisfies
\[
{\mathrm e}^{-W(k)t}
\int_0^t {\mathrm e}^{W(k)s}g(s)\,{\mathrm d}s
=
\int_0^t
g(s){\mathrm e}^{-W(k)(t-s)}\,{\mathrm d}s.
\]
Taking $h=1$, this is precisely $\tilde g(z,t)$.
These substitutions convert their representation for
the diffusion equation with Dirichlet boundary data into
the contour formula in Theorem~\ref{theo-1}.

\end{remark}

\subsubsection{Nonhomogeneous problem}

We conclude this section by deriving the solution of
the nonhomogeneous problem
\begin{align*}
\frac{\partial u}{\partial t}(x,t)
&=
u(x+1,t)-2u(x,t)+u(x-1,t)+f(x,t),
\quad x\in\mathbb N,\quad t\geq0,\\
u(0,t)&=g(t),
\quad t\geq0,\\
u(x,0)&=u_x,
\quad x\in\mathbb N.
\end{align*}
For the formal derivation, assume that
$g\colon[0,\infty)\to\mathbb R$ is continuous,
$(u_x)_{x\in\mathbb N}\in\ell^1$, and
$f\colon\mathbb N\times[0,\infty)\to\mathbb R$ satisfies
\[
\sum_{x=1}^{\infty}|f(x,t)|<\infty,
\qquad t\geq0,
\]
with continuous dependence on $t$ in the sense that
\[
\sum_{x=1}^{\infty}|f(x,t)-f(x,t_0)|
\longrightarrow0
\qquad\text{as }t\to t_0.
\]
These assumptions justify the transform calculations below.
After deriving the Bessel representation, we shall show that it
remains valid for initial data of polynomial growth and forcing
that is continuous in time at each spatial index and bounded
on $\mathbb N\times[0,R]$ for every $R>0$.

Define
\[
\hat f(z,t):=\sum_{x\in\mathbb N}f(x,t)z^x,
\qquad |z|\leq1.
\]
The series converges absolutely and uniformly for $|z|\leq1$, locally
uniformly with respect to $t$. By the residue theorem,
\[
f(x,t)
=
\frac{1}{2\pi{\mathrm i}}
\int_{|z|=1}\hat f(z,t)z^{-x-1}\,{\mathrm d}z,
\qquad x\in\mathbb N.
\]

As in the homogeneous case, put
\[
U(z,t):=\hat u(z,t),
\qquad
g_1(t):=u(1,t)-g(t),
\]
and let
\[
w(z):=z+z^{-1}-2.
\]
Applying the discrete transform to the equation gives
\[
\frac{\partial U}{\partial t}(z,t)
=
w(z)U(z,t)+(z-1)g(t)-g_1(t)+\hat f(z,t).
\]
Consequently,
\begin{equation}\label{nonhomogeneous-U}
U(z,t)
=
{\mathrm e}^{w(z)t}U(z,0)
+(z-1)\tilde g(z,t)
-\tilde g_1(z,t)
+\tilde f(z,t),
\end{equation}
where
\begin{align*}
\tilde g(z,t)
&=
\int_0^t g(s){\mathrm e}^{w(z)(t-s)}\,{\mathrm d}s,\\
\tilde g_1(z,t)
&=
\int_0^t g_1(s){\mathrm e}^{w(z)(t-s)}\,{\mathrm d}s,\\
\tilde f(z,t)
&=
\int_0^t\hat f(z,s){\mathrm e}^{w(z)(t-s)}\,{\mathrm d}s.
\end{align*}

Since $w(z^{-1})=w(z)$ and
$\tilde g(z^{-1},t)=\tilde g(z,t)$, replacing $z$ by $z^{-1}$ in
\eqref{nonhomogeneous-U} gives
\[
U(z^{-1},t)
=
{\mathrm e}^{w(z)t}U(z^{-1},0)
+(z^{-1}-1)\tilde g(z,t)
-\tilde g_1(z,t)
+\tilde f(z^{-1},t).
\]
Adding the last two identities and solving for $\tilde g_1(z,t)$, we
obtain
\begin{align*}
\tilde g_1(z,t)
={}&
\frac12{\mathrm e}^{w(z)t}
\bigl[U(z,0)+U(z^{-1},0)\bigr]-\frac12\bigl[U(z,t)+U(z^{-1},t)\bigr]
+\frac12w(z)\tilde g(z,t)+\frac12\bigl[\tilde f(z,t)+\tilde f(z^{-1},t)\bigr].
\end{align*}
Substituting this expression back into \eqref{nonhomogeneous-U} yields
\begin{align*}
U(z,t)
={}&
{\mathrm e}^{w(z)t}
\bigl[U(z,0)-U(z^{-1},0)\bigr]
+U(z^{-1},t)+(z-z^{-1})\tilde g(z,t)
+\tilde f(z,t)-\tilde f(z^{-1},t).
\end{align*}

For $x\in\mathbb N$, the term involving $U(z^{-1},t)$ has zero
contribution to the inverse transform. We therefore obtain
\begin{align}
u(x,t)
={}&
\frac{1}{2\pi{\mathrm i}}
\int_{|z|=1}
\left[
\sum_{y\in\mathbb N}
u_y\left(z^y-z^{-y}\right)
\right]
{\mathrm e}^{(z+z^{-1}-2)t}
\frac{{\mathrm d}z}{z^{x+1}}
\notag\\
&+
\frac{1}{2\pi{\mathrm i}}
\int_{|z|=1}
(z-z^{-1})
\left[
\int_0^t
g(s){\mathrm e}^{(z+z^{-1}-2)(t-s)}\,{\mathrm d}s
\right]
\frac{{\mathrm d}z}{z^{x+1}}
\notag\\
&+
\frac{1}{2\pi{\mathrm i}}
\int_{|z|=1}
\left[
\int_0^t
\bigl(\hat f(z,s)-\hat f(z^{-1},s)\bigr)
{\mathrm e}^{(z+z^{-1}-2)(t-s)}\,{\mathrm d}s
\right]
\frac{{\mathrm d}z}{z^{x+1}}.
\label{nonhomogeneous-contour-formula}
\end{align}

It remains to rewrite the last contour integral in terms of the
modified Bessel functions. Since
\[
{\mathrm e}^{(z+z^{-1}-2)(t-s)}
=
{\mathrm e}^{-2(t-s)}
\sum_{\ell\in\mathbb Z}I_\ell\bigl(2(t-s)\bigr)z^\ell,
\]
termwise integration and the residue theorem give
\begin{align*}
\frac{1}{2\pi{\mathrm i}}
\int_{|z|=1}
\left[
\int_0^t
\hat f(z,s)
{\mathrm e}^{(z+z^{-1}-2)(t-s)}\,{\mathrm d}s
\right]
\frac{{\mathrm d}z}{z^{x+1}}&=
\int_0^t
{\mathrm e}^{-2(t-s)}
\sum_{y=1}^{\infty}
f(y,s)I_{x-y}\bigl(2(t-s)\bigr)\,{\mathrm d}s,\\
\frac{1}{2\pi{\mathrm i}}
\int_{|z|=1}
\left[
\int_0^t
\hat f(z^{-1},s)
{\mathrm e}^{(z+z^{-1}-2)(t-s)}\,{\mathrm d}s
\right]
\frac{{\mathrm d}z}{z^{x+1}}&=
\int_0^t
{\mathrm e}^{-2(t-s)}
\sum_{y=1}^{\infty}
f(y,s)I_{x+y}\bigl(2(t-s)\bigr)\,{\mathrm d}s.
\end{align*}
Using $I_{x-y}=I_{y-x}$, the contribution of the nonhomogeneous term is
therefore
\[
\int_0^t
{\mathrm e}^{-2(t-s)}
\sum_{y=1}^{\infty}
f(y,s)
\bigl[I_{y-x}\bigl(2(t-s)\bigr)-I_{y+x}\bigl(2(t-s)\bigr)\bigr]
\,{\mathrm d}s.
\]

Putting everything together, we arrive at the formula
\begin{align}
u(x,t)
={}&
{\mathrm e}^{-2t}
\sum_{y=1}^{\infty}
u_y\bigl[I_{y-x}(2t)-I_{y+x}(2t)\bigr]
\notag\\
&+
\int_0^t
{\mathrm e}^{-2(t-s)}g(s)
\bigl[I_{x-1}\bigl(2(t-s)\bigr)-I_{x+1}\bigl(2(t-s)\bigr)\bigr]
\,{\mathrm d}s
\notag\\
&+
\int_0^t
{\mathrm e}^{-2(t-s)}
\sum_{y=1}^{\infty}
f(y,s)
\bigl[I_{y-x}\bigl(2(t-s)\bigr)-I_{y+x}\bigl(2(t-s)\bigr)\bigr]
\,{\mathrm d}s,
\label{nonhomogeneous-solution}
\end{align}
for $x\in\mathbb N$ and $t\geq0$, together with
$u(0,t)=g(t)$.

We now justify the Bessel formula under weaker assumptions
and extend it to an arbitrary diffusion coefficient $a>0$.

For $a>0$, recall the whole-line heat kernel
\[
K_\ell^{(a)}(t)={\mathrm e}^{-2at}I_\ell(2at),
\qquad \ell\in\mathbb Z,\quad t\geq0,
\]
and define
\[
P_{x,y}^{(a)}(t)
:=
K_{x-y}^{(a)}(t)-K_{x+y}^{(a)}(t),
\qquad x\in\mathbb N_0,\quad y\in\mathbb N.
\]
Thus
\[
K_\ell^{(a)}(t)=K_\ell(at),
\qquad
P_{x,y}^{(a)}(t)=P_{x,y}(at).
\]
As before, we omit the superscript when $a=1$.

\begin{theorem}\label{theo-nonhomogeneous}
Let $a>0$, let $g\colon[0,\infty)\to\mathbb R$ be continuous,
and suppose that $(u_x)_{x\in\mathbb N}$ satisfies
\[
u_x=O(x^M)
\qquad\text{as }x\to\infty
\]
for some $M\geq0$.
Let $f\colon\mathbb N\times[0,\infty)\to\mathbb R$ be such that
$t\mapsto f(x,t)$ is continuous for every $x\in\mathbb N$ and
\[
\sup_{\substack{x\in\mathbb N\\0\leq t\leq R}}
|f(x,t)|<\infty
\qquad\text{for every }R>0.
\]
Then the formula
\begin{align}
u(x,t)
={}&
\sum_{y=1}^{\infty}u_yP_{x,y}^{(a)}(t)
+
a\int_0^t g(s)P_{x,1}^{(a)}(t-s)\,{\mathrm d}s
+
\int_0^t
\sum_{y=1}^{\infty}f(y,s)P_{x,y}^{(a)}(t-s)\,{\mathrm d}s,
\label{nonhomogeneous-solution-a}
\end{align}
for $x\in\mathbb N$ and $t\geq0$, together with $u(0,t)=g(t)$,
defines a solution of
\[
\frac{\partial u}{\partial t}(x,t)
=
a\Delta u(x,t)+f(x,t),
\qquad x\in\mathbb N,\quad t\geq0,
\]
satisfying $u(x,0)=u_x$ for every $x\in\mathbb N$.
Moreover, for every $T>0$,
$u(x,t)=O(x^M)$ as $x\to\infty$, uniformly for $t\in[0,T]$.
The function $u$ is the unique solution satisfying this
growth condition on every compact time interval.

The same conclusion holds for complex-valued data.
\end{theorem}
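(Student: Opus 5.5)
The plan is to reduce to the case $a=1$ and then split the solution into the three terms of \eqref{nonhomogeneous-solution-a}. For the reduction, put $\tilde u(x,\tau)=u(x,\tau/a)$, $\tilde g(\tau)=g(\tau/a)$, $\tilde f(x,\tau)=a^{-1}f(x,\tau/a)$. Then $u$ solves the problem with coefficient $a$ if and only if $\tilde u$ solves it with coefficient $1$. The substitution $s=\sigma/a$ in the integrals, together with $P^{(a)}_{x,y}(t)=P_{x,y}(at)$, turns \eqref{nonhomogeneous-solution-a} into \eqref{nonhomogeneous-solution}. This change of variables is also where the factor $a$ in front of the boundary integral comes from. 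So it suffices to treat $a=1$.

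For $a=1$, write $u=u_a+u_g+u_f$. Theorem~\ref{theo-1}, assertion 1, already gives the following for $u_a+u_g$ with $u_g(0,t)=g(t)$:
\begin{itemize}
\item it is well defined;
\item it solves the homogeneous equation with the correct initial and boundary data;
\item it is $O(x^M)$ uniformly on $[0,T]$.
\end{itemize}
It remains to show that $u_f(x,t)=\int_0^t\sum_{y\ge1}f(y,s)P_{x,y}(t-s)\,{\mathrm d}s$, with $u_f(0,t)=0$, is bounded on $\mathbb N_0\times[0,T]$, vanishes at $t=0$, and solves $\partial_tu_f=\Delta u_f+f$.

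For $u_f$ I would use reflection rather than contour integrals. Extend $f$ oddly: $F(y,s)=\sgn y\cdot f(|y|,s)$ and $F(0,s)=0$. A direct reindexing shows
\[
u_f(x,t)=\int_0^t\sum_{k\in\mathbb Z}F(k,s)K_{x-k}(t-s)\,{\mathrm d}s .
\]
The extended forcing $F$ is continuous in $t$ at each $k$ and bounded on $\mathbb Z\times[0,T]$. Hence Theorem~\ref{duhamel} (with $m=1$, $a_{\pm1}=1$, $a_0=-2$) says this is the unique bounded solution on $\mathbb Z$ of $\partial_tv=\Delta v+F$ with $v(\cdot,0)=0$. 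It is bounded by $M_TTe^{4T}$. Since $F$ and the initial data are odd, Theorem~\ref{symmetry} gives $v(0,t)=0$, which matches $u_f(0,t)=0$. Restricting to $x\in\mathbb N$ yields the equation for $u_f$ with forcing $f$.

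Adding the pieces gives a solution. It satisfies $u(x,0)=u_x$ and $u(0,t)=g(t)$, and $u=O(x^M)$ uniformly on $[0,T]$, because $u_f$ is bounded and $x^M\ge1$. Uniqueness in this growth class follows from Theorem~\ref{uniquenessForward} applied on each $[0,T]$. That theorem asks for $f$ continuous on $\mathbb N_0\times[0,T]$, but its proof only uses the difference of two solutions, so continuity in $t$ at each $x$ suffices.

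For complex-valued data, I would apply the real result to the real and imaginary parts separately, using linearity. For uniqueness, I would observe that the proof of Theorem~\ref{uniquenessForward} uses only absolute values and so works verbatim over $\mathbb C$.

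The main obstacle I expect is making sure Theorem~\ref{symmetry} applies to $u_f$. That theorem is stated for bounded solutions, and I must check that $v$ is bounded on $\mathbb Z\times[0,\infty)$ or, failing that, at least on each $\mathbb Z\times[0,T]$. The uniqueness result it invokes really needs only boundedness on compact time intervals, and Theorem~\ref{duhamel} provides exactly that. I would remark on this explicitly, since everything else is bookkeeping.
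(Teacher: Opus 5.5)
Your proposal is correct and follows essentially the same route as the paper. Like the paper, you reduce to $a=1$ by the time change $\tau=at$, take the initial- and boundary-data terms from Theorem~\ref{theo-1}, handle the forcing by odd extension and Theorem~\ref{duhamel}, and get uniqueness from Theorem~\ref{uniquenessForward}. The only difference is that the paper reads off $u_f(0,t)=0$ directly from the formula, since $K_{-y}=K_y$ gives $P_{0,y}=0$. That avoids your detour through Theorem~\ref{symmetry} and its boundedness hypothesis.
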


\begin{proof}
We first consider $a=1$, when
\eqref{nonhomogeneous-solution-a} reduces to
\eqref{nonhomogeneous-solution}.
By Theorem~\ref{theo-1}, the first two terms in
\eqref{nonhomogeneous-solution} already give the required
initial-data and boundary-data contributions.
It remains to verify the forcing contribution
\[
u_f(x,t)
=
\int_0^t
\sum_{y=1}^{\infty}f(y,s)P_{x,y}(t-s)\,{\mathrm d}s.
\]
Extend $f$ oddly to $\mathbb Z$ by setting
\[
\widetilde f(k,t):=
\begin{cases}
f(k,t),&k\geq1,\\
0,&k=0,\\
-f(-k,t),&k\leq-1.
\end{cases}
\]
The extension is continuous in time at each spatial index
and bounded on $\mathbb Z\times[0,T]$ for every $T>0$.
By Theorem~\ref{duhamel}, the function
\[
w(x,t)
=
\int_0^t
\sum_{k\in\mathbb Z}
\widetilde f(k,s)K_{x-k}(t-s)\,{\mathrm d}s
\]
satisfies
\[
\frac{\partial w}{\partial t}(x,t)
=
\Delta w(x,t)+\widetilde f(x,t),
\qquad x\in\mathbb Z,\quad t\geq0,
\]
has zero initial data, and is bounded on
$\mathbb Z\times[0,T]$ for every $T>0$.

Splitting the absolutely convergent spatial sum into
positive and negative indices gives
\[
w(x,t)
=
\int_0^t
\sum_{y=1}^{\infty}
f(y,s)\bigl[K_{x-y}(t-s)-K_{x+y}(t-s)\bigr]
\,{\mathrm d}s.
\]
By the definition of $P_{x,y}$, this agrees with $u_f(x,t)$
for $x\in\mathbb N$ and gives $w(0,t)=0$.
Consequently, $u_f$, extended by $u_f(0,t)=0$, satisfies
the required equation with zero initial and boundary data.

Adding this contribution to the homogeneous solution supplied
by Theorem~\ref{theo-1} proves the assertion for $a=1$.

For general $a>0$, consider the problem with diffusion
coefficient $1$, initial data $(u_x)$, boundary data
$g(\tau/a)$, and forcing $a^{-1}f(x,\tau/a)$.
These transformed data satisfy the same assumptions.
Let $v(x,\tau)$ be the solution just constructed, and set
\[
u(x,t):=v(x,at).
\]
The chain rule gives
\[
\frac{\partial u}{\partial t}(x,t)
=
a\frac{\partial v}{\partial\tau}(x,at)
=
a\Delta u(x,t)+f(x,t).
\]
Also,
\[
u(x,0)=u_x,
\qquad
u(0,t)=g(t).
\]
Inserting the transformed data into
\eqref{nonhomogeneous-solution}, evaluating at $\tau=at$,
and substituting $\sigma=as$ in the time integrals gives
\eqref{nonhomogeneous-solution-a}.
The factor $a$ from ${\mathrm d}\sigma=a\,{\mathrm d}s$
remains in the boundary contribution and cancels the factor
$a^{-1}$ in the forcing contribution.

The initial-data and boundary-data contributions satisfy the
stated growth bound by Theorem~\ref{theo-1} and the time change
$\tau=at$.
The forcing contribution is bounded on
$\mathbb N_0\times[0,T]$ for every $T>0$, by
Theorem~\ref{duhamel} and the same time change.
Since $x^M\geq1$ for $x\in\mathbb N$, the full solution
has the required polynomial growth.

The difference of two solutions in this growth class satisfies
the homogeneous equation with zero initial and boundary data.
Theorem~\ref{uniquenessForward}, applied with
$m=1$, $a_{-1}=a_1=a$, and $a_0=-2a$, therefore gives uniqueness.
For complex-valued data, the same argument applies separately
to the real and imaginary parts.
\end{proof}

We now record some properties of the Dirichlet diffusion kernel.

\begin{lemma}\label{lem:dirichlet-kernel-properties}
Let $a>0$. For $x,y\in\mathbb N$ and $t\geq0$,
\[
P_{x,y}^{(a)}(t)\geq0,
\]
and
\begin{equation}\label{dirichlet-kernel-mass}
0\leq
\sum_{y=1}^{\infty}P_{x,y}^{(a)}(t)
=
\sum_{\ell=1-x}^{x}K_\ell^{(a)}(t)
\leq1.
\end{equation}
Moreover,
\begin{equation}\label{dirichlet-kernel-sine}
P_{x,y}^{(a)}(t)
=
\frac{2}{\pi}\int_0^\pi
{\mathrm e}^{-2at(1-\cos\theta)}
\sin(x\theta)\sin(y\theta)\,{\mathrm d}\theta.
\end{equation}
For every fixed $x\in\mathbb N$,
\begin{equation}\label{dirichlet-kernel-mass-decay}
\sum_{y=1}^{\infty}P_{x,y}^{(a)}(t)
\longrightarrow0
\qquad\text{as }t\to\infty.
\end{equation}
\end{lemma}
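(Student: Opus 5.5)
By the scaling identity $P^{(a)}_{x,y}(t)=P_{x,y}(at)$, it suffices to prove everything for $a=1$ and then replace $t$ by $at$. I would start with the sine representation \eqref{dirichlet-kernel-sine}, because the other assertions follow from it or from Theorem~\ref{phi-function}. From the integral representation $I_\ell(z)=\frac{1}{2\pi}\int_{-\pi}^{\pi}{\mathrm e}^{z\cos\theta}\cos(\ell\theta)\,{\mathrm d}\theta$, using evenness in $\theta$, we get
\[
K_\ell(t)=\frac1\pi\int_0^\pi {\mathrm e}^{-2t(1-\cos\theta)}\cos(\ell\theta)\,{\mathrm d}\theta .
\]
Then $\cos((x-y)\theta)-\cos((x+y)\theta)=2\sin(x\theta)\sin(y\theta)$ gives \eqref{dirichlet-kernel-sine} directly.

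For nonnegativity I would not use the sine formula, whose integrand changes sign. Instead, note that $P_{x,y}(t)=K_{x-y}(t)-K_{x+y}(t)={\mathrm e}^{-2t}\bigl(I_{|x-y|}(2t)-I_{x+y}(2t)\bigr)$. For $x,y\in\mathbb N$ we have $|x-y|<x+y$, and $n\mapsto I_n(\tau)$ is nonincreasing on $\mathbb N_0$ for $\tau\ge0$; this fact was already used in the proof of Theorem~\ref{phi-function}, and it also follows termwise from the power series, since $\frac{(t)^{2k+n}}{k!(k+n)!}$ is decreasing in $n$ for fixed $k$ when $t\le1$. To be safe, I would instead cite the standard monotonicity, or the identity $I_{n-1}-I_{n+1}=\frac{2n}{\tau}I_n\ge0$. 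Iterating this identity shows that $I_{n}\ge I_{n+2}$, but not that $I_n\ge I_{n+1}$. The latter is the known fact $I_n(\tau)>I_{n+1}(\tau)$ for $\tau>0$ \cite{BCLO}, and I take it as established. Hence $P_{x,y}(t)\ge0$.

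For the mass identity \eqref{dirichlet-kernel-mass}, all terms are nonnegative and both series $\sum_y K_{x-y}$ and $\sum_y K_{x+y}$ converge, since $\sum_\ell K_\ell=1$. So I can split:
\[
\sum_{y\ge1}P_{x,y}(t)=\sum_{\ell\le x-1}K_\ell(t)-\sum_{\ell\ge x+1}K_\ell(t)=-\phi(x,t).
\]
By \eqref{phi-finite-sum} this equals $\sum_{\ell=1-x}^{x}K_\ell(t)$. The bounds $0\le\cdot\le1$ then follow from $K_\ell\ge0$ and $\sum_\ell K_\ell=1$.

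Finally, for \eqref{dirichlet-kernel-mass-decay} I would use the finite sum, and it suffices to show that $K_\ell(t)\to0$ for each fixed $\ell$. The cosine integral gives $K_\ell(t)\le\frac1\pi\int_0^\pi{\mathrm e}^{-2t(1-\cos\theta)}\,{\mathrm d}\theta$. This tends to $0$ by dominated convergence, because the integrand tends to $0$ for every $\theta\in(0,\pi]$. Alternatively, the decay follows from $\phi(x,t)\to0$ in Theorem~\ref{phi-function}. The only nonroutine step is the monotonicity $I_n\ge I_{n+1}$, and for it I rely on the standard reference rather than reproving it.
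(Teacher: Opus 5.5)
Your proof is correct. It differs from the paper's mainly in how you get nonnegativity and the decay \eqref{dirichlet-kernel-mass-decay}.

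\textbf{Nonnegativity.} You note, correctly, that the recurrence $I_n-I_{n+2}=\frac{2(n+1)}{\tau}I_{n+1}$ only gives $I_n\ge I_{n+2}$, so you import the stronger monotonicity $I_n\ge I_{n+1}$ from the literature. That citation is legitimate, and the paper itself uses it in the proof of Theorem~\ref{phi-function}. The paper's proof of this lemma avoids it, however. Since $|x-y|$ and $x+y$ differ by the even number $2\min\{x,y\}$, the chain $K_{|x-y|}\ge K_{|x-y|+2}\ge\cdots\ge K_{x+y}$ lands exactly on $x+y$. The step-two inequality from the recurrence is therefore all that is needed. This parity observation makes the argument self-contained, and it also makes your aside about termwise monotonicity of the power series for $t\le1$ unnecessary.

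\textbf{Mass bounds.} You read $0\le\sum_{\ell=1-x}^{x}K_\ell\le1$ directly off $K_\ell\ge0$ and $\sum_\ell K_\ell=1$. The paper instead uses monotonicity of $\phi$, $\phi(0,t)=0$ and $\phi\ge-1$. The two routes are equivalent.

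\textbf{Decay.} The paper simply invokes $\phi(x,t)\to0$ from Theorem~\ref{phi-function}, which itself rests on the external result \cite{Slavik2020}. Your bound $K_\ell(t)\le\frac1\pi\int_0^\pi{\mathrm e}^{-2t(1-\cos\theta)}\,{\mathrm d}\theta=K_0(t)$, combined with dominated convergence, is more elementary and self-contained. On this point your route is arguably preferable.
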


\begin{proof}
The symmetry $I_{-\ell}=I_\ell$ gives
$K_{-\ell}^{(a)}=K_\ell^{(a)}$ and
$P_{x,y}^{(a)}=P_{y,x}^{(a)}$.
Moreover,
\[
P_{x,y}^{(a)}(0)=\delta_{x,y},
\qquad x,y\in\mathbb N.
\]

We first prove nonnegativity.
The recurrence relation
\[
I_n(z)-I_{n+2}(z)
=
\frac{2(n+1)}{z}I_{n+1}(z),
\qquad n\geq0,\quad z>0,
\]
implies
\[
K_n^{(a)}(t)-K_{n+2}^{(a)}(t)
=
\frac{n+1}{at}K_{n+1}^{(a)}(t)\geq0,
\qquad n\geq0,\quad t>0.
\]
If $x\geq y$, repeated application gives
\[
K_{x-y}^{(a)}(t)
\geq K_{x-y+2}^{(a)}(t)
\geq\cdots\geq K_{x+y}^{(a)}(t).
\]
If $x<y$, symmetry gives
$K_{x-y}^{(a)}(t)=K_{y-x}^{(a)}(t)$, and
\[
K_{y-x}^{(a)}(t)
\geq K_{y-x+2}^{(a)}(t)
\geq\cdots\geq K_{y+x}^{(a)}(t).
\]
Thus $P_{x,y}^{(a)}(t)\geq0$ in both cases.
The same conclusion holds at $t=0$.

Next, recall the function
$\phi$ from Theorem~\ref{phi-function}, with diffusion coefficient $a$.
The defining formula for
$\phi$ and \eqref{phi-finite-sum} give
\[
\sum_{y=1}^{\infty}P_{x,y}^{(a)}(t)
=
-\phi(x,t)
=
\sum_{\ell=1-x}^{x}K_\ell^{(a)}(t).
\]
By Theorem~\ref{phi-function}, the sequence
$x\mapsto\phi(x,t)$ is nonincreasing, $\phi(0,t)=0$, and
$\phi(x,t)\geq-1$.
Hence
$0\leq\sum_{y=1}^{\infty}P_{x,y}^{(a)}(t)\leq1$.
For every fixed $x\in\mathbb N$, the same theorem gives
$\phi(x,t)\to0$ as $t\to\infty$, so
\[
\sum_{y=1}^{\infty}P_{x,y}^{(a)}(t)\longrightarrow0
\qquad\text{as }t\to\infty.
\]

Finally, we derive the sine representation.
The integral representation of the modified Bessel functions gives
\[
I_n(2at)
=
\frac1\pi\int_0^\pi
{\mathrm e}^{2at\cos\theta}\cos(n\theta)\,{\mathrm d}\theta,
\qquad n\in\mathbb Z,
\]
and therefore
\[
K_n^{(a)}(t)
=
\frac1\pi\int_0^\pi
{\mathrm e}^{-2at(1-\cos\theta)}
\cos(n\theta)\,{\mathrm d}\theta.
\]
Subtracting the expressions with indices $x-y$ and $x+y$,
and using
\[
\cos((x-y)\theta)-\cos((x+y)\theta)
=
2\sin(x\theta)\sin(y\theta),
\]
we obtain
\[
P_{x,y}^{(a)}(t)
=
\frac2\pi\int_0^\pi
{\mathrm e}^{-2at(1-\cos\theta)}
\sin(x\theta)\sin(y\theta)\,{\mathrm d}\theta.
\]
This completes the proof.
\end{proof}

\begin{lemma}\label{lem:integrated-dirichlet-kernel}
For $a>0$ and $x,y\in\mathbb N$,
\begin{equation}\label{integrated-dirichlet-kernel}
\int_0^\infty P_{x,y}^{(a)}(r)\,{\mathrm d}r
=
\frac{\min\{x,y\}}{a}.
\end{equation}
\end{lemma}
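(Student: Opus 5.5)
We plan to use the sine representation \eqref{dirichlet-kernel-sine} from Lemma~\ref{lem:dirichlet-kernel-properties}, together with the nonnegativity of $P^{(a)}_{x,y}$, and to integrate in time first. By the scaling $P^{(a)}_{x,y}(r)=P_{x,y}(ar)$, the substitution $\tau=ar$ gives $\int_0^\infty P^{(a)}_{x,y}(r)\,{\mathrm d}r=a^{-1}\int_0^\infty P_{x,y}(\tau)\,{\mathrm d}\tau$. It therefore suffices to treat $a=1$ and show that the integral equals $\min\{x,y\}$.

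The direct route is Fubini: integrate ${\mathrm e}^{-2\tau(1-\cos\theta)}$ over $\tau\in[0,\infty)$, which gives $1/(2(1-\cos\theta))=1/(4\sin^2(\theta/2))$. This leads to
\[
\int_0^\infty P_{x,y}(\tau)\,{\mathrm d}\tau=\frac{1}{2\pi}\int_0^\pi\frac{\sin(x\theta)\sin(y\theta)}{\sin^2(\theta/2)}\,{\mathrm d}\theta .
\]
Fubini is justified because the integrand has constant sign only up to an absolute bound. Specifically, $|\sin(x\theta)\sin(y\theta)|\le xy\,\theta^2$ near $0$, and $\sin^2(\theta/2)\ge\theta^2/\pi^2$ on $[0,\pi]$. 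Hence $\int_0^\pi\int_0^\infty|\cdots|\,{\mathrm d}\tau\,{\mathrm d}\theta<\infty$ by Tonelli. This integrability check is the one point needing care, and it is the main obstacle; the near-$\theta=0$ singularity is exactly cancelled by the double zero of $\sin(x\theta)\sin(y\theta)$.

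To evaluate the $\theta$-integral, we write $\sin(x\theta)/\sin(\theta/2)$ and $\sin(y\theta)/\sin(\theta/2)$ as Dirichlet-type sums. The identity is
\[
\frac{\sin(x\theta)}{\sin(\theta/2)}=2\sum_{j=1}^{x}\cos\bigl((j-\tfrac12)\theta\bigr).
\]
This follows by telescoping $2\sin(\theta/2)\cos((j-\frac12)\theta)=\sin(j\theta)-\sin((j-1)\theta)$. The integral then becomes
\[
\frac{2}{\pi}\sum_{j=1}^{x}\sum_{k=1}^{y}\int_0^\pi\cos\bigl((j-\tfrac12)\theta\bigr)\cos\bigl((k-\tfrac12)\theta\bigr)\,{\mathrm d}\theta .
\]
By orthogonality of $\cos((n-\frac12)\theta)$ on $[0,\pi]$, each inner integral equals $\frac{\pi}{2}\delta_{jk}$. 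Indeed, when $j\ne k$ the products expand into cosines of integer multiples of $\theta$, which integrate to $0$ over $[0,\pi]$. The double sum therefore equals the number of common indices, namely $\min\{x,y\}$.

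As a cross-check, we would also verify the result by the fact that $h_y(x):=\min\{x,y\}$ solves $-\Delta h_y(x)=\delta_{x,y}$ on $\mathbb N$ with $h_y(0)=0$, i.e.\ it is the Green's function $\int_0^\infty P$ of $-\Delta$ with Dirichlet condition. That alternative would require controlling $P_{x,y}(t)\to0$ as $t\to\infty$, so the Fourier route above is the primary plan.
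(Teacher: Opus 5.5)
Your proposal is correct and follows essentially the same route as the paper. Both arguments use the sine representation, integrate in time to produce the factor $1/(1-\cos\theta)$, expand $\sin(x\theta)/\sin(\theta/2)$ as a sum of half-integer cosines, and apply orthogonality on $[0,\pi]$. The only difference is technical: you justify the interchange by Tonelli on $[0,\infty)$, using the bound $|\sin(x\theta)\sin(y\theta)|/\sin^2(\theta/2)\le\pi^2xy$, whereas the paper integrates over $[0,R]$ and lets $R\to\infty$; both are valid.
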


\begin{proof}
Integrating \eqref{dirichlet-kernel-sine} over $0\leq r\leq R$
gives
\[
\int_0^R P_{x,y}^{(a)}(r)\,{\mathrm d}r
=
\frac{1}{a\pi}
\int_0^\pi
\frac{\sin(x\theta)\sin(y\theta)}{1-\cos\theta}
\left[1-{\mathrm e}^{-2aR(1-\cos\theta)}\right]
\,{\mathrm d}\theta.
\]
The quotient has a continuous extension at $\theta=0$,
with value $2xy$.
When the right-hand side is rewritten as the difference of two integrals,
the integral containing the exponential tends to zero as
$R\to\infty$: its absolute value over $[0,\delta]$ is bounded
by a constant times $\delta$, while on $[\delta,\pi]$ the
exponential tends uniformly to zero.
Therefore,
\[
\int_0^\infty P_{x,y}^{(a)}(r)\,{\mathrm d}r
=
\frac{1}{a\pi}
\int_0^\pi
\frac{\sin(x\theta)\sin(y\theta)}{1-\cos\theta}
\,{\mathrm d}\theta.
\]

To evaluate this integral, use
\[
\frac{\sin(x\theta)}{\sin(\theta/2)}
=
2\sum_{j=0}^{x-1}\cos\bigl((j+\tfrac12)\theta\bigr)
\]
and
\[
\int_0^\pi
\cos\bigl((j+\tfrac12)\theta\bigr)
\cos\bigl((k+\tfrac12)\theta\bigr)\,{\mathrm d}\theta
=
\begin{cases}
\pi/2,&j=k,\\
0,&j\neq k.
\end{cases}
\]
Since $1-\cos\theta=2\sin^2(\theta/2)$, the finite-sum
identity above gives
\[
\frac{\sin(x\theta)\sin(y\theta)}{1-\cos\theta}
=
2\sum_{j=0}^{x-1}\sum_{k=0}^{y-1}
\cos\bigl((j+\tfrac12)\theta\bigr)
\cos\bigl((k+\tfrac12)\theta\bigr).
\]
Integrating and using the orthogonality relations, we obtain
\begin{align*}
\int_0^\pi
\frac{\sin(x\theta)\sin(y\theta)}{1-\cos\theta}
\,{\mathrm d}\theta
&=
2\sum_{j=0}^{x-1}\sum_{k=0}^{y-1}
\int_0^\pi
\cos\bigl((j+\tfrac12)\theta\bigr)
\cos\bigl((k+\tfrac12)\theta\bigr)
\,{\mathrm d}\theta=
2\sum_{j=0}^{\min\{x,y\}-1}\frac{\pi}{2}
=
\pi\min\{x,y\}.
\end{align*}
Indeed, the terms with $j\neq k$ vanish, while each term
with $j=k$ contributes $\pi/2$. There are exactly
$\min\{x,y\}$ such terms.
This proves \eqref{integrated-dirichlet-kernel}.
\end{proof}

\subsection{Asymptotic behavior}

We now analyze the asymptotic behavior of the solution of the initial-boundary value
problem, allowing a nonhomogeneous term that tends to zero.

\begin{theorem}\label{asymptotic-behavior}
Let $a>0$, let $(u_y)_{y\in\mathbb N}$ be a bounded sequence
of real numbers, and let $g\colon[0,\infty)\to\mathbb R$
be continuous and bounded.
Suppose that $f\colon\mathbb N\times[0,\infty)\to\mathbb R$
is continuous in $t$ for each fixed $y\in\mathbb N$ and satisfies
\[
\lim_{t\to\infty}f(y,t)=0
\qquad\text{for every }y\in\mathbb N.
\]
Assume also that there is a nonnegative sequence
$(h_y)_{y\in\mathbb N}$ such that
\[
|f(y,t)|\leq h_y,
\qquad y\in\mathbb N,\quad t\geq0,
\qquad
\sum_{y=1}^{\infty}h_y<\infty.
\]
Let $u$ be the solution 
of the problem
\begin{gather*}
\frac{\partial u}{\partial t}(x,t)
=
a\Delta u(x,t)+f(x,t),
\qquad x\in\mathbb N,\quad t\geq0,\\
u(x,0)=u_x,\qquad x\in\mathbb N,
\qquad
u(0,t)=g(t),\qquad t\geq0,
\end{gather*}
given by \eqref{nonhomogeneous-solution-a}.
Then, for every fixed $x\in\mathbb N$,
\[
\liminf_{t\to\infty}g(t)
\leq
\liminf_{t\to\infty}u(x,t)
\leq
\limsup_{t\to\infty}u(x,t)
\leq
\limsup_{t\to\infty}g(t).
\]
In particular, if $g(t)\to L$, then
$u(x,t)\to L$ for every fixed $x\in\mathbb N$.
\end{theorem}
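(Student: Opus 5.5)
We split the solution given by \eqref{nonhomogeneous-solution-a} into three parts,
\[
u(x,t)=u_a(x,t)+u_g(x,t)+u_f(x,t),
\]
with $u_a(x,t)=\sum_{y}u_yP^{(a)}_{x,y}(t)$, $u_g(x,t)=a\int_0^t g(s)P^{(a)}_{x,1}(t-s)\,{\mathrm d}s$ and $u_f$ the forcing term. Fix $x\in\mathbb N$. The plan is to show $u_a(x,t)\to0$ and $u_f(x,t)\to0$, and to trap $u_g(x,t)$ between the limit inferior and limit superior of $g$. For $u_a$ this is immediate from Lemma~\ref{lem:dirichlet-kernel-properties}. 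Since $P^{(a)}_{x,y}\ge0$, we have $|u_a(x,t)|\le \sup_y|u_y|\sum_y P^{(a)}_{x,y}(t)$, and the right-hand side tends to $0$ by \eqref{dirichlet-kernel-mass-decay}.

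For the boundary term, note that $aP^{(a)}_{x,1}(r)=a(K^{(a)}_{x-1}(r)-K^{(a)}_{x+1}(r))=\partial_r\phi(x,r)$ by Theorem~\ref{phi-function}. This kernel is nonnegative and $\int_0^\infty aP^{(a)}_{x,1}(r)\,{\mathrm d}r=-\phi(x,0)=1$, which also follows from Lemma~\ref{lem:integrated-dirichlet-kernel} with $y=1$. So $u_g(x,t)=\int_0^t g(t-r)\,k(r)\,{\mathrm d}r$ with $k=aP^{(a)}_{x,1}\ge0$, $\int_0^\infty k=1$, and the standard approximate-identity argument applies. Let $\Lambda=\limsup g$ and $\varepsilon>0$. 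Choose $S$ such that $g(s)\le\Lambda+\varepsilon$ for $s\ge S$. Split the integral at $r=t-S$. The part $r\in[t-S,t]$ is bounded by $\|g\|_\infty\int_{t-S}^\infty k\to0$. The part $r\in[0,t-S]$ is bounded by $(\Lambda+\varepsilon)\int_0^{t-S}k$, which tends to $\Lambda+\varepsilon$. Some care is needed when $\Lambda+\varepsilon<0$; we handle it by adding a constant to $g$. Indeed, $\int_0^t k\to1$ shows that constant boundary data $c$ give $u_g\to c$, consistent with Example~\ref{nonuniform-example}. Hence $\limsup_t u_g(x,t)\le\Lambda$, and the liminf bound follows by applying this to $-g$.

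The forcing term is the main obstacle, since it requires a double limit. We write
\[
u_f(x,t)=\int_0^t\sum_{y}f(y,t-r)P^{(a)}_{x,y}(r)\,{\mathrm d}r .
\]
By Lemma~\ref{lem:dirichlet-kernel-properties} we have $0\le P^{(a)}_{x,y}(r)\le1$, and by Lemma~\ref{lem:integrated-dirichlet-kernel}, $\int_0^\infty P^{(a)}_{x,y}(r)\,{\mathrm d}r=\min\{x,y\}/a\le x/a$. Given $\varepsilon>0$, choose $N$ with $\sum_{y>N}h_y<\varepsilon$. The tail $y>N$ contributes at most $\sum_{y>N}h_y\cdot x/a\le \varepsilon x/a$ for all $t$. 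For each of the finitely many $y\le N$, the term $\int_0^t f(y,t-r)P^{(a)}_{x,y}(r)\,{\mathrm d}r$ tends to $0$ as $t\to\infty$. To see this, split at $r=t-S_y$, where $|f(y,s)|<\varepsilon$ for $s\ge S_y$. The first piece is bounded by $\varepsilon x/a$, and the second by $h_y\int_{t-S_y}^\infty P^{(a)}_{x,y}\to0$ because the kernel is integrable. Interchanging the sum and the integral is legitimate by Tonelli, using the dominating function $h_yP^{(a)}_{x,y}(r)$. Therefore $\limsup_t|u_f(x,t)|\le C\varepsilon x$ for a constant $C$, so $u_f(x,t)\to0$.

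Combining the three parts yields the chain of inequalities in the statement. If $g\to L$, the outer bounds coincide and $u(x,t)\to L$.
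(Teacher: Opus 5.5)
Your proof is correct and follows essentially the same route as the paper. It uses the same three-way decomposition, mass decay for $u_a$, truncation in $y$ combined with the integrated-kernel bound $\min\{x,y\}/a$ for $u_f$, and a positivity sandwich for $u_g$ based on $a\int_0^\infty P^{(a)}_{x,1}(r)\,{\mathrm d}r=1$; you control the early-time pieces through tails of the integrable kernel instead of the paper's supremum of the kernel over a window, which is an inessential difference. The caveat about $\Lambda+\varepsilon<0$ is unnecessary: the bound $\int_0^{t-S}g(t-r)k(r)\,{\mathrm d}r\le(\Lambda+\varepsilon)\int_0^{t-S}k(r)\,{\mathrm d}r$ holds for either sign, and its right-hand side converges to $\Lambda+\varepsilon$ regardless.
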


\begin{proof}
We write \eqref{nonhomogeneous-solution-a} as
\[
u(x,t)=u_a(x,t)+u_g(x,t)+u_f(x,t),
\]
where
\begin{align*}
u_a(x,t)
&=
\sum_{y=1}^{\infty}u_yP_{x,y}^{(a)}(t),\\
u_g(x,t)
&=
a\int_0^t g(s)P_{x,1}^{(a)}(t-s)\,{\mathrm d}s,\\
u_f(x,t)
&=
\int_0^t
\sum_{y=1}^{\infty}f(y,s)P_{x,y}^{(a)}(t-s)
\,{\mathrm d}s,
\end{align*}
and  the kernel $P_{x,y}^{(a)}$ was introduced before
Theorem~\ref{theo-nonhomogeneous}.
These are the contributions from the initial data, boundary data,
and forcing, respectively.

Fix $x\in\mathbb N$.
By nonnegativity and \eqref{dirichlet-kernel-mass-decay},
\[
|u_a(x,t)|
\leq
\sup_{y\in\mathbb N}|u_y|
\sum_{y=1}^{\infty}P_{x,y}^{(a)}(t)
\longrightarrow0.
\]

We next show that $u_f(x,t)\to0$.
For each fixed $y$, put
\[
v_y(t):=
\int_0^t f(y,s)P_{x,y}^{(a)}(t-s)\,{\mathrm d}s.
\]
Given $\varepsilon>0$, choose $s_0\geq0$ such that
$|f(y,s)|\leq\varepsilon$ for $s\geq s_0$.
For $t\geq s_0$, positivity and
\eqref{integrated-dirichlet-kernel} give
\begin{align*}
|v_y(t)|
&\leq
\int_0^{s_0}|f(y,s)|P_{x,y}^{(a)}(t-s)\,{\mathrm d}s
+
\varepsilon\int_{s_0}^tP_{x,y}^{(a)}(t-s)\,{\mathrm d}s\leq
s_0h_y
\sup_{t-s_0\leq r\leq t}P_{x,y}^{(a)}(r)
+
\varepsilon\frac{\min\{x,y\}}{a}.
\end{align*}
The supremum tends to zero by
\eqref{dirichlet-kernel-mass-decay}.
Since $\varepsilon$ is arbitrary, $v_y(t)\to0$.

By \eqref{dirichlet-kernel-mass},
\[
|f(y,s)P_{x,y}^{(a)}(t-s)|\leq h_y.
\]
Since $\sum_y h_y<\infty$, this bound justifies termwise
integration, giving
\[
u_f(x,t)=\sum_{y=1}^{\infty}v_y(t).
\]
Furthermore, Lemma~\ref{lem:integrated-dirichlet-kernel} gives
\[
|v_y(t)|
\leq
h_y\int_0^\infty P_{x,y}^{(a)}(r)\,{\mathrm d}r
=
\frac{h_y\min\{x,y\}}{a}.
\]
Hence, for every $N\in\mathbb N$,
\[
\left|u_f(x,t)-\sum_{y=1}^N v_y(t)\right|
\leq
\frac{x}{a}\sum_{y>N}h_y.
\]
Given $\varepsilon>0$, choose $N$ sufficiently large that the right-hand side is less than $\varepsilon/2$.
For this fixed $N$, we have
$\sum_{y=1}^N v_y(t)\longrightarrow0$ as $t\to\infty$,
since each $v_y(t)$ tends to zero.
Thus there exists $t_0$ such that, for every $t\geq t_0$,
\[
|u_f(x,t)|
\leq
\left|u_f(x,t)-\sum_{y=1}^N v_y(t)\right|
+
\left|\sum_{y=1}^N v_y(t)\right|
<
\frac{\varepsilon}{2}+\frac{\varepsilon}{2}
=\varepsilon.
\]
This proves that $u_f(x,t)\to0$.
It remains to estimate the boundary contribution.
Set
\[
\underline L:=\liminf_{t\to\infty}g(t),
\qquad
\overline L:=\limsup_{t\to\infty}g(t).
\]
Given $\varepsilon>0$, choose $s_0\geq0$ such that
\[
\underline L-\varepsilon
\leq g(s)\leq
\overline L+\varepsilon,
\qquad s\geq s_0.
\]
For $t\geq s_0$, split
\[
u_g(x,t)
=
a\int_0^{s_0}g(s)P_{x,1}^{(a)}(t-s)\,{\mathrm d}s
+
a\int_{s_0}^t g(s)P_{x,1}^{(a)}(t-s)\,{\mathrm d}s.
\]
The first term tends to zero, since its absolute value
is bounded by
\[
a\left(\int_0^{s_0}|g(s)|\,{\mathrm d}s\right)
\sup_{t-s_0\leq r\leq t}P_{x,1}^{(a)}(r)
\longrightarrow0.
\]
Positivity gives
\begin{align*}
(\underline L-\varepsilon)
a\int_0^{t-s_0}P_{x,1}^{(a)}(r)\,{\mathrm d}r
&\leq
a\int_{s_0}^t g(s)P_{x,1}^{(a)}(t-s)\,{\mathrm d}s\leq
(\overline L+\varepsilon)
a\int_0^{t-s_0}P_{x,1}^{(a)}(r)\,{\mathrm d}r.
\end{align*}
By \eqref{integrated-dirichlet-kernel}, with $y=1$,
\[
a\int_0^{t-s_0}P_{x,1}^{(a)}(r)\,{\mathrm d}r
\longrightarrow1.
\]
Consequently,
\[
\underline L-\varepsilon
\leq\liminf_{t\to\infty}u_g(x,t)
\leq\limsup_{t\to\infty}u_g(x,t)
\leq\overline L+\varepsilon.
\]
Letting $\varepsilon\downarrow0$ and using
$u_a(x,t)\to0$ and $u_f(x,t)\to0$ proves the stated bounds.
If $g(t)\to L$, both outer bounds equal $L$, which proves
the final assertion.
\end{proof}

In general, the convergence $u(x,t)\to L$ in
Theorem~\ref{asymptotic-behavior} is not uniform in $x$;
see Example~\ref{nonuniform-example}.

\subsection{Asymptotic periodicity}

In this subsection we prove the asymptotic periodicity of the solution
of the semidiscrete diffusion equation under periodic Dirichlet data and
a forcing term that is periodic in time. Here we take $a=1$.

Proofs of asymptotic periodicity for related problems date back
at least to the 2008 paper of Bona and Fokas \cite{BF}, who
established this property for the linearized KdV equation.
For nonlinear equations, such as the focusing nonlinear
Schrödinger equation, the analysis is considerably more
involved, see \cite{BKSZ} and \cite{AK}. Sometimes a second frequency 
can be generated and the behavior becomes quasi-periodic (not necessarily periodic).

\begin{theorem}\label{thm:qualitative-asymptotic-periodicity}
Let $(u_y)_{y\in\mathbb N}$ be a bounded sequence of complex
numbers, and let $g\colon[0,\infty)\to\mathbb C$ be continuous.
Suppose that $f\colon\mathbb N\times[0,\infty)\to\mathbb C$
is continuous in $t$ for every fixed $x\in\mathbb N$ and is bounded
on $\mathbb N\times[0,R]$ for every $R>0$.
Assume that, for some $T>0$,
\[
g(t+T)=g(t),
\qquad
f(y,t+T)=f(y,t),
\qquad y\in\mathbb N,\quad t\geq0.
\]
Let $u$ be the solution given by
\eqref{nonhomogeneous-solution-a}.
Then, for every fixed $x\in\mathbb N$,
\[
\lim_{t\to\infty}[u(x,t+T)-u(x,t)]=0.
\]
\end{theorem}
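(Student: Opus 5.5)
The plan is to avoid analysing the three contributions in \eqref{nonhomogeneous-solution-a} separately. Instead, I would look at the difference
\[
w(x,t):=u(x,t+T)-u(x,t),\qquad x\in\mathbb N_0,\quad t\geq0,
\]
and show that it solves a homogeneous Dirichlet problem with bounded initial data. Its decay would then follow from the mass decay \eqref{dirichlet-kernel-mass-decay} of the Dirichlet kernel.

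First, I would check that the shifted function $v(x,t):=u(x,t+T)$ satisfies $\partial_t v=\Delta v+f(x,t+T)=\Delta v+f(x,t)$ for $x\in\mathbb N$, and $v(0,t)=g(t+T)=g(t)$; both identities use the $T$-periodicity of $f$ and $g$. Subtracting the equation for $u$, the forcing and boundary terms cancel. Hence $w$ solves $\partial_t w=\Delta w$ on $\mathbb N\times[0,\infty)$ with $w(0,t)=0$ and initial data $w_y:=w(y,0)=u(y,T)-u_y$.

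Second, I need two boundedness facts. By Theorem~\ref{theo-nonhomogeneous} with $M=0$, which applies to complex data, $u$ is bounded on $\mathbb N\times[0,R]$ for every $R>0$. Here the initial and boundary contributions are controlled via Theorem~\ref{theo-1}, and the forcing contribution via Theorem~\ref{duhamel}. Consequently $(w_y)$ is a bounded sequence, and $w$ is bounded on $\mathbb N_0\times[0,R]$ for every $R$.

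Third, I would apply Theorem~\ref{theo-1} (or Theorem~\ref{theo-nonhomogeneous} with $g\equiv0$, $f\equiv0$) to the data $(w_y)$. This gives the bounded solution $\tilde w(x,t)=\sum_{y\ge1}w_yP_{x,y}(t)$. The uniqueness result Theorem~\ref{uniquenessForward}, with $M=0$ and applied separately to real and imaginary parts on each $[0,R]$, then yields $w=\tilde w$.

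Finally, the nonnegativity of $P_{x,y}$ from Lemma~\ref{lem:dirichlet-kernel-properties} gives
\[
|u(x,t+T)-u(x,t)|\leq \Bigl(\sup_{y}|w_y|\Bigr)\sum_{y=1}^\infty P_{x,y}(t).
\]
By \eqref{dirichlet-kernel-mass-decay}, the sum tends to $0$ as $t\to\infty$ for fixed $x$, which proves the claim.

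The only step needing care is the boundedness of $u(\cdot,T)$, since the uniqueness argument requires $w$ to lie in the bounded growth class. This is exactly the $M=0$ case of the growth statement in Theorem~\ref{theo-nonhomogeneous}, so I expect no genuine obstacle there.
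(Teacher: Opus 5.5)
Your proof is correct, but it takes a different route from the paper. The paper keeps the decomposition $u=u_a+u_g+u_f$ and does not use uniqueness. It shows $u_a(x,t)\to0$ directly. For the boundary and forcing parts, it splits the Duhamel integrals at $s=T$ and uses periodicity to obtain the identities \eqref{periodicity-g} and \eqref{periodicity-f}. These express each difference as an integral over one period of the data against $P_{x,y}(t+T-s)$, after which nonnegativity and the mass decay \eqref{dirichlet-kernel-mass-decay} finish the argument.

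You instead use the time-translation invariance of the autonomous problem. Since $u(\cdot,\cdot+T)$ and $u$ solve the same equation with the same boundary data, their difference is a bounded solution of the homogeneous Dirichlet problem. Theorem~\ref{uniquenessForward} then identifies it as $\sum_y[u(y,T)-u_y]P_{x,y}(t)$. This is shorter and more conceptual: it reduces everything to the initial-data estimate, gives an exact representation of the difference, and would carry over to any problem with a uniqueness class and a decaying homogeneous solution operator. The cost is that you depend on the uniqueness theorem and on the $M=0$ growth bound of Theorem~\ref{theo-nonhomogeneous}; you check both correctly, including the reduction of complex data to real and imaginary parts.

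The paper's explicit identities are reused in Theorem~\ref{thm:periodicity-asymptotics}, where the leading coefficients $\int_0^T g(s)\,{\mathrm d}s$ and $\int_0^T\sum_y yf(y,s)\,{\mathrm d}s$ come straight out of them. In your formulation the leading term would involve $\sum_y y[u(y,T)-u_y]$. To recover that quantitative result you would also need moment bounds on $(u(y,T)-u_y)_y$ and the first-moment balance $\sum_y y[u(y,T)-u_y]=\int_0^T\bigl[g(s)+\sum_y yf(y,s)\bigr]\,{\mathrm d}s$.
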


\begin{proof}
We write
\[
u(x,t)=u_a(x,t)+u_g(x,t)+u_f(x,t),
\]
where
\begin{align*}
u_a(x,t)
&=\sum_{y=1}^{\infty}u_yP_{x,y}(t),\\
u_g(x,t)
&=\int_0^t g(s)P_{x,1}(t-s)\,{\mathrm d}s,\\
u_f(x,t)
&=\int_0^t
\sum_{y=1}^{\infty}f(y,s)P_{x,y}(t-s)\,{\mathrm d}s.
\end{align*}
These are the contributions from the initial data, boundary data,
and forcing, respectively.

Fix $x\in\mathbb N$.
By Lemma~\ref{lem:dirichlet-kernel-properties}, the kernel is
nonnegative and its total mass tends to zero. Hence
\[
|u_a(x,t)|
\leq
\sup_{y\in\mathbb N}|u_y|
\sum_{y=1}^{\infty}P_{x,y}(t)
\longrightarrow0,
\]
and therefore
\[
u_a(x,t+T)-u_a(x,t)\longrightarrow0.
\]

For the boundary contribution, split the time integral at $T$
and substitute $s=r+T$ in its second part. Periodicity gives
\begin{equation}\label{periodicity-g}
\begin{aligned}
u_g(x,t+T)
&=
\int_0^T g(s)P_{x,1}(t+T-s)\,{\mathrm d}s+
\int_0^t g(r+T)P_{x,1}(t-r)\,{\mathrm d}r\\
&=
\int_0^T g(s)P_{x,1}(t+T-s)\,{\mathrm d}s
+u_g(x,t).
\end{aligned}
\end{equation}
Thus, with
\[
G_T:=\max_{0\leq s\leq T}|g(s)|,
\]
we have
\[
|u_g(x,t+T)-u_g(x,t)|
\leq
TG_T\sup_{t\leq r\leq t+T}P_{x,1}(r)
\longrightarrow0
\qquad\text{as }t\to\infty
\]
by
\eqref{dirichlet-kernel-mass-decay}. Therefore,
\[
u_g(x,t+T)-u_g(x,t)\longrightarrow0.
\]

The same splitting and substitution, using periodicity of $f$,
give
\begin{equation}\label{periodicity-f}
u_f(x,t+T)-u_f(x,t)
=
\int_0^T
\sum_{y=1}^{\infty}
f(y,s)P_{x,y}(t+T-s)\,{\mathrm d}s.
\end{equation}
Put
\[
F_T:=
\sup_{\substack{y\in\mathbb N\\0\leq s\leq T}}
|f(y,s)|<\infty.
\]
Then
\[
|u_f(x,t+T)-u_f(x,t)|
\leq
TF_T
\sup_{t\leq r\leq t+T}
\sum_{y=1}^{\infty}P_{x,y}(r)
\longrightarrow0.
\]
Adding the three limits proves the assertion.
\end{proof}

The preceding theorem establishes asymptotic periodicity under
boundedness assumptions. Here asymptotic periodicity means that the difference
$u(x,t+T)-u(x,t)$ tends to zero at each fixed lattice point.
It does not, under the present assumptions, assert convergence
to a periodic function.

Under additional second-moment assumptions,
we can identify the leading terms explicitly.

\begin{theorem}\label{thm:periodicity-asymptotics}
Under the assumptions of
Theorem~\ref{thm:qualitative-asymptotic-periodicity}, suppose additionally
that
\[
\sum_{y=1}^{\infty}y^2|u_y|<\infty,
\qquad
\sup_{0\leq s\leq T}
\sum_{y=1}^{\infty}y^2|f(y,s)|<\infty.
\]
Let $u_a$, $u_g$, and $u_f$ denote the contributions to $u$
from the initial data, boundary data, and forcing, respectively.
Then, for each fixed $x\in\mathbb N$, as $t\to\infty$,
\begin{align*}
u_a(x,t)
&=
\frac{x}{2\sqrt{\pi}}
\left(\sum_{y=1}^{\infty}yu_y\right)t^{-3/2}
+o(t^{-2}),\\
u_g(x,t+T)-u_g(x,t)
&=
\frac{x}{2\sqrt{\pi}}
\left(\int_0^Tg(s)\,{\mathrm d}s\right)t^{-3/2}
+O(t^{-5/2}),\\
u_f(x,t+T)-u_f(x,t)
&=
\frac{x}{2\sqrt{\pi}}
\left\{
\int_0^T
\left[\sum_{y=1}^{\infty}yf(y,s)\right]
\,{\mathrm d}s
\right\}t^{-3/2}
+o(t^{-2}).
\end{align*}
Consequently,
\[
u(x,t+T)-u(x,t)
=
\frac{x}{2\sqrt{\pi}}
\left[
\int_0^T
\left(g(s)+\sum_{y=1}^{\infty}yf(y,s)\right)
\,{\mathrm d}s
\right]t^{-3/2}
+o(t^{-2}).
\]
\end{theorem}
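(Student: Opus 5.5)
The key tool will be a uniform large-time expansion of the Dirichlet kernel $P_{x,y}(r)$ obtained from the sine representation \eqref{dirichlet-kernel-sine} with $a=1$. Expanding $\sin(x\theta)\sin(y\theta)=xy\theta^2\bigl(1+O((x^2+y^2)\theta^2)\bigr)$ and $1-\cos\theta=\theta^2/2-\theta^4/24+\cdots$, a Laplace-method computation gives
\[
P_{x,y}(r)=\frac{xy}{2\sqrt{\pi}}\,r^{-3/2}+E_{x,y}(r),
\qquad |E_{x,y}(r)|\leq C\,x^{2}y^{2}\bigl(x^{2}+y^{2}\bigr)^{0}r^{-5/2}\ \text{(schematically)}.
\]
The constant comes from $\frac{2}{\pi}\int_0^\infty \theta^2{\mathrm e}^{-r\theta^2}\,{\mathrm d}\theta=\frac{1}{2\sqrt\pi}r^{-3/2}$. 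The region $\theta\in[\delta,\pi]$ contributes exponentially small terms.

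For the remainder I will use $|\sin(y\theta)-y\theta|\le y^3\theta^3/6$, and also $|\sin(y\theta)-y\theta|\le 2y\theta$. These give a bound $|E_{x,y}(r)|\le C_x\, y\min\{y^2 r^{-5/2},\,r^{-3/2}\}$, with $C_x$ depending only on $x$. With only second moments I cannot get $y^3$ in the remainder, so I will use dominated convergence. For each fixed $y$, $t^{2}E_{x,y}(t)\to0$, since $E_{x,y}(t)=O(t^{-5/2})$. The domination will come from a bound of the form $t^2|E_{x,y}(t)|\le C_x y^2$, uniformly for $t\ge1$. To get this I will interpolate between the $r^{-5/2}$ estimate, which is $O(y^3r^{-5/2})$, and the crude estimate $|E|\le P+\frac{xy}{2\sqrt\pi}r^{-3/2}$ with $P\le K_{x-y}\le Cr^{-1/2}$. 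When $y^2\ge t$, the crude bound gives $t^2\cdot y\,t^{-3/2}\le y^2$. When $y^2<t$, the cubic bound gives $y^3t^{-1/2}\le y^2$. This interpolation is the main obstacle, and it is where the exponent $o(t^{-2})$ rather than $O(t^{-5/2})$ arises.

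With the kernel expansion in hand, the three contributions are handled as follows.

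For $u_a$, I will sum $u_yP_{x,y}(t)$. The leading term gives $\frac{x}{2\sqrt\pi}\bigl(\sum_y y u_y\bigr)t^{-3/2}$. The remainder is $t^{-2}\sum_y u_y\, t^2E_{x,y}(t)$, which is $o(t^{-2})$ by dominated convergence with majorant $C_x y^2|u_y|$.

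For $u_g$, identity \eqref{periodicity-g} gives $u_g(x,t+T)-u_g(x,t)=\int_0^T g(s)P_{x,1}(t+T-s)\,{\mathrm d}s$. Here $y=1$, so $E_{x,1}(r)=O(r^{-5/2})$. Moreover, $(t+T-s)^{-3/2}=t^{-3/2}+O(t^{-5/2})$ uniformly for $s\in[0,T]$. Together these yield the $O(t^{-5/2})$ remainder.

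For $u_f$, I will use \eqref{periodicity-f} in the same way. I will insert the expansion of $P_{x,y}(t+T-s)$ and handle the shift $(t+T-s)^{-3/2}-t^{-3/2}=O(t^{-5/2})$, which is weighted by $\sum_y y|f(y,s)|$. The kernel remainder is treated by dominated convergence on $[0,T]\times\mathbb N$, with majorant $C_x y^2\sup_s|f(y,s)|$. This majorant must be summable in $y$, but the hypothesis only gives $\sup_s\sum_y y^2|f(y,s)|<\infty$. I will therefore run dominated convergence for the $s$-integral of the $y$-sum: the inner sum is bounded for each $s$ by $C_x\sum_y y^2|f(y,s)|$, which is bounded in $s$, and it tends to zero for each $s$ by dominated convergence in $y$.

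Adding the three expansions, and noting that $O(t^{-5/2})\subset o(t^{-2})$, gives the consequence.
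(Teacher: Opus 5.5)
Your proposal is correct, but it takes a different route from the paper. The paper never expands individual kernels. It inserts the sine representation \eqref{dirichlet-kernel-sine} into $u_a$ and into \eqref{periodicity-g}, \eqref{periodicity-f}, and sums over $y$ (and integrates over $s$) first. The factor ${\mathrm e}^{-q(\theta)(T-s)}$ produced by the time shift stays inside the amplitude, where $q(\theta)=2(1-\cos\theta)$. Dominated convergence as $\theta\downarrow0$, with majorant $2y^2|u_y|$, then shows the amplitude is $A\theta^2+o(\theta^3)$, or $A\theta^2+O(\theta^4)$ for the boundary term. A single Laplace-type lemma ($A\theta^2+o(\theta^3)$ gives $\frac{A}{2\sqrt\pi}t^{-3/2}+o(t^{-2})$, and $O(\theta^4)$ gives $O(t^{-5/2})$) handles all three contributions.

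You instead expand kernel by kernel, using the uniform bound $|E_{x,y}(r)|\le C_xy\min\{y^2r^{-5/2},r^{-3/2}\}$ for $r\ge1$; its geometric mean gives $r^2|E_{x,y}(r)|\le C_xy^2$. You then run dominated convergence as $t\to\infty$ (in $y$, then bounded convergence in $s$ for the forcing), and treat the shift via $(t+T-s)^{-3/2}-t^{-3/2}=O(t^{-5/2})$. The mechanism is the same: the second moment supplies the majorant and pointwise limits supply the little-$o$. You apply it in $t$, the paper in $\theta$. The paper's version is shorter and needs no uniform-in-$y$ kernel estimates. Yours gives pointwise kernel information and explains why only $o(t^{-2})$ should be expected. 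To leading order $P_{x,y}(r)\approx\frac{xy}{2\sqrt\pi}r^{-3/2}{\mathrm e}^{-y^2/(4r)}$, so the remainder is genuinely of size $yr^{-3/2}$ in the diffusive zone $y\sim\sqrt r$.

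One step needs correcting. The crude half of your interpolation must come from $|\sin(x\theta)\sin(y\theta)|\le xy\theta^2$ and $q(\theta)\ge4\theta^2/\pi^2$. These give $0\le P_{x,y}(r)\le Cxyr^{-3/2}$, hence $|E_{x,y}(r)|\le C_xyr^{-3/2}$, which is what your step $t^2\cdot y\,t^{-3/2}\le y^2$ actually uses. The bound $P_{x,y}\le K_{x-y}\le Cr^{-1/2}$ that you cite is too weak: for $t\le y^2$ it only yields $t^2|E_{x,y}(t)|\le Ct^{3/2}\le Cy^3$, which would require a third moment. For the same reason, the ``schematic'' estimate $|E_{x,y}(r)|\le Cx^2y^2r^{-5/2}$ is false uniformly in $y$ and should be dropped in favor of the minimum bound.

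For the cubic half, uniformity in $y$ follows from the splitting $\sin(x\theta)\sin(y\theta)-xy\theta^2=\sin(x\theta)[\sin(y\theta)-y\theta]+y\theta[\sin(x\theta)-x\theta]$. Combine it with the paper's estimate $\int_0^\pi\theta^2{\mathrm e}^{-rq(\theta)}\,{\mathrm d}\theta=\frac{\sqrt\pi}{4}r^{-3/2}+O(r^{-5/2})$ for the remaining quadratic term.
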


\begin{proof}
Fix $x\in\mathbb N$, and put
\[
q(\theta):=2(1-\cos\theta),
\qquad -\pi\leq\theta\leq\pi.
\]
By Lemma~\ref{lem:dirichlet-kernel-properties}, with $a=1$,
\begin{equation}\label{pxy-formula}
P_{x,y}(t)
=
\frac{2}{\pi}\int_0^\pi
{\mathrm e}^{-tq(\theta)}
\sin(x\theta)\sin(y\theta)\,{\mathrm d}\theta.
\end{equation}

We next establish an elementary asymptotic estimate.
If $H$ is continuous on $[0,\pi]$ and
\[
H(\theta)=A\theta^2+o(\theta^3)
\qquad\text{as }\theta\downarrow0,
\]
then
\begin{equation}\label{sine-laplace-estimate}
\frac{2}{\pi}
\int_0^\pi
{\mathrm e}^{-tq(\theta)}H(\theta)\,{\mathrm d}\theta
=
\frac{A}{2\sqrt{\pi}}t^{-3/2}+o(t^{-2}).
\end{equation}
If $H(\theta)=A\theta^2+O(\theta^4)$, the remainder
is $O(t^{-5/2})$.

To prove this, observe that, for $0\leq\theta\leq\pi$,
\[
\frac{4}{\pi^2}\theta^2\leq q(\theta)\leq\theta^2,
\qquad
0\leq\theta^2-q(\theta)\leq\frac{\theta^4}{12}.
\]
The first estimate follows from
$q(\theta)=4\sin^2(\theta/2)$ and
$\sin(\theta/2)\geq\theta/\pi$.
For the second estimate, use
\[
0\leq r-\sin r
=
\int_0^r(1-\cos v)\,{\mathrm d}v
\leq\frac{r^3}{6}
\]
and
\[
\theta^2-q(\theta)
=
2\int_0^\theta(r-\sin r)\,{\mathrm d}r.
\]
Writing $c=4/\pi^2$ and using $1-{\mathrm e}^{-r}\leq r$
for $r\geq0$, we obtain
\begin{align*}
0
&\leq
{\mathrm e}^{-tq(\theta)}-{\mathrm e}^{-t\theta^2}=
{\mathrm e}^{-tq(\theta)}
\left(1-{\mathrm e}^{-t(\theta^2-q(\theta))}\right)\leq
t(\theta^2-q(\theta)){\mathrm e}^{-tq(\theta)}\leq
\frac{t\theta^4}{12}{\mathrm e}^{-ct\theta^2}.
\end{align*}
Therefore,
\begin{align*}
&\left|
\int_0^\pi\theta^2{\mathrm e}^{-tq(\theta)}\,{\mathrm d}\theta
-
\int_0^\pi\theta^2{\mathrm e}^{-t\theta^2}\,{\mathrm d}\theta
\right|
\leq
\frac{t}{12}
\int_0^\infty\theta^6{\mathrm e}^{-ct\theta^2}\,{\mathrm d}\theta
=
\frac{t^{-5/2}}{12c^{7/2}}
\int_0^\infty r^6{\mathrm e}^{-r^2}\,{\mathrm d}r=
O(t^{-5/2}).
\end{align*}
The substitution $r=\sqrt{t}\,\theta$ gives
\[
\int_0^\infty\theta^2{\mathrm e}^{-t\theta^2}\,{\mathrm d}\theta
=
t^{-3/2}\int_0^\infty r^2{\mathrm e}^{-r^2}\,{\mathrm d}r
=
\frac{\sqrt{\pi}}4t^{-3/2}.
\]
Moreover, for $\theta\geq\pi$,
\[
{\mathrm e}^{-t\theta^2}
\leq
{\mathrm e}^{-t\pi^2/2}{\mathrm e}^{-t\theta^2/2}.
\]
Consequently,
\[
0\leq
\int_\pi^\infty\theta^2{\mathrm e}^{-t\theta^2}\,{\mathrm d}\theta
\leq
{\mathrm e}^{-t\pi^2/2}
\int_0^\infty\theta^2{\mathrm e}^{-(t/2)\theta^2}
\,{\mathrm d}\theta
=
\frac{\sqrt{\pi}}4
\left(\frac{2}{t}\right)^{3/2}
{\mathrm e}^{-t\pi^2/2}.
\]
This bound is $o(t^{-5/2})$ as $t\to\infty$, since
$t{\mathrm e}^{-t\pi^2/2}\to0$. Subtracting this tail from the integral over $[0,\infty)$, we obtain
\begin{align*}
\int_0^\pi\theta^2{\mathrm e}^{-t\theta^2}\,{\mathrm d}\theta
&=
\int_0^\infty\theta^2{\mathrm e}^{-t\theta^2}\,{\mathrm d}\theta
-
\int_\pi^\infty\theta^2{\mathrm e}^{-t\theta^2}\,{\mathrm d}\theta=
\frac{\sqrt{\pi}}4t^{-3/2}+o(t^{-5/2}).
\end{align*}
Combining this with the preceding estimate gives
\[
\int_0^\pi\theta^2{\mathrm e}^{-tq(\theta)}\,{\mathrm d}\theta
=
\frac{\sqrt{\pi}}4t^{-3/2}+O(t^{-5/2}).
\]

Now write $R(\theta)=H(\theta)-A\theta^2$.
Given $\varepsilon>0$, choose $\delta\in(0,\pi)$ such that
$|R(\theta)|\leq\varepsilon\theta^3$ on $[0,\delta]$.
Then
\[
\left|
\int_0^\delta
{\mathrm e}^{-tq(\theta)}R(\theta)\,{\mathrm d}\theta
\right|
\leq
\varepsilon\int_0^\infty
\theta^3{\mathrm e}^{-ct\theta^2}\,{\mathrm d}\theta
=
\frac{\varepsilon}{2c^2}t^{-2}.
\]
On $[\delta,\pi]$, the function $q$ has a positive minimum,
so the remaining integral is exponentially small.
It follows that
\[
\limsup_{t\to\infty}
t^2\left|
\int_0^\pi
{\mathrm e}^{-tq(\theta)}R(\theta)\,{\mathrm d}\theta
\right|
\leq\frac{\varepsilon}{2c^2}.
\]
Since $\varepsilon>0$ is arbitrary and the expression inside
the limit superior is nonnegative, it follows that
\[
\int_0^\pi
{\mathrm e}^{-tq(\theta)}R(\theta)\,{\mathrm d}\theta
=
o(t^{-2}).
\]
Using $H(\theta)=A\theta^2+R(\theta)$ and the preceding
estimate for the quadratic term, we therefore obtain
\begin{align*}
\frac{2}{\pi}\int_0^\pi
{\mathrm e}^{-tq(\theta)}H(\theta)\,{\mathrm d}\theta
&=
\frac{2A}{\pi}\int_0^\pi
\theta^2{\mathrm e}^{-tq(\theta)}\,{\mathrm d}\theta
+
\frac{2}{\pi}\int_0^\pi
{\mathrm e}^{-tq(\theta)}R(\theta)\,{\mathrm d}\theta\\
&=
\frac{A}{2\sqrt{\pi}}t^{-3/2}
+O(t^{-5/2})+o(t^{-2})=
\frac{A}{2\sqrt{\pi}}t^{-3/2}+o(t^{-2}),
\end{align*}
which proves \eqref{sine-laplace-estimate}.
If $R(\theta)=O(\theta^4)$, the same argument uses
\[
\int_0^\infty
\theta^4{\mathrm e}^{-ct\theta^2}\,{\mathrm d}\theta
=
O(t^{-5/2})
\]
and gives the stronger remainder.

We shall also use the following consequence of a finite second
moment. For any sequence $(a_y)$ satisfying
$\sum_{y=1}^{\infty}y^2|a_y|<\infty$, we have
\begin{equation}\label{sine-series-expansion}
\sum_{y=1}^{\infty}a_y\sin(y\theta)
=
\theta\sum_{y=1}^{\infty}ya_y+o(\theta^2)
\qquad\text{as }\theta\downarrow0.
\end{equation}
Indeed,
\[
\frac{1}{\theta^2}
\sum_{y=1}^{\infty}a_y[\sin(y\theta)-y\theta]
\longrightarrow0.
\]
To justify the limit, each summand tends to zero, and the bound
$|\sin r-r|\leq2r^2$ gives the summable majorant
$2y^2|a_y|$.

For the initial contribution, \eqref{pxy-formula} gives
\[
u_a(x,t)
=
\frac{2}{\pi}
\int_0^\pi
{\mathrm e}^{-tq(\theta)}
\sin(x\theta)
\left[\sum_{y=1}^{\infty}u_y\sin(y\theta)\right]
\,{\mathrm d}\theta.
\]
The interchange of the series and integral is justified by
$\sum_{y=1}^{\infty}|u_y|<\infty$.
By \eqref{sine-series-expansion} and
$\sin(x\theta)=x\theta+O(\theta^3)$,
\[
\sin(x\theta)\sum_{y=1}^{\infty}u_y\sin(y\theta)
=
x\left(\sum_{y=1}^{\infty}yu_y\right)\theta^2
+o(\theta^3).
\]
Applying \eqref{sine-laplace-estimate} proves
\begin{equation}\label{initial-expansion}
u_a(x,t)
=
\frac{x}{2\sqrt{\pi}}
\left(\sum_{y=1}^{\infty}yu_y\right)t^{-3/2}
+o(t^{-2}).
\end{equation}

For the boundary contribution, the periodicity identity \eqref{periodicity-g} gives
\[
u_g(x,t+T)-u_g(x,t)
=
\int_0^T g(s)P_{x,1}(t+T-s)\,{\mathrm d}s.
\]
Using \eqref{pxy-formula}, we obtain
\[
u_g(x,t+T)-u_g(x,t)
=
\frac{2}{\pi}
\int_0^\pi
{\mathrm e}^{-tq(\theta)}
\sin(x\theta)\sin\theta\,J_g(\theta)
\,{\mathrm d}\theta,
\]
where
\[
J_g(\theta):=
\int_0^T g(s){\mathrm e}^{-q(\theta)(T-s)}\,{\mathrm d}s.
\]
Since
\[
\left|{\mathrm e}^{-q(\theta)(T-s)}-1\right|
\leq q(\theta)(T-s),
\]
we have
\[
J_g(\theta)
=
\int_0^Tg(s)\,{\mathrm d}s+O(\theta^2).
\]
Consequently,
\[
\sin(x\theta)\sin\theta\,J_g(\theta)
=
x\left(\int_0^Tg(s)\,{\mathrm d}s\right)\theta^2
+O(\theta^4).
\]
The stronger version of \eqref{sine-laplace-estimate} proves
the asserted boundary asymptotic.

Finally, write
\[
M_2:=
\sup_{0\leq s\leq T}
\sum_{y=1}^{\infty}y^2|f(y,s)|<\infty.
\]
For every $N\geq1$,
\[
\sup_{0\leq s\leq T}
\sum_{y>N}y|f(y,s)|
\leq\frac{M_2}{N+1}.
\]
Thus the series $\sum_y yf(y,s)$, and also $\sum_y f(y,s)$,
converge absolutely and uniformly on $[0,T]$.
Their sums are continuous, and the interchanges of sums and
integrals below are justified.

The forcing periodicity identity \eqref{periodicity-f} and
\eqref{pxy-formula} give
\[
u_f(x,t+T)-u_f(x,t)
=
\frac{2}{\pi}
\int_0^\pi
{\mathrm e}^{-tq(\theta)}
\sin(x\theta)S_f(\theta)\,{\mathrm d}\theta,
\]
where
\[
S_f(\theta):=
\int_0^T
\left[\sum_{y=1}^{\infty}f(y,s)\sin(y\theta)\right]
{\mathrm e}^{-q(\theta)(T-s)}\,{\mathrm d}s.
\]
Put
\[
b_y:=\int_0^T f(y,s)\,{\mathrm d}s.
\]
For every $N$,
\[
\sum_{y=1}^N y^2|b_y|
\leq
\int_0^T\sum_{y=1}^N y^2|f(y,s)|\,{\mathrm d}s
\leq TM_2.
\]
Hence $\sum_{y=1}^{\infty}y^2|b_y|<\infty$, and
\eqref{sine-series-expansion} yields
\[
\sum_{y=1}^{\infty}b_y\sin(y\theta)
=
\theta\int_0^T
\left[\sum_{y=1}^{\infty}yf(y,s)\right]\,{\mathrm d}s
+o(\theta^2).
\]
Moreover, termwise integration gives
\[
S_f(\theta)-\sum_{y=1}^{\infty}b_y\sin(y\theta)
=
\int_0^T
\left[\sum_{y=1}^{\infty}f(y,s)\sin(y\theta)\right]
\left[{\mathrm e}^{-q(\theta)(T-s)}-1\right]
\,{\mathrm d}s.
\]
Here termwise integration is justified by uniform convergence,
since
\[
\sup_{0\leq s\leq T}
\sum_{y>N}|f(y,s)\sin(y\theta)|
\leq
\frac{M_2}{(N+1)^2}.
\]
Using $1-{\mathrm e}^{-r}\leq r$ for $r\geq0$ and
$|\sin(y\theta)|\leq y|\theta|$, we obtain
\begin{gather*}
\left|
S_f(\theta)-\sum_{y=1}^{\infty}b_y\sin(y\theta)
\right|
\leq
\int_0^T
\sum_{y=1}^{\infty}|f(y,s)|\,|\sin(y\theta)|
\left[1-{\mathrm e}^{-q(\theta)(T-s)}\right]
\,{\mathrm d}s\\
\leq
q(\theta)|\theta|
\int_0^T(T-s)
\sum_{y=1}^{\infty}y|f(y,s)|\,{\mathrm d}s\leq
Tq(\theta)|\theta|
\int_0^T
\sum_{y=1}^{\infty}y^2|f(y,s)|\,{\mathrm d}s\leq
T^2M_2q(\theta)|\theta|\leq
T^2M_2|\theta|^3.
\end{gather*}
Thus the difference is $O(|\theta|^3)$ as $\theta\to0$.

It follows that
\[
\sin(x\theta)S_f(\theta)
=
x\left\{
\int_0^T
\left[\sum_{y=1}^{\infty}yf(y,s)\right]\,{\mathrm d}s
\right\}\theta^2
+o(\theta^3).
\]
Applying \eqref{sine-laplace-estimate} proves the asserted
forcing asymptotic.

The initial-data asymptotic expansion \eqref{initial-expansion} also implies
\[
u_a(x,t+T)-u_a(x,t)=o(t^{-2}),
\]
because the mean value theorem yields
\[
(t+T)^{-3/2}-t^{-3/2}=O(t^{-5/2})
\]
and so both initial-data remainders are $o(t^{-2})$ by \eqref{initial-expansion}.
Adding the three contributions gives the final formula.
\end{proof}

\subsection{Robin-type problems}

We now replace the prescribed boundary values by a linear
relation between $u(0,t)$ and $u(1,t)$.
We first derive contour representations and then justify
the resulting solution formulas, including the contribution
of a forcing term.

Let $g\colon[0,\infty)\to\mathbb C$ be continuous, and let
$\{u_x\}_{x\in\mathbb N_0}$ satisfy
\[
\sum_{x\in\mathbb N_0}|u_x|<\infty.
\]
Suppose that $\kappa\in\mathbb C\setminus\{0\}$,
$|\kappa|\neq1$, and
\[
u_0+\kappa u_1=g(0).
\]
We seek a function $u\colon\mathbb N_0\times[0,\infty)\to\mathbb C$
satisfying
\begin{align}
u(x,0)&=u_x,
&&x\in\mathbb N_0,
\label{robin1}\\
\frac{\partial u}{\partial t}(x,t)
&=u(x+1,t)-2u(x,t)+u(x-1,t),
&&x\in\mathbb N,\quad t\geq0,
\label{robin2}\\
u(0,t)+\kappa u(1,t)&=g(t),
&&t\geq0.
\label{robin3}
\end{align}
The compatibility condition ensures that the initial and boundary
conditions agree at $t=0$.

\subsubsection{Derivation of the contour representation}

As in the preceding sections, we begin with a formal transform
calculation. Define
\[
\hat u(z,t):=\sum_{x=1}^{\infty}u(x,t)z^x,
\qquad |z|\leq1,
\]
and put
\[
g_0(t):=u(0,t),\qquad g_1(t):=u(1,t).
\]
Applying the discrete transform to \eqref{robin2} gives
\begin{align*}
\frac{\partial\hat u}{\partial t}(z,t)
&=
z^{-1}\bigl[\hat u(z,t)-zg_1(t)\bigr]
-2\hat u(z,t)+z\bigl[\hat u(z,t)+g_0(t)\bigr]\\
&=
(z+z^{-1}-2)\hat u(z,t)+zg_0(t)-g_1(t).
\end{align*}
For this subsection, write
\[
\omega(z,t):={\mathrm e}^{(z+z^{-1}-2)t}
\]
and define
\[
\tilde g_j(z,t):=
\int_0^t g_j(s)\omega(z,t-s)\,{\mathrm d}s,
\qquad j\in\{0,1\}.
\]
Solving the transformed equation yields the global relation
\[
\hat u(z,t)
=
\hat u(z,0)\omega(z,t)+z\tilde g_0(z,t)-\tilde g_1(z,t).
\]
Since $\omega(z^{-1},t)=\omega(z,t)$, we also have, on $|z|=1$,
\[
\hat u(z^{-1},t)
=
\hat u(z^{-1},0)\omega(z,t)
+z^{-1}\tilde g_0(z,t)-\tilde g_1(z,t).
\]

Subtracting these two identities gives
\begin{align*}
\hat u(z,t)-\hat u(z^{-1},t)
&=
[\hat u(z,0)-\hat u(z^{-1},0)]\omega(z,t)+(z-z^{-1})\tilde g_0(z,t).
\end{align*}
Alternatively, multiplying the first identity by $z^{-1}$ and the
second by $z$, and then subtracting, gives
\begin{align*}
z^{-1}\hat u(z,t)-z\hat u(z^{-1},t)
&=
[z^{-1}\hat u(z,0)-z\hat u(z^{-1},0)]\omega(z,t)+(z-z^{-1})\tilde g_1(z,t).
\end{align*}
Multiply the latter identity by $\kappa$ and add it to the former.
The boundary condition implies
\[
\tilde g_0(z,t)+\kappa\tilde g_1(z,t)=\tilde g(z,t),
\qquad
\tilde g(z,t):=\int_0^t g(s)\omega(z,t-s)\,{\mathrm d}s.
\]
Therefore,
\begin{align*}
\left(1+\frac{\kappa}{z}\right)\hat u(z,t)
-(1+\kappa z)\hat u(z^{-1},t)
&=
\left[
\left(1+\frac{\kappa}{z}\right)\hat u(z,0)
-(1+\kappa z)\hat u(z^{-1},0)
\right]\omega(z,t)+(z-z^{-1})\tilde g(z,t).
\end{align*}
Since $|\kappa|\neq1$, we may divide by $1+\kappa/z$ on the unit
circle. This gives
\[
\begin{aligned}
\hat u(z,t)-\frac{z+\kappa z^2}{z+\kappa}\hat u(z^{-1},t)
={}&
\left[
\hat u(z,0)-\frac{z+\kappa z^2}{z+\kappa}\hat u(z^{-1},0)
\right]\omega(z,t)+\frac{z^2-1}{z+\kappa}\tilde g(z,t).
\end{aligned}
\]

Multiplying by $z^{-x-1}$ and integrating counterclockwise around
the unit circle, we obtain, for $x\in\mathbb N$,
\begin{equation}\label{robin4}
\begin{aligned}
u(x,t)
&-
\frac{1}{2\pi{\mathrm i}}
\int_{|z|=1}
\frac{z+\kappa z^2}{z+\kappa}\hat u(z^{-1},t)
\frac{{\mathrm d}z}{z^{x+1}}\\
&=
\frac{1}{2\pi{\mathrm i}}
\int_{|z|=1}
\left[
\hat u(z,0)-\frac{z+\kappa z^2}{z+\kappa}\hat u(z^{-1},0)
\right]\omega(z,t)
\frac{{\mathrm d}z}{z^{x+1}}+
\frac{1}{2\pi{\mathrm i}}
\int_{|z|=1}
\frac{z^2-1}{z+\kappa}\tilde g(z,t)
\frac{{\mathrm d}z}{z^{x+1}}.
\end{aligned}
\end{equation}
To evaluate the remaining term involving the unknown transform, set
$w=z^{-1}$. The minus sign from the differential cancels with the
reversal of orientation, giving
\begin{equation}\label{robin5}
\begin{aligned}
&\int_{|z|=1}
\frac{z+\kappa z^2}{z+\kappa}\hat u(z^{-1},t)
\frac{{\mathrm d}z}{z^{x+1}}=
\int_{|w|=1}
\frac{w+\kappa}{1+\kappa w}\hat u(w,t)w^{x-2}\,{\mathrm d}w.
\end{aligned}
\end{equation}
Since the defining series for $\hat u(w,t)$ starts at index $1$,
we have
$\hat u(w,t)
=
w\sum_{y=1}^{\infty}u(y,t)w^{y-1}$.
Thus
\[
\frac{w+\kappa}{1+\kappa w}\hat u(w,t)w^{x-2}
=
\frac{w+\kappa}{1+\kappa w}
w^{x-1}\sum_{y=1}^{\infty}u(y,t)w^{y-1}
\]
extends analytically to $w=0$ for every $x\geq1$. The evaluation therefore
depends on whether the possible pole at $w=-1/\kappa$ lies inside
the unit circle.

\subsubsection{The case $|\kappa|<1$}

Here $|-1/\kappa|>1$, so the integral in \eqref{robin5} vanishes.
It follows from \eqref{robin4} that
\begin{equation*}
\begin{aligned}
u(x,t)
&=
\frac{1}{2\pi{\mathrm i}}
\int_{|z|=1}
\left[
\hat u(z,0)-\frac{z+\kappa z^2}{z+\kappa}\hat u(z^{-1},0)
\right]\omega(z,t)
\frac{{\mathrm d}z}{z^{x+1}}+
\frac{1}{2\pi{\mathrm i}}
\int_{|z|=1}
\frac{z^2-1}{z+\kappa}\tilde g(z,t)
\frac{{\mathrm d}z}{z^{x+1}},
\qquad x\geq1.
\end{aligned}
\end{equation*}
The boundary value is then defined by
\[
u(0,t)=g(t)-\kappa u(1,t).
\]

We next express the contour integrals in terms of the modified Bessel
functions. 
For the first term, we use
$\hat u(z,0)=\sum_{y=1}^{\infty}u_yz^y$ and the Laurent
expansion $\omega(z,t)
=
{\mathrm e}^{-2t}
\sum_{\ell\in\mathbb Z}I_\ell(2t)z^\ell$ to obtain
\begin{align*}
\frac{1}{2\pi{\mathrm i}}
\int_{|z|=1}\hat u(z,0)\omega(z,t)
\frac{{\mathrm d}z}{z^{x+1}}
&=
{\mathrm e}^{-2t}
\sum_{y=1}^{\infty}u_y
\sum_{\ell\in\mathbb Z}I_\ell(2t)
\frac{1}{2\pi{\mathrm i}}
\int_{|z|=1}z^{y+\ell-x-1}\,{\mathrm d}z\\
&=
{\mathrm e}^{-2t}
\sum_{y=1}^{\infty}u_yI_{x-y}(2t)
=
{\mathrm e}^{-2t}
\sum_{y=1}^{\infty}u_yI_{y-x}(2t).
\end{align*}
On $|z|=1$, the geometric series gives
\[
\frac{z+\kappa z^2}{z+\kappa}
=
(1+\kappa z)\sum_{m=1}^{\infty}(-\kappa)^{m-1}z^{1-m}
=
\sum_{m=1}^{\infty}(-\kappa)^{m-1}
\bigl(z^{1-m}+\kappa z^{2-m}\bigr).
\]
Since $\hat u(z^{-1},0)=\sum_{y=1}^{\infty}u_yz^{-y}$,
multiplying these two expansions yields
\[
\frac{z+\kappa z^2}{z+\kappa}\hat u(z^{-1},0)
=
\sum_{y=1}^{\infty}\sum_{m=1}^{\infty}
u_y(-\kappa)^{m-1}
\bigl(z^{-y+1-m}+\kappa z^{-y+2-m}\bigr).
\]
Using also the expansion of $\omega(z,t)$,
we obtain
\begin{align*}
&\frac{1}{2\pi{\mathrm i}}
\int_{|z|=1}
\frac{z+\kappa z^2}{z+\kappa}
\hat u(z^{-1},0)\omega(z,t)
\frac{{\mathrm d}z}{z^{x+1}}\\
&\quad=
{\mathrm e}^{-2t}
\sum_{y=1}^{\infty}\sum_{m=1}^{\infty}
u_y(-\kappa)^{m-1}
\sum_{\ell\in\mathbb Z}I_\ell(2t)
\frac{1}{2\pi{\mathrm i}}
\int_{|z|=1}
\bigl[
z^{\ell-y-m-x}
+\kappa z^{\ell-y-m-x+1}
\bigr]\,{\mathrm d}z.
\end{align*}
For each fixed pair $y,m$, the first term contributes only
when $\ell=y+x+m-1$, and the second only when
$\ell=y+x+m-2$. Thus the sum over~$\ell$ reduces to two terms,
and we conclude that
\begin{equation*}
\frac{1}{2\pi{\mathrm i}}
\int_{|z|=1}
\frac{z+\kappa z^2}{z+\kappa}
\hat u(z^{-1},0)\omega(z,t)
\frac{{\mathrm d}z}{z^{x+1}}=
{\mathrm e}^{-2t}
\sum_{y=1}^{\infty}u_y
\sum_{m=1}^{\infty}(-\kappa)^{m-1}
\bigl[
I_{y+x+m-1}(2t)+\kappa I_{y+x+m-2}(2t)
\bigr].
\end{equation*}
Similarly,
\[
\frac{z^2-1}{z+\kappa}
=
\sum_{m=1}^{\infty}(-\kappa)^{m-1}
\bigl(z^{2-m}-z^{-m}\bigr),
\]
so
\begin{align*}
&\frac{1}{2\pi{\mathrm i}}
\int_{|z|=1}
\frac{z^2-1}{z+\kappa}\tilde g(z,t)
\frac{{\mathrm d}z}{z^{x+1}}=
\sum_{m=1}^{\infty}(-\kappa)^{m-1}
\int_0^t
{\mathrm e}^{-2(t-s)}g(s)
\bigl[
I_{x+m-2}\bigl(2(t-s)\bigr)-I_{x+m}\bigl(2(t-s)\bigr)
\bigr]
\,{\mathrm d}s.
\end{align*}
In the previous calculation, the geometric series and the Laurent series converge absolutely
and uniformly on the unit circle, locally uniformly in time.
The summability of the initial data and continuity of $g$ therefore
justify the interchanges of sums and integrals.

Consequently,
\begin{equation}\label{robin10}
\begin{aligned}
u(x,t)
&=
{\mathrm e}^{-2t}\sum_{y=1}^{\infty}u_y
\Bigg\{
I_{y-x}(2t)
-
\sum_{m=1}^{\infty}(-\kappa)^{m-1}
\bigl[
I_{y+x+m-1}(2t)+\kappa I_{y+x+m-2}(2t)
\bigr]
\Bigg\}\\
&\quad+
\sum_{m=1}^{\infty}(-\kappa)^{m-1}
\int_0^t
{\mathrm e}^{-2(t-s)}g(s)
\bigl[
I_{x+m-2}\bigl(2(t-s)\bigr)-I_{x+m}\bigl(2(t-s)\bigr)
\bigr]
\,{\mathrm d}s,
\qquad x\geq1.
\end{aligned}
\end{equation}

This representation also permits bounded initial data:
\[
\sup_{x\in\mathbb N_0}|u_x|<\infty.
\]
To see the relevant bounds, recall that
\[
K_\ell(t):={\mathrm e}^{-2t}I_\ell(2t)\geq0,
\qquad
\sum_{\ell\in\mathbb Z}K_\ell(t)=1.
\]
Each sum over a subset of kernel indices is therefore bounded by $1$,
while
\[
\sum_{m=1}^{\infty}|\kappa|^{m-1}
=\frac{1}{1-|\kappa|}.
\]
Put $C:=\sup_{y\geq1}|u_y|$ and $G_T:=\max_{0\leq s\leq T}|g(s)|$.
By the triangle inequality, for $x\geq1$ and $0\leq t\leq T$,
\begin{align*}
|u(x,t)|
&\leq
C\sum_{y=1}^{\infty}K_{x-y}(t)+
C\sum_{m=1}^{\infty}|\kappa|^{m-1}
\sum_{y=1}^{\infty}
\bigl[K_{x+y+m-1}(t)
+|\kappa|K_{x+y+m-2}(t)\bigr]\\
&\quad+
G_T\sum_{m=1}^{\infty}|\kappa|^{m-1}
\int_0^t
\bigl[K_{x+m-2}(t-s)+K_{x+m}(t-s)\bigr]
\,{\mathrm d}s\\
&\leq
C+C(1+|\kappa|)
\sum_{m=1}^{\infty}|\kappa|^{m-1}
+2TG_T\sum_{m=1}^{\infty}|\kappa|^{m-1}=
\left(1+\frac{1+|\kappa|}{1-|\kappa|}\right)C
+\frac{2TG_T}{1-|\kappa|}.
\end{align*}
The boundary relation gives the corresponding bound at $x=0$.

Absolute convergence, locally uniform in time for each fixed $x$,
follows from the factorial bound on $I_\ell(2t)$ established earlier.
The verification that these formulas give solutions, including
the extension to bounded initial data, is provided at the end
of this section.

Likewise, summing the same kernel estimates over $x\geq1$ shows that
\[
\sup_{0\leq t\leq T}\sum_{x=1}^{\infty}|u(x,t)|
\leq
\left(1+\frac{1+|\kappa|}{1-|\kappa|}\right)
\sum_{y=1}^{\infty}|u_y|
+\frac{2TG_T}{1-|\kappa|}.
\]
Thus summable initial data give a solution that remains summable,
uniformly on compact time intervals, including the boundary value
at $x=0$.

\subsubsection{The case $|\kappa|>1$}

Here the point $w=-1/\kappa$ lies inside the unit circle, so
\eqref{robin5} has an additional residue contribution.
We first determine $\hat u(-1/\kappa,t)$.

Evaluating the global relation at $z=-1/\kappa$, we obtain
\begin{align*}
\hat u(-1/\kappa,t)
&=
\hat u(-1/\kappa,0)\omega(-1/\kappa,t)
-\frac1\kappa\tilde g_0(-1/\kappa,t)
-\tilde g_1(-1/\kappa,t)\\
&=
\hat u(-1/\kappa,0)\omega(-1/\kappa,t)
-\frac1\kappa\tilde g(-1/\kappa,t),
\end{align*}
where the second equality uses
$\tilde g_0+\kappa\tilde g_1=\tilde g$.
By the definitions of the transforms,
\[
\hat u(-1/\kappa,0)
=
\sum_{y=1}^{\infty}\frac{(-1)^y}{\kappa^y}u_y.
\]
Moreover, since
\[
-\frac1\kappa+(-1/\kappa)^{-1}-2
=
-(\kappa+\kappa^{-1}+2),
\]
we have
\[
\omega(-1/\kappa,t)
=
{\mathrm e}^{-(\kappa+\kappa^{-1}+2)t}
\]
and
\[
\tilde g(-1/\kappa,t)
=
\int_0^t
g(s){\mathrm e}^{-(\kappa+\kappa^{-1}+2)(t-s)}
\,{\mathrm d}s.
\]
Using
$1+\kappa w=\kappa(w+1/\kappa)$, we calculate
\begin{align*}
\operatorname*{Res}_{w=-1/\kappa}
\left[
\frac{w+\kappa}{1+\kappa w}
\hat u(w,t)w^{x-2}
\right]=
\lim_{w\to-1/\kappa}
\frac{w+\kappa}{\kappa}
\hat u(w,t)w^{x-2}=
\frac{\kappa-\kappa^{-1}}{\kappa}
\hat u(-1/\kappa,t)
\left(-\frac1\kappa\right)^{x-2}.
\end{align*}
The residue theorem therefore gives
\begin{equation*}
\int_{|w|=1}
\frac{w+\kappa}{1+\kappa w}
\hat u(w,t)w^{x-2}\,{\mathrm d}w=
2\pi{\mathrm i}\,
\frac{\kappa-\kappa^{-1}}{\kappa}
\hat u(-1/\kappa,t)
\left(-\frac1\kappa\right)^{x-2}=
2\pi{\mathrm i}\,
\frac{(-1)^x}{\kappa^x}C(t),
\end{equation*}
where
\begin{align*}
C(t)
&:=
(\kappa^2-1)\hat u(-1/\kappa,t)=
(\kappa^2-1)
\Bigg\{
\left[
\sum_{y=1}^{\infty}\frac{(-1)^y}{\kappa^y}u_y
\right]
{\mathrm e}^{-(\kappa+\kappa^{-1}+2)t}
-\frac1\kappa
\int_0^t
g(s){\mathrm e}^{-(\kappa+\kappa^{-1}+2)(t-s)}
\,{\mathrm d}s
\Bigg\}.
\end{align*}
Consequently,
\begin{equation}\label{robin-contour-large}
\begin{aligned}
u(x,t)
&=
\frac{(-1)^x}{\kappa^x}C(t)+
\frac{1}{2\pi{\mathrm i}}
\int_{|z|=1}
\left[
\hat u(z,0)-\frac{z+\kappa z^2}{z+\kappa}\hat u(z^{-1},0)
\right]\omega(z,t)
\frac{{\mathrm d}z}{z^{x+1}}\\
&\quad+
\frac{1}{2\pi{\mathrm i}}
\int_{|z|=1}
\frac{z^2-1}{z+\kappa}\tilde g(z,t)
\frac{{\mathrm d}z}{z^{x+1}},
\qquad x\geq1.
\end{aligned}
\end{equation}
Again, $u(0,t)=g(t)-\kappa u(1,t)$.

For $|\kappa|>1$, the appropriate geometric expansions on $|z|=1$
are
\begin{align*}
\frac{z+\kappa z^2}{z+\kappa}
&=
\sum_{m=1}^{\infty}
\frac{(-1)^{m-1}}{\kappa^m}
\bigl(z^m+\kappa z^{m+1}\bigr),\\
\frac{z^2-1}{z+\kappa}
&=
\sum_{m=1}^{\infty}
\frac{(-1)^{m-1}}{\kappa^m}
\bigl(z^{m+1}-z^{m-1}\bigr).
\end{align*}
It follows that
\begin{gather*}
\frac{1}{2\pi{\mathrm i}}
\int_{|z|=1}
\frac{z+\kappa z^2}{z+\kappa}
\hat u(z^{-1},0)\omega(z,t)
\frac{{\mathrm d}z}{z^{x+1}}=
{\mathrm e}^{-2t}
\sum_{y=1}^{\infty}u_y
\sum_{m=1}^{\infty}
\frac{(-1)^{m-1}}{\kappa^m}
\bigl[
I_{y+x-m}(2t)+\kappa I_{y+x-m-1}(2t)
\bigr],\\
\frac{1}{2\pi{\mathrm i}}
\int_{|z|=1}
\frac{z^2-1}{z+\kappa}\tilde g(z,t)
\frac{{\mathrm d}z}{z^{x+1}}=
\sum_{m=1}^{\infty}\frac{(-1)^m}{\kappa^m}
\int_0^t
{\mathrm e}^{-2(t-s)}g(s)
\bigl[
I_{m-x-1}\bigl(2(t-s)\bigr)-I_{m-x+1}\bigl(2(t-s)\bigr)
\bigr]
\,{\mathrm d}s.
\end{gather*}
Thus
\begin{align*}
u(x,t)
&=
\frac{(-1)^x}{\kappa^x}C(t)+
{\mathrm e}^{-2t}\sum_{y=1}^{\infty}u_y
\Bigg\{
I_{y-x}(2t)
+
\sum_{m=1}^{\infty}\frac{(-1)^m}{\kappa^m}
\bigl[
I_{y+x-m}(2t)+\kappa I_{y+x-m-1}(2t)
\bigr]
\Bigg\}\\
&\quad+
\sum_{m=1}^{\infty}\frac{(-1)^m}{\kappa^m}
\int_0^t
{\mathrm e}^{-2(t-s)}g(s)
\bigl[
I_{m-x-1}\bigl(2(t-s)\bigr)-I_{m-x+1}\bigl(2(t-s)\bigr)
\bigr]
\,{\mathrm d}s,
\qquad x\geq1.
\end{align*}

\subsubsection{The nonhomogeneous problem}

We now include a forcing term $f(x,t)$, retaining the assumptions
on $g$, $\kappa$, and the initial data.
We first proceed formally, assuming that the transforms and
the required interchanges of sums, derivatives, and integrals
are justified. Precise assumptions and a justification of the
resulting formulas are given below.

Let $u^{(0)}$ denote the solution of the unforced Robin
problem considered above, with the same initial and
boundary data. We now seek the solution
$u\colon\mathbb N_0\times[0,\infty)\to\mathbb C$ satisfying
\begin{align}
u(x,0)&=u_x,
&&x\in\mathbb N_0,
\label{robin24}\\
\frac{\partial u}{\partial t}(x,t)
&=
u(x+1,t)-2u(x,t)+u(x-1,t)+f(x,t),
&&x\in\mathbb N,\quad t\geq0,
\label{robin25}\\
u(0,t)+\kappa u(1,t)&=g(t),
&&t\geq0.
\label{robin26}
\end{align}

Define
\[
\hat u(z,t):=\sum_{x=1}^{\infty}u(x,t)z^x,
\qquad
\hat f(z,t):=\sum_{x=1}^{\infty}f(x,t)z^x.
\]
Here $g_0(t):=u(0,t)$ and $g_1(t):=u(1,t)$.
The transformed equation is
\[
\frac{\partial\hat u}{\partial t}(z,t)
=
(z+z^{-1}-2)\hat u(z,t)
+zg_0(t)-g_1(t)+\hat f(z,t),
\]
so
\[
\hat u(z,t)
=
\hat u(z,0)\omega(z,t)
+z\tilde g_0(z,t)-\tilde g_1(z,t)
+\int_0^t\hat f(z,s)\omega(z,t-s)\,{\mathrm d}s.
\]
The same elimination of the boundary transforms gives
\begin{align*}
\hat u(z,t)
-\frac{z+\kappa z^2}{z+\kappa}\hat u(z^{-1},t)
&=
\left[
\hat u(z,0)
-\frac{z+\kappa z^2}{z+\kappa}\hat u(z^{-1},0)
\right]\omega(z,t)+
\frac{z^2-1}{z+\kappa}\tilde g(z,t)\\
&\quad+
\int_0^t
\left[
\hat f(z,s)
-\frac{z+\kappa z^2}{z+\kappa}\hat f(z^{-1},s)
\right]\omega(z,t-s)\,{\mathrm d}s.
\end{align*}

If $|\kappa|<1$, the reflected transform
again has zero contribution to the inverse transform, and hence
\begin{align*}
u(x,t)
&=
u^{(0)}(x,t)
+
\frac{1}{2\pi{\mathrm i}}
\int_{|z|=1}
\Bigg\{
\int_0^t
\left[
\sum_{y=1}^{\infty}
f(y,s)
\left(
z^y-\frac{z+\kappa z^2}{z+\kappa}z^{-y}
\right)
\right]
\omega(z,t-s)\,{\mathrm d}s
\Bigg\}
\frac{{\mathrm d}z}{z^{x+1}},
\qquad x\geq1.
\end{align*}
The boundary value is recovered from
\[
u(0,t)=g(t)-\kappa u(1,t).
\]

If $|\kappa|>1$, the reflected-transform integral again
has a residue at $w=-1/\kappa$ after the substitution
$w=z^{-1}$.
To evaluate its contribution, we evaluate the global
relation at $z=-1/\kappa$:
\begin{align*}
\hat u(-1/\kappa,t)
&=
\hat u(-1/\kappa,0)\omega(-1/\kappa,t)
-\frac1\kappa\tilde g(-1/\kappa,t)+
\int_0^t\hat f(-1/\kappa,s)\omega(-1/\kappa,t-s)
\,{\mathrm d}s.
\end{align*}
The corresponding formula for $u^{(0)}$ has the same
initial-data and boundary-data terms, but no forcing term.
Subtracting the two formulas and using
$\hat f(-1/\kappa,s)
=\sum_{y=1}^{\infty}(-1)^y\kappa^{-y}f(y,s)$ gives
\[
\hat u(-1/\kappa,t)-\widehat{u^{(0)}}(-1/\kappa,t)
=
\int_0^t
\left[
\sum_{y=1}^{\infty}\frac{(-1)^y}{\kappa^y}f(y,s)
\right]
{\mathrm e}^{-(\kappa+\kappa^{-1}+2)(t-s)}
\,{\mathrm d}s.
\]
Consequently,
\begin{align*}
u(x,t)
&=
u^{(0)}(x,t)+
\frac{(-1)^x}{\kappa^x}(\kappa^2-1)
\int_0^t
\left[
\sum_{y=1}^{\infty}\frac{(-1)^y}{\kappa^y}f(y,s)
\right]
{\mathrm e}^{-(\kappa+\kappa^{-1}+2)(t-s)}
\,{\mathrm d}s\\
&\quad+
\frac{1}{2\pi{\mathrm i}}
\int_{|z|=1}
\Bigg\{
\int_0^t
\left[
\sum_{y=1}^{\infty}
f(y,s)
\left(
z^y-\frac{z+\kappa z^2}{z+\kappa}z^{-y}
\right)
\right]
\omega(z,t-s)\,{\mathrm d}s
\Bigg\}
\frac{{\mathrm d}z}{z^{x+1}},
\qquad x\geq1.
\end{align*}
As before,
\[
u(0,t)=g(t)-\kappa u(1,t).
\]

We now justify the preceding representations and state assumptions
under which their Bessel forms give classical solutions.

For $x,y\in\mathbb N$, let $R^\kappa_{x,y}(t)$ denote the coefficient
of $u_y$ in the homogeneous Bessel representation with $g=0$.
Thus, for $|\kappa|<1$,
\[
R^\kappa_{x,y}(t)
=
K_{x-y}(t)
-
\sum_{m=1}^{\infty}(-\kappa)^{m-1}
\bigl[
K_{x+y+m-1}(t)+\kappa K_{x+y+m-2}(t)
\bigr],
\]
whereas, for $|\kappa|>1$,
\begin{align*}
R^\kappa_{x,y}(t)
&=
(\kappa^2-1)\frac{(-1)^{x+y}}{\kappa^{x+y}}
{\mathrm e}^{-(\kappa+\kappa^{-1}+2)t}
+K_{x-y}(t)+
\sum_{m=1}^{\infty}\frac{(-1)^m}{\kappa^m}
\bigl[
K_{x+y-m}(t)+\kappa K_{x+y-m-1}(t)
\bigr].
\end{align*}
Since $R^\kappa_{x,y}$ is the initial-data kernel for the
homogeneous Robin problem, Duhamel's principle suggests
\begin{equation}\label{robin-forcing-bessel}
u(x,t)
=
u^{(0)}(x,t)
+
\int_0^t
\sum_{y=1}^{\infty}
R^\kappa_{x,y}(t-s)f(y,s)\,{\mathrm d}s,
\qquad x\in\mathbb N,\quad t\geq0.
\end{equation}
The same formula can also be obtained by expanding the forcing contour integrals in the same way as the
initial-data integrals.

We now justify this representation under the assumptions
of the following theorem.

\begin{theorem}\label{theo-robin}
Let $\kappa\in\mathbb C\setminus\{0\}$ satisfy $|\kappa|\neq1$.
Let $(u_x)_{x\in\mathbb N_0}$ be a bounded sequence of complex
numbers, and let $g\colon[0,\infty)\to\mathbb C$ be continuous,
with
\[
u_0+\kappa u_1=g(0).
\]
Suppose that $f\colon\mathbb N\times[0,\infty)\to\mathbb C$
is continuous in $t$ for each $x$ and satisfies
\[
\sup_{\substack{x\in\mathbb N\\0\leq t\leq R}}
|f(x,t)|<\infty
\qquad\text{for every }R>0.
\]

Let $u^{(0)}$ be given by the homogeneous Bessel representation
derived above for the corresponding case $|\kappa|<1$ or
$|\kappa|>1$, with
$u^{(0)}(0,t)=g(t)-\kappa u^{(0)}(1,t)$.
Then $u^{(0)}$ is a solution of
\eqref{robin1}--\eqref{robin3}.

Moreover, formula \eqref{robin-forcing-bessel}, together with
\[
u(0,t)=g(t)-\kappa u(1,t),
\]
defines a solution of
\eqref{robin24}--\eqref{robin26}.
\end{theorem}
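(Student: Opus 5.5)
We verify the Bessel representations directly, without going back to the contour integrals. The plan is to write $u^{(0)}$ as an initial-data part and a boundary part, and to treat the forcing by Duhamel. The kernel $R^\kappa_{x,y}(t)$ is a finite or geometrically convergent linear combination of the whole-line kernels $K_\ell(t)$. When $|\kappa|>1$ it also contains the exponential term $(\kappa^2-1)(-1)^{x+y}\kappa^{-x-y}{\mathrm e}^{-(\kappa+\kappa^{-1}+2)t}$.

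\textbf{Step 1: the kernel equation.} For each fixed $y\in\mathbb N$, define $R^\kappa_{0,y}(t):=-\kappa R^\kappa_{1,y}(t)$. I would show that $x\mapsto R^\kappa_{x,y}(t)$ solves the homogeneous equation for $x\in\mathbb N$ and satisfies $R^\kappa_{x,y}(0)=\delta_{x,y}$. This is a direct computation. Each $K_{x+c}(t)$ satisfies $\partial_tK_{x+c}=\Delta_xK_{x+c}$ on all of $\mathbb Z$, and the exponential term does too, since $(-1/\kappa)^{x}$ is an eigenvector of $\Delta$ with eigenvalue $-\kappa^{-1}-\kappa-2$. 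Termwise differentiation is justified by the factorial bound \eqref{K-factorial-bound} together with the geometric factor $|\kappa|^{\pm m}$.

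The only delicate identity is the Robin relation. The value $R^\kappa_{0,y}$ obtained by evaluating the formula at $x=0$ must equal $-\kappa R^\kappa_{1,y}$. Equivalently, the equation at $x=1$ must hold with this boundary value. I would check this by telescoping the $m$-sums. For $|\kappa|<1$, the combination $K_{x+y+m-1}+\kappa K_{x+y+m-2}$ summed against $(-\kappa)^{m-1}$ is designed to collapse under $x\mapsto x$, $x\mapsto x+1$ shifts. The initial condition uses $K_\ell(0)=\delta_{\ell,0}$: all reflected indices are at least $1$ when $|\kappa|<1$. When $|\kappa|>1$, the exponential term cancels the finitely many reflected terms with nonpositive index at $t=0$. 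This collapse is what guarantees the initial condition in that case.

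\textbf{Step 2: the boundary-data kernel.} The same computation applies to the boundary kernel, the coefficient of $g(s)$. Differentiating the time integral produces a term $g(t)B_x(0)$. This term vanishes for $x\ge2$, and at $x=1$ it combines with the boundary value so that $u(0,t)=g(t)-\kappa u(1,t)$.

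\textbf{Step 3: passing to bounded data.} Since $\sum_\ell K_\ell=1$ and $K_\ell\ge0$, the kernels satisfy
\[
\sup_{x}\sum_{y}|R^\kappa_{x,y}(t)|\le C_{\kappa,T},\qquad 0\le t\le T .
\]
When $|\kappa|<1$ this is the bound computed above. When $|\kappa|>1$, the exponential term contributes at most $|\kappa^2-1|\sum_y|\kappa|^{-x-y}$. This uniform bound lets me truncate the data as in the proof of Theorem~\ref{theo-1}. The finitely supported approximants solve the problem by Steps 1--2. They converge uniformly on $[0,T]$ at each fixed $x$, so I can pass to the limit in the integral form of the equation and then differentiate.

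Alternatively, Theorem~\ref{superposition-2} could be used with $m=1$. Its hypothesis on the boundary sums $\sum_k|u_k(0,t)|$ follows from $R^\kappa_{0,y}=-\kappa R^\kappa_{1,y}$ and the same bound. This shows that $u^{(0)}$ solves \eqref{robin1}--\eqref{robin3}.

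\textbf{Step 4: the forcing term.} Set
\[
v(x,t)=\int_0^t\sum_y R^\kappa_{x,y}(t-s)f(y,s)\,{\mathrm d}s .
\]
I would repeat the argument of Theorem~\ref{duhamel}. First I need a bound of the form
\[
\sum_y\sup_{0\le r\le T}|R^\kappa_{x,y}(r)|<\infty .
\]
I would obtain it from the integral equation for $R^\kappa$, exactly as was done for $G$. The $t$-derivative is bounded in the same way. The M-test and the Leibniz rule then give
\[
\partial_tv(x,t)=\sum_yR^\kappa_{x,y}(0)f(y,t)+\int_0^t\sum_y\Delta_xR^\kappa_{x,y}(t-s)f(y,s)\,{\mathrm d}s=f(x,t)+\Delta v(x,t).
\]
Here I use $R^\kappa_{x,y}(0)=\delta_{x,y}$. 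In addition $v(x,0)=0$, and the relation $R^\kappa_{0,y}=-\kappa R^\kappa_{1,y}$ gives $v(0,t)+\kappa v(1,t)=0$. Adding $u^{(0)}$ completes the argument.

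I expect the main obstacle to be Step 1 at the boundary index $x=1$, in the case $|\kappa|>1$. There the telescoping sums include Bessel functions of negative order, and the exponential residue term has to cancel them exactly. I would first try to verify this identity at the level of generating functions in $z$, using \eqref{robin-contour-large} for finitely supported data, rather than by manipulating indices.
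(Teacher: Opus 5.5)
Your proof is correct, but it handles the homogeneous part by a genuinely different route from the paper. The paper never checks the Bessel formulas against the Robin condition by hand. For summable data it constructs a solution as the Dirichlet solution of Theorem~\ref{theo-1} with boundary value $g-\kappa h$, where $h$ solves the Volterra equation \eqref{robin-volterra}. It then shows that this solution is continuously differentiable in $\ell^1$, so the formal transform computation applies and identifies it with the contour and Bessel representations. The properties of $R^\kappa_{x,y}$ then follow by specializing to $\delta$-data. You verify the explicit formulas directly, term by term.

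Your route is more elementary and self-contained: it needs no Volterra theory and no justification of the transform calculus, and it is an independent check of the formulas. The paper's route avoids index bookkeeping, reuses the Dirichlet theory, and also validates the contour representations for the constructed solution. Similarly, deriving $\sum_y\sup_{r\le T}|R^\kappa_{x,y}(r)|<\infty$ from the kernel's integral equation and the pointwise bound $\sup_x\sum_y|R^\kappa_{x,y}(t)|\le C_{\kappa,T}$ is a valid substitute for the paper's explicit factorial estimate.

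The obstacle you anticipate does not arise, so the generating-function fallback is unnecessary. For $|\kappa|<1$, put $S_n:=\sum_{m\ge1}(-\kappa)^{m-1}(K_{n+m-1}+\kappa K_{n+m-2})$; regrouping by index gives $S_y+\kappa S_{y+1}=K_y+\kappa K_{y-1}$. For $|\kappa|>1$, put $T_n:=\sum_{m\ge1}(-1)^m\kappa^{-m}(K_{n-m}+\kappa K_{n-m-1})$; then $T_y+\kappa T_{y+1}=-(K_y+\kappa K_{y-1})$, and the exponential terms cancel between $x=0$ and $x=1$ on their own. In both cases the result cancels $K_{-y}+\kappa K_{1-y}$, so the extended formula satisfies $R^\kappa_{0,y}+\kappa R^\kappa_{1,y}=0$. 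At $t=0$, the two index-zero terms of $T_{x+y}$ contribute $(1-\kappa^2)(-1/\kappa)^{x+y}$, and the exponential term cancels this. The boundary kernel satisfies the analogous identities.

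Three small repairs are needed:
\begin{itemize}
\item The factor $|{\mathrm e}^{-(\kappa+\kappa^{-1}+2)t}|$ can exceed $1$; it grows for real $\kappa<-1$. Absorb it into $C_{\kappa,T}$.
\item The uniform-in-$t$ convergence you claim in Step~3 does not follow from a $\sup_x\sum_y$ bound alone. Use your Step~4 bound there instead; it also supplies the boundary continuity that Theorem~\ref{superposition-2} requires.
\item The initial value $u^{(0)}(0,0)=u_0$ is where the compatibility condition $u_0+\kappa u_1=g(0)$ enters, and you should say so.
\end{itemize}
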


\begin{proof}
We first verify the homogeneous representations under the
additional assumption
\[
\sum_{x=1}^{\infty}|u_x|<\infty.
\]
Let $V$ be the Dirichlet solution with these initial data,
boundary value $g$, and zero forcing, as supplied by
Theorem~\ref{theo-1}.
For a continuous function $h$, the same theorem gives the
Dirichlet solution with boundary value $g-\kappa h$:
\begin{align*}
w(x,t)
&=
\sum_{y=1}^{\infty}u_yP_{x,y}(t)
+
\int_0^t[g(s)-\kappa h(s)]P_{x,1}(t-s)\,{\mathrm d}s=
V(x,t)-\kappa\int_0^t h(s)P_{x,1}(t-s)\,{\mathrm d}s,
\qquad x\in\mathbb N,
\end{align*}
together with
\[
w(0,t)=g(t)-\kappa h(t).
\]
For every continuous $h$, this function satisfies the
homogeneous diffusion equation and the prescribed initial
data at every $x\in\mathbb N$.

We choose $h$ so that the Robin condition holds.
Since
\[
w(0,t)+\kappa w(1,t)-g(t)
=
\kappa[w(1,t)-h(t)],
\]
this condition is equivalent to $w(1,t)=h(t)$, or
\begin{equation}\label{robin-volterra}
h(t)+\kappa\int_0^t P_{1,1}(t-s)h(s)\,{\mathrm d}s
=
V(1,t).
\end{equation}
The kernel and the right-hand side are continuous.
The standard existence and uniqueness theorem for linear
Volterra integral equations therefore gives a unique
continuous solution on every bounded time interval.
By uniqueness, these solutions agree on overlapping
intervals and define $h$ on $[0,\infty)$.

With this choice of $h$, the function $w$ satisfies the Robin
condition. Moreover, \eqref{robin-volterra} gives $h(0)=u_1$,
so compatibility yields
\[
w(0,0)=g(0)-\kappa u_1=u_0.
\]

We next justify the transform calculations for this solution.
Symmetry and the mass bound
\eqref{dirichlet-kernel-mass}, with $a=1$, give
\[
\sum_{x=1}^{\infty}P_{x,y}(t)\leq1.
\]
Consequently,
\[
\sum_{x=1}^{\infty}|w(x,t)|
\leq
\sum_{y=1}^{\infty}|u_y|
+
\int_0^t|g(s)-\kappa h(s)|\,{\mathrm d}s,
\]
which is bounded on every compact time interval.

The function $w(\cdot,t)$ also depends continuously on $t$
in the $\ell^1$ norm. Indeed, for each fixed $y$, the
factorial bound \eqref{K-factorial-bound} gives continuity
of $(P_{x,y}(t))_{x\in\mathbb N}$ in that norm.
The mass bound then permits approximation of the initial-data
series by finite sums. Continuity of the boundary integral
follows from the same kernel continuity and continuity of
$g-\kappa h$.
The differential equation shows that
$\partial_t w(\cdot,t)$ is continuous in the $\ell^1$ norm
as well: the spatial shifts are bounded operations on
$\ell^1$, and the boundary value contributes only at $x=1$.

Thus the transform may be differentiated termwise, and the
preceding transform calculations apply to~$w$.
They yield the homogeneous contour formulas and, by
termwise integration of absolutely convergent series,
their Bessel representations.

In particular, apply this conclusion to initial data
$u_x=\delta_{x,y}$ for $x\in\mathbb N$ and boundary data
$g=0$, with the compatible value
$u_0=-\kappa\delta_{1,y}$.
The resulting solution is $R^\kappa_{x,y}(t)$ for
$x\in\mathbb N$. Defining
\[
R^\kappa_{0,y}(t):=-\kappa R^\kappa_{1,y}(t),
\]
we therefore have
\[
\frac{\partial R^\kappa_{x,y}}{\partial t}(t)
=
R^\kappa_{x+1,y}(t)-2R^\kappa_{x,y}(t)
+R^\kappa_{x-1,y}(t),
\qquad x,y\in\mathbb N,
\]
and
\[
R^\kappa_{x,y}(0)=\delta_{x,y},
\qquad x,y\in\mathbb N.
\]

We now establish the convergence needed for bounded initial
data and forcing. For every fixed $x\in\mathbb N$ and $R>0$,
the explicit Robin kernel formulas give
\begin{equation}\label{robin-kernel-summability}
\sum_{y=1}^{\infty}
\sup_{0\leq t\leq R}|R^\kappa_{x,y}(t)|<\infty.
\end{equation}
Indeed, \eqref{K-factorial-bound} gives
\[
S_R:=\sum_{\ell\in\mathbb Z}
\sup_{0\leq t\leq R}K_\ell(t)
\leq
{\mathrm e}^{R^2}
\left(1+2\sum_{\ell=1}^{\infty}\frac{R^\ell}{\ell!}\right)
<\infty.
\]
For $|\kappa|<1$, the explicit kernel formula is
\[
R^\kappa_{x,y}(t)
=
K_{x-y}(t)
-
\sum_{m=1}^{\infty}(-\kappa)^{m-1}
\bigl[
K_{x+y+m-1}(t)+\kappa K_{x+y+m-2}(t)
\bigr].
\]
Consequently,
\begin{align*}
\sum_{y=1}^{\infty}
\sup_{0\leq t\leq R}|R^\kappa_{x,y}(t)|
&\leq
S_R+(1+|\kappa|)S_R
\sum_{m=1}^{\infty}|\kappa|^{m-1}=
S_R\left(1+\frac{1+|\kappa|}{1-|\kappa|}\right)
<\infty.
\end{align*}
For $|\kappa|>1$, the expansion
\[
\frac1{z+\kappa}
=
\sum_{m=1}^{\infty}(-1)^{m-1}\kappa^{-m}z^{m-1}
\]
gives the kernel formula
\begin{align*}
R^\kappa_{x,y}(t)
&=
K_{x-y}(t)
-
\sum_{m=1}^{\infty}(-1)^{m-1}\kappa^{-m}
\bigl[
K_{x+y-m}(t)+\kappa K_{x+y-m-1}(t)
\bigr]+
(\kappa^2-1)\frac{(-1)^{x+y}}{\kappa^{x+y}}
{\mathrm e}^{-(\kappa+\kappa^{-1}+2)t}.
\end{align*}
The first two terms are estimated in the same way, using
\[
\sum_{m=1}^{\infty}|\kappa|^{-m}
=
\frac1{|\kappa|-1}.
\]
For the last term, fix $x$ and put
\[
C_{x,R}:=
|\kappa^2-1|\,|\kappa|^{-x}
\max_{0\leq t\leq R}
\left|{\mathrm e}^{-(\kappa+\kappa^{-1}+2)t}\right|.
\]
Its absolute value is bounded by $C_{x,R}|\kappa|^{-y}$.
Thus
\[
\sum_{y=1}^{\infty}
\sup_{0\leq t\leq R}|R^\kappa_{x,y}(t)|
\leq
S_R\left(1+\frac{1+|\kappa|}{|\kappa|-1}\right)
+\frac{C_{x,R}}{|\kappa|-1}
<\infty.
\]
Finally, since
$R^\kappa_{0,y}(t)=-\kappa R^\kappa_{1,y}(t)$,
the same summability conclusion holds at $x=0$.

The kernel equation consequently implies
\[
\sum_{y=1}^{\infty}
\sup_{0\leq t\leq R}
\left|
\frac{\partial R^\kappa_{x,y}}{\partial t}(t)
\right|<\infty,
\qquad x\in\mathbb N,
\]
by applying \eqref{robin-kernel-summability} at
$x-1$, $x$, and $x+1$.

We now remove the additional summability assumption on the
initial data and assume only that $(u_y)$ is bounded.
Define the initial-data contribution by
\[
u_a(x,t):=
\sum_{y=1}^{\infty}u_yR^\kappa_{x,y}(t),
\qquad x\in\mathbb N_0.
\]
By \eqref{robin-kernel-summability} and boundedness of $(u_y)$,
this series converges absolutely and uniformly for
$t\in[0,R]$, for every fixed $x$ and $R>0$.
The kernel bounds justify termwise differentiation for
$x\in\mathbb N$. Hence $u_a$ satisfies the homogeneous
diffusion equation, with
\[
u_a(x,0)=u_x,\qquad x\in\mathbb N,
\qquad
u_a(0,t)+\kappa u_a(1,t)=0.
\]
Next, let $u_g$ denote the homogeneous Robin solution with
zero initial data at every $x\in\mathbb N$, boundary data~$g$,
and the compatible initial value $u_g(0,0)=g(0)$.
Its existence and Bessel representation were established
above, since these initial data are summable.
Its boundary value is
\[
u_g(0,t)=g(t)-\kappa u_g(1,t).
\]
Thus $u_a+u_g$ is precisely the homogeneous Bessel
representation $u^{(0)}$, and compatibility gives
$u^{(0)}(0,0)=u_0$.

Finally, consider the Duhamel contribution
\[
u_f(x,t):=
\int_0^t\sum_{y=1}^{\infty}
R^\kappa_{x,y}(t-s)f(y,s)\,{\mathrm d}s,
\qquad x\in\mathbb N_0.
\]
For every $R>0$ and for each fixed $x$,  the bound
\[
|R^\kappa_{x,y}(t-s)f(y,s)|
\leq
\left(\sup_{0\leq\tau\leq R}|R^\kappa_{x,y}(\tau)|\right)
\left(\sup_{\substack{j\in\mathbb N\\0\leq r\leq R}}
|f(j,r)|\right)
\]
and \eqref{robin-kernel-summability} give absolute and
uniform convergence on $0\leq s\leq t\leq R$.
Each summand is continuous, so the sum is continuous and
the time integral is an ordinary Riemann integral.
The corresponding derivative bound gives uniform convergence
of the differentiated series for $x\in\mathbb N$.
The Leibniz rule and the kernel equation therefore yield
\begin{align*}
\frac{\partial u_f}{\partial t}(x,t)
&=
\sum_{y=1}^{\infty}R^\kappa_{x,y}(0)f(y,t)
+
\int_0^t\sum_{y=1}^{\infty}
\frac{\partial R^\kappa_{x,y}}{\partial t}(t-s)
f(y,s)\,{\mathrm d}s\\
&=
f(x,t)+u_f(x+1,t)-2u_f(x,t)+u_f(x-1,t).
\end{align*}
Moreover,
\[
u_f(x,0)=0,\qquad x\in\mathbb N_0,
\qquad
u_f(0,t)+\kappa u_f(1,t)=0.
\]
Adding $u_f$ to $u^{(0)}$ proves
\eqref{robin-forcing-bessel} and all the required initial
and boundary conditions. 
\end{proof}

\section{Non-symmetric right-hand sides}\label{sec-7}

We next consider diffusion-type equations with nonsymmetric coefficients.
We first use symmetrization to obtain solution formulas,
then give a unified transform derivation that also extends
to a class of time-dependent coefficients.
Finally, we establish asymptotic periodicity under suitable
conditions on the coefficients and data.

\subsection{Symmetrization}

The next goal is to solve the initial-boundary value problem for
the more general equation
\begin{equation}\label{nonsymmetric}
\frac{\partial u}{\partial t}(x,t)
=
\alpha u(x-1,t)+\beta u(x,t)+\gamma u(x+1,t)+f(x,t).
\end{equation}

The following lemma makes it possible to convert equations with
non-symmetric coefficients $\alpha$, $\beta$, $\gamma$ into symmetric
ones.

\begin{lemma}\label{symmetrization}
Let $\alpha,\gamma\in\mathbb R\setminus\{0\}$ and
$\beta\in\mathbb C$.
Choose $r\in\mathbb C\setminus\{0\}$ such that
\[
r^2=\frac{\gamma}{\alpha},
\qquad
A:=\alpha r=\frac{\gamma}{r}.
\]
A function $u\colon\mathbb N_0\times[0,\infty)\to\mathbb C$
satisfies
\begin{equation}\label{u-eqn}
\frac{\partial u}{\partial t}(x,t)
=
\alpha u(x-1,t)+\beta u(x,t)+\gamma u(x+1,t)+f(x,t),
\qquad x\in\mathbb N,\quad t\geq0,
\end{equation}
if and only if the function
\[
v(x,t):=r^x u(x,t)
\]
satisfies
\begin{equation}\label{v-eqn}
\frac{\partial v}{\partial t}(x,t)
=
Av(x-1,t)+\beta v(x,t)+Av(x+1,t)+r^x f(x,t),
\qquad x\in\mathbb N,\quad t\geq0.
\end{equation}
If $\alpha\gamma>0$, we may choose
$r=\sqrt{\gamma/\alpha}>0$.
Then $A=\sqrt{\alpha\gamma}$ when $\alpha,\gamma>0$,
and $A=-\sqrt{\alpha\gamma}$ when $\alpha,\gamma<0$.
\end{lemma}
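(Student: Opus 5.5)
The proof is a direct computation: substitute $u(x,t)=r^{-x}v(x,t)$ into \eqref{u-eqn} and multiply through by $r^x$. No earlier result is needed, only the two algebraic identities $\alpha r=A$ and $\gamma/r=A$, which both follow from $r^2=\gamma/\alpha$. So I would first fix $r$ with $r^2=\gamma/\alpha$. Such an $r$ exists and is nonzero because $\alpha,\gamma\neq0$. Then I note that $\alpha r=\gamma/r$ is equivalent to $\alpha r^2=\gamma$, which shows that $A$ is well defined.

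For the forward direction, I multiply \eqref{u-eqn} by $r^x$. The left side becomes $\partial_t v(x,t)$, since $r^x$ does not depend on $t$. On the right side, the neighbor terms become
\[
r^x\alpha u(x-1,t)=\alpha r\,v(x-1,t)=Av(x-1,t),
\qquad
r^x\gamma u(x+1,t)=\frac{\gamma}{r}\,v(x+1,t)=Av(x+1,t).
\]
The middle term gives $\beta v(x,t)$ and the forcing gives $r^xf(x,t)$. This yields \eqref{v-eqn} for every $x\in\mathbb N$. Note that $x-1=0$ is allowed here, because $u$ and $v$ are both defined on $\mathbb N_0$.

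For the converse, I take $v$ satisfying \eqref{v-eqn}, put $u=r^{-x}v$, and multiply \eqref{v-eqn} by $r^{-x}$. The same two identities, read backwards, give \eqref{u-eqn}. Since the map $u\mapsto r^xu$ is invertible and pointwise, differentiability in $t$ transfers in both directions.

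For the final claim, assume $\alpha\gamma>0$. Then $\gamma/\alpha>0$, so we may take $r=\sqrt{\gamma/\alpha}>0$. This gives $A=\alpha\sqrt{\gamma/\alpha}=\sgn(\alpha)\sqrt{\alpha^2\gamma/\alpha}=\sgn(\alpha)\sqrt{\alpha\gamma}$. Hence $A=\sqrt{\alpha\gamma}$ if $\alpha,\gamma>0$ and $A=-\sqrt{\alpha\gamma}$ if $\alpha,\gamma<0$.

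The only point requiring care is this sign bookkeeping in the last step, where one must not write $\alpha\sqrt{\gamma/\alpha}=\sqrt{\alpha\gamma}$ without checking the sign of $\alpha$. There is no genuine obstacle.
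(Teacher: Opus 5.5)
Your proposal is correct and follows the paper's proof: multiply \eqref{u-eqn} by $r^x$, use $\alpha r=A=\gamma/r$ to rewrite the neighbor terms, and obtain the converse by substituting $u=r^{-x}v$. Your sign check $A=\sgn(\alpha)\sqrt{\alpha\gamma}$ for the final claim is also right; the paper states that claim without a separate argument.
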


\begin{proof}
Multiplying \eqref{u-eqn} by $r^x$ gives
\[
\frac{\partial v}{\partial t}(x,t)
=
\alpha r\,v(x-1,t)+\beta v(x,t)
+\frac{\gamma}{r}v(x+1,t)+r^x f(x,t),
\]
which is \eqref{v-eqn}.
The converse follows by substituting $u(x,t)=r^{-x}v(x,t)$.
\end{proof}

The same result holds if $\mathbb N_0$ and $\mathbb N$
are replaced by $\mathbb Z$.

By the previous lemma, solving an initial-boundary value problem
for \eqref{u-eqn} can be reduced to solving a corresponding
initial-boundary value problem for \eqref{v-eqn}.
However, the transformation does not necessarily preserve
boundedness of the initial data.
Also, the calculations from subsections \ref{sec51} and \ref{sec52}
apply only to the special case $\beta=-2A$.
Although it would be possible to extend them to general $A$, $\beta$,
it seems easier to guess the result by inserting suitable scaling
factors.
Thus, we will not use the previous lemma directly in the present section, but it helps
identify the appropriate coefficients.

We now allow arbitrary real coefficients $\alpha,\beta,\gamma$,
subject to $\alpha,\gamma\neq0$.
To express the fundamental solution in terms of modified Bessel
functions, choose $\eta\in\mathbb C$ such that
\[
\eta^2=\alpha\gamma.
\]
The fundamental solution of the homogeneous equation
corresponding to \eqref{nonsymmetric} is
\[
G(x,t)
=
{\mathrm e}^{\beta t}
\left(\frac{\alpha}{\eta}\right)^x I_x(2\eta t),
\qquad x\in\mathbb Z,\quad t\geq0.
\]
This expression is independent of the choice between $\eta$
and $-\eta$, since
\[
I_j(-z)=(-1)^jI_j(z),
\qquad j\in\mathbb Z.
\]
In particular, when $\alpha,\gamma>0$, we may take
$\eta=\sqrt{\alpha\gamma}$, and then
$\alpha/\eta=\sqrt{\alpha/\gamma}$.
When both coefficients are negative, the factor $\alpha/\eta$
retains the sign needed in the fundamental solution.

If $\alpha\gamma<0$, then $\eta$ is imaginary.
As explained in subsection \ref{sub-general-case}, the modified
Bessel functions can then be expressed using Bessel functions
$J_j$ with real arguments.
Despite its possibly complex representation, $G$ is real-valued
for real coefficients, as is also clear from its power series.

First, we find a solution of the homogeneous equation corresponding
to \eqref{nonsymmetric} with the zero boundary condition at $x=0$:
\begin{align}
\frac{\partial u}{\partial t}(x,t)
&=
\alpha u(x-1,t)+\beta u(x,t)+\gamma u(x+1,t),
\qquad x\in\mathbb N,\quad t\geq0,
\label{nonsym1}\\
u(x,0)&=u_x,\qquad x\in\mathbb N,
\label{nonsym2}\\
u(0,t)&=0,\qquad t\geq0.
\label{nonsym3}
\end{align}

\begin{theorem}\label{dirichlet-problem}
Let $\alpha,\beta,\gamma\in\mathbb R$ with $\alpha,\gamma\neq0$,
and choose $\eta\in\mathbb C$ such that $\eta^2=\alpha\gamma$.
Suppose that $\{u_x\}_{x\in\mathbb N}$ is bounded.
Then the function
$u\colon\mathbb N_0\times[0,\infty)\to\mathbb R$ given by
\[
u(x,t)=\sum_{k\in\mathbb N}u_kQ_{x,k}(t),
\]
where
\begin{align*}
Q_{x,k}(t)
&:=
G(x-k,t)-\left(\frac{\gamma}{\alpha}\right)^kG(x+k,t)\\
&=
{\mathrm e}^{\beta t}
\left(\frac{\alpha}{\eta}\right)^{x-k}
\bigl[I_{x-k}(2\eta t)-I_{x+k}(2\eta t)\bigr],
\qquad x\in\mathbb N_0,\quad k\in\mathbb N,\quad t\geq0,
\end{align*}
satisfies \eqref{nonsym1}--\eqref{nonsym3} and is bounded on
$\mathbb N_0\times[0,T]$ for every $T>0$.
\end{theorem}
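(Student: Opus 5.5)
We will prove Theorem~\ref{dirichlet-problem} by the odd-reflection idea of Subsection~\ref{sec51}, adapted to the nonsymmetric weights. First we verify the closed form of $Q_{x,k}$. Substituting the fundamental solution gives
\[
\left(\frac{\gamma}{\alpha}\right)^kG(x+k,t)={\mathrm e}^{\beta t}\left(\frac{\eta^2}{\alpha^2}\right)^k\left(\frac{\alpha}{\eta}\right)^{x+k}I_{x+k}(2\eta t)={\mathrm e}^{\beta t}\left(\frac{\alpha}{\eta}\right)^{x-k}I_{x+k}(2\eta t).
\]
From the closed form and $I_{-k}=I_k$, we get $Q_{0,k}(t)={\mathrm e}^{\beta t}(\alpha/\eta)^{-k}[I_{-k}-I_k](2\eta t)=0$. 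This gives \eqref{nonsym3} termwise. Next, $t\mapsto G(x-k,t)$ and $t\mapsto G(x+k,t)$ both solve the whole-line equation $\partial_tG(x,t)=\alpha G(x-1,t)+\beta G(x,t)+\gamma G(x+1,t)$ in the variable $x$ (the case $m=1$ of Theorem~\ref{FS-general}). Hence each $Q_{\cdot,k}$ solves \eqref{nonsym1} for all $x\in\mathbb Z$, in particular for $x\in\mathbb N$. Finally, $Q_{x,k}(0)=\delta_{x,k}-(\gamma/\alpha)^k\delta_{x,-k}=\delta_{x,k}$ for $x\in\mathbb N_0$, $k\in\mathbb N$, which gives \eqref{nonsym2}. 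Real-valuedness follows because $G$ is real-valued for real coefficients.

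The main task is summing over $k$ with merely bounded $u_k$. The weight $(\gamma/\alpha)^k$ may grow, so we cannot quote Theorem~\ref{superposition-2} with $\sum_k|u_k(x,0)|$ alone. The second hypothesis there is trivial here, since $Q_{0,k}\equiv0$, so $\gamma=0$ in that theorem and the boundary continuity hypotheses hold. However, we also need each $Q_{\cdot,k}$ to be bounded on $\mathbb N_0\times[0,T]$. The function $G(x-k,t)$ is bounded by Theorem~\ref{FS-general}, and for fixed $k$ the factor $(\gamma/\alpha)^k$ is a constant, so this holds. With $\beta$ in Theorem~\ref{superposition-2} equal to $\sup_x\sum_k|Q_{x,k}(0)|=1$ and $c_k=u_k$, that theorem gives that $u$ solves \eqref{nonsym1} and obeys the bound $|u|\le(\sup|u_k|)\exp((|\alpha|+|\beta|+|\gamma|)t)$. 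Boundedness on $\mathbb N_0\times[0,T]$ therefore follows directly.

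The step needing the most care is checking that Theorem~\ref{superposition-2} really applies as stated: its proof uses only the Gronwall estimate on finite sums and never requires summability of the kernels in $k$. We will reread it to confirm. As a safeguard, we will independently check the pointwise absolute convergence of $\sum_k u_kQ_{x,k}(t)$ using $|I_n(z)|\le I_n(|z|)\le {\mathrm e}^{|z|^2/4}(|z|/2)^{|n|}/|n|!$. This gives $|Q_{x,k}(t)|\le C_{x,T}\,|\alpha/\eta|^{-k}(|\eta|T)^{k-x}/(k-x)!$ for large $k$, which is summable because the factorial beats the geometric factor. The same bound yields uniform convergence on $[0,T]$, as a direct alternative route.
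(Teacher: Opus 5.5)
Your proposal is correct and follows essentially the same route as the paper. You check $Q_{0,k}\equiv0$ and $Q_{x,k}(0)=\delta_{x,k}$, use that each $Q_{\cdot,k}$ is bounded because $G$ is, and apply Theorem~\ref{superposition-2} with $\sum_k|Q_{x,k}(0)|=1$ and vanishing boundary sums; this gives both the equation and the uniform bound on $\mathbb N_0\times[0,T]$. Your extra verification of the closed form and the Bessel-bound convergence check are correct but not needed, since the Gronwall argument in that theorem already gives absolute convergence without any summability of the weights $(\gamma/\alpha)^k$.
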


\begin{proof}
Since $Q_{0,k}(t)=0$ for all $t\geq0$, we get $u(0,t)=0$.
Since $G$ is the fundamental solution, we also have
$Q_{x,k}(0)=\delta_{kx}$ for all $k,x\in\mathbb N$, so
$u(x,0)=u_x$ for every $x\in\mathbb N$.

The fact that $u$ satisfies \eqref{nonsym1} follows from
Theorem~\ref{superposition-2}, the superposition principle for
solutions on $\mathbb N_0\times[0,T]$.
Here $u$ is an infinite linear combination of the functions
$(x,t)\mapsto Q_{x,k}(t)$
with bounded coefficients $u_k$, and each of these functions is a finite
linear combination of shifted fundamental solutions.

Moreover, for $x\in\mathbb N$,
\[
\sum_{k\in\mathbb N}|Q_{x,k}(0)|
=
\sum_{k\in\mathbb N}\delta_{kx}=1,
\]
whereas
\[
\sum_{k\in\mathbb N}|Q_{0,k}(0)|=0.
\]
Finally, for each fixed $k$, the function
$(x,t)\mapsto Q_{x,k}(t)$ is bounded on $\mathbb N_0\times[0,T]$,
because $G$ is bounded on $\mathbb Z\times[0,T]$.
These observations verify the assumptions of the superposition
theorem.
\end{proof}


\begin{theorem}\label{nonsymmetric-solution}
Let $\alpha,\beta,\gamma\in\mathbb R$ with $\alpha,\gamma\neq0$.
Suppose that:
\begin{enumerate}
\item
$g\colon[0,\infty)\to\mathbb R$ is continuous.
\item
$\{u_x\}_{x\in\mathbb N}$ is bounded.
\item
$f\colon\mathbb N\times[0,\infty)\to\mathbb R$ is bounded on
$\mathbb N\times[0,T]$ for every $T>0$, and
$t\mapsto f(x,t)$ is continuous on $[0,\infty)$ for every
$x\in\mathbb N$.
\end{enumerate}
Then the function
$u\colon\mathbb N_0\times[0,\infty)\to\mathbb R$ given by
\begin{align*}
u(x,t)
&=
g(t)(1-\sgn x)
+\alpha\int_0^t Q_{x,1}(t-s)g(s)\,{\mathrm d}s
+\sum_{k\in\mathbb N}u_kQ_{x,k}(t)+
\int_0^t\sum_{k\in\mathbb N}
f(k,s)Q_{x,k}(t-s)\,{\mathrm d}s,
\end{align*}
is a solution of the initial-boundary value problem
\begin{align*}
\frac{\partial u}{\partial t}(x,t)
&=
\alpha u(x-1,t)+\beta u(x,t)+\gamma u(x+1,t)+f(x,t),
\qquad x\in\mathbb N,\quad t\geq0,\\
u(0,t)&=g(t),\qquad t\geq0,\\
u(x,0)&=u_x,\qquad x\in\mathbb N.
\end{align*}
Moreover, $u$ is the unique solution of this problem that is
bounded on $\mathbb N_0\times[0,T]$ for every $T>0$.
\end{theorem}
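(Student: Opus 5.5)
We split $u=u_b+u_a+u_f$, where $u_b(x,t)=g(t)(1-\sgn x)+\alpha\int_0^t Q_{x,1}(t-s)g(s)\,{\mathrm d}s$ is the boundary contribution, $u_a(x,t)=\sum_k u_kQ_{x,k}(t)$ and $u_f(x,t)=\int_0^t\sum_k f(k,s)Q_{x,k}(t-s)\,{\mathrm d}s$. By Theorem~\ref{dirichlet-problem}, $u_a$ solves the homogeneous equation with zero Dirichlet data and initial data $u_x$, and it is bounded on $\mathbb N_0\times[0,T]$. It therefore remains to show two things: $u_f$ solves the forced problem with zero initial and boundary data, and $u_b$ solves the homogeneous problem with boundary value $g$ and zero initial data.

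For $u_f$ we imitate the proof of Theorem~\ref{duhamel}. Each $Q_{x,k}$ is a finite combination $G(x-k,\cdot)-(\gamma/\alpha)^kG(x+k,\cdot)$. The factor $(\gamma/\alpha)^k$ may grow, so we do not estimate the two terms separately; we use the closed form with $(\alpha/\eta)^{x-k}[I_{x-k}-I_{x+k}]$ instead. For fixed $x$, the bound $|I_\ell(2\eta t)|\le I_\ell(2|\eta|T)\le {\mathrm e}^{|\eta|^2T^2}(|\eta|T)^{|\ell|}/|\ell|!$ beats the geometric factor $|\alpha/\eta|^{x-k}$, which gives $\sum_k\sup_{0\le r\le T}|Q_{x,k}(r)|<\infty$. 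The same bound for $\partial_rQ_{x,k}=\alpha Q_{x-1,k}+\beta Q_{x,k}+\gamma Q_{x+1,k}$ yields the corresponding summable bound for the derivative. With boundedness of $f$, the Weierstrass test and the Leibniz rule then give $\partial_tu_f=f(x,t)+\alpha u_f(x-1,t)+\beta u_f(x,t)+\gamma u_f(x+1,t)$, using $Q_{x,k}(0)=\delta_{xk}$, $Q_{0,k}=0$ and $u_f(x,0)=0$. For boundedness on $\mathbb N_0\times[0,T]$, we note that $\sum_k|Q_{x,k}(r)|$ is bounded uniformly in $x$: the odd extension via Lemma~\ref{symmetrization} is not available in real terms, so we argue through the superposition bound \eqref{super-estimate} in Theorem~\ref{superposition-2}, applied with the sign choices $c_k$ as in its proof, which gives $\sum_k|Q_{x,k}(r)|\le {\mathrm e}^{(|\alpha|+|\beta|+|\gamma|)r}$.

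For $u_b$ we first assume $g\in C^1$ and substitute $v=u-g$, exactly as in Subsection~\ref{sec51}. Then $v$ has zero boundary data, initial data $u_x-g(0)$, and forcing $f-g'(t)+(\alpha+\beta+\gamma)g(t)$, which is independent of $x$ apart from the forcing $f$. The already established Dirichlet formulas give $v=\sum_k(u_k-g(0))Q_{x,k}(t)+\int_0^t\sum_k[\,(\alpha+\beta+\gamma)g(s)-g'(s)\,]Q_{x,k}(t-s)\,{\mathrm d}s+u_f$. Put $\psi(x,r)=\sum_kQ_{x,k}(r)$ for $x\ge1$, and integrate by parts in $s$ to eliminate $g'$. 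We use $\partial_r\psi=\alpha\psi(x-1,\cdot)+\beta\psi+\gamma\psi(x+1,\cdot)$ with $\psi(0,\cdot)=0$, together with the identity $\partial_r\psi-(\alpha+\beta+\gamma)\psi=-\alpha Q_{x,1}$. This identity should follow from the shift structure of $Q$: after telescoping, $\sum_k[Q_{x-1,k}-Q_{x,k}]$ leaves only the $k=1$ boundary term. The $g(0)$ terms then cancel and we arrive at the stated formula. For merely continuous $g$ we prove directly that the formula is a solution, by differentiating under the integral: $\partial_t\int_0^tQ_{x,1}(t-s)g(s)\,{\mathrm d}s=\delta_{x1}g(t)+\int_0^t\partial_tQ_{x,1}(t-s)g(s)\,{\mathrm d}s$. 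The extra term $\alpha g(t)$ at $x=1$ is exactly the missing $\alpha u(0,t)$. We check $u_b(0,t)=g(t)$ and $u_b(x,0)=0$ for $x\ge1$. Boundedness follows from $|Q_{x,1}(r)|\le$ the same uniform kernel bound.

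Uniqueness follows from Theorem~\ref{uniquenessForward} with $m=1$ and $M=0$. Since that theorem is stated for real coefficients, no changes are needed there. We expect the main obstacle to be the uniform-in-$x$ summability of $Q_{x,k}$, since the factor $(\gamma/\alpha)^k$ may be large when $|\gamma|>|\alpha|$. Our first attempt is the Gronwall argument of Theorem~\ref{superposition-2}; if it falls short, we fall back on the factorial Bessel bounds.
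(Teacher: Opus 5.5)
Your proposal is correct and follows essentially the paper's proof: the same decomposition, Theorem~\ref{dirichlet-problem} for $u_a$, factorial Bessel bounds with the Weierstrass test and the Leibniz rule for $u_f$ (with uniform-in-$x$ boundedness from \eqref{super-estimate}), direct differentiation of the boundary integral using $Q_{x,1}(0)=\delta_{x,1}$, and Theorem~\ref{uniquenessForward} for uniqueness. The $C^1$ detour through $v=u-g$ is unnecessary once you verify the formula directly for continuous $g$; its key identity $\partial_r\psi-(\alpha+\beta+\gamma)\psi=-\alpha Q_{x,1}$ is true, but it comes from combining the $\alpha$- and $\gamma$-differences, not from telescoping $\sum_k[Q_{x-1,k}-Q_{x,k}]$ alone.
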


\begin{proof}
We use the decomposition
\[
u(x,t)=u_a(x,t)+u_g(x,t)+u_f(x,t),
\]
where
\begin{align*}
u_a(x,t)
&=
\sum_{k\in\mathbb N}u_kQ_{x,k}(t),\\
u_g(x,t)
&=
g(t)(1-\sgn x)
+\alpha\int_0^t Q_{x,1}(t-s)g(s)\,{\mathrm d}s,\\
u_f(x,t)
&=
\int_0^t\sum_{k\in\mathbb N}
f(k,s)Q_{x,k}(t-s)\,{\mathrm d}s.
\end{align*}
Since $Q_{0,1}(t)=0$ for all $t\geq0$, we have $u_g(0,t)=g(t)$.
Also, $u_g(x,0)=0$ for every $x\in\mathbb N$.

The kernel $Q_{x,1}$ satisfies
\[
\frac{\partial Q_{x,1}}{\partial t}(t)
=
\alpha Q_{x-1,1}(t)+\beta Q_{x,1}(t)+\gamma Q_{x+1,1}(t),
\qquad x\in\mathbb N,\quad t\geq0,
\]
because it is a linear combination of shifted fundamental solutions.

We now use the Leibniz rule to calculate
$\frac{\partial u_g}{\partial t}$.
For $x\geq2$, we have $Q_{x,1}(0)=0$, and
\begin{align*}
\frac{\partial u_g}{\partial t}(x,t)
&=
\alpha Q_{x,1}(0)g(t)
+\alpha\int_0^t
\frac{\partial Q_{x,1}}{\partial t}(t-s)g(s)\,{\mathrm d}s\\
&=
\alpha\int_0^t
\bigl[
\alpha Q_{x-1,1}(t-s)+\beta Q_{x,1}(t-s)
+\gamma Q_{x+1,1}(t-s)
\bigr]g(s)\,{\mathrm d}s\\
&=
\alpha u_g(x-1,t)+\beta u_g(x,t)+\gamma u_g(x+1,t).
\end{align*}
For $x=1$, we have $Q_{1,1}(0)=1$, and therefore
\begin{align*}
\frac{\partial u_g}{\partial t}(1,t)
&=
\alpha Q_{1,1}(0)g(t)
+\alpha\int_0^t
\frac{\partial Q_{1,1}}{\partial t}(t-s)g(s)\,{\mathrm d}s\\
&=
\alpha g(t)
+\alpha\int_0^t
\bigl[
\alpha Q_{0,1}(t-s)+\beta Q_{1,1}(t-s)
+\gamma Q_{2,1}(t-s)
\bigr]g(s)\,{\mathrm d}s\\
&=
\alpha u_g(0,t)+\beta u_g(1,t)+\gamma u_g(2,t).
\end{align*}

By Theorem~\ref{dirichlet-problem}, the function $u_a$ satisfies
$u_a(0,t)=0$ for all $t\geq0$, $u_a(x,0)=u_x$ for every
$x\in\mathbb N$, and
\[
\frac{\partial u_a}{\partial t}(x,t)
=
\alpha u_a(x-1,t)+\beta u_a(x,t)+\gamma u_a(x+1,t),
\qquad x\in\mathbb N,\quad t\geq0.
\]

As in the proof of Theorem~\ref{dirichlet-problem}, the
superposition principle for solutions on
$\mathbb N_0\times[0,T]$ (Theorem~\ref{superposition-2})
implies that, for every fixed $s\in[0,\infty)$, the function
\begin{equation}\label{integrand}
(x,t)\mapsto\sum_{k\in\mathbb N}f(k,s)Q_{x,k}(t),
\qquad x\in\mathbb N_0,\quad t\geq0,
\end{equation}
is well defined and satisfies
\begin{align*}
\frac{\partial}{\partial t}
\sum_{k\in\mathbb N}f(k,s)Q_{x,k}(t)
&=
\sum_{k\in\mathbb N}f(k,s)
\bigl[
\alpha Q_{x-1,k}(t)+\beta Q_{x,k}(t)+\gamma Q_{x+1,k}(t)
\bigr]
\end{align*}
for all $x\in\mathbb N$ and $t\geq0$.

We next justify that the integral defining $u_f$ exists and that
the Leibniz rule can be applied to it.
For each fixed $x$, we shall prove uniform convergence of
\[
\sum_{k\in\mathbb N}f(k,s)Q_{x,k}(t-s)
\]
and of the series obtained by differentiating its summands
with respect to $t$, on $0\leq s\leq t\leq T$.
Since the summands and their $t$-derivatives are continuous,
this will justify both integration and differentiation.
Fix $T>0$ and put
\[
H_T:=T\max\{|\alpha|,|\gamma|\},
\qquad
D_T:={\mathrm e}^{|\beta|T+H_T^2}.
\]
For $j\geq0$, the series for the fundamental solution gives
\[
G(j,r)
=
{\mathrm e}^{\beta r}
\sum_{n=0}^{\infty}
\frac{\alpha^{n+j}\gamma^n r^{2n+j}}{n!(n+j)!},
\]
whereas, for $j<0$,
\[
G(j,r)
=
{\mathrm e}^{\beta r}
\sum_{n=0}^{\infty}
\frac{\alpha^n\gamma^{n+|j|}r^{2n+|j|}}
{n!(n+|j|)!}.
\]
Since $|\alpha|r\leq H_T$ and $|\gamma|r\leq H_T$
for $0\leq r\leq T$, both cases yield
\begin{align*}
|G(j,r)|
&\leq
{\mathrm e}^{|\beta|T}
\sum_{n=0}^{\infty}
\frac{H_T^{2n+|j|}}{n!(n+|j|)!}\leq
{\mathrm e}^{|\beta|T}
\frac{H_T^{|j|}}{|j|!}
\sum_{n=0}^{\infty}\frac{H_T^{2n}}{n!}=
{\mathrm e}^{|\beta|T+H_T^2}
\frac{H_T^{|j|}}{|j|!}
=
D_T\frac{H_T^{|j|}}{|j|!}.
\end{align*}
Consequently, for each fixed $x\in\mathbb N_0$,
\[
\sup_{0\leq r\leq T}|Q_{x,k}(r)|
\leq
D_T\left(
\frac{H_T^{|x-k|}}{|x-k|!}
+
\left|\frac{\gamma}{\alpha}\right|^k
\frac{H_T^{x+k}}{(x+k)!}
\right).
\]
The right-hand side is summable over $k$, by the ratio test.
Since $f$ is bounded on $\mathbb N\times[0,T]$, the series
\[
\sum_{k\in\mathbb N}f(k,s)Q_{x,k}(t-s)
\]
therefore converges uniformly on $0\leq s\leq t\leq T$.
Each summand is continuous in $(s,t)$, so the sum is continuous
and the integral defining $u_f$ exists.

Moreover, for $x\in\mathbb N$,
\[
\frac{\partial Q_{x,k}}{\partial r}(r)
=
\alpha Q_{x-1,k}(r)+\beta Q_{x,k}(r)+\gamma Q_{x+1,k}(r).
\]
Applying the same bounds at the neighboring spatial indices
shows that the series
\[
\sum_{k\in\mathbb N}
f(k,s)\frac{\partial Q_{x,k}}{\partial r}(t-s)
\]
also converges uniformly on the triangle $0\leq s\leq t\leq T$.
Its sum is continuous and equals the partial derivative
with respect to $t$ of the preceding series.
Thus the Leibniz rule applies to $u_f$.
Hence,
\begin{align*}
\frac{\partial u_f}{\partial t}(x,t)
&=
\sum_{k\in\mathbb N}f(k,t)Q_{x,k}(0)+
\int_0^t\sum_{k\in\mathbb N}f(k,s)
\bigl[
\alpha Q_{x-1,k}(t-s)+\beta Q_{x,k}(t-s)
+\gamma Q_{x+1,k}(t-s)
\bigr]\,{\mathrm d}s\\
&=
f(x,t)+\alpha u_f(x-1,t)+\beta u_f(x,t)+\gamma u_f(x+1,t).
\end{align*}
Moreover, $u_f(x,0)=0$ for all $x\in\mathbb N$, and $u_f(0,t)
=
0$ for $t\geq0$.

Adding the equations for $u_a$, $u_g$, and $u_f$, together with
their initial and boundary conditions, shows that
$u=u_a+u_g+u_f$ solves the stated initial-boundary value problem.

For every $T>0$, the function $u_a$ is bounded on
$\mathbb N_0\times[0,T]$ by Theorem~\ref{dirichlet-problem}.
Also, $Q_{x,1}(t)=G(x-1,t)-(\gamma/\alpha)G(x+1,t)$
is bounded on $\mathbb N_0\times[0,T]$ because $G$
is bounded on $\mathbb Z\times[0,T]$, and therefore $u_g$
is bounded as well.
The function in \eqref{integrand} is bounded on
$\mathbb N_0\times[0,T]$, uniformly with respect to $s\in[0,T]$;
this follows from boundedness of $f$ and the estimate
\eqref{super-estimate} with $m=1$.
Hence $u_f$ is bounded on $\mathbb N_0\times[0,T]$.
Consequently, $u$ is bounded on the same domain.
Uniqueness for bounded solutions follows from
Theorem~\ref{uniquenessForward}.
\end{proof}

\begin{example}\label{diffusion-on-graph}
A well-studied problem leading to a semidiscrete diffusion
equation with a nonsymmetric right-hand side is diffusion on
a $(q+1)$-regular tree, also known as the Bethe lattice.
This is an infinite tree in which each vertex has degree $q+1$.
Fundamental solutions for both continuous and discrete time,
which make it possible to solve initial-value problems, are
available in the literature; see \cite{CHJSV,CJKS} and the
references therein.

Instead of initial-value problems on the $(q+1)$-regular tree,
we now consider an initial-boundary value problem on a homogeneous
rooted tree, where the root has degree $q$ and every other vertex
has degree $q+1$.
We prescribe the initial values at all vertices except the root,
where a boundary condition is imposed.

Label the layers of the tree by nonnegative integers, with layer
zero corresponding to the root.
Suppose that the initial values are constant on each layer.
By symmetry, this property will be preserved for all $t\geq0$.
Hence, we can denote by $u(x,t)$ the value of the solution at time
$t$ at any vertex in layer $x$.
Since each vertex in layer $x\geq1$ has one neighbor in layer
$x-1$ and $q$ neighbors in layer $x+1$, the problem reduces to
the initial-boundary value problem
\begin{align*}
\frac{\partial u}{\partial t}(x,t)
&=
d\bigl[u(x-1,t)-(q+1)u(x,t)+qu(x+1,t)\bigr],
\qquad x\in\mathbb N,\quad t\geq0,\\
u(0,t)&=g(t),\qquad t\geq0,\\
u(x,0)&=u_x,\qquad x\in\mathbb N,
\end{align*}
whose solution is provided by
Theorem~\ref{nonsymmetric-solution}.
\end{example}

\subsection{Derivation from the unified transform method}\label{sect-7-2}

We now derive an alternative representation of the solution
of the same initial-boundary value problem using the unified
transform method. This approach will also extend to the
time-dependent coefficients considered in the next subsection.

Let $\alpha,\beta,\gamma\in\mathbb C\setminus\{0\}$ and put
\[
\rho:=\sqrt{\left|\frac{\gamma}{\alpha}\right|}.
\]
Given a continuous function $g\colon[0,\infty)\to\mathbb C$ and a
sequence $\{u_x\}_{x\in\mathbb N}$ of complex numbers satisfying
\[
\sum_{x\in\mathbb N}|u_x|\rho^x<\infty,
\]
we seek a function $u\colon\mathbb N_0\times[0,\infty)\to\mathbb C$
such that
\begin{align*}
\frac{\partial u}{\partial t}(x,t)
&=
\alpha u(x-1,t)+\beta u(x,t)+\gamma u(x+1,t),
\quad x\in\mathbb N,\quad t\geq0,\\
u(0,t)&=g(t),\quad t\geq0,\quad u(x,0)=u_x,
\quad x\in\mathbb N.
\end{align*}

We proceed formally, assuming the summability and regularity needed
for the following transform calculations. Define
\[
\hat u(z,t):=\sum_{x\in\mathbb N}u(x,t)z^x,
\qquad |z|\leq\rho,
\]
and put $h(t):=u(1,t)$. Multiplying the equation by $z^x$ and summing
over $x\in\mathbb N$, we obtain
\begin{align*}
\frac{\partial\hat u}{\partial t}(z,t)
&=
\alpha\sum_{x\in\mathbb N}u(x-1,t)z^x
+\beta\sum_{x\in\mathbb N}u(x,t)z^x
+\gamma\sum_{x\in\mathbb N}u(x+1,t)z^x\\
&=
\alpha z\bigl[\hat u(z,t)+g(t)\bigr]
+\beta\hat u(z,t)
+\gamma\bigl[z^{-1}\hat u(z,t)-h(t)\bigr]\\
&=
\left(\alpha z+\beta+\frac{\gamma}{z}\right)\hat u(z,t)
+\alpha zg(t)-\gamma h(t),
\qquad 0<|z|\leq\rho.
\end{align*}
Solving this ordinary differential equation gives
\[
\hat u(z,t)
=
{\mathrm e}^{(\alpha z+\beta+\gamma/z)t}\hat u(z,0)
+\alpha z\tilde g(z,t)-\gamma\tilde h(z,t),
\]
where
\begin{align*}
\hat u(z,0)
&=\sum_{x\in\mathbb N}u_xz^x,\\
\tilde g(z,t)
&=
\int_0^t
g(s){\mathrm e}^{(\alpha z+\beta+\gamma/z)(t-s)}
\,{\mathrm d}s,\\
\tilde h(z,t)
&=
\int_0^t
h(s){\mathrm e}^{(\alpha z+\beta+\gamma/z)(t-s)}
\,{\mathrm d}s.
\end{align*}

The transformation $z\mapsto\gamma/(\alpha z)$ leaves
$\alpha z+\beta+\gamma/z$ unchanged and maps the circle $|z|=\rho$
onto itself. Thus, for $|z|=\rho$, both $z$ and $\gamma/(\alpha z)$ lie in the
domain of the discrete transform, so we may evaluate the transformed
equation at these two points:
\[
\hat u\left(\frac{\gamma}{\alpha z},t\right)
=
{\mathrm e}^{(\alpha z+\beta+\gamma/z)t}
\hat u\left(\frac{\gamma}{\alpha z},0\right)
+\frac{\gamma}{z}\tilde g(z,t)-\gamma\tilde h(z,t).
\]
Subtracting this identity from the preceding formula for $\hat u(z,t)$
eliminates $\tilde h$ and yields
\begin{align*}
\hat u(z,t)-\hat u\left(\frac{\gamma}{\alpha z},t\right)
&=
\left[
\hat u(z,0)-\hat u\left(\frac{\gamma}{\alpha z},0\right)
\right]
{\mathrm e}^{(\alpha z+\beta+\gamma/z)t}+
\left(\alpha z-\frac{\gamma}{z}\right)\tilde g(z,t).
\end{align*}

We now apply the inverse transform, with all circles oriented
counterclockwise. For $x\in\mathbb N$, the residue theorem gives
\[
\frac{1}{2\pi{\mathrm i}}
\int_{|z|=\rho}\hat u(z,t)\frac{{\mathrm d}z}{z^{x+1}}
=u(x,t).
\]
The term involving $\hat u(\gamma/(\alpha z),t)$ has zero contribution.
Indeed, setting
\[
w=\frac{\gamma}{\alpha z},
\qquad
{\mathrm d}z=-\frac{\gamma}{\alpha w^2}\,{\mathrm d}w,
\]
we obtain
\begin{align*}
\int_{|z|=\rho}
\hat u\left(\frac{\gamma}{\alpha z},t\right)
\frac{{\mathrm d}z}{z^{x+1}}
&=
\left(\frac{\alpha}{\gamma}\right)^x
\int_{|w|=\rho}\hat u(w,t)w^{x-1}\,{\mathrm d}w=0.
\end{align*}
Here the minus sign from the differential is canceled by the reversal
of orientation. The last integral vanishes because
\[
\hat u(w,t)w^{x-1}
=
\sum_{y\in\mathbb N}u(y,t)w^{y+x-1}
\]
contains no $w^{-1}$ term.

Consequently,
\begin{align*}
u(x,t)
&=
\frac{1}{2\pi{\mathrm i}}
\int_{|z|=\rho}
\left[
\hat u(z,0)-\hat u\left(\frac{\gamma}{\alpha z},0\right)
\right]
{\mathrm e}^{(\alpha z+\beta+\gamma/z)t}
\frac{{\mathrm d}z}{z^{x+1}}+
\frac{1}{2\pi{\mathrm i}}
\int_{|z|=\rho}
\left(\alpha z-\frac{\gamma}{z}\right)\tilde g(z,t)
\frac{{\mathrm d}z}{z^{x+1}}.
\end{align*}
Substituting the expressions for the initial transform and $\tilde g$,
we arrive at
\begin{align*}
u(x,t)
&=
\frac{1}{2\pi{\mathrm i}}
\int_{|z|=\rho}
\left\{
\sum_{y\in\mathbb N}
u_y\left[
z^y-\left(\frac{\gamma}{\alpha z}\right)^y
\right]
\right\}
{\mathrm e}^{(\alpha z+\beta+\gamma/z)t}
\frac{{\mathrm d}z}{z^{x+1}}\\
&\quad+
\frac{1}{2\pi{\mathrm i}}
\int_{|z|=\rho}
\left(\alpha z-\frac{\gamma}{z}\right)
\left\{
\int_0^t
g(s){\mathrm e}^{(\alpha z+\beta+\gamma/z)(t-s)}
\,{\mathrm d}s
\right\}
\frac{{\mathrm d}z}{z^{x+1}},
\qquad x\in\mathbb N,\quad t\geq0.
\end{align*}
This completes the formal derivation for constant coefficients $\alpha$, $\beta$, $\gamma$.
A rigorous justification will be given in the next subsection,
where we treat the more general case of time-dependent coefficients
with constant ratio $\gamma(t)/\alpha(t)$; see
Theorem~\ref{theo-time-dependent-nonhomogeneous}.
The present problem is recovered by taking the coefficients
to be constant and setting $f=0$.

\subsection{Time-dependent coefficients}

Let $\alpha,\beta,\gamma\colon[0,\infty)\to\mathbb C\setminus\{0\}$
be continuous functions satisfying
\[
\gamma(t)=\lambda\alpha(t),\qquad t\geq0,
\]
for a fixed $\lambda\in\mathbb C\setminus\{0\}$, and put
\[
\rho:=\sqrt{|\lambda|}.
\]
The constant ratio between $\gamma$ and $\alpha$ ensures that the
transformed equation has the same symmetry at every time.

\subsubsection{Homogeneous problem}

Let $g\colon[0,\infty)\to\mathbb C$ be continuous, and let
$\{u_x\}_{x\in\mathbb N}$ satisfy
\[
\sum_{x\in\mathbb N}|u_x|\rho^x<\infty.
\]
We seek a function $u\colon\mathbb N_0\times[0,\infty)\to\mathbb C$
satisfying
\begin{align*}
\frac{\partial u}{\partial t}(x,t)
&=
\alpha(t)u(x-1,t)+\beta(t)u(x,t)+\gamma(t)u(x+1,t),
&&x\in\mathbb N,\quad t\geq0,\\
u(0,t)&=g(t),
&&t\geq0,\\
u(x,0)&=u_x,
&&x\in\mathbb N.
\end{align*}

As before, we proceed formally, assuming the summability and regularity
needed for the transform calculations. Define
\[
\hat u(z,t):=\sum_{x\in\mathbb N}u(x,t)z^x,
\qquad |z|\leq\rho,
\]
and put $h(t):=u(1,t)$. Applying the discrete transform gives
\[
\frac{\partial\hat u}{\partial t}(z,t)
=
\left(\alpha(t)z+\beta(t)+\frac{\gamma(t)}{z}\right)\hat u(z,t)
+z\alpha(t)g(t)-\gamma(t)h(t),
\qquad 0<|z|\leq\rho.
\]

Put
\[
\omega(z,t):=\alpha(t)z+\beta(t)+\frac{\gamma(t)}{z},
\qquad
\Omega(z,t):=\int_0^t\omega(z,s)\,{\mathrm d}s.
\]
Also define
\[
A(t):=\int_0^t\alpha(s)\,{\mathrm d}s,\qquad
B(t):=\int_0^t\beta(s)\,{\mathrm d}s,\qquad
C(t):=\int_0^t\gamma(s)\,{\mathrm d}s.
\]
Since $C(t)=\lambda A(t)$, we have
\[
\Omega(z,t)
=
A(t)z+B(t)+\frac{C(t)}{z}
=
B(t)+A(t)\left(z+\frac{\lambda}{z}\right).
\]

Solving the transformed ordinary differential equation yields
\begin{align*}
\hat u(z,t)
&=
{\mathrm e}^{\Omega(z,t)}\hat u(z,0)+
z\int_0^t
\alpha(s)g(s)
{\mathrm e}^{\Omega(z,t)-\Omega(z,s)}\,{\mathrm d}s-
\int_0^t
\gamma(s)h(s)
{\mathrm e}^{\Omega(z,t)-\Omega(z,s)}\,{\mathrm d}s.
\end{align*}
The relation $\gamma(t)=\lambda\alpha(t)$ implies
\[
\omega(\lambda/z,t)=\omega(z,t),
\qquad
\Omega(\lambda/z,t)=\Omega(z,t).
\]
Moreover, $z\mapsto\lambda/z$ maps the circle $|z|=\rho$ onto itself.
Thus both transformed identities are available on this circle.
Replacing $z$ by $\lambda/z$ and subtracting eliminates the unknown
function $h$:
\begin{align*}
\hat u(z,t)-\hat u(\lambda/z,t)
&=
\bigl[\hat u(z,0)-\hat u(\lambda/z,0)\bigr]
{\mathrm e}^{\Omega(z,t)}+
\left(z-\frac{\lambda}{z}\right)
\int_0^t
\alpha(s)g(s)
{\mathrm e}^{\Omega(z,t)-\Omega(z,s)}\,{\mathrm d}s.
\end{align*}

All contour integrals below are taken counterclockwise. For
$x\in\mathbb N$, the substitution $w=\lambda/z$ gives
\[
\int_{|z|=\rho}
\hat u(\lambda/z,t)\frac{{\mathrm d}z}{z^{x+1}}
=
\lambda^{-x}
\int_{|w|=\rho}\hat u(w,t)w^{x-1}\,{\mathrm d}w
=0.
\]
The minus sign from the differential is canceled by the reversal of
orientation, and the last integral vanishes by termwise integration.
Consequently, inversion gives
\begin{align}
u(x,t)
&=
\frac{1}{2\pi{\mathrm i}}
\int_{|z|=\rho}
\left\{
\sum_{y=1}^{\infty}
u_y\left[z^y-\left(\frac{\lambda}{z}\right)^y\right]
\right\}
{\mathrm e}^{\Omega(z,t)}
\frac{{\mathrm d}z}{z^{x+1}}
\notag\\
&\quad+
\frac{1}{2\pi{\mathrm i}}
\int_{|z|=\rho}
\left(z-\frac{\lambda}{z}\right)
\left\{
\int_0^t
\alpha(s)g(s)
{\mathrm e}^{\Omega(z,t)-\Omega(z,s)}\,{\mathrm d}s
\right\}
\frac{{\mathrm d}z}{z^{x+1}},
\label{time-dependent-homogeneous}
\end{align}
for $x\in\mathbb N$ and $t\geq0$, together with $u(0,t)=g(t)$.

\subsubsection{Nonhomogeneous problem}

We retain the assumptions on the coefficients, initial data, and
boundary data, and consider
\[
\frac{\partial u}{\partial t}(x,t)
=
\alpha(t)u(x-1,t)+\beta(t)u(x,t)+\gamma(t)u(x+1,t)
+f(x,t),
\qquad x\in\mathbb N,\quad t\geq0,
\]
with the same initial and boundary conditions.

For the transform calculation, we temporarily assume that
\[
\sum_{x=1}^{\infty}|f(x,t)|\rho^x<\infty
\]
for every $t\geq0$, and that
\[
\sum_{x=1}^{\infty}
|f(x,t)-f(x,t_0)|\rho^x\longrightarrow0
\qquad\text{as }t\to t_0
\]
for every $t_0\geq0$.
As in the homogeneous case, the derivation is formal.
After evaluating the contour integrals, we shall justify the
resulting series representation under alternative assumptions
that require only continuity in time at each spatial point
and boundedness on compact time intervals, uniformly in space.

Define
\[
\hat f(z,t):=\sum_{x=1}^{\infty}f(x,t)z^x,
\qquad |z|\leq\rho.
\]
The transformed equation now reads
\[
\frac{\partial\hat u}{\partial t}(z,t)
=
\omega(z,t)\hat u(z,t)
+z\alpha(t)g(t)-\gamma(t)h(t)+\hat f(z,t).
\]
Its solution has the additional term
\[
\int_0^t
\hat f(z,s)
{\mathrm e}^{\Omega(z,t)-\Omega(z,s)}\,{\mathrm d}s.
\]
Subtracting the identity evaluated at $\lambda/z$ therefore gives
\begin{align*}
\hat u(z,t)-\hat u(\lambda/z,t)
&=
\bigl[\hat u(z,0)-\hat u(\lambda/z,0)\bigr]
{\mathrm e}^{\Omega(z,t)}\\
&\quad+
\left(z-\frac{\lambda}{z}\right)
\int_0^t
\alpha(s)g(s)
{\mathrm e}^{\Omega(z,t)-\Omega(z,s)}\,{\mathrm d}s+
\int_0^t
\bigl[\hat f(z,s)-\hat f(\lambda/z,s)\bigr]
{\mathrm e}^{\Omega(z,t)-\Omega(z,s)}\,{\mathrm d}s.
\end{align*}
Inverting as above, we obtain
\begin{align}
u(x,t)
&=
\frac{1}{2\pi{\mathrm i}}
\int_{|z|=\rho}
\bigl[\hat u(z,0)-\hat u(\lambda/z,0)\bigr]
{\mathrm e}^{\Omega(z,t)}
\frac{{\mathrm d}z}{z^{x+1}}
\notag\\
&\quad+
\frac{1}{2\pi{\mathrm i}}
\int_{|z|=\rho}
\left(z-\frac{\lambda}{z}\right)
\left\{
\int_0^t
\alpha(s)g(s)
{\mathrm e}^{\Omega(z,t)-\Omega(z,s)}\,{\mathrm d}s
\right\}
\frac{{\mathrm d}z}{z^{x+1}}
\notag\\
&\quad+
\frac{1}{2\pi{\mathrm i}}
\int_{|z|=\rho}
\left\{
\int_0^t
\bigl[\hat f(z,s)-\hat f(\lambda/z,s)\bigr]
{\mathrm e}^{\Omega(z,t)-\Omega(z,s)}\,{\mathrm d}s
\right\}
\frac{{\mathrm d}z}{z^{x+1}}.
\label{time-dependent-nonhomogeneous}
\end{align}

\subsubsection{Evaluation of the contour integrals}

To evaluate the contour integrals, we expand the exponential in a
Laurent series. For $0\leq s\leq t$ and $\ell\in\mathbb Z$, define
\[
E_\ell(s,t):=
{\mathrm e}^{B(t)-B(s)}
\lambda^{\max\{0,-\ell\}}
\sum_{n=0}^{\infty}
\frac{\lambda^n[A(t)-A(s)]^{2n+|\ell|}}
{n!(n+|\ell|)!}.
\]
In particular, for $\ell\geq0$,
\[
E_{-\ell}(s,t)=\lambda^\ell E_\ell(s,t).
\]
This weighted symmetry replaces the symmetry of the kernels in the
case $\lambda=1$.
Indeed,
\begin{align*}
{\mathrm e}^{\Omega(z,t)-\Omega(z,s)}
&=
{\mathrm e}^{B(t)-B(s)}
{\mathrm e}^{[A(t)-A(s)]z}
{\mathrm e}^{\lambda[A(t)-A(s)]/z}\\
&=
{\mathrm e}^{B(t)-B(s)}
\sum_{n,m\geq0}
\frac{\lambda^m[A(t)-A(s)]^{n+m}}{n!m!}z^{n-m}.
\end{align*}
For $\ell\geq0$, the coefficient of $z^\ell$ is obtained by setting
$n=m+\ell$, whereas the coefficient of $z^{-\ell}$ is obtained by
setting $m=n+\ell$. Hence
\begin{equation}\label{time-dependent-laurent}
{\mathrm e}^{\Omega(z,t)-\Omega(z,s)}
=
\sum_{\ell\in\mathbb Z}E_\ell(s,t)z^\ell.
\end{equation}
Since $A(0)=B(0)=0$, this also gives
\[
{\mathrm e}^{\Omega(z,t)}
=
\sum_{\ell\in\mathbb Z}E_\ell(0,t)z^\ell.
\]

The Laurent series converges absolutely and uniformly on $|z|=\rho$,
uniformly for $0\leq s\leq t\leq T$, for every $T>0$.
Together with the weighted summability of the initial data and the temporary assumptions on $f$ and continuity of $g$, this justifies the following termwise
integrations.

For the initial-data contribution, extracting the coefficient of
$z^{-1}$ gives
\begin{align*}
&\frac{1}{2\pi{\mathrm i}}
\int_{|z|=\rho}
\bigl[\hat u(z,0)-\hat u(\lambda/z,0)\bigr]
{\mathrm e}^{\Omega(z,t)}
\frac{{\mathrm d}z}{z^{x+1}}=
\sum_{y=1}^{\infty}
u_y\bigl[E_{x-y}(0,t)-\lambda^yE_{x+y}(0,t)\bigr].
\end{align*}
Here $u_yz^y$ selects the coefficient $E_{x-y}$, while
$u_y\lambda^yz^{-y}$ selects $\lambda^yE_{x+y}$.

Similarly, the boundary-data contribution is
\begin{align*}
&\frac{1}{2\pi{\mathrm i}}
\int_{|z|=\rho}
\left(z-\frac{\lambda}{z}\right)
\left\{
\int_0^t
\alpha(s)g(s)
{\mathrm e}^{\Omega(z,t)-\Omega(z,s)}\,{\mathrm d}s
\right\}
\frac{{\mathrm d}z}{z^{x+1}}=
\int_0^t
\alpha(s)g(s)
\bigl[E_{x-1}(s,t)-\lambda E_{x+1}(s,t)\bigr]
\,{\mathrm d}s.
\end{align*}
Finally, the forcing contribution is
\begin{align*}
&\frac{1}{2\pi{\mathrm i}}
\int_{|z|=\rho}
\left\{
\int_0^t
\bigl[\hat f(z,s)-\hat f(\lambda/z,s)\bigr]
{\mathrm e}^{\Omega(z,t)-\Omega(z,s)}\,{\mathrm d}s
\right\}
\frac{{\mathrm d}z}{z^{x+1}}=
\int_0^t
\sum_{y=1}^{\infty}
f(y,s)\bigl[E_{x-y}(s,t)-\lambda^yE_{x+y}(s,t)\bigr]
\,{\mathrm d}s.
\end{align*}

Combining these expressions, we obtain
\begin{equation}\label{time-dependent-series-solution}
\begin{aligned}
u(x,t)
&=
\sum_{y=1}^{\infty}
u_y\bigl[E_{x-y}(0,t)-\lambda^yE_{x+y}(0,t)\bigr]+
\int_0^t
\alpha(s)g(s)
\bigl[E_{x-1}(s,t)-\lambda E_{x+1}(s,t)\bigr]
\,{\mathrm d}s\\
&\quad+
\int_0^t
\sum_{y=1}^{\infty}
f(y,s)\bigl[E_{x-y}(s,t)-\lambda^yE_{x+y}(s,t)\bigr]
\,{\mathrm d}s,
\qquad x\in\mathbb N,\quad t\geq0,
\end{aligned}
\end{equation}
together with $u(0,t)=g(t)$. 

\begin{theorem}\label{theo-time-dependent-nonhomogeneous}
Let $\alpha,\gamma\colon[0,\infty)\to\mathbb C\setminus\{0\}$
and $\beta\colon[0,\infty)\to\mathbb C$ be continuous and satisfy
\[
\gamma(t)=\lambda\alpha(t),
\qquad t\geq0,
\]
for a fixed $\lambda\in\mathbb C\setminus\{0\}$.
Put $\rho=\sqrt{|\lambda|}$.
Suppose that $g\colon[0,\infty)\to\mathbb C$ is continuous and
\[
\sum_{y=1}^{\infty}|u_y|\rho^y<\infty.
\]
Let $f\colon\mathbb N\times[0,\infty)\to\mathbb C$ be continuous
in $t$ for each $x\in\mathbb N$ and satisfy
\[
\sup_{\substack{x\in\mathbb N\\0\leq t\leq T}}
|f(x,t)|<\infty
\qquad\text{for every }T>0.
\]
Then \eqref{time-dependent-series-solution}, together with
$u(0,t)=g(t)$, defines a solution of
\[
\frac{\partial u}{\partial t}(x,t)
=
\alpha(t)u(x-1,t)+\beta(t)u(x,t)+\gamma(t)u(x+1,t)
+f(x,t),
\qquad x\in\mathbb N,\quad t\geq0,
\]
with $u(x,0)=u_x$ for $x\in\mathbb N$.
\end{theorem}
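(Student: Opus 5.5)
We verify the series formula \eqref{time-dependent-series-solution} directly, without returning to the contour integrals. The argument treats the three contributions separately, in the same way as the proofs of Theorem~\ref{theo-nonhomogeneous} and Theorem~\ref{nonsymmetric-solution}. Put
\[
Q_{x,y}(s,t):=E_{x-y}(s,t)-\lambda^yE_{x+y}(s,t),\qquad x\in\mathbb N_0,\ y\in\mathbb N .
\]

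\emph{Kernel properties.} We first establish the following identities.

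By \eqref{time-dependent-laurent}, $t\mapsto E_\ell(s,t)$ is the $z^\ell$-coefficient of $\mathrm e^{\Omega(z,t)-\Omega(z,s)}$. Differentiating in $t$ and comparing coefficients (the Laurent series converges absolutely and locally uniformly in $(s,t)$) gives
\[
\partial_tE_\ell(s,t)=\alpha(t)E_{\ell-1}(s,t)+\beta(t)E_\ell(s,t)+\gamma(t)E_{\ell+1}(s,t),
\]
with $E_\ell(s,s)=\delta_{\ell,0}$.

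The relation $\gamma=\lambda\alpha$ and the weighted symmetry $E_{-\ell}=\lambda^\ell E_\ell$ then yield two consequences. First, $Q_{0,y}=E_{-y}-\lambda^yE_y=0$, so the boundary value is zero. Second, $Q_{x,y}(s,t)$ satisfies the same recurrence in $x$ for every $x\in\mathbb N$, since each of its two terms is a shifted solution. Moreover $Q_{x,y}(s,s)=\delta_{x,y}$ for $x,y\in\mathbb N$, because $E_{x+y}(s,s)=0$.

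\emph{Factorial bound.} Fix $T>0$ and put $H:=\max_{[0,T]}|A|\cdot\max\{1,|\lambda|\}$. Exactly as in the proof of Theorem~\ref{nonsymmetric-solution}, we expect
\[
\sup_{0\le s\le t\le T}|E_\ell(s,t)|\le D_T\,\frac{(2H)^{|\ell|}}{|\ell|!},
\]
where $D_T$ depends on $T$ and $\max|\beta|$, and the possible factor $|\lambda|^{|\ell|}$ from $\lambda^{\max\{0,-\ell\}}$ has been absorbed into $H$. This is the step I expect to require the most care, because the complex $\lambda$ and the complex coefficients force the estimate to run through moduli.

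For fixed $x$, the bound gives
\[
\sum_y\ \sup_{0\le s\le t\le T}|Q_{x,y}(s,t)|\,\rho^{-y}<\infty .
\]
Indeed, $|\lambda|^y\rho^{-y}=\rho^{y}$ is dominated by the factorial decay. Multiplying by $\rho^{y}|u_y|$, which is summable, gives uniform convergence of the initial-data series. The same bound, with $|f|$ bounded and $\sum_y\sup|Q_{x,y}|<\infty$, gives uniform convergence on the triangle $0\le s\le t\le T$ of the forcing series. By the recurrence, the same conclusions hold for the $t$-differentiated series, using the bounds at the indices $x-1$, $x$ and $x+1$.

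\emph{Verification of the equation.} The initial-data part is handled by termwise differentiation, which is justified by uniform convergence of the differentiated series; it satisfies the equation, vanishes at $x=0$, and equals $u_x$ at $t=0$.

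For the boundary part $u_g(x,t)=\int_0^t\alpha(s)g(s)Q_{x,1}(s,t)\,\mathrm ds$, we apply the Leibniz rule. This produces the term $\alpha(t)g(t)Q_{x,1}(t,t)=\alpha(t)g(t)\delta_{x,1}$, which is exactly the missing term $\alpha(t)u(0,t)$ at $x=1$, since the kernel vanishes at $x=0$ while $u(0,t)=g(t)$. This is the argument used for $u_g$ in the proof of Theorem~\ref{nonsymmetric-solution}.

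For the forcing part we use Leibniz again, with continuity of the sums guaranteed by uniform convergence. The boundary term gives $\sum_yf(y,t)\delta_{x,y}=f(x,t)$, and the integral term satisfies the recurrence. This part also vanishes at $t=0$ and at $x=0$.

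Adding the three contributions completes the proof.
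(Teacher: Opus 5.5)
Your proposal is correct and follows essentially the same route as the paper. Both use the recurrence for $E_\ell$ obtained from the Laurent expansion, $Q_{0,y}=0$ from the weighted symmetry, and a factorial majorant for $E_\ell$ to justify termwise differentiation and the Leibniz rule, followed by separate verification of the initial, boundary and forcing contributions. The paper's majorant $D_T\rho^{-\ell}L_T^{|\ell|}/|\ell|!$ is slightly sharper, but your cruder one suffices; your constant $D_T$ must also absorb the factor $\mathrm{e}^{|\lambda|d^2}$, so it depends on $|\lambda|$ and $\max|A|$ as well, which is harmless.
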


\begin{proof}
Put
\[
Q_{x,y}(s,t):=E_{x-y}(s,t)-\lambda^yE_{x+y}(s,t),
\qquad x,y\in\mathbb N,\quad 0\leq s\leq t.
\]
When $\alpha$, $\beta$, and $\gamma$ are constant, this kernel
depends only on $t-s$ and agrees with the kernel introduced
in Theorem \ref{dirichlet-problem}:
\[
Q_{x,y}(s,t)=Q_{x,y}(t-s).
\]
The weighted symmetry of $E_\ell$ gives
\[
Q_{0,y}(s,t):=E_{-y}(s,t)-\lambda^yE_y(s,t)=0.
\]
Using the definition of $Q_{x,y}$, we can rewrite the
proposed solution as
\begin{align*}
u(x,t)
&=
\sum_{y=1}^{\infty}u_yQ_{x,y}(0,t)
+
\int_0^t\alpha(s)g(s)Q_{x,1}(s,t)\,{\mathrm d}s+
\int_0^t\sum_{y=1}^{\infty}
f(y,s)Q_{x,y}(s,t)\,{\mathrm d}s,
\qquad x\in\mathbb N.
\end{align*}

We first establish convergence.
Fix $T>0$ and put
\[
L_T:=\rho\int_0^T|\alpha(r)|\,{\mathrm d}r,
\qquad
D_T:=
\exp\left(\int_0^T|\beta(r)|\,{\mathrm d}r+L_T^2\right).
\]
For $0\leq s\leq t\leq T$, write
$d:=|A(t)-A(s)|$. Then
\[
d
=
\left|\int_s^t\alpha(r)\,{\mathrm d}r\right|
\leq
\int_0^T|\alpha(r)|\,{\mathrm d}r,
\qquad
\rho d\leq L_T.
\]
Similarly,
\[
\left|{\mathrm e}^{B(t)-B(s)}\right|
=
{\mathrm e}^{\operatorname{Re}(B(t)-B(s))}
\leq
\exp\left(\int_0^T|\beta(r)|\,{\mathrm d}r\right).
\]

Since $|\lambda|=\rho^2$, the defining series for $E_\ell$
therefore gives
\begin{align*}
|E_\ell(s,t)|
&\leq
\exp\left(\int_0^T|\beta(r)|\,{\mathrm d}r\right)
\rho^{2\max\{0,-\ell\}}
\sum_{n=0}^{\infty}
\frac{\rho^{2n}d^{2n+|\ell|}}
{n!(n+|\ell|)!}\\
&=
\exp\left(\int_0^T|\beta(r)|\,{\mathrm d}r\right)
\rho^{-\ell}(\rho d)^{|\ell|}
\sum_{n=0}^{\infty}
\frac{(\rho d)^{2n}}{n!(n+|\ell|)!}.
\end{align*}
Here we used
$2\max\{0,-\ell\}=|\ell|-\ell$, which implies
$\rho^{2\max\{0,-\ell\}}d^{|\ell|}
=
\rho^{-\ell}(\rho d)^{|\ell|}$.
Using $(n+|\ell|)!\geq|\ell|!$, we further obtain
\begin{align*}
\sum_{n=0}^{\infty}
\frac{(\rho d)^{2n}}{n!(n+|\ell|)!}
&\leq
\frac{1}{|\ell|!}
\sum_{n=0}^{\infty}\frac{(\rho d)^{2n}}{n!}=
\frac{{\mathrm e}^{(\rho d)^2}}{|\ell|!}
\leq
\frac{{\mathrm e}^{L_T^2}}{|\ell|!}.
\end{align*}
Combining these estimates with $\rho d\leq L_T$ yields
\[
|E_\ell(s,t)|
\leq
D_T\rho^{-\ell}
\frac{L_T^{|\ell|}}{|\ell|!},
\qquad 0\leq s\leq t\leq T.
\]
Consequently,
\begin{equation}\label{time-dependent-kernel-bound}
|Q_{x,y}(s,t)|
\leq
D_T\rho^{y-x}
\left(
\frac{L_T^{|x-y|}}{|x-y|!}
+
\frac{L_T^{x+y}}{(x+y)!}
\right).
\end{equation}

For each fixed $x$, the right-hand side is summable over $y$,
by the ratio test.
It therefore gives a summable uniform majorant for
$f(y,s)Q_{x,y}(s,t)$ on $0\leq s\leq t\leq T$.

For the initial-data series, we also have
\[
|u_yQ_{x,y}(0,t)|
\leq
2D_T\rho^{-x}{\mathrm e}^{L_T}|u_y|\rho^y.
\]
For each fixed $x$, these bounds are summable over $y$, since
\[
\sum_{y=1}^{\infty}
2D_T\rho^{-x}{\mathrm e}^{L_T}|u_y|\rho^y
=
2D_T\rho^{-x}{\mathrm e}^{L_T}
\sum_{y=1}^{\infty}|u_y|\rho^y
<\infty.
\]
The Weierstrass M-test proves absolute and uniform convergence
of the initial-data series on $[0,T]$ and of the forcing series
on the triangle $0\leq s\leq t\leq T$.
Their summands are continuous, so the sums are continuous and
the time integrals exist.

We next verify the equation.
Differentiating the Laurent expansion
\eqref{time-dependent-laurent} with respect to $t$ and comparing
coefficients gives
\[
\frac{\partial E_\ell}{\partial t}(s,t)
=
\alpha(t)E_{\ell-1}(s,t)
+\beta(t)E_\ell(s,t)
+\gamma(t)E_{\ell+1}(s,t).
\]
Here differentiation of the Laurent coefficients is justified
by differentiation under their contour integrals, whose
integrands and $t$-derivatives are continuous on the fixed
circle and compact time intervals.
Hence
\begin{equation}\label{time-dependent-kernel-equation}
\frac{\partial Q_{x,y}}{\partial t}(s,t)
=
\alpha(t)Q_{x-1,y}(s,t)
+\beta(t)Q_{x,y}(s,t)
+\gamma(t)Q_{x+1,y}(s,t).
\end{equation}
Also, $E_\ell(s,s)=\delta_{\ell,0}$, so
\[
Q_{x,y}(s,s)=\delta_{x,y},
\qquad x,y\in\mathbb N.
\]

Since $\alpha$, $\beta$, and $\gamma$ are continuous,
they are bounded on $[0,T]$.
Thus \eqref{time-dependent-kernel-equation} and
\eqref{time-dependent-kernel-bound}, applied at the neighboring
spatial indices, give summable uniform majorants for the
differentiated series as well.
At $x=1$, the term with spatial index $0$ is identically zero.
Termwise differentiation and differentiation under the time
integrals are therefore justified.

Write the three contributions as $u_a$, $u_g$, and $u_f$,
respectively, and extend each temporarily by zero at $x=0$.
Using \eqref{time-dependent-kernel-equation},
the identity $Q_{x,y}(t,t)=\delta_{x,y}$, and the Leibniz
rule, we obtain, for $x\in\mathbb N$,
\begin{align*}
\frac{\partial u_a}{\partial t}(x,t)
&=
\alpha(t)u_a(x-1,t)+\beta(t)u_a(x,t)
+\gamma(t)u_a(x+1,t),\\
\frac{\partial u_g}{\partial t}(x,t)
&=
\alpha(t)u_g(x-1,t)+\beta(t)u_g(x,t)
+\gamma(t)u_g(x+1,t)
+\alpha(t)g(t)\delta_{x,1},\\
\frac{\partial u_f}{\partial t}(x,t)
&=
\alpha(t)u_f(x-1,t)+\beta(t)u_f(x,t)
+\gamma(t)u_f(x+1,t)+f(x,t).
\end{align*}
Adding these identities and assigning the full solution its
prescribed boundary value $u(0,t)=g(t)$ gives the required equation:
at $x=1$, the term $\alpha(t)g(t)\delta_{x,1}$ is precisely the
boundary contribution $\alpha(t)u(0,t)$.

Finally, $Q_{x,y}(0,0)=\delta_{x,y}$ and both time integrals
vanish at $t=0$, so $u(x,0)=u_x$.
This completes the proof.
\end{proof}

\subsection{Asymptotic periodicity}

We now investigate asymptotic periodicity for equations with
non-symmetric coefficients and periodic boundary data and forcing.
Unlike in the symmetric diffusion problem, this behavior requires
restrictions on the coefficients.
The following example identifies conditions that are necessary
if asymptotic periodicity is to hold for every admissible choice
of data.

\begin{example}\label{nonsymmetric-periodicity-necessity}
Let $\alpha,\gamma\in\mathbb R\setminus\{0\}$,
$\beta\in\mathbb C$, and $T>0$.
Consider the problem
\begin{gather*}
\frac{\partial u}{\partial t}(x,t)
=
\alpha u(x-1,t)+\beta u(x,t)+\gamma u(x+1,t),
\qquad x\in\mathbb N,\quad t\geq0,\\
u(0,t)=0,\qquad t\geq0,
\qquad
u(x,0)=\delta_{x,1},\qquad x\in\mathbb N.
\end{gather*}
The solution is $u(x,t)=Q_{x,1}(t)$.
In particular,
\[
u(1,t)
=
{\mathrm e}^{\beta t}
\bigl[I_0(2\eta t)-I_2(2\eta t)\bigr],
\qquad \eta^2=\alpha\gamma.
\]
Using the recurrence relation
\[
I_0(z)-I_2(z)=\frac{2}{z}I_1(z),
\]
we obtain
\[
u(1,t)
=
{\mathrm e}^{\beta t}\frac{I_1(2\eta t)}{\eta t},
\qquad t>0.
\]
We examine whether
\[
u(1,t+T)-u(1,t)\longrightarrow0
\qquad\text{as }t\to\infty.
\]

Suppose first that $\alpha\gamma>0$, and put
$d:=\sqrt{\alpha\gamma}>0$, so that we may choose $\eta=d$.
The standard asymptotic formula
\[
I_1(s)
=
\frac{{\mathrm e}^{s}}{\sqrt{2\pi s}}
\bigl(1+O(s^{-1})\bigr),
\qquad s\to+\infty,
\]
gives
\[
u(1,t)
=
\frac{{\mathrm e}^{(\beta+2d)t}}
{2\sqrt{\pi}\,d^{3/2}t^{3/2}}
\bigl(1+O(t^{-1})\bigr).
\]
Since $T$ is fixed and
\[
(t+T)^{-3/2}
=
t^{-3/2}\bigl(1+O(t^{-1})\bigr),
\]
it follows that
\[
u(1,t+T)-u(1,t)
=
\frac{{\mathrm e}^{(\beta+2d)t}}
{2\sqrt{\pi}\,d^{3/2}t^{3/2}}
\left[
{\mathrm e}^{(\beta+2d)T}-1+O(t^{-1})
\right].
\]
If $\operatorname{Re}\beta+2d>0$, then
\[
\left|{\mathrm e}^{(\beta+2d)T}\right|>1,
\]
and hence ${\mathrm e}^{(\beta+2d)T}-1\neq0$.
Consequently,
\[
|u(1,t+T)-u(1,t)|\longrightarrow\infty.
\]
Thus asymptotic $T$-periodicity fails in this case.

Suppose next that $\alpha\gamma<0$, and put
$d:=\sqrt{|\alpha\gamma|}>0$, so that we may choose
$\eta={\mathrm i}d$.
The identity $I_1({\mathrm i}s)={\mathrm i}J_1(s)$ yields
\[
u(1,t)
=
{\mathrm e}^{\beta t}\frac{J_1(2dt)}{dt}.
\]
Using the standard asymptotic formula
\[
J_1(s)
=
\sqrt{\frac{2}{\pi s}}
\left[
\cos\left(s-\frac{3\pi}{4}\right)+O(s^{-1})
\right],
\qquad s\to+\infty,
\]
we obtain
\[
u(1,t)
=
\frac{{\mathrm e}^{\beta t}}
{\sqrt{\pi}\,d^{3/2}t^{3/2}}
\left[
\cos\left(2dt-\frac{3\pi}{4}\right)+O(t^{-1})
\right].
\]
Therefore, writing
\[
\vartheta(t):=2dt-\frac{3\pi}{4},
\]
we have
\[
u(1,t+T)-u(1,t)
=
\frac{{\mathrm e}^{\beta t}}
{\sqrt{\pi}\,d^{3/2}t^{3/2}}
\left[
{\mathrm e}^{\beta T}\cos(\vartheta(t)+2dT)
-\cos\vartheta(t)+O(t^{-1})
\right].
\]

Assume that $\operatorname{Re}\beta>0$.
The $2\pi$-periodic function
\[
F(\vartheta)
:=
{\mathrm e}^{\beta T}\cos(\vartheta+2dT)-\cos\vartheta
=
\bigl[{\mathrm e}^{\beta T}\cos(2dT)-1\bigr]\cos\vartheta
-
{\mathrm e}^{\beta T}\sin(2dT)\sin\vartheta
\]
is not identically zero. Otherwise,
 both coefficients would vanish:
\[
{\mathrm e}^{\beta T}\cos(2dT)=1,
\qquad
{\mathrm e}^{\beta T}\sin(2dT)=0.
\]
The second equality implies $\sin(2dT)=0$, so
$|\cos(2dT)|=1$.
The first would then imply
$|{\mathrm e}^{\beta T}|=1$, contradicting
$\operatorname{Re}\beta>0$.

Choose $\vartheta_0\in[0,2\pi)$ such that
$F(\vartheta_0)\neq0$, and set
\[
t_n:=\frac{\vartheta_0+2\pi n+3\pi/4}{2d},
\qquad n\in\mathbb N.
\]
Then $t_n\to\infty$ and
$F(\vartheta(t_n))=F(\vartheta_0)$.
Consequently,
\[
|u(1,t_n+T)-u(1,t_n)|
=
\frac{{\mathrm e}^{\operatorname{Re}\beta\,t_n}}
{\sqrt{\pi}\,d^{3/2}t_n^{3/2}}
\left|F(\vartheta_0)+O(t_n^{-1})\right|
\longrightarrow\infty.
\]
Thus asymptotic $T$-periodicity fails in this case as well.

We conclude that, even for finitely supported initial data
and zero boundary data and forcing, asymptotic $T$-periodicity
can fail.
For this property to hold for every admissible choice of data,
it is necessary that
\[
\begin{cases}
\operatorname{Re}\beta+2\sqrt{\alpha\gamma}\leq0,
&\alpha\gamma>0,\\
\operatorname{Re}\beta\leq0,
&\alpha\gamma<0.
\end{cases}
\]
\end{example}

The next theorem shows that these necessary conditions are also
sufficient under weighted summability assumptions on the initial
data and forcing.

\begin{theorem}\label{nonsymmetric-asymptotic-periodicity}
Let $\alpha,\gamma\in\mathbb R\setminus\{0\}$,
$\beta\in\mathbb C$, and $T>0$, and put
\[
\rho:=\sqrt{\left|\frac{\gamma}{\alpha}\right|}.
\]
Suppose that
\[
\begin{cases}
\operatorname{Re}\beta+2\sqrt{\alpha\gamma}\leq0,
&\alpha\gamma>0,\\
\operatorname{Re}\beta\leq0,
&\alpha\gamma<0.
\end{cases}
\]
Let $(u_y)_{y\in\mathbb N}$ be a complex sequence satisfying
\[
\sum_{y=1}^{\infty}|u_y|\rho^y<\infty,
\]
and let $g\colon[0,\infty)\to\mathbb C$ be continuous and
$T$-periodic.
Suppose that $f\colon\mathbb N\times[0,\infty)\to\mathbb C$
is continuous and $T$-periodic in $t$ for every $y\in\mathbb N$,
and that there exists a nonnegative sequence $(h_y)$ such that
\[
|f(y,t)|\leq h_y,
\qquad y\in\mathbb N,\quad t\geq0,
\qquad
\sum_{y=1}^{\infty}h_y\rho^y<\infty.
\]

Consider the problem
\[
\frac{\partial u}{\partial t}(x,t)
=
\alpha u(x-1,t)+\beta u(x,t)+\gamma u(x+1,t)+f(x,t),
\qquad x\in\mathbb N,\quad t\geq0,
\]
with
\[
u(x,0)=u_x,\qquad x\in\mathbb N,
\qquad
u(0,t)=g(t),\qquad t\geq0.
\]
Define
\[
\eta:=
\begin{cases}
\sqrt{\alpha\gamma},&\alpha\gamma>0,\\
{\mathrm i}\sqrt{|\alpha\gamma|},&\alpha\gamma<0,
\end{cases}
\qquad
r:=\frac{\eta}{\alpha},
\]
and
\[
Q_{x,y}(t):=
r^{y-x}{\mathrm e}^{\beta t}
\bigl[I_{x-y}(2\eta t)-I_{x+y}(2\eta t)\bigr].
\]
Let $u$ be the solution represented by
\[
u(x,t)=u_a(x,t)+u_g(x,t)+u_f(x,t),\quad x\in\mathbb N,
\]
where
\begin{align*}
u_a(x,t)
&=
\sum_{y=1}^{\infty}u_yQ_{x,y}(t),\\
u_g(x,t)
&=
\alpha\int_0^t g(s)Q_{x,1}(t-s)\,{\mathrm d}s,\\
u_f(x,t)
&=
\int_0^t\sum_{y=1}^{\infty}
f(y,s)Q_{x,y}(t-s)\,{\mathrm d}s
\end{align*}
are the contributions from the initial data, boundary data,
and forcing, respectively.
Then, for every fixed $x\in\mathbb N$,
\[
u(x,t+T)-u(x,t)\longrightarrow0
\qquad\text{as }t\to\infty.
\]
\end{theorem}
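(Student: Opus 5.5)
The plan is to mirror the proof of Theorem~\ref{thm:qualitative-asymptotic-periodicity}: split $u=u_a+u_g+u_f$, use periodicity to reduce $u_g(x,t+T)-u_g(x,t)$ and $u_f(x,t+T)-u_f(x,t)$ to integrals over $[0,T]$ of kernels evaluated at times in $[t,t+T]$, and show that these kernels tend to zero. In the symmetric case the key input was the nonnegativity and mass decay of $P_{x,y}$ from Lemma~\ref{lem:dirichlet-kernel-properties}. Here the replacement is a weighted pointwise decay estimate for $Q_{x,y}$, obtained from a sine-type integral representation.

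\textbf{Step 1 (kernel representation).} Using $I_n(w)=\frac1\pi\int_0^\pi {\mathrm e}^{w\cos\theta}\cos(n\theta)\,{\mathrm d}\theta$, valid for complex $w$, we get
\[
Q_{x,y}(t)=r^{y-x}\frac{2}{\pi}\int_0^\pi {\mathrm e}^{(\beta+2\eta\cos\theta)t}\sin(x\theta)\sin(y\theta)\,{\mathrm d}\theta .
\]
The real part of the exponent is $\operatorname{Re}\beta+2\eta\cos\theta$ when $\alpha\gamma>0$, and $\operatorname{Re}\beta$ when $\alpha\gamma<0$, since then $\eta$ is imaginary. Under the hypothesis this real part is $\le -2\eta(1-\cos\theta)\le0$ in the first case and $\le0$ in the second.

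\textbf{Step 2 (decay of $Q_{x,y}$).} We aim for a bound of the form
\[
|Q_{x,y}(t)|\le |r|^{y-x}\,\varepsilon_x(t)\,y,\qquad \varepsilon_x(t)\to0,
\]
and also the crude bound $|Q_{x,y}(t)|\le\frac{2}{\pi}\pi|r|^{y-x}=2\rho^{y-x}$, using $|r|=\rho$. In the case $\alpha\gamma>0$ the estimate $|\sin(y\theta)|\le y\theta$ together with ${\mathrm e}^{-2\eta t(1-\cos\theta)}$ gives
\[
|Q_{x,y}(t)|\le C\rho^{y-x}\,x\,y\int_0^\pi \theta^2{\mathrm e}^{-ct\theta^2}\,{\mathrm d}\theta=O(\rho^{y-x}xy\,t^{-3/2}).
\]
In the case $\alpha\gamma<0$ with $\operatorname{Re}\beta<0$ we get exponential decay. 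The main obstacle is the borderline case $\alpha\gamma<0$, $\operatorname{Re}\beta=0$. There the integrand is ${\mathrm e}^{\mathrm i(\operatorname{Im}\beta+2|\eta|\cos\theta)t}\sin x\theta\sin y\theta$, and decay comes only from oscillation. I would first try a Riemann–Lebesgue argument via the substitution $c=\cos\theta$. This turns $\sin(x\theta)\sin(y\theta)\,{\mathrm d}\theta$ into $\sin^2\theta\,U_{x-1}(c)U_{y-1}(c)\,{\mathrm d}c/\sin\theta$, an integrable function of $c$ on $[-1,1]$, so $Q_{x,y}(t)\to0$ for each fixed $x,y$. (Alternatively one can use $J_n(s)\to0$ directly in $Q_{x,y}=r^{y-x}{\mathrm e}^{\beta t}\mathrm i^{x-y}[J_{x-y}-(-1)^y J_{x+y}](2|\eta|t)$.) In every case, for fixed $x$ and $y$ we obtain $\sup_{t\le r\le t+T}|Q_{x,y}(r)|\to0$, together with the uniform majorant $2\rho^{y-x}$.

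\textbf{Step 3 (assembly).} The initial-data term satisfies $|u_a(x,t)|\le\sum_y |u_y|\sup_{r\ge t}|Q_{x,y}(r)|$. Each summand tends to zero and is dominated by $2\rho^{-x}|u_y|\rho^y$, which is summable, so dominated convergence gives $u_a(x,t)\to0$. The periodicity identities derived as in \eqref{periodicity-g} and \eqref{periodicity-f} give
\[
u_g(x,t+T)-u_g(x,t)=\alpha\int_0^T g(s)Q_{x,1}(t+T-s)\,{\mathrm d}s\to0
\]
by Step 2. Similarly,
\[
|u_f(x,t+T)-u_f(x,t)|\le T\sum_y h_y\sup_{t\le r\le t+T}|Q_{x,y}(r)|,
\]
which tends to zero by dominated convergence with majorant $2T\rho^{-x}h_y\rho^y$. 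Before using these identities, I would check that the interchange of sum and integral and the splitting are justified by the uniform majorant $2\rho^{y-x}$ and the summability assumptions.
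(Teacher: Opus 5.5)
Your proposal is correct and follows essentially the same route as the paper: you use the sine representation to get the uniform bound $|Q_{x,y}(t)|\le 2\rho^{y-x}$ under the coefficient conditions, pointwise decay of each kernel, the periodicity identities that reduce the differences to integrals over $[0,T]$, and dominated convergence with the weighted summable majorants (the paper phrases this last step as truncation at $N$). The differences are cosmetic: the paper passes through $v=r^xu$ and the symmetric kernel $H_{x,y}$ (with $r^xQ_{x,y}=r^yH_{x,y}$), gets the kernel decay from standard Bessel asymptotics (your stated alternative) rather than your Laplace and Riemann--Lebesgue estimates, and also cites Theorem~\ref{theo-time-dependent-nonhomogeneous} to justify the representation.
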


\begin{proof}
Since $r^2=\gamma/\alpha$ and $|r|=\rho$,
Lemma~\ref{symmetrization} shows that
$v(x,t):=r^xu(x,t)$ satisfies the symmetric equation
\[
\frac{\partial v}{\partial t}(x,t)
=
\eta v(x-1,t)+\beta v(x,t)+\eta v(x+1,t)+r^xf(x,t).
\]
The transformed initial data are $v(y,0)=r^yu_y$, and
they form an absolutely summable sequence because
\[
\sum_{y=1}^{\infty}|v(y,0)|
=
\sum_{y=1}^{\infty}|r|^y|u_y|
=
\sum_{y=1}^{\infty}\rho^y|u_y|
<\infty.
\]
Moreover,
\[
\sup_{\substack{y\in\mathbb N\\t\geq0}}|r^yf(y,t)|
\leq
\sum_{y=1}^{\infty}\rho^yh_y<\infty.
\]
Thus Theorem~\ref{theo-time-dependent-nonhomogeneous},
applied with $\alpha(t)=\gamma(t)=\eta$ and $\beta(t)=\beta$,
justifies the solution representation.
Introduce the symmetric kernel
\[
H_{x,y}(t):=
{\mathrm e}^{\beta t}
\bigl[I_{x-y}(2\eta t)-I_{x+y}(2\eta t)\bigr].
\]
The same calculation as Lemma \ref{lem:dirichlet-kernel-properties} gives
\begin{equation}\label{nonsymmetric-sine-kernel}
H_{x,y}(t)
=
\frac2\pi\int_0^\pi
{\mathrm e}^{(\beta+2\eta\cos\theta)t}
\sin(x\theta)\sin(y\theta)\,{\mathrm d}\theta.
\end{equation}
This identity also holds for complex $\eta$.
The assumptions on the coefficients imply
\[
\left|{\mathrm e}^{(\beta+2\eta\cos\theta)t}\right|
\leq1,
\qquad 0\leq\theta\leq\pi,\quad t\geq0.
\]
Consequently,
\begin{equation}\label{nonsymmetric-kernel-bound}
|H_{x,y}(t)|\leq2,
\qquad x,y\in\mathbb N,\quad t\geq0.
\end{equation}

We next prove that
\begin{equation}\label{nonsymmetric-kernel-decay}
H_{x,y}(t)\longrightarrow0
\qquad\text{as }t\to\infty
\end{equation}
for every fixed $x,y\in\mathbb N$.
For each fixed integer $n$, the standard Bessel asymptotics give
\[
I_n(s)=\frac{{\mathrm e}^{s}}{\sqrt{2\pi s}}
\bigl(1+O(s^{-1})\bigr),
\qquad
J_n(s)=O(s^{-1/2}),
\qquad s\to+\infty.
\]
If $\alpha\gamma>0$, then $\eta>0$, and hence
\[
\left|{\mathrm e}^{\beta t}I_n(2\eta t)\right|
=
O\!\left(
{\mathrm e}^{(\operatorname{Re}\beta+2\eta)t}t^{-1/2}
\right)
\longrightarrow0.
\]
If $\alpha\gamma<0$, write $\eta={\mathrm i}d$, where $d>0$.
Using $I_n({\mathrm i}s)={\mathrm i}^{\,n}J_n(s)$, we obtain
\[
\left|{\mathrm e}^{\beta t}I_n(2\eta t)\right|
=
O\!\left(
{\mathrm e}^{\operatorname{Re}\beta\,t}t^{-1/2}
\right)
\longrightarrow0.
\]
Applying these limits with $n=x-y$ and $n=x+y$ proves
\eqref{nonsymmetric-kernel-decay}.

Since
\[
r^xQ_{x,y}(t)=r^yH_{x,y}(t),
\qquad
\alpha r=\eta,
\]
multiplying the formulas for $u_a$, $u_g$, and $u_f$
by $r^x$ gives
\[
v(x,t)=v_a(x,t)+v_g(x,t)+v_f(x,t),
\]
where
\begin{align*}
v_a(x,t)
&=
\sum_{y=1}^{\infty}r^yu_yH_{x,y}(t),\\
v_g(x,t)
&=
\eta\int_0^t g(s)H_{x,1}(t-s)\,{\mathrm d}s,\\
v_f(x,t)
&=
\int_0^t\sum_{y=1}^{\infty}
r^yf(y,s)H_{x,y}(t-s)\,{\mathrm d}s.
\end{align*}
The bound \eqref{nonsymmetric-kernel-bound} and the summable
majorants in the assumptions justify the interchanges of
sums and time integrals below.

Fix $x\in\mathbb N$.
For every $N\in\mathbb N$, \eqref{nonsymmetric-kernel-bound} implies
\[
\left|
v_a(x,t)-\sum_{y=1}^N r^yu_yH_{x,y}(t)
\right|
\leq
2\sum_{y>N}|u_y|\rho^y.
\]
The right-hand side tends to zero as $N\to\infty$,
uniformly in $t$, while each finite sum tends to zero
as $t\to\infty$ by \eqref{nonsymmetric-kernel-decay}.
Therefore $v_a(x,t)\to0$, and hence
\[
v_a(x,t+T)-v_a(x,t)\longrightarrow0.
\]

Splitting the boundary integral at $s=T$ and using
$g(s+T)=g(s)$ gives
\[
v_g(x,t+T)-v_g(x,t)
=
\eta\int_0^T
g(s)H_{x,1}(t+T-s)\,{\mathrm d}s.
\]
Consequently,
\[
|v_g(x,t+T)-v_g(x,t)|
\leq
|\eta|
\left(\int_0^T|g(s)|\,{\mathrm d}s\right)
\sup_{t\leq\tau\leq t+T}|H_{x,1}(\tau)|
\longrightarrow0.
\]

Similarly, periodicity of $f$ gives
\[
v_f(x,t+T)-v_f(x,t)
=
\int_0^T\sum_{y=1}^{\infty}
r^yf(y,s)H_{x,y}(t+T-s)\,{\mathrm d}s.
\]
The contribution from $y>N$ has absolute value at most
\[
2T\sum_{y>N}h_y\rho^y,
\]
uniformly in $t$.
For each fixed $N$, the remaining contribution has
absolute value at most
\[
T\sum_{y=1}^N
h_y\rho^y
\sup_{t\leq\tau\leq t+T}|H_{x,y}(\tau)|,
\]
which tends to zero by \eqref{nonsymmetric-kernel-decay}.
Choosing $N$ sufficiently large and then $t$ sufficiently
large therefore gives
\[
v_f(x,t+T)-v_f(x,t)\longrightarrow0.
\]

Combining the three contributions, we obtain
\[
v(x,t+T)-v(x,t)\longrightarrow0.
\]
Since $u(x,t)=r^{-x}v(x,t)$ and $x$ is fixed, the same
conclusion holds for $u$.
\end{proof}

We now obtain quantitative estimates under stronger moment assumptions.

\begin{theorem}\label{nonsymmetric-quantitative-asymptotic-periodicity}
Let $\alpha,\gamma\in\mathbb R\setminus\{0\}$,
$\beta\in\mathbb C$, $T>0$, and put
\[
\rho:=\sqrt{\left|\frac{\gamma}{\alpha}\right|}.
\]
Suppose that $g\colon[0,\infty)\to\mathbb C$ is continuous
and $T$-periodic, and that
$f\colon\mathbb N\times[0,\infty)\to\mathbb C$ is continuous
and $T$-periodic in $t$ for every $x\in\mathbb N$.
Assume that 
\[
\sum_{x=1}^{\infty}x^3|u_x|\rho^x<\infty,
\qquad
\sum_{x=1}^{\infty}x^3\rho^x
\sup_{0\leq s\leq T}|f(x,s)|<\infty.
\]
Let $u$ be the previously established solution of the problem
\begin{align*}
\frac{\partial u}{\partial t}(x,t)
&=
\alpha u(x-1,t)+\beta u(x,t)+\gamma u(x+1,t)+f(x,t),
\quad x\in\mathbb N,\quad t\geq0,\\
u(0,t)&=g(t),\quad t\geq0,\quad
u(x,0)=u_x,\quad x\in\mathbb N.
\end{align*}
Write
\[
u(x,t)=u_a(x,t)+u_g(x,t)+u_f(x,t),
\]
where $u_a$, $u_g$, and $u_f$ are the contributions to $u(x,t)$
from the initial data, boundary data, and forcing, respectively.
Let $x\in\mathbb N$ be fixed.
\begin{enumerate}
\item
If $\alpha\gamma>0$, put
\[
d:=\sqrt{\alpha\gamma},
\qquad r:=\frac d\alpha.
\]
Then for $t\to\infty$,
\begin{align*}
u_a(x,t)
&=
\frac{xr^{-x}{\mathrm e}^{(\beta+2d)t}}
{2\sqrt{\pi}\,d^{3/2}}
\left[
\left(\sum_{y=1}^{\infty}yu_yr^y\right)t^{-3/2}
+O(t^{-5/2})
\right],\\
u_g(x,t+T)-u_g(x,t)
&=
\frac{xr^{-x}{\mathrm e}^{(\beta+2d)t}}
{2\sqrt{\pi}\,d^{3/2}}
\left[
d\left(\int_0^Tg(s){\mathrm e}^{(\beta+2d)(T-s)}
\,{\mathrm d}s\right)t^{-3/2}
+O(t^{-5/2})
\right],\\
u_f(x,t+T)-u_f(x,t)
&=
\frac{xr^{-x}{\mathrm e}^{(\beta+2d)t}}
{2\sqrt{\pi}\,d^{3/2}}
\left[
\left\{
\int_0^T
\left[\sum_{y=1}^{\infty}yf(y,s)r^y\right]
{\mathrm e}^{(\beta+2d)(T-s)}\,{\mathrm d}s
\right\}t^{-3/2}
+O(t^{-5/2})
\right].
\end{align*}
Consequently,
$u(x,t+T)-u(x,t)
=
{\mathrm e}^{(\beta+2d)t}
\left[D_xt^{-3/2}+O(t^{-5/2})\right]$,
where
\begin{align*}
D_x
&=
\frac{xr^{-x}}{2\sqrt{\pi}\,d^{3/2}}
\Bigg[
({\mathrm e}^{(\beta+2d)T}-1)
\sum_{y=1}^{\infty}yu_yr^y
+
\int_0^T
\left(dg(s)+\sum_{y=1}^{\infty}yf(y,s)r^y\right)
{\mathrm e}^{(\beta+2d)(T-s)}\,{\mathrm d}s
\Bigg].
\end{align*}

\item
If $\alpha\gamma<0$, then for $t\to\infty$,
\begin{align*}
|u_a(x,t)|
&=
O\!\left({\mathrm e}^{(\operatorname{Re}\beta)t}t^{-1}\right),\\
|u_g(x,t+T)-u_g(x,t)|
&=
O\!\left({\mathrm e}^{(\operatorname{Re}\beta)t}t^{-1}\right),\\
|u_f(x,t+T)-u_f(x,t)|
&=
O\!\left({\mathrm e}^{(\operatorname{Re}\beta)t}t^{-1}\right).
\end{align*}
Consequently,
$|u(x,t+T)-u(x,t)|
=
O\left({\mathrm e}^{(\operatorname{Re}\beta)t}t^{-1}\right)$.
\end{enumerate}
In particular, $u(x,t+T)-u(x,t)\to0$ for every fixed
$x\in\mathbb N$ whenever
\[
\begin{cases}
\operatorname{Re}\beta+2\sqrt{\alpha\gamma}\leq0,
&\alpha\gamma>0,\\
\operatorname{Re}\beta\leq0,
&\alpha\gamma<0.
\end{cases}
\]
\end{theorem}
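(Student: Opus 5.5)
We will reduce to the symmetric kernel exactly as in the proof of Theorem~\ref{nonsymmetric-asymptotic-periodicity}. With $r=\eta/\alpha$ we have $r^xQ_{x,y}(t)=r^yH_{x,y}(t)$, so it suffices to study $v=r^xu$ and its three parts $v_a,v_g,v_f$. The sine representation \eqref{nonsymmetric-sine-kernel} gives
\[
H_{x,y}(t)=\frac2\pi\int_0^\pi {\mathrm e}^{(\beta+2\eta\cos\theta)t}\sin(x\theta)\sin(y\theta)\,{\mathrm d}\theta .
\]
The periodicity splitting used before gives
\[
v_g(x,t+T)-v_g(x,t)=\eta\int_0^T g(s)H_{x,1}(t+T-s)\,{\mathrm d}s,
\]
and the analogous identity for $v_f$. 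Inserting the sine representation and interchanging sums and integrals puts each quantity in the form
\[
\frac2\pi\int_0^\pi {\mathrm e}^{(\beta+2\eta\cos\theta)t}\sin(x\theta)\,\Phi(\theta)\,{\mathrm d}\theta .
\]
The interchanges are justified by the weighted moment hypotheses $\sum x^3|u_x|\rho^x<\infty$ and its forcing analogue, since $|r|=\rho$. For the three parts, $\Phi$ is respectively
\[
\sum_y r^yu_y\sin(y\theta),\qquad
\eta\sin\theta\int_0^Tg(s){\mathrm e}^{(\beta+2\eta\cos\theta)(T-s)}\,{\mathrm d}s,\qquad
\int_0^T\sum_y r^yf(y,s)\sin(y\theta)\,{\mathrm e}^{(\beta+2\eta\cos\theta)(T-s)}\,{\mathrm d}s .
\]

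\emph{Case $\alpha\gamma>0$.} Here $\eta=d>0$. We factor ${\mathrm e}^{(\beta+2d)t}$ and write ${\mathrm e}^{(\beta+2d\cos\theta)t}={\mathrm e}^{(\beta+2d)t}{\mathrm e}^{-d t\,q(\theta)}$ with $q(\theta)=2(1-\cos\theta)$. We then apply \eqref{sine-laplace-estimate} with $t$ replaced by $dt$, in its stronger $O(\theta^4)$ form. This form gives the leading term $A/(2\sqrt\pi)\,(dt)^{-3/2}$ and remainder $O(t^{-5/2})$, which yields the prefactor $x/(2\sqrt\pi\,d^{3/2})$.

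It remains to show that $\sin(x\theta)\Phi(\theta)=A\theta^2+O(\theta^4)$ with the right $A$. The third moment gives $\sum_y a_y\sin(y\theta)=\theta\sum_y ya_y+O(\theta^3)$, which improves \eqref{sine-series-expansion} via $|\sin r-r|\le r^3/6$. Together with $\sin(x\theta)=x\theta+O(\theta^3)$ this gives $A=x\sum_y yr^yu_y$ for $v_a$.

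For $v_g$ and $v_f$ we expand ${\mathrm e}^{(\beta+2d\cos\theta)(T-s)}={\mathrm e}^{(\beta+2d)(T-s)}\bigl(1+O(\theta^2)\bigr)$, uniformly in $s\in[0,T]$. This produces the stated weighted integrals $d\int_0^T g(s){\mathrm e}^{(\beta+2d)(T-s)}\,{\mathrm d}s$ and $\int_0^T\bigl[\sum_y yf(y,s)r^y\bigr]{\mathrm e}^{(\beta+2d)(T-s)}\,{\mathrm d}s$.

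For the formula for $D_x$ we need the difference of $u_a$ at $t+T$ and $t$. We write
\[
{\mathrm e}^{(\beta+2d)(t+T)}(t+T)^{-3/2}={\mathrm e}^{(\beta+2d)t}{\mathrm e}^{(\beta+2d)T}\bigl(t^{-3/2}+O(t^{-5/2})\bigr),
\]
which produces the factor ${\mathrm e}^{(\beta+2d)T}-1$. Dividing by $r^x$ returns to $u$.

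\emph{Case $\alpha\gamma<0$.} Here $\eta={\mathrm i}d$ and $|{\mathrm e}^{(\beta+2{\mathrm i}d\cos\theta)t}|={\mathrm e}^{(\operatorname{Re}\beta)t}$. The phase $2d\cos\theta$ has stationary points at $\theta=0$ and $\theta=\pi$. I would integrate by parts once in $\theta$, using
\[
{\mathrm e}^{2{\mathrm i}dt\cos\theta}=\frac{-1}{2{\mathrm i}dt\sin\theta}\,\partial_\theta{\mathrm e}^{2{\mathrm i}dt\cos\theta}.
\]
The amplitude $\sin(x\theta)\Phi(\theta)$ vanishes to second order at both endpoints because $\sin(x\theta)$ and $\Phi$ each vanish at $0$ and $\pi$. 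Hence $\psi:=\sin(x\theta)\Phi(\theta)/\sin\theta$ is $C^1$ on $[0,\pi]$ and vanishes at both ends, so there are no boundary terms. Boundedness of $\psi'$ in $L^1$ gives the $O(t^{-1})$ bound times ${\mathrm e}^{(\operatorname{Re}\beta)t}$. The needed regularity follows from the third-moment hypotheses, which allow differentiating $\sum_y r^ya_y\sin(y\theta)$ termwise twice. The final claim then follows in both cases, since ${\mathrm e}^{(\beta+2d)t}$ is bounded in modulus by $1$, respectively ${\mathrm e}^{(\operatorname{Re}\beta)t}\le1$.

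The main obstacle I expect is the uniform control in the oscillatory case. I must verify that $\psi$ is $C^1$ uniformly under the moment assumptions, including the forcing integral in $s$. I must also check that the endpoint $\theta=\pi$, where $\cos\theta=-1$ is a second stationary point, causes no loss. If the single integration by parts proves delicate near $\pi$, I would instead split $[0,\pi]$ with a smooth cutoff and treat each endpoint by the substitution $\theta\mapsto\pi-\theta$.
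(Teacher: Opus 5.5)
Your proposal is correct and follows the paper's proof essentially step for step. It uses the same symmetrized sine representation, the same $O(\theta^4)$ Laplace-type estimate with $t$ replaced by $dt$ when $\alpha\gamma>0$, and the same single integration by parts against $h/\varphi'$ when $\alpha\gamma<0$ (your $\psi$ is this quotient up to a constant factor). Your concern about $\theta=\pi$ does not arise, since $h(\theta)/(\theta-\theta_0)=\int_0^1 h'(\theta_0+v(\theta-\theta_0))\,{\mathrm d}v$ treats both endpoints identically. Your observation that $C^2$ regularity of the amplitude suffices, where the paper uses $C^3$, is also correct.
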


\begin{proof}
The solution representation is justified by the symmetrization
argument used in the proof of
Theorem~\ref{nonsymmetric-asymptotic-periodicity}.
That argument applies without the inequalities imposed there
on $\operatorname{Re}\beta$.

Choose $\eta$ such that $\eta^2=\alpha\gamma$, and put
$r=\eta/\alpha$. Then $|r|=\rho$ and $\alpha r=\eta$.
By \eqref{nonsymmetric-sine-kernel},
\[
Q_{x,y}(t)
=
\frac{2r^{y-x}}{\pi}
\int_0^\pi
{\mathrm e}^{(\beta+2\eta\cos\theta)t}
\sin(x\theta)\sin(y\theta)\,{\mathrm d}\theta.
\]
The solution formula and the periodicity identities used in
the proof of Theorem~\ref{nonsymmetric-asymptotic-periodicity}
give
\begin{align*}
u_a(x,t)
&=
\sum_{y=1}^{\infty}u_yQ_{x,y}(t),\\
u_g(x,t+T)-u_g(x,t)
&=
\alpha\int_0^Tg(s)Q_{x,1}(t+T-s)\,{\mathrm d}s,\\
u_f(x,t+T)-u_f(x,t)
&=
\int_0^T\sum_{y=1}^{\infty}
f(y,s)Q_{x,y}(t+T-s)\,{\mathrm d}s.
\end{align*}
Substituting the sine representation into these formulas
and using $\alpha r=\eta$, we obtain
\begin{align*}
u_a(x,t)
&=
\frac{2r^{-x}{\mathrm e}^{\beta t}}{\pi}
\int_0^\pi
{\mathrm e}^{2\eta t\cos\theta}
\sin(x\theta)S_a(\theta)\,{\mathrm d}\theta,\\
u_g(x,t+T)-u_g(x,t)
&=
\frac{2\eta r^{-x}{\mathrm e}^{\beta t}}{\pi}
\int_0^\pi
{\mathrm e}^{2\eta t\cos\theta}
\sin(x\theta)\sin\theta B(\theta)\,{\mathrm d}\theta,\\
u_f(x,t+T)-u_f(x,t)
&=
\frac{2r^{-x}{\mathrm e}^{\beta t}}{\pi}
\int_0^\pi
{\mathrm e}^{2\eta t\cos\theta}
\sin(x\theta)S_f(\theta)\,{\mathrm d}\theta,
\end{align*}
where
\begin{align*}
S_a(\theta)
&:=
\sum_{y=1}^{\infty}u_yr^y\sin(y\theta),\\
B(\theta)
&:=
\int_0^T
g(s){\mathrm e}^{(\beta+2\eta\cos\theta)(T-s)}
\,{\mathrm d}s,\\
S_f(\theta)
&:=
\int_0^T
\left[
\sum_{y=1}^{\infty}f(y,s)r^y\sin(y\theta)
\right]
{\mathrm e}^{(\beta+2\eta\cos\theta)(T-s)}
\,{\mathrm d}s.
\end{align*}
The interchanges of sums and integrals are justified by the
moment assumptions: the series are bounded termwise by the
summable sequences $|u_y|\rho^y$ and
$\rho^y\sup_{0\leq s\leq T}|f(y,s)|$, respectively,
and the exponential factors are bounded for
$\theta\in[0,\pi]$ and $s\in[0,T]$.

\paragraph{The case $\alpha\gamma>0$.}
Choose
\[
\eta=d:=\sqrt{\alpha\gamma}>0,
\qquad
r=\frac d\alpha.
\]
Here $r=\rho$ if $\alpha,\gamma>0$ and $r=-\rho$
if $\alpha,\gamma<0$.
Put
\[
q(\theta):=2(1-\cos\theta).
\]
Since
\[
{\mathrm e}^{\beta t}{\mathrm e}^{2dt\cos\theta}
=
{\mathrm e}^{(\beta+2d)t}{\mathrm e}^{-dtq(\theta)},
\]
the preceding formulas become
\begin{align}
u_a(x,t)
&=
\frac{2r^{-x}{\mathrm e}^{(\beta+2d)t}}{\pi}
\int_0^\pi
{\mathrm e}^{-dtq(\theta)}
\sin(x\theta)S_a(\theta)\,{\mathrm d}\theta,
\label{nsp-rates-initial-sine}\\
u_g(x,t+T)-u_g(x,t)
&=
\frac{2dr^{-x}{\mathrm e}^{(\beta+2d)t}}{\pi}
\int_0^\pi
{\mathrm e}^{-dtq(\theta)}
\sin(x\theta)\sin\theta B(\theta)\,{\mathrm d}\theta,
\label{nsp-rates-boundary-sine}\\
u_f(x,t+T)-u_f(x,t)
&=
\frac{2r^{-x}{\mathrm e}^{(\beta+2d)t}}{\pi}
\int_0^\pi
{\mathrm e}^{-dtq(\theta)}
\sin(x\theta)S_f(\theta)\,{\mathrm d}\theta.
\label{nsp-rates-forcing-sine}
\end{align}

We use the elementary estimate established in the proof of
Theorem~\ref{thm:periodicity-asymptotics}.
Its stronger version, with the time variable replaced by $dt$,
shows that, for every continuous complex-valued function $H$
on $[0,\pi]$ satisfying
\[
H(\theta)=A\theta^2+O(\theta^4)
\qquad\text{as }\theta\downarrow0,
\]
we have
\begin{equation}\label{nsp-rates-gaussian-estimate}
\int_0^\pi
{\mathrm e}^{-dtq(\theta)}H(\theta)\,{\mathrm d}\theta
=
\frac{A\sqrt{\pi}}{4d^{3/2}}t^{-3/2}
+O(t^{-5/2}).
\end{equation}
Here $d>0$ is fixed.
The estimate applies to complex-valued functions by applying
it separately to their real and imaginary parts.

To apply \eqref{nsp-rates-gaussian-estimate}, we determine
the expansions at $\theta=0$ of the factors multiplying
${\mathrm e}^{-dtq(\theta)}$ in the three integrals.
We shall show that
\[
S_a(\theta)=M_a\theta+O(\theta^3),
\qquad
B(\theta)=M_g+O(\theta^2),
\qquad
S_f(\theta)=M_f\theta+O(\theta^3),
\]
where
\begin{align*}
M_a
&:=
\sum_{y=1}^{\infty}yu_yr^y,\\
M_g
&:=
\int_0^Tg(s){\mathrm e}^{(\beta+2d)(T-s)}\,{\mathrm d}s,\\
M_f
&:=
\int_0^T
\left[
\sum_{y=1}^{\infty}yf(y,s)r^y
\right]
{\mathrm e}^{(\beta+2d)(T-s)}\,{\mathrm d}s.
\end{align*}
The inequality $|\sin v-v|\leq|v|^3/6$ gives
\[
|S_a(\theta)-M_a\theta|
\leq
\frac{|\theta|^3}{6}
\sum_{y=1}^{\infty}y^3|u_y|\rho^y,
\]
so
$S_a(\theta)=M_a\theta+O(\theta^3)$.
The analogous estimate for the forcing series is uniform
in $s\in[0,T]$, by the moment assumption on $f$.
Moreover, for $0\leq s\leq T$,
\begin{align*}
&\left|
{\mathrm e}^{(\beta+2d\cos\theta)(T-s)}
-
{\mathrm e}^{(\beta+2d)(T-s)}
\right|=
\left|{\mathrm e}^{(\beta+2d)(T-s)}\right|
\left(1-{\mathrm e}^{-dq(\theta)(T-s)}\right)
\leq
{\mathrm e}^{T\max\{\operatorname{Re}\beta+2d,0\}}
dT\theta^2.
\end{align*}
It follows that
$B(\theta)=M_g+O(\theta^2)$,
$S_f(\theta)=M_f\theta+O(\theta^3)$.
Indeed, the error in $S_f$ is the sum of the cubic error
from the sine series and the product of its $O(\theta)$
bound with the $O(\theta^2)$ error in the exponential.

Using $\sin(x\theta)=x\theta+O(\theta^3)$, we therefore obtain
\begin{align*}
\sin(x\theta)S_a(\theta)
&=
xM_a\theta^2+O(\theta^4),\\
\sin(x\theta)\sin\theta B(\theta)
&=
xM_g\theta^2+O(\theta^4),\\
\sin(x\theta)S_f(\theta)
&=
xM_f\theta^2+O(\theta^4).
\end{align*}
Applying \eqref{nsp-rates-gaussian-estimate} in
\eqref{nsp-rates-initial-sine}--\eqref{nsp-rates-forcing-sine}
gives
\begin{align}
u_a(x,t)
&=
{\mathrm e}^{(\beta+2d)t}
\left[
\frac{xr^{-x}M_a}{2\sqrt{\pi}\,d^{3/2}}
t^{-3/2}+O(t^{-5/2})
\right],
\label{nsp-rates-initial-asymptotic}\\
u_g(x,t+T)-u_g(x,t)
&=
{\mathrm e}^{(\beta+2d)t}
\left[
\frac{xr^{-x}M_g}{2\sqrt{\pi d}}
t^{-3/2}+O(t^{-5/2})
\right],
\label{nsp-rates-boundary-asymptotic}\\
u_f(x,t+T)-u_f(x,t)
&=
{\mathrm e}^{(\beta+2d)t}
\left[
\frac{xr^{-x}M_f}{2\sqrt{\pi}\,d^{3/2}}
t^{-3/2}+O(t^{-5/2})
\right].
\label{nsp-rates-forcing-asymptotic}
\end{align}
Since
$(t+T)^{-3/2}=t^{-3/2}+O(t^{-5/2})$,
the initial-data formula also gives
\[
u_a(x,t+T)-u_a(x,t)
=
{\mathrm e}^{(\beta+2d)t}
\left[
\frac{xr^{-x}M_a}{2\sqrt{\pi}\,d^{3/2}}
({\mathrm e}^{(\beta+2d)T}-1)t^{-3/2}
+O(t^{-5/2})
\right].
\]
Adding the three contributions gives
\begin{equation}\label{nsp-rates-same-sign-conclusion}
u(x,t+T)-u(x,t)
=
{\mathrm e}^{(\beta+2d)t}
\left[D_xt^{-3/2}+O(t^{-5/2})\right],
\end{equation}
where
\begin{align*}
D_x
&=
\frac{xr^{-x}}{2\sqrt{\pi}\,d^{3/2}}
\Bigg[
({\mathrm e}^{(\beta+2d)T}-1)
\sum_{y=1}^{\infty}yu_yr^y
+
\int_0^T
\left(dg(s)+\sum_{y=1}^{\infty}yf(y,s)r^y\right)
{\mathrm e}^{(\beta+2d)(T-s)}\,{\mathrm d}s
\Bigg].
\end{align*}
This proves the first assertion.
If $\operatorname{Re}\beta+2\sqrt{\alpha\gamma}\leq0$,
then $|{\mathrm e}^{(\beta+2d)t}|\leq1$, and therefore
\[
u(x,t+T)-u(x,t)\longrightarrow0.
\]

\paragraph{The case $\alpha\gamma<0$.}
Choose
\[
\eta={\mathrm i}d,
\qquad
d:=\sqrt{|\alpha\gamma|}>0,
\qquad
r=\frac{{\mathrm i}d}{\alpha},
\]
and put
\[
\varphi(\theta):=2d\cos\theta.
\]
The three integral representations at the beginning of the
proof become
\begin{align*}
u_a(x,t)
&=
\frac{2r^{-x}{\mathrm e}^{\beta t}}{\pi}
\int_0^\pi
{\mathrm e}^{{\mathrm i}t\varphi(\theta)}
h_a(\theta)\,{\mathrm d}\theta,\\
u_g(x,t+T)-u_g(x,t)
&=
\frac{2{\mathrm i}dr^{-x}{\mathrm e}^{\beta t}}{\pi}
\int_0^\pi
{\mathrm e}^{{\mathrm i}t\varphi(\theta)}
h_g(\theta)\,{\mathrm d}\theta,\\
u_f(x,t+T)-u_f(x,t)
&=
\frac{2r^{-x}{\mathrm e}^{\beta t}}{\pi}
\int_0^\pi
{\mathrm e}^{{\mathrm i}t\varphi(\theta)}
h_f(\theta)\,{\mathrm d}\theta,
\end{align*}
where
\[
h_a(\theta):=\sin(x\theta)S_a(\theta),
\qquad
h_g(\theta):=\sin(x\theta)\sin\theta B(\theta),
\qquad
h_f(\theta):=\sin(x\theta)S_f(\theta).
\]
Here $S_a$, $B$, and $S_f$ are defined as above, with
$\eta={\mathrm i}d$ and $r={\mathrm i}d/\alpha$.

The series in the definitions of $S_a$ and $S_f$
can be differentiated termwise with respect to $\theta$
up to order three. Indeed, after $j$ differentiations,
their summands are bounded in absolute value by
$y^j|u_y|\rho^y$ and
$y^j\rho^y\sup_{0\leq s\leq T}|f(y,s)|$,
respectively. For $0\leq j\leq3$, both bounds are summable
by the assumptions. Hence the differentiated series
converge uniformly in $\theta$ and, for the forcing,
uniformly in $s\in[0,T]$.
The remaining factors are smooth in $\theta$, and
differentiation under the integrals over $[0,T]$ is justified
by continuity on the compact integration domain.
Consequently, $h_a,h_g,h_f$ belong to $C^3([0,\pi])$.

The derivative
\[
\varphi'(\theta)=-2d\sin\theta
\]
vanishes only at the endpoints $\theta=0,\pi$, and
$\varphi''(0),\varphi''(\pi)\neq0$.
Moreover,
\[
S_a(0)=S_a(\pi)=S_f(0)=S_f(\pi)=0.
\]
It follows from the definitions that each of the three
functions $h_a,h_g,h_f$ satisfies
\[
h(0)=h(\pi)=h'(0)=h'(\pi)=0.
\]

We estimate the integrals directly by integration by parts.
For any of the three functions $h$ above, the quotient
\[
F(\theta):=\frac{h(\theta)}{\varphi'(\theta)},
\qquad 0<\theta<\pi,
\]
has a continuously differentiable extension to $[0,\pi]$,
with $F(0)=F(\pi)=0$.
Indeed, for either endpoint $\theta_0\in\{0,\pi\}$,
\[
\frac{h(\theta)}{\theta-\theta_0}
=
\int_0^1
h'\bigl(\theta_0+v(\theta-\theta_0)\bigr)\,{\mathrm d}v,
\]
and
\[
\frac{\varphi'(\theta)}{\theta-\theta_0}
=
\int_0^1
\varphi''\bigl(\theta_0+v(\theta-\theta_0)\bigr)
\,{\mathrm d}v.
\]
Both right-hand sides extend continuously differentiably
to $\theta=\theta_0$.
Their values there are $h'(\theta_0)=0$ and
$\varphi''(\theta_0)\neq0$, respectively.
Their quotient therefore gives the required extension
near each endpoint.

Thus $h=F\varphi'$ throughout $[0,\pi]$, and integration
by parts gives
\begin{align*}
\int_0^\pi
h(\theta){\mathrm e}^{{\mathrm i}t\varphi(\theta)}
\,{\mathrm d}\theta
&=
\frac{1}{{\mathrm i}t}
\left[
F(\theta){\mathrm e}^{{\mathrm i}t\varphi(\theta)}
\right]_0^\pi-
\frac{1}{{\mathrm i}t}
\int_0^\pi
F'(\theta){\mathrm e}^{{\mathrm i}t\varphi(\theta)}
\,{\mathrm d}\theta=\frac{1}{{\mathrm i}t}
\int_0^\pi
F'(\theta){\mathrm e}^{{\mathrm i}t\varphi(\theta)}
\,{\mathrm d}\theta.
\end{align*}
Since $\varphi$ is real-valued,
\[
\left|
\int_0^\pi
h(\theta){\mathrm e}^{{\mathrm i}t\varphi(\theta)}
\,{\mathrm d}\theta
\right|
\leq
\frac1t\int_0^\pi|F'(\theta)|\,{\mathrm d}\theta
=
O(t^{-1}).
\]
Applying this estimate to the three integrals yields
\begin{gather*}
|u_a(x,t)|
=
O\left({\mathrm e}^{(\operatorname{Re}\beta)t}t^{-1}\right),\\
|u_g(x,t+T)-u_g(x,t)|
+
|u_f(x,t+T)-u_f(x,t)|=
O\left({\mathrm e}^{(\operatorname{Re}\beta)t}t^{-1}\right).
\end{gather*}
Since $T$ is fixed, the estimate for $u_a$ also gives
\[
|u_a(x,t+T)-u_a(x,t)|
=
O\!\left({\mathrm e}^{(\operatorname{Re}\beta)t}t^{-1}\right).
\]
Adding the three contributions, we obtain
\begin{equation}\label{nsp-rates-opposite-sign-conclusion}
|u(x,t+T)-u(x,t)|
=
O\!\left({\mathrm e}^{(\operatorname{Re}\beta)t}t^{-1}\right).
\end{equation}
This proves the second assertion.
If $\operatorname{Re}\beta\leq0$, the exponential factor
is bounded, and hence
\[
u(x,t+T)-u(x,t)\longrightarrow0.\tag*{\qedhere}
\]
\end{proof}

\section{The unified transform method: higher-order examples}\label{sec-8}

We now illustrate how the unified transform method extends
to lattice equations of higher order in the spatial variable,
while retaining a first-order time derivative.
We develop a third-order example in detail, then outline
the corresponding calculations for a fourth-order equation
and for a general equation involving finitely many shifts.
Our purpose is to explain the transform calculations and
the elimination of unknown boundary terms; we do not
establish general existence and uniqueness results here.

\subsection{A third-order example}

We consider a semidiscretization of the linear KdV equation:
\begin{equation}\label{main-problem-11}
\frac{\partial u}{\partial t}(x,t)
+
u(x+2,t)-3u(x+1,t)+3u(x,t)-u(x-1,t)
=0,
\qquad x\in\mathbb N,\quad t\geq0,
\end{equation}
subject to the initial and boundary conditions
\begin{equation}\label{main-problem-12}
u(0,t)=g(t),\qquad t\geq0,
\qquad
u(x,0)=u_x,\qquad x\in\mathbb N.
\end{equation}
Here $g\colon[0,\infty)\to\mathbb R$ is continuous, and
$(u_x)_{x\in\mathbb N}$ is a real sequence.

We proceed formally, assuming that the solution has a discrete
transform
\[
\hat u(z,t):=\sum_{x=1}^{\infty}u(x,t)z^x
\]
that is entire in $z$ for each fixed $t\geq0$, and that
termwise differentiation with respect to $t$ is justified. The residue theorem therefore gives
\[
u(x,t)
=
\frac{1}{2\pi{\mathrm i}}
\int_{|z|=1}\hat u(z,t)\frac{{\mathrm d}z}{z^{x+1}},
\]
where the unit circle is oriented counterclockwise.
Put
\begin{align*}
g_0(t)&:=u(0,t)=g(t),\\
g_1(t)&:=u(1,t)-u(0,t),\\
g_2(t)&:=u(2,t)-2u(1,t)+u(0,t).
\end{align*}
Thus
\[
u(1,t)=g_0(t)+g_1(t),
\qquad
u(2,t)=g_0(t)+2g_1(t)+g_2(t).
\]
The function $g_0$ is prescribed, whereas $g_1$ and $g_2$
are unknown boundary traces. 

The transforms of the shifted functions satisfy
\begin{align*}
\sum_{x=1}^{\infty}u(x+1,t)z^x
&=
z^{-1}\hat u(z,t)-u(1,t),\\
\sum_{x=1}^{\infty}u(x-1,t)z^x
&=
z\hat u(z,t)+zu(0,t),\\
\sum_{x=1}^{\infty}u(x+2,t)z^x
&=
z^{-2}\hat u(z,t)-z^{-1}u(1,t)-u(2,t).
\end{align*}
Multiplying \eqref{main-problem-11} by $z^x$, summing over
$x\geq1$, and using these identities, we obtain
\begin{align*}
\frac{\partial\hat u}{\partial t}(z,t)
&=
\left(z-3+\frac3z-\frac1{z^2}\right)\hat u(z,t)
+zg_0(t)
+\left(\frac1z-3\right)\bigl[g_0(t)+g_1(t)\bigr]
+g_0(t)+2g_1(t)+g_2(t)\\
&=
\frac{(z-1)^3}{z^2}\hat u(z,t)
+\left(z+\frac1z-2\right)g_0(t)
+\left(\frac1z-1\right)g_1(t)
+g_2(t).
\end{align*}
Introducing
\[
w(z):=\frac{(z-1)^3}{z^2},
\qquad z\neq0,
\]
we arrive at the global relation
\begin{equation}\label{third-order-global-relation}
\hat u(z,t)
=
{\mathrm e}^{w(z)t}\hat u(z,0)
+
\left(z+\frac1z-2\right)\tilde g_0(z,t)
+
\left(\frac1z-1\right)\tilde g_1(z,t)
+
\tilde g_2(z,t),
\end{equation}
where
\[
\tilde g_j(z,t)
:=
\int_0^t
g_j(s){\mathrm e}^{w(z)(t-s)}\,{\mathrm d}s,
\qquad j=0,1,2.
\]
Formal inversion therefore gives
\begin{align*}
u(x,t)
&=
\frac{1}{2\pi{\mathrm i}}
\int_{|z|=1}
\Bigg[
{\mathrm e}^{w(z)t}\hat u(z,0)
+\left(z+\frac1z-2\right)\tilde g_0(z,t)
+\left(\frac1z-1\right)\tilde g_1(z,t)
+\tilde g_2(z,t)
\Bigg]\frac{{\mathrm d}z}{z^{x+1}}.
\end{align*}

To obtain a formula involving only the prescribed data, we
must eliminate the contributions from $\tilde g_1$ and
$\tilde g_2$.
As in the second-order case, we use transformations that
leave the phase function $w$ unchanged. 

The equation $w(\phi(z))=w(z)$ is equivalent to the cubic equation for $\phi(z)$,
\[
z^2(\phi(z)-1)^3-(z-1)^3\phi(z)^2=0.
\]
Since $\phi(z)=z$ is one solution, we can factor this equation as
\[
(\phi(z)-z)
\bigl[z^2\phi(z)^2+(1-3z)\phi(z)+z\bigr]=0.
\]
The two remaining solutions are
\[
\phi^\pm(z)
=
\frac{3z-1\pm(1-z)\sqrt{1-4z}}{2z^2}.
\]
Both take the value $1$ at $z=1$.
The square root makes $\phi^\pm$ local branches of an
algebraic function.


Since $w(\phi^\pm(z))=w(z)$, we have
\[
\tilde g_j(\phi^\pm(z),t)=\tilde g_j(z,t),
\qquad j=0,1,2.
\]
Evaluating \eqref{third-order-global-relation} at these points
gives
\begin{align*}
\hat u(\phi^\pm(z),t)
&=
{\mathrm e}^{w(z)t}\hat u(\phi^\pm(z),0)+
\left(\phi^\pm(z)+\frac1{\phi^\pm(z)}-2\right)
\tilde g_0(z,t)+
\left(\frac1{\phi^\pm(z)}-1\right)\tilde g_1(z,t)
+\tilde g_2(z,t),
\end{align*}
where
\[
\hat u(\phi^\pm(z),0)
=
\sum_{x=1}^{\infty}u_x\bigl(\phi^\pm(z)\bigr)^x.
\]
We have obtained a linear system for the unknown
boundary transforms.
To display its solution, put
\begin{align*}
R_\pm(z,t)
&:=
\hat u(\phi^\pm(z),t)
-
{\mathrm e}^{w(z)t}\hat u(\phi^\pm(z),0)-
\left(\phi^\pm(z)+\frac1{\phi^\pm(z)}-2\right)
\tilde g_0(z,t).
\end{align*}
Then
\[
R_\pm(z,t)
=
\left(\frac1{\phi^\pm(z)}-1\right)\tilde g_1(z,t)
+\tilde g_2(z,t).
\]
Whenever $\phi^+(z)\neq\phi^-(z)$, it follows that
\begin{align*}
\tilde g_1(z,t)
&=
\frac{R_+(z,t)-R_-(z,t)}
{1/\phi^+(z)-1/\phi^-(z)},\\
\tilde g_2(z,t)
&=
R_+(z,t)
-
\left(\frac1{\phi^+(z)}-1\right)\tilde g_1(z,t).
\end{align*}
To be able to substitute these formulas
into the global relation \eqref{third-order-global-relation}, we first calculate
\begin{align*}
\left(\frac1z-1\right)\tilde g_1(z,t)+\tilde g_2(z,t)
&=
R_+(z,t)
+
\left(\frac1z-\frac1{\phi^+(z)}\right)
\frac{R_+(z,t)-R_-(z,t)}
{1/\phi^+(z)-1/\phi^-(z)}\\
&=
A_+(z)R_+(z,t)+A_-(z)R_-(z,t),
\end{align*}
where
\[
A_\pm(z):=
\frac{1/z-1/\phi^\mp(z)}
{1/\phi^\pm(z)-1/\phi^\mp(z)}.
\]
To simplify the result of substitution, observe that
\begin{gather*}
A_++A_-=1,
\qquad
\frac{A_+}{\phi^+}+\frac{A_-}{\phi^-}=\frac1z,\\
A_+\phi^++A_-\phi^-
=
\phi^++\phi^--\frac{\phi^+\phi^-}{z}
=
\frac3z-\frac2{z^2},
\end{gather*}
where the last equality follows from Vieta's formulas.
Using these identities and the definitions of $R_\pm$
in the global relation, we obtain
\begin{align}
\hat u(z,t)
&=
{\mathrm e}^{w(z)t}
\left[
\hat u(z,0)
-A_+(z)\hat u(\phi^+(z),0)
-A_-(z)\hat u(\phi^-(z),0)
\right]
\notag\\
&\quad+
\left(z-\frac3z+\frac2{z^2}\right)\tilde g_0(z,t)
+
A_+(z)\hat u(\phi^+(z),t)
+A_-(z)\hat u(\phi^-(z),t).
\label{third-order-eliminated-relation}
\end{align}

The first two terms on the right-hand side depend only on
the prescribed initial and boundary data.
The last two terms still involve the unknown solution.
We shall prove that their combined contribution to the
inversion integral vanishes:
\begin{align}
J
:={}&
\int_{|z|=1}
A_+(z)\hat u(\phi^+(z),t)
\frac{{\mathrm d}z}{z^{x+1}}
+
\int_{|z|=1}
A_-(z)\hat u(\phi^-(z),t)
\frac{{\mathrm d}z}{z^{x+1}}
=0,
\qquad x\in\mathbb N.
\label{third-order-cancellation}
\end{align}
Once this identity is established, inversion of
\eqref{third-order-eliminated-relation} gives the Fokas-type formula
\begin{align}
u(x,t)
&=
\frac1{2\pi{\mathrm i}}
\int_{|z|=1}
{\mathrm e}^{w(z)t}
\left[
\hat u(z,0)
-A_+(z)\hat u(\phi^+(z),0)
-A_-(z)\hat u(\phi^-(z),0)
\right]
\frac{{\mathrm d}z}{z^{x+1}}
\notag\\
&\quad+
\frac1{2\pi{\mathrm i}}
\int_{|z|=1}
\left(z-\frac3z+\frac2{z^2}\right)
\left[
\int_0^t g(s){\mathrm e}^{w(z)(t-s)}\,{\mathrm d}s
\right]
\frac{{\mathrm d}z}{z^{x+1}},
\qquad x\in\mathbb N.
\label{third-order-solution}
\end{align}
Here $\hat u(z,0)=\sum_{y=1}^{\infty}u_yz^y$ is the
prescribed initial transform. The combinations involving $\phi^+$ and $\phi^-$ are
understood by continuous extension at $z=1$, where the
two values coincide; the apparent singularities there
are removable.

We now fix $t\geq0$ and prove
\eqref{third-order-cancellation} under the analyticity
assumption on $\hat u(z,t)$ stated above.
The idea is to expand the unknown transform in its Taylor
series and show that each term makes zero contribution
to the contour integral.
For $n\in\mathbb N_0$, put
\[
T_n(z):=
A_+(z)\phi^+(z)^n+A_-(z)\phi^-(z)^n.
\]
The identities established above give
\[
T_0(z)=1,
\qquad
T_1(z)=\frac3z-\frac2{z^2}.
\]
Moreover, both $\phi^+(z)$ and $\phi^-(z)$ satisfy
\[
z^2\phi(z)^2+(1-3z)\phi(z)+z=0,
\]
or equivalently
\[
\phi(z)^2
=
\left(\frac3z-\frac1{z^2}\right)\phi(z)-\frac1z.
\]
Multiplying this identity by $\phi(z)^{n-2}$, applying it
to each of $\phi^\pm(z)$, and taking the linear combination
with coefficients $A_\pm(z)$, we obtain
\[
T_n(z)
=
\left(\frac3z-\frac1{z^2}\right)T_{n-1}(z)
-\frac1zT_{n-2}(z),
\qquad n\geq2.
\]
It follows by induction that every $T_n(z)$ is a polynomial
in $z^{-1}$.
In particular, for $x\in\mathbb N$,
\[
\int_{|z|=1}T_n(z)\frac{{\mathrm d}z}{z^{x+1}}=0,
\qquad n\in\mathbb N_0,
\]
because the integrand has no $z^{-1}$ term.

Since $\hat u(\cdot,t)$ is entire, its Taylor expansion gives
\[
\hat u(\phi^\pm(z),t)
=
\sum_{n=1}^{\infty}u(n,t)\phi^\pm(z)^n.
\]
Consequently,
\[
A_+(z)\hat u(\phi^+(z),t)
+
A_-(z)\hat u(\phi^-(z),t)
=
\sum_{n=1}^{\infty}u(n,t)T_n(z).
\]
This series converges uniformly on $|z|=1$.
Indeed, the values $\phi^\pm(z)$ remain in a bounded set,
and the Taylor series of an entire function converges
absolutely and uniformly on every closed disk.
The coefficients $A_\pm(z)$ are also bounded on the circle,
with their removable limits understood at $z=1$.
Interchanging the two branches merely interchanges the two
terms and does not affect their sum.

Termwise integration is therefore justified, and
\[
J
=
\sum_{n=1}^{\infty}u(n,t)
\int_{|z|=1}T_n(z)\frac{{\mathrm d}z}{z^{x+1}}
=
0.
\]
This proves \eqref{third-order-cancellation} and completes
the derivation of \eqref{third-order-solution}.

\subsection{A fourth-order example}

We consider the lattice equation
\begin{equation}\label{fourth-order-equation}
\frac{\partial u}{\partial t}(x,t)
=
a u(x+2,t)+b u(x+1,t)+c u(x-1,t)+d u(x-2,t),
\qquad x\in\mathbb N,\quad t\geq0,
\end{equation}
where $a,b,c,d$ are nonzero real constants.
Because the equation involves $u(x-2,t)$, we prescribe two
boundary values:
\begin{equation}\label{fourth-order-data}
u(-1,t)=g_{-1}(t),\qquad
u(0,t)=g_0(t),\qquad t\geq0,\qquad
u(x,0)=u_x,\qquad x\in\mathbb N.
\end{equation}
Here $g_{-1},g_0\colon[0,\infty)\to\mathbb R$ are continuous.

In this subsection, we use the notation
\[
g_1(t):=u(1,t),
\qquad
g_2(t):=u(2,t).
\]
Thus, unlike in the third-order example, $g_1$ and $g_2$
denote the values themselves rather than their differences.
These two functions are unknown, whereas $g_{-1}$ and $g_0$
are prescribed.

As before, we proceed formally, assuming the convergence and
regularity required for the transform calculations. We use the discrete transform
\[
\hat u(z,t):=\sum_{x=1}^{\infty}u(x,t)z^x
\]
and the inversion formula
\[
u(x,t)
=
\frac{1}{2\pi{\mathrm i}}
\int_{|z|=1}
\hat u(z,t)\frac{{\mathrm d}z}{z^{x+1}},
\qquad x\in\mathbb N,
\]
where the unit circle is oriented counterclockwise.
For $z\neq0$, the shifted sums can be expressed in terms of $\hat u$:
\begin{align*}
\sum_{x=1}^{\infty}u(x+1,t)z^x
&=
z^{-1}\hat u(z,t)-u(1,t),\\
\sum_{x=1}^{\infty}u(x-1,t)z^x
&=
z\hat u(z,t)+zu(0,t),\\
\sum_{x=1}^{\infty}u(x+2,t)z^x
&=
z^{-2}\hat u(z,t)-z^{-1}u(1,t)-u(2,t),\\
\sum_{x=1}^{\infty}u(x-2,t)z^x
&=
z^2\hat u(z,t)+z^2u(0,t)+zu(-1,t).
\end{align*}
Multiplying \eqref{fourth-order-equation} by $z^x$ and summing
over $x\geq1$, we obtain
\begin{align*}
\frac{\partial\hat u}{\partial t}(z,t)
&=
a\bigl[z^{-2}\hat u(z,t)-z^{-1}g_1(t)-g_2(t)\bigr]\\
&\quad+
b\bigl[z^{-1}\hat u(z,t)-g_1(t)\bigr]
+c\bigl[z\hat u(z,t)+zg_0(t)\bigr]+
d\bigl[z^2\hat u(z,t)+z^2g_0(t)+zg_{-1}(t)\bigr]\\
&=
w(z)\hat u(z,t)
-a g_2(t)-\left(\frac az+b\right)g_1(t)
+(cz+dz^2)g_0(t)+dzg_{-1}(t),
\end{align*}
where
\[
w(z):=\frac a{z^2}+\frac bz+cz+dz^2.
\]
Solving this first-order linear equation  gives
\begin{align}
\hat u(z,t)
&=
{\mathrm e}^{w(z)t}\hat u(z,0)
-a\tilde g_2(z,t)
-\left(\frac az+b\right)\tilde g_1(z,t)
+
(cz+dz^2)\tilde g_0(z,t)
+dz\tilde g_{-1}(z,t),
\label{fourth-order-global-relation}
\end{align}
where
\[
\tilde g_j(z,t)
:=
\int_0^t
g_j(s){\mathrm e}^{w(z)(t-s)}\,{\mathrm d}s,
\qquad j=-1,0,1,2.
\]
To eliminate the unknown functions $\tilde g_1$
and $\tilde g_2$, we look for symmetries $\phi$ such that $w(\phi(z))=w(z)$, i.e.,
\[
\frac a{\phi(z)^2}+\frac b{\phi(z)}
+c\phi(z)+d\phi(z)^2
=
\frac a{z^2}+\frac bz+cz+dz^2.
\]
After multiplication by $z^2\phi(z)^2$, the equation factors as
\[
(\phi(z)-z)
\left[
dz^2\phi(z)^3
+(dz^3+cz^2)\phi(z)^2
-(bz+a)\phi(z)-az
\right]
=0.
\]
Thus, besides the trivial solution $\phi(z)=z$, the
remaining three values satisfy a cubic equation.
Away from points where roots coincide, they can be viewed
as locally analytic branches of an algebraic function.

The symmetries become particularly transparent when
$a=d$ and $b=c$. In this case,
\[
w(z)
=
a\left[\left(z+\frac1z\right)^2-2\right]
+b\left(z+\frac1z\right),
\]
and hence
\begin{align*}
w(\phi(z))-w(z)
&=
\left(
\phi(z)+\frac1{\phi(z)}-z-\frac1z
\right)
\left[
a\left(
\phi(z)+\frac1{\phi(z)}+z+\frac1z
\right)+b
\right].
\end{align*}
The first factor gives the two symmetries
$\phi(z)=z$ and $\phi(z)=z^{-1}$.
The remaining two values satisfy
\[
\phi(z)^2+
\left(z+\frac1z+\frac ba\right)\phi(z)+1=0.
\]
Their product is $1$, so they are reciprocal to one another.
The geometry of their images of the unit circle depends
on the coefficients.




For every $\phi$ satisfying $w(\phi(z))=w(z)$, we have
\[
\tilde g_j(\phi(z),t)=\tilde g_j(z,t),
\qquad j=-1,0,1,2.
\]
Where $\hat u(\phi(z),t)$ is defined, evaluation of
\eqref{fourth-order-global-relation} at $\phi(z)$ gives
\begin{align*}
\hat u(\phi(z),t)
&=
{\mathrm e}^{w(z)t}\hat u(\phi(z),0)
-a\tilde g_2(z,t)
-\left(\frac a{\phi(z)}+b\right)\tilde g_1(z,t)+
\bigl[c\phi(z)+d\phi(z)^2\bigr]\tilde g_0(z,t)
+d\phi(z)\tilde g_{-1}(z,t),
\end{align*}
where
\[
\hat u(\phi(z),0)
=
\sum_{x=1}^{\infty}u_x\phi(z)^x.
\]
Two distinct roots provide two linear equations for
$\tilde g_1$ and $\tilde g_2$.
The coefficient matrix is non-singular because $a\neq0$
and the two roots are distinct and nonzero.
Solving this system expresses $\tilde g_1$ and $\tilde g_2$
in terms of the prescribed data and the transformed values
$\hat u(\phi(z),t)$.
Substituting these expressions into the inversion formula
is the next step of the unified transform method.

To obtain a representation involving only the prescribed
data, one must still show that the remaining terms involving
the unknown transformed solution make no contribution to
the inversion integral.
In the third-order example, this cancellation followed from
an algebraic recurrence for combinations of powers of the
phase symmetries.
For the present equation, a corresponding cancellation
argument remains to be established; its form may depend
on the coefficients and on the analytic properties of the
transformed solution.
We do not pursue these details here.

\subsection{The general case}

The same algebraic procedure applies to a general
lattice equation
\[
\frac{\partial u}{\partial t}(x,t)
=
\sum_{j=-p}^{q}a_j u(x+j,t),
\qquad x\in\mathbb N,\quad t\geq0,
\]
where $p,q\geq1$ and $a_{-p}a_q\neq0$.
The $p$ boundary values
$u(1-p,t)$, \dots, $u(0,t)$,
are prescribed, while the transform calculation
introduces the $q$ unknown functions
$u(1,t)$, \dots, $u(q,t)$.
The phase function is
\[
w(z)=\sum_{j=-p}^{q}a_jz^{-j},
\]
which has a pole of order $q$ at zero.
After multiplication by $\phi(z)^q$, the equation
$w(\phi(z))=w(z)$ becomes a polynomial equation of degree
$p+q$ in $\phi(z)$, with the known root $\phi(z)=z$.

Evaluating the global relation at suitable additional roots
provides equations for the unknown boundary transforms.
As in the preceding examples, completing the method requires
justifying the elimination of the remaining terms involving
the unknown transformed solution from the inversion integral.

\section{Conclusion}\label{sec-9}

We have studied initial-boundary value problems for linear
lattice differential equations using reflection, the Laplace
transform, and the unified transform method. The resulting
contour integral and Bessel-function representations provide
both explicit solution formulas and tools for qualitative
analysis. For the semidiscrete diffusion equation, we have justified
the formulas for initial data of polynomial growth and
continuous boundary data, treated spatially bounded forcing,
and established spatial estimates and long-time properties.
The extensions to Robin boundary conditions, nonsymmetric
coefficients, and a class of time-dependent coefficients
illustrate the scope of these approaches.

Several directions remain open for further investigation.
One is a systematic treatment of initial-boundary value
problems involving higher-order spatial differences.
The uniqueness result in Theorem~\ref{uniquenessForward}
and the fundamental solutions obtained in
Section~\ref{section-symmetric-FS} provide a starting point.
The higher-order examples presented in Section \ref{sec-8} outline how the
unified transform method can be used to derive solution
representations. A fuller treatment would include precise
conditions for the contour deformations and elimination
of unknown boundary transforms, rigorous justification
of the resulting formulas, and analysis of the solutions.

Our long-time results concern fixed lattice points.
It would also be useful to investigate asymptotic behavior
along sequences for which $x/t\to c>0$ as $t\to\infty$.
Such results would describe propagation at nonzero velocities
and complement the fixed-position limits studied here.

Another direction is to replace continuous time by discrete
time or, more generally, by a time scale containing both
continuous and discrete parts. An explicit solution formula
for a purely discrete initial-boundary value problem was
obtained in \cite{Ander}. It would be interesting to develop
corresponding qualitative results and determine how the
choice of the time domain affects the long-time behavior.
Related questions arise for equations involving Stieltjes
derivatives; a continuous-space setting was recently
investigated in~\cite{ST}.

Finally, one may consider fractional derivatives in time
or fractional powers of spatial difference operators.
For nonlocal spatial operators, the formulation of the
boundary conditions itself requires additional care.
Extending the solution representations and asymptotic
analysis to these problems would provide a further
connection between lattice equations and their
continuous counterparts.

\bigskip

\noindent{\bf Acknowledgement.} Andreas Chatziafratis is supported by JSPS KAKENHI Grant Number 24H00024 and JST [Moonshot R\&D][Grant Number JPMJMS25A5].

\end{document}